\numberwithin{equation}{section}
\newtheorem{theorem}{Theorem}[section]
\newtheorem{proposition}{Proposition}[section]
\newtheorem{lemma}{Lemma}[section]
\newtheorem{corollary}{Corollary}[section]
\newtheorem{remark}{Remark}[section]
\newtheorem{remarks}{Remark}[section]
\newtheorem{definition}{Definition}[section]
\newcommand{\be}{\begin{equation}}
\newcommand{\ee}{\end{equation}}
\newcommand{\e}{\varepsilon}
\newcommand{\R}{\mathbb R}
\newcommand{\C}{\mathbb C}
\newcommand{\Z}{\mathbb Z}
\newcommand{\N}{\mathbb N}
\newcommand{\T}{\mathbb T}
\renewcommand{\b }{\beta }
\newcommand{\ii }{{\rm i} }
\renewcommand{\l }{\lambda }
\newcommand{\p}{\pi}
\begin{document}

\title{{\bf Normal form coordinates for the KdV equation 
%near finite gap potentials 
having expansions in terms of pseudodifferential operators
%, involving pseudodifferential operators 
}}

%\date{}

%\author{}

\author{
Thomas Kappeler\footnote{Supported in part by the Swiss National Science Foundation.}  ,
Riccardo Montalto\footnote{Supported in part by the Swiss National Science Foundation.} 
}

\maketitle

\noindent
{\bf Abstract.} 
Near an arbitrary finite gap potential we construct real analytic, canonical coordinates
for the KdV equation on the torus having the following two main properties:  (1) up to a remainder term, which is smoothing to any given order, 
the coordinate transformation is a pseudodifferential operator of order 0 with principal part given by the Fourier transform and  
(2) the pullback of the KdV Hamiltonian is in normal form up to order three and
the corresponding Hamiltonian vector field admits an expansion in terms of a paradifferential operator.
Such coordinates are a key ingredient for studying the stability of finite gap solutions
of the KdV equation under small, quasi-linear perturbations.

\medskip

\noindent
{\em Keywords:} Normal form, KdV equation, finite gap potentials, pseudodifferential operators.

\medskip

\noindent
{\em MSC 2010:} 37K10, 35Q55

%%%%%%%%%%%%%%%%%%%%%%%%%%%%%%%%%%%%%%%%%%%%%%%%%%%%%%%%%%%%%%%%%%%%%%%%%%

\tableofcontents

\section{Introduction}\label{introduzione paper}
\label{1. Introduction}
The goal of this paper is to construct canonical coordinates for the Korteweg-de Vries (KdV) equation on the circle
   \begin{equation}
   \label{1.1} \partial_t u = - \partial_x^3 u + 6 u \partial_x u , \quad x \in \T := \R/ \Z\,,
   \end{equation}
   taylored for studying the stability of finite gap solutions of \eqref{1.1}, also referred to as periodic multisolitons, under quasi-linear perturbations. 
   To state our main results, we first need to make some preliminary considerations and introduce some notations. It is well known 
   that \eqref{1.1} is well-posed on the Sobolev spaces $H^s = \{ q \in H^s_\C: \, q \mbox{ real valued}\}$ with $s \geq - 1$ (cf \cite{KT} and references therein) where 
   for any $s \in \R,$
   \begin{equation}\label{def Hs intro}
   H^s_\C \equiv H^s(\T, \C) : = \big\{ q = \sum_{n \in \Z} q_n e^{2 \pi \ii n x} :  \,
   \| q \|_s < \infty\big\} \,,
  \quad \| q \|_s = \big( \sum_{n \in \Z} \langle n \rangle^{2 s} |q_n|^2 \big)^{\frac12}\,,
  \,\,\, \langle n \rangle := {\rm max}\{1, |n| \}\,.
   \end{equation}
  % In the sequel we will assume that $s \in \Z_{\geq 0}$, if not stated otherwise. 
  Note that $\int_0^1 u(t, x)\, d x$ is a prime integral for equation \eqref{1.1}. Without loss of generality, we restrict
   our attention to the case where $u$ has zero mean value (cf \cite[Section 13]{KP}), i.e., we consider solutions $u(t, x)$ of \eqref{1.1} in 
   $H_0^s$ with $s \ge -1$ where for any $s \in \R,$
   \begin{equation}
   H_0^s = \big\{ q \in H^s : \,\,  \int_0^1 q(x)\, d x = 0 \big\}\,.
   \end{equation} 
   It is well known that equation \eqref{1.1} can be written as a Hamiltonian PDE, $\partial_t u = \partial_x \nabla { H}^{kdv}(u)$ 
   where $\partial_x $ is the Gardner Poisson structure (with $\partial_x^{- 1}$ being the corresponding symplectic structure) 
   and $\nabla { H}^{kdv}$ denotes the $L^2$-gradient of the KdV Hamiltonian 
   $$
   H^{kdv}(q) := \frac12\int_0^1 (\partial_x q)^2\,dx + \int_0^1 q^3\, d x\,. 
   $$
   According to \cite{KP} (cf also \cite{KMT}), there are canonical coordinates $x_n = x_n(q)$, $y_n = y_n(q)$, $n \geq 1$, defined on $L^2_0 \equiv H_0^0$
   so that when expressed in these coordinates, the KdV equation takes the form 
   $$
   \dot x_n = \omega_n^{kdv} y_n, \qquad \dot y_n = - \omega_n^{kdv} x_n\,, \qquad \forall n \geq 1\,,
   $$
   where $\omega_n^{kdv}$ denote the KdV frequencies.
   %and only depend on the actions $I_k = (x_k^2 + y_k^2)/2$, $k \geq 1$. 
   To be more precise, introduce for any $s \in \R$ the sequence space
   $$
   h^s_{0,\C} \equiv h^s(\Z \setminus \{ 0 \}, \C) = \big\{ w = (w_n)_{n \neq 0} \subset \C : \| w \|_s < \infty \big\}\,, \qquad
     \| w \|_s := \big( \sum_{n \neq 0} |n|^{2 s} |w_n|^2 \big)^{\frac12}
   $$
   and its real subspace  $ h_0^s := \big\{ (w_n)_{n \neq 0} \in h^s_{0,\C} : w_{- n} = \overline{w}_n \,\, \forall n \geq 1 \big\}$
   and define the weighted complex coordinates $z_{\pm n} \equiv z_{\pm n}(q)$, 
   \begin{equation}\label{definition z_n}
   z_n := \sqrt{n \pi}(x_n - \ii y_n), \quad z_{- n} := \sqrt{n \pi} (x_n + \ii y_n), \quad \forall n \geq 1\,, 
   \end{equation}
   where $\sqrt{\cdot} \equiv \sqrt[+]{\cdot}\,$ denotes the principal branch of the square root.
   The results in \cite{KP} imply that the transformation, referred to as Birkhoff map,
   $$
   \Phi^{kdv} : L^2_0 \equiv H^0_0 \to h^0_0, \quad q \mapsto (z_n(q))_{n \neq 0}\,,
   $$
   is canonical in the sense that 
   $$
   \{ z_n, z_{- n} \} = \int_0^1 \nabla z_n \partial_x \nabla z_{- n}\, d x = 2 \pi \ii n, \quad \forall n \geq 1\,,
   $$
   whereas the brackets between all other coordinate functions vanish, and has the property that for any $s \in \Z_{\geq 0}$, 
   its restriction to $H^s_0$ is a real analytic diffeomorphism with range $h^s_0$, $\Phi^{kdv} : H^s_0 \to h^s_0$. 
   In terms of the coordinates $z_n(q), n \ne 0,$ referred to as complex Birkhoff coordinates, the action variables
   $ I_n(q)$ are defined by
   \begin{equation}\label{definition actions}
  I_n(q)  = \frac{1}{2 \pi n} z_n(q) z_{- n}(q) \ge 0\,, \quad \forall n \ge 1\,.
   \end{equation}
   The sequences $I(q)= (I_n(q))_{n \ge1}$ fill out the whole positive quadrant  $\ell^{1,1}_+$ of  $\ell^{1,1}$
   where for any $r \ge 0$, the weighted $\ell^1-$space $\ell^{1,r}$ is defined by
   \begin{equation}\label{weighted ell_1}
   \ell^{1,r} \equiv \ell^{1,r}(\N, \R) := \{ I=(I_n)_{n \geq 1} \subset \R : \, \sum_{n =1}^\infty n^r |I_n| < \infty \}\,, \qquad \N \equiv \Z_{\geq 1}.
   \end{equation}
    A key feature of the Birkhoff map is that the KdV Hamiltonian, expressed in the coordinates $z_n,$ $n \ne 0,$
   $$
    {H}^{kdv} \circ \Psi^{kdv} : h^1_0 \to \R\,, \quad \Psi^{kdv} := (\Phi^{kdv})^{- 1}\,,
   $$
   is in fact a function ${\cal H}^{kdv }$ of the actions $I$ alone. More precisely,
   ${\cal H}^{kdv } :\ell^{1, 3}_+ \to \R$  is a real analytic map.
   The KdV frequencies are then defined by $\omega_n^{kdv} := \partial_{I_n} {\cal H}^{kdv}$.
   Finally,  the differential 
   $d_0 \Phi^{kdv} : L^2_0 \to h^0_0$ of $\Phi^{kdv}$ at $q = 0$  is the Fourier transform $\mathcal F$ (cf \cite[Theorem 9.8]{KP})
   and hence $d_0 \Psi^{kdv}$ the inverse Fourier transform ${\cal F}^{- 1}$ where for any $s \in \Z,$
   \begin{equation}\label{Fourier transform}
   {\cal F} : H^s_{0} \to h^s_0,  \quad q \mapsto (q_n)_{n \neq 0}, \quad q_n := \int_0^1 q(x) e^{- 2 \pi \ii n x}\, d x\,.
   \end{equation} 
   Let
   \begin{equation}\label{definition S}
  S_+ \subseteq \N \,\mbox{ finite}, \quad   S := S_+ \cup (- S_+) \qquad {\mbox{and}} \qquad S_+^\bot := \N \setminus S_+\,, \quad S^\bot := S_+^\bot \cup (-S^\bot_+)\,.
   \end{equation}
   We denote by $M_S \subset L^2_0$ the manifold of $S$-gap potentials,
   $$
   M_S := \big\{ q \in L^2_0  : z_n(q) = 0 \,\, \forall \, n \in S^\bot  \big\}, \quad
   $$
   and by $M_S^o$ the open subset of $M_S$, consisting of proper $S$-gap potentials,
   $$
   M_S^o := \{ q \in M_S : \, z_n(q) \ne 0 \, \, \forall \, n \in S \}\,.
   $$
   Note that $M_S$ is contained in $\cap_{s \geq 0} H^s_0$ and hence consists of $C^\infty$-smooth potentials  
   and that $M_S^o$ can be parametrized by the action-angle coordinates 
   $ \theta_S = (\theta_k)_{k \in S_+},$  $I_S = (I_k)_{k \in S_+},$
   $$
    {\cal M}_S^o  \to M_S^o, \,\, (\theta_S, I_S) \mapsto \Psi^{kdv}(z(\theta_S, I_S))\,,
   \qquad {\cal M}_S^o := \T^{S_+} \times \R^{S_+}_{> 0}
   $$
   where for any $n \ne 0$, $z_n = z_n(\theta_S, I_S)$ is given by 
   $$
    z_{\pm n} := \sqrt{2 \pi n I_n} e^{\mp \ii \theta_n}, \quad \forall n \in S_+, \qquad 
    %\mbox{and} \qquad 
   z_n = 0, \quad \forall n \in S^\bot .
   $$
   %For any given $q \in M_S$, we refer to $z_n (q)$, $n \in S$, as the tangential coordinates of $q$ and to 
  % $z_n(q)$, $n \in S^\bot$, as the ones in the normal directions and 
 Introduce for any $s \in \R$
   $$
  h^s_{\bot, \C} := h^s(S^\bot, \C)\,, \qquad  h^s_\bot := \big\{ z_\bot \in h^s_{\bot, \C} : z_{- n} = \overline{z}_n, \quad \forall n \in S^\bot \big\}, 
   $$
   as well as the maps, related to the Fourier transform,
   $$
   \mathcal F_\bot: H^s_0 \to h^s_\bot, \, q \mapsto \, (q_n)_{n \in S^\bot} \,, \quad 
   \mathcal F_\bot^{-1}: h^s_\bot \to H^s_0, \, (z_n)_{n \in S^\bot} \mapsto \sum_{n \in S^\bot} z_n e^{2\pi \ii nx}\,.
   $$
   %In the sequel we will only consider the subset of $h^s_0$ consisting of elements $(w_n)_{n \neq 0} \in h^s_0$ with $w_n \neq 0$ for any $n \in S$. 
   By a slight abuse of notation, we view ${\cal M}_S^o \times h^s_\bot$, $s \in \R$, as a subset of $h^s_0$ 
   and denote its elements by
   $$
   {\frak x} := (\theta_S, I_S, z_\bot) , \quad \theta_S := (\theta_n)_{n \in S_+}, \,\,\, I_S := (I_n)_{n \in S_+}, \,\,\, z_\bot := (z_n)_{n \in S^\bot}\,. 
   $$
   It is endowed by the standard Poisson bracket, given by 
   $$
   \{ I_n, \theta_n \} = 1, \,\,\, \forall n \in S_+, \qquad \{ z_n, z_{- n} \} = 2 \pi \ii n, \,\,\, \forall n \in S^\bot_+,
   $$
   whereas the brackets between all other coordinate functions vanish.
   %we denote by ${\frak x}$ a point  $(\theta, I, z)$ in ${\cal M}_S^o \times h^0_\bot$ and 
   For any $s \in \R$, we denote by $\widehat{\frak x} = (\widehat \theta_S, \widehat I_S, \widehat z_\bot)$ elements in the tangent space $E_s$ 
   of ${\cal M}_S^o \times h^s_\bot$ at any given point $\frak x \in {\cal M}_S^o \times h^s_\bot$ where $E_s$ is given by
   $$
   E_s = \R^{S_+} \times \R^{S_+} \times h^s_\bot.
   $$ 
    Furthermore, for any $k \ge 1, $ $\partial_x^{-k} : H^s_{ \C} \to H^{s+k}_{0,\C}$ is  the bounded linear operator, defined by
$$
\partial_x^{-k}[e^{2\pi \ii nx}] = \frac{1}{(2\pi \ii n)^k} e^{2\pi \ii nx}\, , \quad \forall n \ne 0\,,  
\qquad \mbox{and} \qquad   \partial_x^{-k}[1] = 0\,.
$$   
%(Occcasionally we will view $\partial_x^{-k}$ as a bounded linear operator $\partial_x^{-k} : L^2_\C \equiv H^0_\C  \to L^2_{0,\C}$.)
Finally, the standard inner products on $L^2_0$ and on $h^0_0$
%denoted both by $\langle \cdot , \cdot \rangle$, 
are defined by
$$
\langle f , g \rangle \equiv \langle f , g \rangle_{L^2_0} = \int_0^1 f(x) g(x) d x\,, \,\,\, \forall f, g \in L^2_0\,, \qquad 
\langle z , w \rangle \equiv \langle z , w \rangle_{h^0_0}  = \sum_{n \ne 0} z_n w_{-n} \,, \,\, \, \forall z, w \in h^0_0\,.
$$
Note that $\langle \cdot , \cdot \rangle_{L^2_0}$ and  $\langle \cdot , \cdot \rangle_{h^0_0}$ extend as complex valued bilinear forms 
to $L^2_{0, \C}$ and respectively, $h^0_{0, \C}$. In the sequel, restrictions of these inner products to various subspaces  and extensions
as dual pairings will be denoted in the same way and the gradient of a functional $F$ corresponding to these inner products by $\nabla F$.
In more detail, for a $C^1-$functional $F: h^0_0 \to \C$, one has
$dF[\widehat z] = \sum_{n \ne 0} \widehat z_n \partial_{z_n} F = \langle  \nabla F, \widehat z \rangle$
with the nth component of $\nabla F$ given by $(\nabla F)_n = \partial_{z_{-n}}  F$.
% as will the Euclidean inner product on $\R^{S_+}$.
Furthermore, for any given Banach spaces $Y_1,$ $Y_2$, we denote by ${\cal B}( Y_1 , Y_2)$ the space of bounded linear operators from $Y_1$ to $Y_2$, endowed with the operator norm. 
\begin{theorem}\label{modified Birkhoff map}
Let $S_+ \subseteq \N$ be finite
%$I^{(0)} \in \R^{S_+}_{> 0}$ 
and ${\cal K}$ be a subset of ${\cal M}_S^o$ of the form $\T^{S_+} \times {\cal K}_1$ where  ${\cal K}_1$ is a compact subset of $\R^{S_+}_{> 0}$. 
%$\T^{S_+} \times \{ I^{(0)} \}$ in ${\cal M}_S^o = \T^{S_+} \times \R^{S_+}_{> 0}$. 
Then there exists an open bounded neighbourhood ${\cal V}$ of ${\cal K} \times \{ 0 \}$ in ${\cal M}_S^o \times h^0_\bot$ 
and a canonical real analytic diffeomorphism 
$\Psi : {\cal V} \to \Psi({\cal V}) \subseteq L^2_0\,, \, {\frak x} = (\theta_S, I_S, z_\bot)\mapsto q$ 
with the property that $\Psi$ satisfies 
$$
\Psi(\theta_S, I_S, 0) = \Psi^{kdv}(\theta_S, I_S, 0), \quad \forall (\theta_S, I_S, 0) \in {\cal V}\,,
$$
and is compatible in the sense explained below with the scale of Sobolev spaces $H^s_0, s \in \Z_{\geq 0}$, so that the following holds:
\begin{description}
\item[({\bf AE1})] For any integer $N \ge 1$, $\Psi$ admits an asymptotic expansion on $\mathcal V$ of the form  
$$
\Psi(\theta_S, I_S, z_\bot) = \Psi^{kdv}(\theta_S, I_S, 0) + {\cal F}_\bot^{- 1}[z_\bot] + 
\sum_{k = 1}^N a_k(\theta_S, I_S, z_\bot; \Psi) \, \partial_x^{- k} {\cal F}_\bot^{- 1}[z_\bot] 
+ {\cal R}_{N}(\theta_S, I_S, z_\bot; \Psi) 
$$ 
where  ${\cal R}_{N}(\theta_S, I_S, 0; \Psi) = 0$ and where
for any $s \in \Z_{\geq 0}$  and $1 \le k \le N$ 
$$
a_k^\Psi : {\cal V} \to H^s,\,  {\frak x} \mapsto  a_k^\Psi({\frak x}) \equiv a_k ({\frak x}; \Psi),
\qquad {\cal R}_N^\Psi : {\cal V} \cap \big( {\cal M}_S^o \times h^s_\bot \big) \to H^{s + N +1},\, {\frak x} \mapsto  {\cal R}_N^\Psi ({\frak x}) \equiv \mathcal R_N({\frak x}; \Psi),
$$
are real analytic maps satisfying the tame estimates of Theorem \ref{modified Birkhoff map 2} below. 
\item[({\bf AE2})]
For any $\frak x  \in {\cal V}$,
%$\cap (\mathcal M_S^o \times h^1_\bot)$, 
the transpose $d \Psi(\frak x )^t$ 
(with respect to the standard inner products) of the differential
$d \Psi(\frak x) : E_1 \to H^1_0$ is a bounded operator $d \Psi(\frak x )^t : H^1_0 \to E_1$. For any $\widehat q \in H^1_0$
and any integer $N \ge 1$, $d \Psi(\frak x )^t[\widehat q]$ admits an expansion of the form 
$$
d \Psi(\frak x)^t [\widehat q]= 
%\begin{pmatrix}
\big( \, 0, \, 0, \, {\cal F}_\bot [\widehat q]  + {\cal F}_\bot \circ \sum_{k = 1}^N  a_k(\frak x; d \Psi^t)  \partial_x^{- k}\widehat q \, 
+ {\cal F}_\bot \circ \sum_{k = 1}^N  \mathcal A_k(\frak x; d \Psi^t)[\widehat q]  \, \partial_x^{- k} \mathcal F_\bot^{-1}[ z_\bot] \big)
%\end{pmatrix}  z_\bot
+ {\cal R}_{N}(\frak x; d \Psi^t)[\widehat q]
$$
where for any $s \in \N$ and $1 \le  k \le N$, 
$$
a_k^{d \Psi^t} : {\cal V} 
%\cap (\mathcal M_S^o \times h^1_\bot) 
\to H^s\,, \,\, \frak x \mapsto a_k^{d \Psi^t}(\frak x) \equiv a_k (\frak x; d\Psi^t )\,, 
$$
$$
\mathcal A_k^{d \Psi^t} : {\cal V} 
%\cap (\mathcal M_S^o \times h^1_\bot)  
\to  {\cal B}(H^1_0, H^s)\,, \,\,  \frak x \mapsto \mathcal A_k^{d \Psi^t}(\frak x) \equiv \mathcal A_k (\frak x; d\Psi^t )\,, 
$$
$$
 {\cal R}_N^{d \Psi^t} : {\cal V} \cap ({\cal M}_S^o \times h^s_\bot) \to {\cal B}(H^s_0, E_{s + N +1}), \,\,
\frak x \mapsto {\cal R}_N^{d \Psi^t}(\frak x) \equiv  {\cal R}_N(\frak x; d \Psi^t)
$$
are real analytic maps, satisfying the tame estimates of Theorem \ref{modified Birkhoff map 2} below. 
Furthermore, the coefficient $a_1( \frak x ; d \Psi^t)$ satisfies
$a_1( \frak x; d \Psi^t) = - a_1( \frak x; \Psi)$.
\item[({\bf AE3})] The Hamiltonian ${\cal H} := H^{kdv} \circ \Psi : {\cal V} \cap (\mathcal M_S^o \times h^1_\bot) \to \R$ is in normal form up to order three. More precisely,  
$$
{\cal H}(\theta_S, I_S, z_\bot) = {\cal H}^{kdv}(I_S, 0) + \sum_{n \in S_+^\bot} \Omega_n^{kdv}(I_S, 0) z_n z_{- n} + {\cal P}(\theta_S, I_S, z_\bot)
$$
where $ {\cal P} : {\cal V} \cap (\mathcal M_S^o \times h^1_\bot) \to \R$ is real analytic and satisfies ${\cal P}(\theta_S, I_S, z_\bot) = O(\| z_\bot \|_1 \| z_\bot \|_0^2)$, 
$\Omega_n^{kdv} : = \frac{1}{2 \pi n} \omega_n^{kdv}$, 
$n \in S_+^\bot$, and $ \omega_n^{kdv}$ denote the KdV frequencies introduced above. Furthermore for any integer $N \ge 1$, 
there exists an integer $\sigma_N \ge N$ (loss of regularity) so that the gradient $\nabla {\cal P}(\theta_S, I_S, z_\bot)$ of ${\cal P}$ with components 
$\nabla_{\theta_S} {\cal P}$, $\nabla_{I_S} {\cal P}$, and $\nabla_{z_\bot} {\cal P}$ admits an expansion of the form 
$$
\nabla {\cal P}(\theta_S, I_S, z_\bot) =
\big(\, 0, \, 0, \, {\cal F}_\bot \circ \sum_{k = 0}^N  T_{a_k^{\nabla {\cal P}}} \, \partial_x^{- k} {\cal F}_\bot^{- 1}[z_\bot] \, \big)   + {\cal R}_N^{{\nabla {\cal P}}}(\theta_S, I_S, z_\bot)
$$
where for any integers $s \ge 0$ and $0 \le  k \le N$, 
$$
\begin{aligned}
& a_k^{\nabla {\cal P}} : {\cal V} \cap ({\cal M}_S^o \times h^{s + \sigma_N}_\bot) \to H^s \,,  \qquad
 {\cal R}_N^{{\nabla {\cal P}}} : {\cal V} \cap \big( {\cal M}_S^o \times h^{s \lor \sigma_N}_\bot \big) \to E_{s + N + 1}
\end{aligned}
$$
are real analytic and satisfy the tame estimates of Theorem \ref{modified Birkhoff map 2} below. 
\end{description}
\noindent
Here $T_{a_k^{\nabla {\cal P}}}$ denotes the operator of paramultiplication with $a_k^{\nabla {\cal P}}$ (cf Appendix \ref{appendice B}) and 
the diffeomorphism $\Psi : {\cal V} \to \Psi({\cal V}) \subset L^2_0$ 
being compatible with the scale of Sobolev spaces $H^s_0$, $s \in \Z_{\geq 0}$, means that for any $s \in \Z_{\geq 0}$,
$\Psi \big( {\cal V} \cap ({\cal M}_S^o \times h^s_\bot) \big) \subseteq H^s_0$ and $\Psi : {\cal V} \cap ({\cal M}_S^o \times h^s_\bot) \to H^s_0$ 
is a real analytic diffeomorphism onto its image. 
\end{theorem}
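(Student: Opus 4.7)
The plan is to construct $\Psi$ by three successive modifications of the Birkhoff map $\Psi^{kdv}$ of Kappeler--Pöschel: first install the correct pseudodifferential skeleton along the directions transverse to $M_S^o$, then repair canonicity by a Moser homotopy, and finally read off the normal form (AE3) from the resulting composition using paralinearization.

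For the skeleton, I would exploit the fact that the differential of $\Psi^{kdv}$ at a finite gap potential admits an asymptotic expansion in powers of $\partial_x^{-1}$, which can be derived from the spectral theory of the Hill--Schrödinger operator associated to $q \in M_S^o$ (essentially the content of the preparatory analysis in \cite{KP} and \cite{KMT}). Matching this expansion formally, I would define a preliminary map
\[
\Psi_0(\theta_S, I_S, z_\bot) := \Psi^{kdv}(\theta_S, I_S, 0) + \mathcal{F}_\bot^{-1}[z_\bot] + \sum_{k=1}^N a_k^{(0)}(\theta_S, I_S, z_\bot) \, \partial_x^{-k} \mathcal{F}_\bot^{-1}[z_\bot],
\]
with coefficients $a_k^{(0)}$ determined recursively so that $\Psi_0$ coincides with $\Psi^{kdv}$ on $M_S^o$ and its derivative in $z_\bot$ at $z_\bot = 0$ matches that of $\Psi^{kdv}$ up to a smoothing remainder of order $N+1$. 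By construction, (AE1) holds for $\Psi_0$.

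The map $\Psi_0$ will generally fail to be canonical. To correct this, I would apply Moser's homotopy method: on the neighbourhood $\mathcal{V}$, interpolate between the standard form $\omega_{\mathrm{std}}$ and the pulled-back Gardner form $\Psi_0^* \partial_x^{-1}$ via $\omega_t := (1-t)\omega_{\mathrm{std}} + t\, \Psi_0^* \partial_x^{-1}$, solve $\iota_{X_t} \omega_t = -\alpha_t$ with $\alpha_t$ a primitive of $\dot{\omega}_t$ built explicitly from the symbols of Step 1, and integrate the flow of $X_t$. The key point is that $X_t$ must be kept inside the algebra of operators of the form ``finite sum of $a_k \partial_x^{-k}$ plus smoothing remainder'', which requires proving closure of this algebra under composition, inversion of near-identity operators, and the time-one flow, all with tame estimates and real analytic dependence on $\mathfrak{x}$. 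Composing $\Psi_0$ with this corrector gives a canonical $\Psi$ still satisfying (AE1); (AE2) then follows by differentiating (AE1), taking the $L^2$-transpose, and identifying the symbols. The relation $a_1(\mathfrak{x}; d\Psi^t) = -a_1(\mathfrak{x}; \Psi)$ arises from the fact that the formal adjoint of $a \,\partial_x^{-1}$ equals $-\partial_x^{-1} a = -a\, \partial_x^{-1}$ modulo a smoother operator. For (AE3), Taylor expanding $\mathcal{H} = H^{kdv}\circ \Psi$ in $z_\bot$ yields $\mathcal{H}^{kdv}(I_S,0)$ as constant term, vanishing linear term (by canonicity and invariance of $M_S$ under the KdV flow), and quadratic term $\sum_{n \in S_+^\bot} \Omega_n^{kdv}(I_S,0) z_n z_{-n}$, because to first order in $z_\bot$ the map $\Psi$ agrees with the inverse Fourier transform on $S^\bot$. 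The pseudo/paradifferential expansion of $\nabla \mathcal{P}$ is then obtained by applying the chain rule $\nabla \mathcal{P} = d\Psi^t \cdot \nabla H^{kdv}(\Psi(\cdot))$ minus the explicit integrable and quadratic contributions, and invoking Bony's paralinearization (cf.\ Appendix \ref{appendice B}) to decompose each remaining product into paramultiplication terms $T_{a_k^{\nabla \mathcal{P}}} \partial_x^{-k}$ plus smoothing remainders.

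The main obstacle is the interplay between canonicity and the asymptotic pseudodifferential structure: the Moser correction in the second step must preserve the expansion skeleton built in the first. This reduces to a closure property of the algebra of operators of the form $\sum_k a_k \partial_x^{-k} + (\text{smoothing})$ under composition, inversion and flows, together with the associated tame estimates and real analyticity -- essentially the content of Theorem \ref{modified Birkhoff map 2} referenced in the statement. A secondary but delicate technical point is the careful matching of the symbol coefficients across $\Psi$, $d\Psi^t$ and $\nabla \mathcal{P}$, including the identity between their principal parts, which encodes canonicity at the symbolic level and propagates into the paradifferential expansion of the reduced Hamiltonian vector field.
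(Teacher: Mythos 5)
Your overall strategy is the one the paper actually follows (Kuksin's scheme: a partially linearized Birkhoff map composed with a Moser--Weinstein symplectic corrector, the pseudodifferential expansion coming from Hill-operator spectral asymptotics, and the Hamiltonian treated by paralinearization), but there are two genuine gaps. First, your skeleton map $\Psi_0$ is defined by a truncated sum whose coefficients are ``determined recursively'' up to a remainder of order $N+1$; this makes $\Psi_0$ depend on $N$, whereas the theorem asserts a single map admitting expansions to \emph{every} order. The paper avoids this by taking $\Psi_L(z_S,z_\bot)=\Psi^{kdv}(z_S,0)+d_\bot\Psi^{kdv}(z_S,0)[z_\bot]$, the exact first-order Taylor expansion in the normal directions, independent of $N$, and then proving (Theorem \ref{lemma asintotica Floquet solutions}) that this one fixed map admits an expansion to arbitrary order, via the asymptotics of the squared Floquet solutions $\partial_x f_{\pm n}^2$ which represent $\partial_x\nabla_q z_{\pm n}$. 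That analytic input is the technical heart of {\bf (AE1)} and cannot simply be delegated to ``preparatory analysis'' in the cited references.

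Second, and more seriously, your justification of the quadratic normal form in {\bf (AE3)} is incorrect as stated: to first order in $z_\bot$ the map $\Psi$ agrees with $d_\bot\Psi^{kdv}(z_S,0)$, which is \emph{not} $\mathcal F_\bot^{-1}$ away from $q=0$, so the quadratic part of $H^{kdv}\circ\Psi$ is not diagonal for the reason you give. What is true is the identity $\Psi_1(z_S)^t\, d\nabla H^{kdv}(q)\,\Psi_1(z_S)=\Omega_\bot(I_S)+\mathcal G(z_S)$, obtained by conjugating the linearized KdV flow along a finite-gap solution; this leaves a nontrivial residual quadratic term $\mathcal P_2^{(1)}=\tfrac12\langle\mathcal G(z_S)[z_\bot],z_\bot\rangle$, and composing with the corrector $\Psi_C$ produces a further quadratic contribution. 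The sum $\mathcal P_2^{(2)}$ of these must be shown to vanish: this is Kuksin's vanishing lemma (Lemma \ref{cancellazione finale termini quadratici}), proved by comparing the linearization of $\partial_t w=J\nabla\mathcal H^{(2)}(w)$ along finite-gap solutions with the linearized KdV equation and invoking uniqueness of the initial value problem. It is not a consequence of canonicity alone, and your sketch omits this cancellation entirely.
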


\noindent
In applications, it is of interest to know whether the coordinate transformation $\Psi$ preserves the reversible structure, defined by the maps
$S_{rev} : L^2_0 \to L^2_0$,  $(S_{rev} q)(x) := q(- x)$, and
${\cal S}_{rev} : {\cal M}_S^o \times h^0_\bot \to {\cal M}_S^o \times h^0_\bot$ where 
\begin{equation}\label{definition reversible structure for actions angles}
 {\cal S}_{rev}(\theta_S, I_S, z_\bot) := (\theta^{rev}_S, I_S^{rev}, z_\bot^{rev})\,, \quad
\theta_n^{rev} = - \theta_n , \,\,\, I_n^{rev} = I_n, \,\,\, \forall n \in S_+\,, \quad z_n^{rev} = z_{- n}, \,\,\, \forall n \in S^\bot\,. 
\end{equation}
Note that for any $s \in \Z_{\ge 0}$, $S_{rev} : H^s_0 \to H^s_0$ and ${\cal S}_{rev} : {\cal M}_S^o \times h^s_\bot \to {\cal M}_S^o \times h^s_\bot$ 
are linear involutions and that without loss of generality, the neighbourhood ${\cal V}$ of Theorem \ref{modified Birkhoff map} can 
be chosen to be invariant under the map ${\cal S}_{rev}$, i.e., ${\cal S}_{rev} ({\cal V}) = {\cal V}$. 

\medskip

\noindent 
 {\bf Addendum to Theorem 1.1} \label{teo reversibilita}
 {\em  The maps $\Psi : {\cal V} \to L^2_0$, \, $\Psi^{kdv} : h^0_0 \to L^2_0$,  and \,${\cal F}^{- 1} : h^0_0 \to L^2_0$ preserve
 the  reversible structure, i.e.,
  $$
  \Psi \circ {\cal S}_{rev} = S_{rev} \circ \Psi, \qquad \Psi^{kdv} \circ {\cal S}_{rev} = S_{rev} \circ \Psi^{kdv}\,, 
  \qquad {\cal F}^{- 1} \circ {\cal S}_{rev} = S_{rev} \circ {\cal F}^{- 1}\,.
  $$
  and so do the maps in the asymptotic expansions {\bf (AE1)} ($\frak x \in \mathcal V$),
  $$
  a_k^\Psi ( {\cal S}_{rev} {\frak x}) =  (-1)^k S_{rev}  (a_k^\Psi ({\frak x}) ) \,, \quad 
  {\cal R}_N^\Psi ( {\cal S}_{rev} {\frak x}) =  S_{rev} ({\cal R}_N^\Psi ({\frak x}))\,,
  $$
  and the ones in the asymptotic expansions  {\bf (AE2)} ($\frak x \in \mathcal V \cap (\mathcal M_S^o \times h^1_\bot)$, $\widehat q \in H^1_0$), 
  $$
  a_k({\cal S}_{rev} \frak x; \, d \Psi^t) = (-1)^k  { S}_{rev}  (a_k(\frak x; \, d \Psi^t) )\,, \quad
  \mathcal  A_k({\cal S}_{rev} \frak x; \, d \Psi^t) [ S_{rev} \, \widehat q ] = (-1)^k  { S}_{rev}  (\mathcal A_k (\frak x; \, d \Psi^t) [ \widehat q ] )
  $$
  $$
  {\cal R}_N({\cal S}_{rev} \frak x ; \, d \Psi^t) [ S_{rev} \, \widehat q ] =  {\cal S}_{rev} ( {\cal R}_N( \frak x ; \, d \Psi^t) [ \, \widehat q ] ) \,.
  $$
  Furthermore, the Hamiltonians $H^{kdv}$, ${\cal H} = H^{kdv} \circ \Psi$, and ${\cal P}$ are reversible, meaning that 
  $$
  H^{kdv} \circ S_{rev} = H^{kdv}\,, \quad {\cal H} \circ {\cal S}_{rev} = {\cal H}\,, \quad {\cal P} \circ {\cal S}_{rev} = {\cal P}
  $$
  and the maps in the asymptotic expansion in {\bf (AE3)} preserve the reversible structure,
   $$
 a_k^ {\nabla \cal P} ( {\cal S}_{rev} {\frak x}) = (-1)^k S_{rev} (a_k^{\nabla \cal P} (\frak x) )\,, \quad \forall \, \frak x \in \mathcal V \cap ({\cal M}_S^o \times h^{1 + \sigma_N}_\bot)\,,
 $$
 $$
  {\cal R}_N^{\nabla \cal P} ( {\cal S}_{rev} \frak x) =  {\cal S}_{rev} ( {\cal R}_{N}^{\nabla \cal P} (\frak x) )\, \quad \forall \, \frak x \in \mathcal V \cap ({\cal M}_S^o \times h^{1 \lor \sigma_N}_\bot)\, . 
  $$}
  
  \smallskip
  
 \noindent
 Theorem \ref{modified Birkhoff map 2} below states tame estimates for the map $\Psi $ and the gradient $\nabla {\cal P}$ of the remainder term ${\cal P}$ 
 in the expansion of ${\cal H}$. In the sequel, we denote
 elements in the tangent space $E_s := \R^{S_+} \times \R^{S_+} \times h^s_\bot$  of ${\cal V} \cap ({\cal M}_S^o \times h^s_\bot)$
 at any given point $\frak x = (\theta_S, I_S, z_\bot)$ by $\widehat{\frak x} = (\widehat \theta_S, \widehat I_S, \widehat z_\bot)$.
 Throughout the paper, all the stated estimates for maps hold locally uniformly with respect to their arguments.
  \begin{theorem}\label{modified Birkhoff map 2}
  Let $N, l \in \N$. Then
  under the same assumptions as in Theorem \ref{modified Birkhoff map}, the following estimates hold:
  \begin{description}
  \item[(Est1)] For any $\frak x = (\theta_S, I_S, z_\bot) \in {\cal V}$, $1 \le k \le N$, \, $\widehat{\frak x}_1, \ldots, \widehat{\frak x}_l \in E_0$, $s \in \Z_{\geq 0}$,
  $$
  \begin{aligned}
  & \| a_k^\Psi(\frak x) \|_s \lesssim_{s, k} 1 
  \,,  \qquad
    \| d^l a_k^\Psi(\frak x)[\widehat{\frak x}_1, \ldots, \widehat{\frak x}_l]\|_s \lesssim_{s, k, l} \prod_{j = 1}^l \| \widehat{\frak x}_j\|_0\,. 
  \end{aligned}
  $$
  Simlarly, for any $\frak x  \in {\cal V} \cap \big( {\cal M}_S^o \times h^s_\bot \big)$, \, $\widehat{\frak x}_1, \ldots, \widehat{\frak x}_l \in E_s$, $s \in \Z_{\geq 0}$,
  $$
  \begin{aligned}
  & \| {\cal R}_N^\Psi(\frak x)\|_{s + N + 1} \lesssim_{s, N}  \| z_\bot \|_s\,,  \quad
   \| d^l {\cal R}_N^\Psi(\frak x)[\widehat{\frak x}_1, \ldots, \widehat{\frak x}_l]\|_{s + N + 1} 
   \lesssim_{s, N, l} \sum_{j = 1}^l \| \widehat{\frak x}_j\|_s \prod_{i \neq j} \| \widehat{\frak x}_i\|_0 + \| z_\bot \|_s \prod_{j = 1}^l \| \widehat{\frak x}_j\|_0\,.
  \end{aligned}
  $$
  \item[(Est2)] For any $\frak x = (\theta_S, I_S, z_\bot) \in {\cal V} \cap (\mathcal M_S^o \times h^1_\bot)$, $1 \le k \le N$, \, $\widehat{\frak x}_1, \ldots, \widehat{\frak x}_l \in E_1$, $s \in \N$,
  $$
  \begin{aligned}
  &  \| a_k^{{d \Psi}^t}(\frak x)\|_s \lesssim_{s, k} 1 +  \| z_\bot \|_1 \,, \qquad
   \| d^l a_k^{{d \Psi}^t}(\frak x)[\widehat{\frak x}_1, \ldots, \widehat{\frak x}_l]\|_s 
  \lesssim_{s, k, l}  \prod_{j = 1}^l \| \widehat{\frak x}_j\|_1\,. 
  \end{aligned}
  $$
  and
  $$
  \begin{aligned}
  &  \| \mathcal A_k^{{d \Psi}^t}(\frak x)\|_s \lesssim_{s, k} 1 +  \| z_\bot \|_1 \,, \qquad
   \| d^l \mathcal A_k^{{d \Psi}^t}(\frak x)[\widehat{\frak x}_1, \ldots, \widehat{\frak x}_l]\|_s 
  \lesssim_{s, k, l}  \prod_{j = 1}^l \| \widehat{\frak x}_j\|_1\,. 
  \end{aligned}
  $$
  Similarly, for any $\frak x \in {\cal V} \cap \big( {\cal M}_S^o \times h^s_\bot \big)$, 
  \, $\widehat{\frak x}_1, \ldots, \widehat{\frak x}_l \in E_s$, \, $\widehat q \in H^s_0$, $s \in \N$,
  $$
  \begin{aligned}
  & \| {\cal R}_N^{d\Psi^t} (\frak x) [\widehat q]\|_{s + N + 1} \lesssim_{s, N} \| \widehat q\|_s + \| z_\bot \|_s \| \widehat q\|_1\,,  \\
  & \| d^l \big( {\cal R}_N^{d \Psi^t} (\frak x) [\widehat q] \big)[\widehat{\frak x}_1, \ldots, \widehat{\frak x}_l]\|_{s + N + 1} 
  \lesssim_{s, N, l} \| \widehat q\|_s \prod_{j = 1}^l \| \widehat{\frak x}_j\|_1 + \| \widehat q\|_1 \sum_{j = 1}^l \| \widehat{\frak x}_j\|_s \prod_{i \neq j} \| \widehat{\frak x}_i\|_1 + 
  \| \widehat q\|_1 \| z_\bot\|_s \prod_{j = 1}^l \| \widehat{\frak x}_j \|_1\,.
    \end{aligned}
  $$
\item[(Est3)] For any $s \in \Z_{\ge 0}$, $\frak x = (\theta_S, I_S, z_\bot) \in {\cal V} \cap \big( {\cal M}_S^o \times h^{s + \sigma_N}_\bot\big)$, $\| z_\bot \|_{\sigma_N} \leq 1$, $1 \le k \le N$,
\, $\widehat{\frak x}_1, \ldots, \widehat{\frak x}_l \in E_{s + \sigma_N}$,
$$
  \| a_k^{\nabla \cal P}(\frak x)\|_{s} \lesssim_{s, k} \| z_\bot \|_{s + \sigma_N}\,, \quad
  \| d^l a_k^{\nabla \cal P}(\frak x)[\widehat{\frak x}_1, \ldots, \widehat{\frak x}_l]\|_s 
\lesssim_{s, k, l} \sum_{j = 1}^l \| \widehat{\frak x}_j\|_{s + \sigma_N} \prod_{n \neq j} \| \widehat{\frak x}_n\|_{\sigma_N} +
 \| z_\bot \|_{s + \sigma_N} \prod_{j = 1}^l \| \widehat{\frak x}_j\|_{\sigma_N}\,. 
$$
For any $s \in \Z_{\ge 0}$, $\frak x \in {\cal V} \cap \big( {\cal M}_S^o \times h_\bot^{s \lor\sigma_N}\big)$ with $\| z_\bot \|_{\sigma_N} \leq 1$,
  \, $ \widehat{\frak x} \in E_{s \lor\sigma_N}$, 
$$
 \| {\cal R}_N^{\nabla \cal P}(\frak x)\|_{s + N + 1} \lesssim_{s, N} \| z_\bot \|_{\sigma_N} \| z_\bot \|_{s \lor\sigma_N}\,, \quad
 \| d {\cal R}_N^{\nabla \cal P}(\frak x)[\widehat{\frak x}] \|_{s + N + 1} 
\lesssim_{s, N} \| z_\bot \|_{\sigma_N} \| \widehat{\frak x}\|_{s \lor \sigma_N} + \| z_\bot \|_{s \lor \sigma_N} \| \widehat{\frak x}\|_{\sigma_N}\,, 
$$
and if in addition $ \widehat{\frak x}_1, \ldots, \widehat{\frak x}_l \in E_{s \lor \sigma_N}$, $l \ge 2,$
$$
 \| d^l  {\cal R}_N^{\nabla \cal P}(\frak x)[\widehat{\frak x}_1, \ldots, \widehat{\frak x}_l]\|_{s + N + 1} 
\lesssim_{s, N, l} \sum_{j = 1}^l \| \widehat{\frak x}_j\|_{s \lor \sigma_N} \prod_{n \neq j} \| \widehat{\frak x}_n\|_{\sigma_N} + 
\| z_\bot \|_{s \lor \sigma_N} \prod_{j = 1}^l \|\widehat{\frak x}_j \|_{\sigma_N}\,. 
$$
  \end{description}
  \end{theorem}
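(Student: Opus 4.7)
The plan is to derive the three sets of tame estimates in parallel with the construction of $\Psi$ that underlies Theorem \ref{modified Birkhoff map}, by tracking how the symbols $a_k$ and the remainders $\mathcal R_N$ are produced at each stage, and then invoking standard bilinear tame estimates in Sobolev spaces together with the smoothing effect of the operators $\partial_x^{-k}$. The estimates for $\Psi$, for $d\Psi^t$, and for $\nabla \mathcal{P}$ would not be derived in isolation: they are the natural output of the iterative scheme producing each asymptotic expansion, and once one expansion is quantified, the next inherits its bounds by differentiation, transposition, or composition with $H^{kdv}$.

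\textbf{Base estimates and uniform bounds on the symbols.} The starting point is that $\Psi^{kdv}$ is a real analytic diffeomorphism between $H^s_0$ and $h^s_0$ for every $s \in \Z_{\ge 0}$ \cite{KP}, that $\Psi$ agrees with $\Psi^{kdv}$ on the finite dimensional component $\mathcal K \times \{0\}$, and that by Theorem \ref{modified Birkhoff map} all the symbols $a_k^\Psi$, $a_k^{d\Psi^t}$, $\mathcal A_k^{d\Psi^t}$, $a_k^{\nabla \mathcal P}$ are real analytic on $\mathcal V$. Shrinking $\mathcal V$ slightly to a complex polydisc on which these symbols extend analytically, Cauchy estimates applied componentwise in $\frak x$ give the uniform-in-$\frak x$ bounds on $\|a_k^\Psi(\frak x)\|_s$ and on its derivatives that appear in \textbf{(Est1)}. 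The analogous argument yields the uniform bounds on $a_k^{d\Psi^t}$ and $\mathcal A_k^{d\Psi^t}$ in \textbf{(Est2)}; the upgrade from $\|\widehat{\frak x}_j\|_0$ to $\|\widehat{\frak x}_j\|_1$ on the right hand side of \textbf{(Est2)} reflects that $d\Psi^t$ acts from $H^1_0$ to $E_1$, whereas $\Psi$ itself acts on $L^2_0$.

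\textbf{Tame bounds for the remainders in \textbf{(AE1)}--\textbf{(AE2)}.} The remainder $\mathcal R_N^\Psi$ is produced iteratively by extracting at each step the principal symbol of $\Psi - \Psi^{kdv}(\theta_S, I_S, 0) - \mathcal F_\bot^{-1}[z_\bot] - \sum_{k<K} a_k^\Psi \, \partial_x^{-k} \mathcal F_\bot^{-1}[z_\bot]$ and pulling out one more factor of $\partial_x^{-1}$. After $N$ steps the residual lives in $H^{s + N + 1}$ provided $z_\bot \in h^s_\bot$, which yields $\|\mathcal R_N^\Psi(\frak x)\|_{s + N + 1} \lesssim_{s, N} \|z_\bot\|_s$; the linear factor $\|z_\bot\|_s$ (as opposed to a constant) is dictated by the normalisation $\mathcal R_N^\Psi(\theta_S, I_S, 0) = 0$. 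The multilinear bounds follow by differentiating this representation: the bilinear tame estimate in Sobolev spaces allows at most one factor to be measured in the high norm $\|\cdot\|_s$ while all others are measured in $\|\cdot\|_0$, which is precisely the structure stated. Transposing the expansion of $d\Psi$, using that $(\partial_x^{-k})^t = (-1)^k \partial_x^{-k}$ and tracking $z_\bot$-derivatives of the coefficients $a_k^\Psi$, then yields the expansion of $d\Psi^t$ with $a_1(\frak x; d\Psi^t) = - a_1(\frak x; \Psi)$, while the $\mathcal A_k^{d\Psi^t}[\widehat q] \partial_x^{-k} \mathcal F_\bot^{-1}[z_\bot]$ terms arise precisely as the image of $\widehat q$ under these $z_\bot$-differentiated coefficients. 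The tame bounds in \textbf{(Est2)} follow from those of \textbf{(Est1)} together with the smoothing $\partial_x^{-k}$ and the bilinear Sobolev calculus.

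\textbf{Estimates for $\nabla \mathcal{P}$.} By construction $\mathcal H = H^{kdv}\circ\Psi$ is in normal form up to order three, so $\mathcal P = \mathcal H - \mathcal H^{kdv}(I_S,0) - \sum_n \Omega_n^{kdv} z_n z_{-n}$ vanishes cubically at $z_\bot = 0$. Computing $\nabla \mathcal P = d\Psi^t \, \nabla H^{kdv}(\Psi(\frak x))$ minus the quadratic contribution and substituting the expansions from \textbf{(AE1)}--\textbf{(AE2)}, one obtains an expression of products $a\cdot b$ with factors $a$ coming from the symbols of $d\Psi^t$ and factors $b$ coming from the image of $\nabla H^{kdv}$ under the expansion of $\Psi$. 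Replacing each such product by its Bony decomposition $T_a b + T_b a + R(a,b)$ from Appendix \ref{appendice B} converts this expression into the paramultiplication form stated in \textbf{(AE3)}. The tame bounds of \textbf{(Est3)} then follow from \textbf{(Est1)}--\textbf{(Est2)} combined with the tame estimates for paramultiplication and for the Bony remainder; the integer $\sigma_N$ absorbs the cumulative regularity cost of $N$ iterations of the Bony splitting and of the smoothing carried by the paradifferential remainder. The cubic vanishing of $\mathcal P$ at $z_\bot = 0$ produces the factor $\|z_\bot\|_{s+\sigma_N}$ (respectively $\|z_\bot\|_{\sigma_N}\|z_\bot\|_{s \lor \sigma_N}$) in the estimates for $a_k^{\nabla \mathcal P}$ and $\mathcal R_N^{\nabla \mathcal P}$.

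\textbf{Main obstacle.} The principal technical difficulty is the bookkeeping needed to maintain the mixed $(\|\cdot\|_{\sigma_N}, \|\cdot\|_{s \lor \sigma_N})$ structure of \textbf{(Est3)} under repeated differentiation: each application of the chain rule to $H^{kdv}\circ \Psi$ or to $d\Psi^t \, \nabla H^{kdv}\circ\Psi$ produces, a priori, several high-norm factors which must be reabsorbed into a single one via the Bony product splitting, and this reabsorption has to be compatible with the extraction of the paradifferential symbols $a_k^{\nabla \mathcal P}$ at each step. Choosing $\sigma_N$ large enough to compensate for the cumulative loss of regularity incurred in the $N$-step iteration, and verifying that every remainder produced along the way enters the correct Sobolev scale, is where the bulk of the work lies; once this is set up, the verifications of \textbf{(Est1)} and \textbf{(Est2)} proceed along the same lines with simpler accounting because no paradifferential splitting is required.
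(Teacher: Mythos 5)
Your proposal treats $\Psi$ as a single black box whose expansion can be quantified by generic principles (Cauchy estimates on a complex extension, Bony splitting of products), but the paper's estimates are genuinely constructive and tied to the decomposition $\Psi = \Psi_L \circ \Psi_C$, which you never invoke. Two concrete gaps result. First, Cauchy estimates cannot deliver the tame structure of the remainder bounds: analyticity of $\mathcal R_N^\Psi$ on $\mathcal V \cap h^s_0$ would give $\prod_j \|\widehat{\frak x}_j\|_s$, not the stated $\sum_j \|\widehat{\frak x}_j\|_s \prod_{i\neq j}\|\widehat{\frak x}_i\|_0 + \|z_\bot\|_s\prod_j\|\widehat{\frak x}_j\|_0$, in which only one factor carries the high norm. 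In the paper this structure is extracted from the explicit mechanisms producing each piece: for $\Psi_L$, ODE estimates on Floquet solutions of $-y''+qy=\lambda y$ (Theorem \ref{teorema espansione fn appendice} and the lemmas of Appendix \ref{appendix asymptotics}); for $\Psi_C$, a Gronwall argument on the integral equation \eqref{equazione integrale resto} satisfied by $\pi_\bot\mathcal R_N(z;\Psi^{\tau_0,\tau})$, combined with the geometric decay $(C_0\|z_\bot\|_0)^{n-1}$ of the Neumann series for $(J\mathcal L(z))^n$ in Lemma \ref{lemma potenze JM L(q,z)} and the interpolation Lemma \ref{lemma interpolation}. Your iterative "extract the principal symbol and pull out $\partial_x^{-1}$" description fits only the $\Psi_C$ parametrix; the $\Psi_L$ coefficients are not produced by any such iteration.

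Second, and more seriously, your treatment of \textbf{(Est3)} asserts that $\mathcal P$ "vanishes cubically at $z_\bot=0$" by construction and that $\nabla\mathcal P = d\Psi^t\,\nabla H^{kdv}\circ\Psi$ minus the quadratic part can simply be Bony-decomposed. Neither holds. The quadratic-in-$z_\bot$ part $\mathcal P_2^{(2)}$ of $H^{kdv}\circ\Psi$ beyond $\tfrac12\langle\Omega_\bot(I_S)z_\bot,z_\bot\rangle$ does not vanish by construction; its vanishing is Kuksin's Lemma \ref{cancellazione finale termini quadratici}, proved by comparing the linearization of the KdV flow along finite gap solutions in the two coordinate systems and using uniqueness of the initial value problem. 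Moreover, the term $\mathcal H_\Omega(\Psi_C(z))-\mathcal H_\Omega(z)$ contains $\langle D_\bot^2 z_\bot,\pi_\bot\tilde\Psi_C(z)\rangle$, whose gradient a priori loses two derivatives (the paper explicitly flags that the associated vector field "could be unbounded"); making it bounded requires the skew-adjointness of $\mathcal L(z)$, the vanishing of $\mathcal L_\bot^\bot$, and the commutator Lemma \ref{lemma cal T(q)} showing $\Psi_1(z_S)\ii D_\bot^3 - \mathcal F_\bot^{-1}\ii D_\bot^3\mathcal F_\bot\Psi_1(z_S)$ is of order one. Without these cancellations your scheme for \textbf{(Est3)} fails at the quadratic order already, before any paradifferential bookkeeping begins.
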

 \noindent 
{\em Applications:} The Birkhoff coordinates are well suited to study the initial value problem of \eqref{1.1} (cf e.g. \cite{KT}, \cite{KM2} and references therein) 
and semilinear perturbations of \eqref{1.1} (cf e.g. \cite{KP}, \cite{K}). However, when equation \eqref{1.1} is expressed in Birkhoff coordinates, 
various features of the KdV equation and its perturbations such as being partial differential equations, get lost. 
On the other hand, due to the expansions $({\bf AE1}) - ({\bf AE3})$, the coordinates of Theorem \ref{modified Birkhoff map} allow 
to preserve the essence of such features and in the form stated turn out to be well suited to study quasi-linear perturbations of the KdV equation. 
   
   \medskip
   \noindent
 {\em Outline of the construction:} In his pioneering work \cite{K}, Kuksin presents a general scheme for proving KAM-type theorems 
 for integrable PDEs in one space dimension such as the KdV or the sine-Gordon (sG) equations, which possess a Lax pair formulation 
 and admit finite dimensional integrable subsystems foliated by invariant tori. Expanding on work of Krichever \cite{Krichever}, Kuksin considers bounded integrable subsystems of such a PDE 
 which admit action-angle coordinates. They are complemented by infinitely many coordinates 
 whose construction is based on a set of time periodic solutions, referred to as Floquet solutions of the PDE, obtained by linearizing 
 the PDE under consideration along a solution evolving in the integrable subsystem. It turns out that the resulting coordinate transformation 
 is typically not symplectic. Extending arguments of Moser and Weinstein to the given infinite dimensional setup (see \cite{K}, Lemma 1.4 and Section 1.7), 
 he constructs a second coordinate transformation so that the composition of the two transformations become symplectic. We follow Kuksin's scheme of the proof 
 by constructing $\Psi$ as the composition of $\Psi_L \circ \Psi_C$ of two transformations. 
 The $\Psi_L$ is given by the Taylor expansion of $\Psi^{kdv}$ of order one in the normal direction $z_\bot$ around $(\theta_S, I_S, 0)$,
   $$
   \Psi^{kdv}(\theta_S, I_S, 0) + d \Psi^{kdv}(\theta_S, I_S, 0)[(0,0, z_\bot)]\,.
   $$ 
   The neighbourhood ${\cal V}$ of ${\cal K} \times \{ 0 \}$ is chosen sufficiently small so that by the inverse function theorem,
    $\Psi_L$ is a real analytic diffeomorphism onto its image. 
  %It turns out that up to normalizing factors, $\Psi_L$ coincides with the map constructed by Kuksin in terms of Floquet solutions. 
    Using that $\Psi_L$ is given in terms of the Birkhoff map $\Psi^{kdv}$, we prove in a first step that $\Psi_L$ admits an asymptotic expansion and tame estimates corresponding to the ones of Theorems \ref{modified Birkhoff map}, \ref{modified Birkhoff map 2}. In a second step we establish the corresponding results for the symplectic corrector $\Psi_C$. 
The methods developed in this paper also apply to the defocusing NLS equation and can be used to provide corresponding asymptotic expansions
and estimates, thus complementing our previous work \cite{Kappeler-Montalto} on this equation.

   \medskip
   
   \noindent
   {\em Comments:} In view of the definition of $\Psi_L$, the map $\Psi = \Psi_L \circ \Psi_C$ can be considered as a symplectic version of the Taylor expansion of $\Psi^{kdv}$ of order $1$ 
   in normal directions at points of ${\cal M}_S^o \times \{ 0 \}$ and hence as a locally defined symplectic approximation of $\Psi^{kdv}$. Theorem \ref{modified Birkhoff map} in particular says 
   that $\Psi(\theta_S, I_S, z_\bot) - {\cal F}_\bot^{- 1}[z_\bot]$ maps ${\cal V} \cap ({\cal M}_S^o \times h^s_\bot)$ into $H^{s + 1}_0$ for any $s \geq 0$, i.e., that it is one-smoothing. 
   Such a property has previously been established for the Birkhoff map $\Psi^{kdv}$ near $0$ by Kuksin-Perelman \cite{Kuksin-Perelman}, Theorem 0.2, 
   and proved to hold on the entire phase space by Kappeler-Schaad-Topalov \cite{KST2}. Theorem \ref{modified Birkhoff map} says that for the map $\Psi$, a much stronger property holds: 
   up to a remainder term which is $(N+1)$-smoothing, $\Psi$ is a (nonlinear) pseudodifferential operator acting on ${\cal F}_\bot^{- 1}(h^0_\bot)$. 
   
   \medskip
   
   \noindent
   {\em Organization:} The maps $\Psi_L$ and $\Psi_C$ are studied in Section \ref{sezione mappa Psi L} and respectively, Section \ref{sezione Psi C}.
   The expansion of the KdV Hamiltonian in the new coordinates is treated in Section \ref{Hamiltoniana trasformata} 
   and a summary of the proofs of Theorem \ref{modified Birkhoff map} and Theorem \ref{modified Birkhoff map 2} is given in Section \ref{synopsis of proof}. 
   In Appendix \ref{Birkhoff map} -- 
   %\ref{sezione floquet solution}, \ref{appendix asymptotics}, and  
   Appendix \ref{AppendixReversability}, 
   we present results needed for the analysis of the map  $\Psi_L$ in Section \ref{sezione mappa Psi L} and 
   in Appendix \ref{appendice B} we review material from the pseudodifferential and paradifferential calculus.
   %In the preliminary Section \ref{sezione notazioni}, the Hamiltonian setup is described and auxiliary results are recorded. 

 %%%%%%%%%%%%%%%%%%%%%%%%%%%%%%%%%%%%%%%%%%%%%%%%%%%%%%%%%%%%%%%%%%%%%%%%%
 %%%%%%%%%%%%%%%%%%%%%%%%%%%%%%%%%%%%%%%%%%%%%%%%%%%%%%%%%%%%%%%%%%%%%%%%%%  

   \section{The map $\Psi_L$}\label{sezione mappa Psi L}
   In this section we define and study the map $\Psi_L$ described in Section \ref{introduzione paper}. First let us introduce some more notation. For $S \subset \Z$ finite as in \eqref{definition S},
   denote by $h_S^0 \subset \C^S$ the subspace given by
   \begin{equation}\label{definition h^0_S}
   h_S^0 := \big\{ z_S = (z_n)_{n \in S} \in \C^S :  \,z_{- n} = \overline{z}_n \,\, \forall n \in S_+ \big\}\,.
   \end{equation}
   By a slight abuse of terminology, for any $s \in \Z$, we identify $h^s_0$ with $h^0_S \times h^s_\bot$
   and write $(z_S, z_\bot)$ for $z \in h^s_0.$
   According to the Addendum to Theorem \ref{Theorem Birkhoff coordinates} at the end of Appendix \ref{Birkhoff map}, 
   the space $M_S$ of $S-$gap potentials is viewed as a (real analytic) submanifold of the weighted Sobolev space $H^{w_*}_0$ and
   the restriction of $\Phi^{kdv}$ to $M_S$ yields a real analytic diffeomorphism,
   ${\Phi^{kdv}}_{| M_S} : M_S \to h_S^0$. 
   We endow 
   %$M_S$  by the pull back of the Gardner Poisson structure by the embedding $M_S \hookrightarrow L^2_0$ and 
   $h_S^0$ with the pull back of the standard Poisson 
   structure on $h_0^0$ by the natural embedding $h_S^0 \hookrightarrow h_0^0$, where the standard Poisson structure is the one for which $\{ z_n, z_{- n} \} = 2 \pi \ii n$ 
   for any $n \geq 1$ whereas the Poisson brackets among all the other coordinates vanish. 

   Consider the partially linearized inverse Birkhoff map 
   \begin{equation}\label{definition Psi_L}
   \Psi_L : h_S^0 \times h_\bot^0 \to L^2_0\,, \,\,\, (z_S, z_\bot) \mapsto \Psi^{kdv}(z_S, 0) + d_\bot \Psi^{kdv}(z_S, 0)[z_\bot]
   % \qquad \Psi^{kdv} = (\Phi^{kdv})^{- 1}
   \end{equation}
   where 
   %we recall that $\Psi^{kdv} = (\Phi^{kdv})^{- 1}$ and 
   $d_\bot \Psi^{kdv}(z_S, 0)$ denotes the Fr\'echet derivative of the map $z_\bot \mapsto \Psi^{kdv}(z_S, z_\bot)$, evaluated at the point $(z_S, 0)$. 
   By Theorem \ref{Theorem Birkhoff coordinates}, $\Psi_L$ is a real analytic map. 
   %Note that by definition \eqref{definition h^0_S}, the tangent space $T_{w_S} h_S^0$  of $h_S^0$ at $w_S \in h^0_S$ is given by
   %$$T_{w_S} h_S^0 = \big\{ (\widehat w_n)_{n \in S} \subset \C :  \, \widehat w_{- n} = \overline{\widehat w}_n\,, \,\, \forall n \in S_+ \big\}\,.$$
   \begin{proposition}\label{prop Psi L Psi kdv} The map $\Psi_L$ has the following properties: (i) For any $z_S \in h^0_S,$
   $$
   \Psi_L (z_S, 0) = \Psi^{kdv}(z_S, 0) \qquad \mbox{and} \qquad d \Psi_L(z_S, 0)= d \Psi^{kdv}(z_S, 0).
   $$
   (ii) For any compact subset ${\mathcal K} \subseteq h_S^0$ there exists an open neighbourhood ${\mathcal V}$ of ${\mathcal K} \times \{ 0 \}$ in $h^0_0 \equiv h_S^0 \times h_\bot^0$ 
   so that for any integer $s \ge 0$, the restriction $\Psi_L|_{\mathcal V \cap h_0^s}$ is a map $\mathcal V \cap h_0^s  \to H^s_0$
   which is a real analytic diffeomorphism onto its image. The neighborhood $\mathcal V$ is chosen of the form $\mathcal V_S \times \mathcal V_\bot$
    where $\mathcal V_S$ is an open, bounded neighborhood of $\mathcal K$ in $h^0_S$ and $\mathcal V_\bot$ is an open ball
    in $h^0_\bot$ of sufficiently small radius, centered at zero.
 %\noindent Combining Lemma \ref{mapPhi1} and Lemma \ref{analytic diffeo PsiL}, one obtains the following
%\begin{proposition}\label{prop Psi L Psi kdv}
%For any given compact subset ${\cal K} \subseteq h^0_S$ there exists an open neighbourhood ${\cal V}$ of ${ K} \times \{ 0 \}$ in $h^0_S \times h^0_\bot$  so that the following holds:
%(i) $\mathcal V$ is of the form ${ \cal V_S} \times {\cal V}_\bot$ where $\mathcal V_S$ is an open bounded neighborhood of $\mathcal K$ in $h^0_S$ and ${\cal V}_\bot$ a ball in $h^0_\bot$
%of radius smaller than one, centered at $0$. (ii) The composition $\Psi_L := \Psi_L^{kdv} \circ \Phi_1 : {\cal V} \to L^2_0$,
%\begin{equation}\label{definition Psi_L} \Psi_L(z_S, z_\bot) = q + d_\bot \Psi^{kdv}(z_S, 0)[B(z_S)  z_\bot]
%\end{equation}
%has the property that for any $s \in \Z_{\ge 0},$ the restriction of $\Psi_L$ to ${{\cal V} \cap h_0^s}$ takes values in $H^s_0$ and
%$$ \Psi_L : {\cal V} \cap h_0^s  \to \Psi_L({{\cal V} \cap h_0^s})$$
%is a real analytic diffeomorphism. 
(iii) For any $z = (z_S, z_\bot) \in {\cal V}$ and  $\widehat z = (\hat z_S,  \hat z_\bot ) \in h^0_S \times h_\bot^0$, 
\begin{equation}\label{formula d Psi L}
d \Psi_L(z) [\widehat z]  = d \Psi_L(z_S, 0)[\widehat z] + d_S \big( d_\bot \Psi^{kdv}(z_S, 0)[ z_\bot]  \big)[\widehat z_S]
\end{equation}
where the linear map $d \Psi_L(z_S, 0) =  d \Psi^{kdv}( z_S, 0)$
is canonical and $d_S \big( d_\bot \Psi^{kdv}(z_S, 0)[z_\bot]  \big)$ denotes the Fr\'echet derivative
of the map 
$$
\mathcal V_S \to L^2_0 ,\, z_S \mapsto d_\bot \Psi^{kdv}(z_S, 0)[z_\bot].
$$
\end{proposition}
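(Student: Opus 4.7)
The plan is to prove the three parts in the order (i), (iii), (ii). The first two are direct computations from the definition \eqref{definition Psi_L}, while (ii) is the substantive content and relies on the real analytic inverse function theorem together with Theorem \ref{Theorem Birkhoff coordinates}.

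For (i), I would set $z_\bot = 0$ in \eqref{definition Psi_L}; the second summand $d_\bot \Psi^{kdv}(z_S, 0)[z_\bot]$ is linear in $z_\bot$ and therefore vanishes, giving $\Psi_L(z_S, 0) = \Psi^{kdv}(z_S, 0)$. Differentiating \eqref{definition Psi_L} at $(z_S, 0)$ in the direction $\widehat z = (\widehat z_S, \widehat z_\bot)$, the $z_S$-derivative of the second summand is proportional to $z_\bot = 0$ and hence vanishes, so only $d_S \Psi^{kdv}(z_S, 0)[\widehat z_S] + d_\bot \Psi^{kdv}(z_S, 0)[\widehat z_\bot] = d \Psi^{kdv}(z_S, 0)[\widehat z]$ survives. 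For (iii), I would apply the product rule to \eqref{definition Psi_L} at a general point $(z_S, z_\bot) \in \mathcal V$. The three resulting contributions are $d_S \Psi^{kdv}(z_S, 0)[\widehat z_S]$, $d_S\bigl(d_\bot \Psi^{kdv}(z_S, 0)[z_\bot]\bigr)[\widehat z_S]$, and $d_\bot \Psi^{kdv}(z_S, 0)[\widehat z_\bot]$. Combining the first and third via part (i) yields $d\Psi_L(z_S, 0)[\widehat z]$, and the middle contribution is precisely the extra term in \eqref{formula d Psi L}; the canonicity of $d\Psi_L(z_S, 0) = d\Psi^{kdv}(z_S, 0)$ is inherited from the canonicity of $\Psi^{kdv}$ established in Theorem \ref{Theorem Birkhoff coordinates}.

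For (ii), I would fix $z_S^0 \in \mathcal K$ and invoke the real analytic inverse function theorem for $\Psi_L$ at $(z_S^0, 0)$, regarded as a map $h^0_0 \to L^2_0$. By (i), the linearization $d\Psi_L(z_S^0, 0) = d\Psi^{kdv}(z_S^0, 0)$ is a linear isomorphism $h^0_0 \to L^2_0$, since $\Psi^{kdv} : H^0_0 \to h^0_0$ is a real analytic diffeomorphism. This produces an open product neighborhood $\mathcal V_S(z_S^0) \times \mathcal V_\bot(z_S^0)$ of $(z_S^0, 0)$ on which $\Psi_L$ is a real analytic diffeomorphism onto its image. Covering the compact set $\mathcal K$ by finitely many such neighborhoods and intersecting the $\mathcal V_\bot$-factors (centered at $0$) yields a single product neighborhood $\mathcal V = \mathcal V_S \times \mathcal V_\bot$ of $\mathcal K \times \{0\}$ of the required form.

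The main obstacle I anticipate is propagating this local statement to the full Sobolev scale, namely showing that the same $\mathcal V$ produces a real analytic diffeomorphism $\mathcal V \cap h^s_0 \to H^s_0$ for every integer $s \ge 0$. This is resolved by a second appeal to Theorem \ref{Theorem Birkhoff coordinates}, which asserts that $\Psi^{kdv}$ restricts to a real analytic diffeomorphism $H^s_0 \to h^s_0$ at every Sobolev level; consequently $d\Psi^{kdv}(z_S, 0)$ is a bounded linear isomorphism $h^s_0 \to H^s_0$ for each $s$, and $\Psi^{kdv}(z_S, 0) \in M_S \subset \bigcap_{s \ge 0} H^s_0$ is $C^\infty$. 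Since the local inverse of $\Psi_L$ near $(z_S, 0)$ is constructed from $d\Psi^{kdv}(z_S, 0)^{-1}$ and higher derivatives of $\Psi^{kdv}$, all of which act boundedly at each Sobolev level, after possibly shrinking $\mathcal V_\bot$ once and for all the inverse function theorem applied at level $s$ delivers the desired real analytic diffeomorphism onto its image uniformly in $s$.
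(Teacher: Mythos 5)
Your proposal matches the paper's approach: the paper treats (i) and (iii) as straightforward consequences of the definition of $\Psi_L$ and proves (ii) via the inverse function theorem, referring for the details to the analogous argument for the defocusing NLS equation in \cite[Proposition 3.1]{Kappeler-Montalto}, which is exactly the route you take. The one detail worth making explicit is that covering $\mathcal K$ by finitely many inverse-function-theorem neighbourhoods only gives that $\Psi_L$ is a local diffeomorphism on $\mathcal V_S \times \mathcal V_\bot$; global injectivity on this set (needed for ``diffeomorphism onto its image'') follows from the injectivity of $\Psi^{kdv}$ on the zero section $\mathcal V_S \times \{0\}$ together with a compactness argument after shrinking the ball $\mathcal V_\bot$ once more.
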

 \begin{proof} (i) The stated formulas follow from the definition of $\Psi_L$ in a straightforward way. (ii) In view of Theorem \ref{Theorem Birkhoff coordinates},
 the claimed statements can be proved by using the same arguments as in the proof of the corresponding results for the defocusing NLS equation in \cite[Proposition 3.1]{Kappeler-Montalto}. 
 Item (iii) is proved in a straightforward way.
 \end{proof}

In a next step we want to analyze $d_\bot \Psi^{kdv}(z_S, 0)$ further. Consider the Hamiltonian vector fields $\partial_x \nabla_q z_{\pm n}$, $n \geq 1$, 
corresponding to the Hamiltonians given by the complex Birkhoff coordinates $z_{\pm n}$. 
Since $\Phi^{kdv}$ is canonical in the sense that $\{ z_n, z_{- n} \} = 2 \pi \ii n$ for any $n \ne 0$ 
whereas the brackets among all the other coordinates vanish, it follows that for any $q \in L^2_0$ and $n \geq 1$, 
   $$
   d_q \Phi^{kdv}[\partial_x \nabla_q z_{\pm n}] = X_{z_{\pm n}} 
   $$
   where $X_{z_{\pm n}}$ are the constant vector fields on $h^0_{0, \C}$ given by 
   $$
   X_{z_n} = - 2 \pi \ii n e^{(- n)}\,, \quad X_{z_{- n}} =  2 \pi \ii n e^{(n)}\,
   $$
   and $e^{(n)}, e^{(- n)}$ are the standard basis elements in the sequence space $h^0_{0, \C}$, 
   $$
   e^{(n)} = (\delta_{n, k})_{k \neq 0}, \qquad e^{(- n)} = (\delta_{- n, k})_{k \neq 0}\,.
   $$
   (Here we extended $d_q \Phi^{kdv}: L^2_{0} \to h^0_{0}$ as a $\C$-linear map $L^2_{0, \C} \to h^0_{0, \C}$). Hence for any $n \geq 1$, 
   \begin{equation}\label{formula vector field X z+- n}
   (d_q \Phi^{kdv})^{- 1}[e^{(n)}] = \frac{1}{ 2 \pi \ii n} \partial_x \nabla_q z_{- n}, \qquad  
   (d_q \Phi^{kdv})^{- 1}[e^{(- n)}] =  - \frac{1}{2 \pi \ii n} \partial_x \nabla_q z_{n}\,. 
   \end{equation}
   It then follows from \cite[Theorem 9.5, 9.7]{KP} and the formulas \eqref{armadillo 2}, \eqref{aramdillo 2} 
   for the functions $H_n$, $G_n$ together with their properties stated in Proposition \ref{Re/ImFloquet solutions} 
   that for any $q \in M_S$ and $n \in S_+^\bot$
   \begin{equation}\label{formula gradient zn}
   \partial_x \nabla_q z_n = \sqrt{n \pi} \partial_x \nabla_q (x_n - \ii y_n) = \sqrt{n \pi} \frac{\xi_n}{2} e^{- \ii \beta_n} \partial_x (H_n - \ii G_n)^2
   \end{equation}
   and similarly
   \begin{equation}\label{formula gradient z-n}
   \partial_x \nabla_q z_{- n} = \sqrt{n \pi} \partial_x \nabla_q (x_n + \ii y_n) = \sqrt{n \pi} \frac{\xi_n}{2} e^{\ii \beta_n} \partial_x (H_n + \ii G_n)^2
   \end{equation}
   where  $\beta_n = \sum_{\ell \in S_+} \beta_\ell^n$ (cf \cite[Theorem 8.5]{KP}) and $\xi_n = \sqrt{8 I_n / \gamma_n}$ (cf \cite[Theorem 7.3]{KP}). 
   Since $q \in M_S$ and $n \in S^\bot_+$ one has $\gamma_n(q) = 0$ and the factor $\xi_n(q)$  is obtained by a limiting argument. By a slight abuse of terminology,
we denote this limit also by $\sqrt{8 I_n(q) / \gamma_n^2(q)}$.
   The formulas \eqref{formula gradient zn} - \eqref{formula gradient z-n} allow to express $ \partial_x \nabla_q z_{\pm n}$ in terms of the Floquet solutions 
   $f_n(x) \equiv f_n(x, q)$ as follows (cf Appendix \ref{sezione floquet solution} for notations). 
   \begin{proposition}\label{lemma zn nabla q}
   For any $q \in M_S$ and $n \in S_+^\bot$, 
   $$
   \partial_x \nabla_q z_n = \sqrt{n \pi} \frac{\xi_n}{2} e^{- \ii \beta_n} \big(- \frac{2 \dot m_2(\tau_n)}{\ddot{\Delta}(\tau_n)} \big) \partial_x f_{- n}^2\,,
   \qquad
   \partial_x \nabla_q z_{- n} = \sqrt{n \pi} \frac{\xi_n}{2} e^{\ii \beta_n} \big( -\frac{2 \dot m_2(\tau_n)}{\ddot{\Delta}(\tau_n)} \big) \partial_x f_n^2\,.
   $$
   Hence by \eqref{formula vector field X z+- n}
   $$
   (d_q \Phi^{kdv})^{- 1}[e^{(n)}] = \sqrt{n \pi} \, \xi_n e^{\ii \beta_n} \frac{\dot m_2(\tau_n)}{\ddot{\Delta}(\tau_n)}  \frac{- 1}{2 \pi \ii n} \partial_x f_n^2\,,
   \qquad
   (d_q \Phi^{kdv})^{- 1}[e^{(- n)}] = \sqrt{n \pi} \, \xi_n e^{- \ii \beta_n} \frac{\dot m_2(\tau_n)}{\ddot{\Delta}(\tau_n)}  \frac{1}{2 \pi \ii n} \partial_x f_{- n}^2\,.
   $$
   \end{proposition}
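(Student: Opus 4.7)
The plan is to start from the identities \eqref{formula gradient zn} and \eqref{formula gradient z-n} already recorded in the excerpt, which express $\partial_x\nabla_q z_{\pm n}$ in terms of $\partial_x(H_n \mp iG_n)^2$. The problem thus reduces to the pointwise algebraic identity
$$
(H_n \mp i G_n)^2 \;=\; -\,\frac{2\dot m_2(\tau_n)}{\ddot\Delta(\tau_n)}\, f_{\pm n}^2 \qquad (q \in M_S,\ n \in S_+^\bot)\,.
$$
Once this is in hand, substitution into \eqref{formula gradient zn}--\eqref{formula gradient z-n} immediately yields the first pair of formulas in the proposition, and the formulas for $(d_q\Phi^{kdv})^{-1}[e^{(\pm n)}]$ follow at once from \eqref{formula vector field X z+- n}.

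To prove the displayed identity I would unpack the definitions of $H_n$, $G_n$ and of the Floquet solutions $f_{\pm n}$ collected in Proposition \ref{Re/ImFloquet solutions} and Appendix \ref{sezione floquet solution}. For a generic potential near $M_S$, the pair $(H_n, G_n)$ is, up to the rotation $e^{\pm i\beta_n}$ and a real rescaling involving $(\Delta^2-4)^{\pm 1/2}$ evaluated at the endpoints of the $n$th gap, a real frame built from $(f_n, f_{-n})$. When $q \in M_S$ and $n \in S_+^\bot$ the $n$th gap is closed, $\tau_n$ is a double periodic (respectively antiperiodic) eigenvalue of the Hill operator $L(q) = -\partial_x^2 + q$, and the standard rescaling factors degenerate. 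The correct limit is extracted from the Taylor expansions
$$
\Delta(\lambda)^2 - 4 = \ddot\Delta(\tau_n)(\lambda-\tau_n)^2 + O((\lambda-\tau_n)^3)\,, \qquad
m_2(\lambda) = \dot m_2(\tau_n)(\lambda-\tau_n) + O((\lambda-\tau_n)^2)\,,
$$
combined with the Wronskian normalization of the Floquet basis. These ingredients produce exactly the coefficient $-2\dot m_2(\tau_n)/\ddot\Delta(\tau_n)$, while squaring $H_n \mp iG_n$ absorbs any residual sign/phase ambiguity in the identification with $f_{\pm n}$.

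The main obstacle lies in this second step, namely the careful bookkeeping of phases and signs in the degenerate limit: one must verify that the labels $f_n$, $f_{-n}$ at $\tau_n$ align with the phases $e^{\mp i\beta_n}$ and with the normalization constants $\xi_n = \sqrt{8 I_n/\gamma_n^2}$ appearing in \eqref{formula gradient zn}--\eqref{formula gradient z-n}, so that everything collapses to a single real coefficient. Once the conventions of \cite{KP} for $\beta_n$, $\xi_n$ are matched with those of the appendix for the normalization of the Floquet solutions and for $m_2$, the identity becomes forced, and the remainder of the argument is purely mechanical.
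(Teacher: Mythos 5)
Your route is essentially the paper's: the identity you need, $H_n \pm \ii G_n = \bigl(-2\dot m_2(\tau_n)/\ddot\Delta(\tau_n)\bigr)^{1/2} f_{\pm n}$, is exactly \eqref{aramdillo 3}, which follows directly from the definitions \eqref{armadillo 2}--\eqref{aramdillo 2} of $H_n, G_n$ together with the limiting formula \eqref{costa rica 2} for $a_{\pm n}$ (the degenerate-gap Taylor expansions you invoke are precisely how Lemma \ref{lemma armadillo 1} computes $a_{\pm n}$); squaring and substituting into \eqref{formula gradient zn}--\eqref{formula gradient z-n} then gives the proposition, and the second pair of formulas follows from \eqref{formula vector field X z+- n}. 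One caution: your displayed identity pairs $(H_n \mp \ii G_n)^2$ with $f_{\pm n}^2$, which under the usual sign convention reverses \eqref{aramdillo 3} --- $(H_n-\ii G_n)^2$ must go with $f_{-n}^2$ --- and squaring does \emph{not} absorb this: it only removes the ambiguity in the branch of the square root, not the labeling of $f_n$ versus $f_{-n}$, which are distinct (complex-conjugate) functions. With that pairing corrected, the argument is complete and coincides with the paper's.
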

   \begin{remark}
   At $q = 0$, $\sqrt{n \pi} \, \xi_n = 1$, $\beta_n = 0$, $\frac{\dot m_2(\tau_n)}{\ddot{\Delta}(\tau_n)} = - 1$ and 
   $f_{\pm n}(x) = e^{\pm  \pi \ii n x}$ for any $n \geq 1$, confirming that $(d_0 \Phi^{kdv})^{- 1}[e^{(\pm n)}] = e^{\pm 2 \pi \ii n x}$ (cf \cite{KP}). 
   \end{remark}
   \noindent
   For any given $q \in M_S$ and $n \in S_+^\bot$, introduce the functions $W_{\pm n}(x) \equiv W_{\pm n}(x, q)$ given by 
   \begin{equation}\label{definition Wn}
   W_n(x) := \sqrt{n \pi } \,  \xi_n \frac{\dot m_2(\tau_n)}{\ddot{\Delta}(\tau_n)} e^{\ii \beta_n} \frac{- 1}{2 \pi \ii n} \partial_x f_n^2(x)\,, \qquad 
    W_{-n}(x) := \sqrt{n \pi } \,  \xi_n \frac{\dot m_2(\tau_n)}{\ddot{\Delta}(\tau_n)} e^{- \ii \beta_n}\frac{1}{2 \pi \ii n} \partial_x f_{-n}^2(x)\,.
   \end{equation}
   We record that  for any $n \in S_+^\bot$, $W_{- n} = \overline{W_n}$ since $f_{-n} = \overline{f_n}$. 
   %Furthermore we introduce
    %\begin{equation}\label{Fbot}
    %\mathcal F^{-1}_\bot : h^0_\bot \to L^2_0,\, z_\bot = (z_n)_{n \in S^\bot} \mapsto \sum_{n \in S^\bot} z_n e^{2\pi \ii nx}\,, \qquad 
   % \mathcal F_\bot :  L^2_0 \to h^0_\bot ,\,  \sum_{n \ne 0} p_n e^{2\pi \ii nx} \mapsto (p_n)_{n \in S^\bot}.
   % \end{equation}
   Combining Proposition \ref{prop Psi L Psi kdv} and Proposition \ref{lemma zn nabla q} 
   one obtains the following formula for the map $\Psi_L$:
   \begin{corollary}\label{definition Psi_1 z_S}
   For any $z = (z_S, z_\bot) \in {\cal V}$, one has
   $ d_\bot \Psi^{kdv}(z_S, 0)[z_\bot]  = \Psi_1(z_S)[z_\bot]$ where
   \begin{equation}\label{forma finale Psi L def}
     \Psi_1(z_S)[z_\bot](x):= \sum_{n \in S^\bot} z_n W_n(x, q)\,, \quad q = \Psi^{kdv}(z_S, 0)\,.
   \end{equation}
   \end{corollary}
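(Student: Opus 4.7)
The plan is to unfold the definition of $d_\bot \Psi^{kdv}(z_S, 0)$ in the basis $\{e^{(n)}\}_{n \in S^\bot}$ of $h^0_\bot$ and identify the image of each basis vector with the corresponding $W_n$ via Proposition \ref{lemma zn nabla q}. Set $q = \Psi^{kdv}(z_S, 0) \in M_S$. Since $\Psi^{kdv} = (\Phi^{kdv})^{-1}$ is a diffeomorphism, its differential at $(z_S, 0)$ is the inverse of $d_q \Phi^{kdv} : L^2_0 \to h^0_0$. Under the natural splitting $h^0_0 = h^0_S \times h^0_\bot$, the partial derivative $d_\bot \Psi^{kdv}(z_S, 0) : h^0_\bot \to L^2_0$ is exactly the restriction of $(d_q \Phi^{kdv})^{-1}$ to the second factor, i.e., for $n \in S^\bot$, the vector $e^{(n)} \in h^0_\bot$ (identified with the element of $h^0_0$ with vanishing $S$-components) satisfies
\begin{equation*}
d_\bot \Psi^{kdv}(z_S, 0)[e^{(n)}] = (d_q \Phi^{kdv})^{-1}[e^{(n)}].
\end{equation*}

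Given $z_\bot = (z_n)_{n \in S^\bot} \in h^0_\bot$, its Fourier-type expansion $z_\bot = \sum_{n \in S^\bot} z_n e^{(n)}$ converges in $h^0_\bot$, and by continuity of the bounded linear map $d_\bot \Psi^{kdv}(z_S, 0)$, linearity yields
\begin{equation*}
d_\bot \Psi^{kdv}(z_S, 0)[z_\bot] = \sum_{n \in S^\bot} z_n \, (d_q \Phi^{kdv})^{-1}[e^{(n)}].
\end{equation*}
Now Proposition \ref{lemma zn nabla q} provides explicit formulas for $(d_q \Phi^{kdv})^{-1}[e^{(\pm n)}]$ when $n \in S_+^\bot$, and comparing them with the definitions \eqref{definition Wn} of $W_{\pm n}(x, q)$ shows
\begin{equation*}
(d_q \Phi^{kdv})^{-1}[e^{(n)}] = W_n(\cdot, q), \qquad \forall n \in S^\bot.
\end{equation*}
Substituting this into the series gives $d_\bot \Psi^{kdv}(z_S, 0)[z_\bot](x) = \sum_{n \in S^\bot} z_n W_n(x, q)$, which is the claimed formula \eqref{forma finale Psi L def}.

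The only point that is not entirely mechanical is the identification of the partial derivative $d_\bot \Psi^{kdv}(z_S, 0)$ on the subspace $h^0_\bot$ with the full inverse differential applied to elements supported in $S^\bot$. This is really just the chain rule combined with the block-structure of $d_q \Phi^{kdv}$, but one should briefly justify convergence of the series in $L^2_0$: since the $W_n$ satisfy uniform bounds from their explicit expressions in terms of the Floquet functions $f_{\pm n}$ (guaranteed by the compactness of $z_S$ on the slice $M_S$), continuity of the linear operator $d_\bot \Psi^{kdv}(z_S, 0)$ gives exactly the needed convergence. No further obstacle is expected, as all the analytic work has been done in Proposition \ref{lemma zn nabla q}.
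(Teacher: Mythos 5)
Your proposal is correct and follows essentially the same route as the paper, which derives the corollary simply by combining Proposition \ref{prop Psi L Psi kdv} (identifying $d_\bot\Psi^{kdv}(z_S,0)$ with the restriction of $(d_q\Phi^{kdv})^{-1}$) with Proposition \ref{lemma zn nabla q} and the definition \eqref{definition Wn}, exactly as you do. The convergence remark at the end is the right justification (continuity of the bounded operator $d_\bot\Psi^{kdv}(z_S,0):h^0_\bot\to L^2_0$ applied to the partial sums of $\sum_{n\in S^\bot} z_n e^{(n)}$), and nothing further is needed.
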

   \noindent
   Note that $\Psi_1(z_S)[z_\bot] = \Psi_L(z) - q$ is linear in $z_\bot$. 
   Since $q \in M_S$ is a finite gap potential, it is $C^\infty$-smooth and so is $W_n(x, q)$. Next we want to show that $\Psi_L(z)$ admits  an expansion of the type stated in Theorem \ref{modified Birkhoff map}. 
   Recall from the Addendum to Theorem \ref{Theorem Birkhoff coordinates} at the end of Appendix \ref{Birkhoff map} that for any $q \in M_S$, $V^*_{q, S}$ denotes a neighborhood of $q$,
   consisting of complex valued $S-$gap potentials in the weighted Sobolev space $H^{w_*}_{0, \C}$ so that the restriction of $\Phi^{kdv}$ to $V^*_{q, S}$
   is a real analytic diffeomorphism onto its image $\Phi^{kdv}(V^*_{q, S}) \subset h^0_{S, \C}$.
 Combining Theorem \ref{teorema espansione fn appendice} and Lemma \ref{lemma appendice espansione xi n} - Lemma \ref{asymptotics beta_n}
 of Appendix \ref{appendix asymptotics} and using that
 $$
 \sum_{n \in S^\bot} z_n \frac{1}{(2 \pi \ii n)^k} e^{2\pi \ii nx} = \partial_x^{-k} ( \sum_{n \in S^\bot} z_n  e^{2\pi \ii nx} ) = \partial_x^{-k} \mathcal F_\bot^{-1}[z_\bot]
 $$
 one obtains the following
\begin{theorem}\label{lemma asintotica Floquet solutions}
(i) Let  $q \in M_S$ and $N \in \N$. Then for any $p \in V^*_{q, S}$,
$W_n(x) \equiv W_n(x, p)$, $n \in S^\bot$, has an expansion as $|n| \to \infty$ of the form 
\begin{equation}\label{final asymptotic Wn}
W_n(x, p) = e^{2 \p \ii n x} \Big( 1  + \sum_{k = 1}^N \frac{W_{k}^{ae}(x, p)}{(2 \pi \ii n)^k} + \frac{{\cal R}_{N}^{W_n}(x, p)}{(2 \pi \ii n)^{N + 1}} \Big)
\end{equation}
where for any $s \in \Z_{\geq 0}$, 
$W_k^{ae} : V^*_{q, S} \to H^s_{ \C}$, $p \mapsto W_{k}^{ae}(\cdot, p)$, $k \ge 1$, are real analytic and 
$ {\cal R}_{N}^{W_n} : V^*_{q, S}  \to H^s_\C, \,p  \mapsto {\cal R}_{N}^{W_n}(\cdot, p)$, $n \in S^\bot$,
are analytic and  satisfy for any $j \geq 0$, 
$$
{\rm sup}_{\begin{subarray}{c}
0 \leq x \leq 1 \\
n \in S^\bot
\end{subarray}} |\partial_x^j {\cal R}_N^{W_n}(x, p)| \leq C_{N, j}\,.
$$
The constants $C_{N, j}$ can be chosen locally uniformly for $p \in V^*_{q, S}$. 
By a slight abuse of terminology, in the sequel, we will view $W_k^{ae}(\cdot, q)$ and ${\cal R}_{N}^{W_n}(\cdot, q)$  as functions of $z_S$, 
$$
W_k^{ae}(\cdot, z_S) \equiv W_k^{ae}(\cdot, \, \Psi^{kdv}(z_S, 0))\,, \qquad  {\cal R}_{N}^{W_n}(\cdot, z_S) \equiv {\cal R}_{N}^{W_n}(\cdot, \, \Psi^{kdv}(z_S, 0))\,.
$$
\noindent
(ii) For any $z_S \in h^0_S$, 
the linear operator $\Psi_1(z_S)$, given by
$$
\Psi_1(z_S) : h^0_\bot \to L^2_0, \, \widehat z_\bot \mapsto \Psi_1(z_S)[\widehat z_\bot]= \sum_{n \in S^\bot} \widehat z_n W_n( \cdot, q)\,, \qquad q = \Psi^{kdv}(z_S, 0)\,,
$$
has the property that for any $s \in \Z_{\ge 0}$, its restriction to $h^s_\bot$ is a bounded linear operator $h^s_\bot \to H^s_0$.
Furthermore, up to a remainder,  the operator $\Psi_1(z_S): h^0_\bot \to L^2_0$ is a pseudodifferential operator of order $0$. 
More precisely, $\Psi_1(z_S)$ has an expansion to any order $N \ge 1$ of the form
$$
\Psi_1(z_S)=  \big( \text{Id} +  \sum_{k = 1}^N a_{k}(z_S; \Psi_1) \partial_x^{- k} \big) \circ {\cal F}_\bot^{-1} + {\cal R}_{N}(z_S; \Psi_1)\,, \qquad
$$
where
$$
a_{k}(z_S; \Psi_1) := W_{k}^{ae}(\cdot, z_S), \,\, k \ge 1, \qquad
{\cal R}_{N}(z_S; \Psi_1)[\widehat z_\bot] (x) := \sum_{n \in S^{\bot}} \widehat z_n \frac{{\cal R}_{N}^{W_n}(x, z_S)}{(2 \pi \ii n)^{N + 1}} e^{2 \pi \ii nx}\,.
$$
For any $s \ge 0$, the restriction of ${\cal R}_{N}(z_S; \Psi_1)$ to $h^s_\bot$ defines a bounded linear operator $h^s_\bot \to H^{s+ N+1}$ and the map
%${\cal R}_{N}^{\Psi_1(q)}$ is a bounded linear operator $h^s_\bot \to H^{s+N+1}$
$$
h^0_S \to {\cal B}(h^s_\bot, H^{s + N+1}), \, z_S \mapsto  {\cal R}_{N}(z_S; \Psi_1)\,,
$$
is real analytic. Corresponding properties hold for the map $\Psi_L: {\cal V} \to L^2_0,$ defined
in Proposition~\ref{prop Psi L Psi kdv}, 
\begin{equation}\label{definizione Psi L Phi 1}
\Psi_L(z_S, z_\bot) =  q + \Psi_1(z_S)[z_\bot] = q + {\cal F}_\bot^{-1}[z_\bot] + \sum_{k = 1}^N a_{k}(z_S; \Psi_L) \partial_x^{- k} {\cal F}_\bot^{-1}[z_\bot]+ 
{\cal R}_{N}(z_S; \Psi_L)[z_\bot]
\end{equation}
where
$$
a_{k}(z_S; \Psi_L):= a_{k}(z_S; \Psi_1), \,\,\, k \ge 1 \qquad  {\cal R}_{N}(z_S; \Psi_L):= {\cal R}_{N}(z_S; \Psi_1)
$$
\end{theorem}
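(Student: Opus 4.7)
The plan is to prove (i) by unwinding the definition of $W_n$ in \eqref{definition Wn} and substituting the asymptotic expansions of its four ingredients $f_n$, $\xi_n$, $\beta_n$, and $\dot m_2(\tau_n)/\ddot\Delta(\tau_n)$ provided by Theorem \ref{teorema espansione fn appendice} and Lemmas in Appendix \ref{appendix asymptotics}. First I would compute $\partial_x f_n^2 = 2 f_n \partial_x f_n$ by plugging in the expansion
$f_n(x,p) = e^{\pi \ii n x}\bigl( 1 + \sum_{k=1}^{M} c_k(x,p)/(2\pi \ii n)^k + O(1/n^{M+1})\bigr)$
of $f_n$ and then squaring; by the Leibniz rule, $\partial_x f_n^2$ then has principal part $(2\pi \ii n) e^{2\pi \ii n x}$ times an asymptotic series in $1/n$ whose coefficients are polynomials in the $c_k$ and their derivatives, hence real analytic in $p \in V^*_{q,S}$. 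Dividing by the $(2\pi \ii n)^{-1}$ prefactor in \eqref{definition Wn} cancels the leading $(2\pi \ii n)$ and leaves a series of the stated form; multiplying by the scalar expansions of $\sqrt{n\pi}\,\xi_n$, $e^{\pm \ii \beta_n}$ and $\dot m_2(\tau_n)/\ddot\Delta(\tau_n)$ produces another analytic series in $1/n$. Taking a Cauchy product of these power series in $1/n$ up to order $N$, and using that (by the remark following the theorem) the zeroth order term of the product equals $1$, yields the expansion \eqref{final asymptotic Wn} with $W_k^{ae}(\cdot, p)$ real analytic in $p$ as required.

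For the remainder, the key point is that each of the cited expansions has a remainder that, together with all its $x$-derivatives, is uniformly bounded in $n \in S^\bot$ and locally uniformly in $p \in V^*_{q,S}$. I would establish the $C^j$ bound on $\mathcal R_N^{W_n}(x,p)$ by writing
\[
\mathcal R_N^{W_n} = (2\pi \ii n)^{N+1}\bigl( W_n(x,p) e^{-2\pi \ii n x} - 1 - \sum_{k=1}^N W_k^{ae}(x,p)/(2\pi \ii n)^k\bigr),
\]
and then tracking how each individual remainder propagates through the multiplications and the Leibniz rule. Since $x$-derivatives of $f_n(x,p) e^{-\pi \ii n x}$ and of the $c_k$ are uniformly controlled by the cited expansion theorem, and the scalar factors depend only on $p$ (not on $x$), all derivatives $\partial_x^j \mathcal R_N^{W_n}$ inherit uniform bounds in $n$ and in $p$ on any relatively compact subset of $V^*_{q,S}$. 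Analyticity of $\mathcal R_N^{W_n}$ in $p$ follows since $W_n$ itself and the $W_k^{ae}$ are all real analytic in $p$.

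For part (ii), once (i) is in hand the work is essentially algebraic. Substituting \eqref{final asymptotic Wn} into $\Psi_1(z_S)[\widehat z_\bot] = \sum_{n\in S^\bot} \widehat z_n W_n(\cdot,q)$ and interchanging the (finite) sum over $k$ with the sum over $n$, one uses the identity displayed before the statement,
\[
\sum_{n\in S^\bot} \widehat z_n \, \frac{1}{(2\pi \ii n)^k} e^{2\pi \ii n x} = \partial_x^{-k} \mathcal F_\bot^{-1}[\widehat z_\bot],
\]
to recognize the main terms as $W_k^{ae}(\cdot, z_S) \, \partial_x^{-k} \mathcal F_\bot^{-1}[\widehat z_\bot]$, which gives the claimed pseudodifferential form with $a_k(z_S;\Psi_1) = W_k^{ae}(\cdot,z_S)$. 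The remainder is
\[
\mathcal R_N(z_S;\Psi_1)[\widehat z_\bot](x) = \sum_{n\in S^\bot} \widehat z_n \, \frac{\mathcal R_N^{W_n}(x,z_S)}{(2\pi \ii n)^{N+1}} e^{2\pi \ii n x},
\]
and the bound $\mathcal R_N(z_S;\Psi_1) : h^s_\bot \to H^{s+N+1}$ follows by $L^2$-based Sobolev estimates: each $x$-derivative up to order $s+N+1$ hits either $\mathcal R_N^{W_n}$ (bounded uniformly by (i)) or the exponential (producing a factor $(2\pi \ii n)$ which is absorbed into the $(2\pi \ii n)^{N+1}$ denominator as long as $j \le s+N+1$), so by Cauchy–Schwarz in $n$ we control the $H^{s+N+1}$-norm by $\|\widehat z_\bot\|_s$. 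Real analyticity in $z_S$ is inherited from the real analyticity of $p \mapsto \mathcal R_N^{W_n}(\cdot,p)$ and $p = \Psi^{kdv}(z_S,0)$. Finally the formula \eqref{definizione Psi L Phi 1} for $\Psi_L$ follows from Corollary \ref{definition Psi_1 z_S}, which identifies $\Psi_L(z_S,z_\bot) - q = \Psi_1(z_S)[z_\bot]$.

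The main obstacle is purely bookkeeping: one must combine four separate asymptotic expansions while keeping track of uniform derivative bounds on each remainder and verifying that the coefficients assemble into real analytic functions of $p$. Once one agrees that all scalar factors are analytic in $p$ by the cited lemmas and that the $c_k$ in the expansion of $f_n$ are analytic in $p$ with derivatives bounded uniformly in $n$, the rest is a systematic application of the Leibniz rule and Cauchy product, followed by the Fourier-side reinterpretation that turns the pointwise expansion into a pseudodifferential operator expansion.
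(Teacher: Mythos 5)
Your proposal takes exactly the route the paper does: Theorem \ref{lemma asintotica Floquet solutions} is obtained in the paper (essentially without further proof) by multiplying out the expansions of $f_n$, $\sqrt{n\pi}\,\xi_n$, $e^{\pm\ii\beta_n}$ and $\dot m_2(\tau_n)/\ddot\Delta(\tau_n)$ from Theorem \ref{teorema espansione fn appendice} and Lemmas \ref{lemma appendice espansione xi n}--\ref{asymptotics beta_n}, cancelling the leading factor $2\pi\ii n$ produced by $\partial_x f_n^2$, and then reinterpreting the result on the Fourier side via $\sum_{n}\widehat z_n(2\pi\ii n)^{-k}e^{2\pi\ii nx}=\partial_x^{-k}\mathcal F_\bot^{-1}[\widehat z_\bot]$ --- precisely your Cauchy-product-plus-Leibniz argument. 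The only cosmetic point is that the coefficients $W_k^{ae}$ are \emph{real} analytic (which one reads off from $W_{-n}=\overline{W_n}$ for real $p$, as in the proof of Theorem \ref{teorema espansione fn appendice}) while the remainders are merely analytic; otherwise your write-up supplies more detail than the paper itself.
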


\smallskip

\begin{remark}\label{notation coefficients expansion}
(i) Note that the pseudodifferential operator $ \big( \text{Id} +  \sum_{k = 1}^N a_{k}(z_S; \Psi_1) \partial_x^{- k} \big) \circ {\cal F}_\bot^{-1} $
defines a bounded linear operator $h^s_\bot \to H^s$ for any $s \in \R$ whereas the  remainder ${\cal R}_{N}(z_S; \Psi_1)$
defines a bounded linear operator  $h^s_\bot \to H^{s +N + 1}$ for any $s \ge -N - 1$. \\
(ii) Whenever possible, we will use similar notation for the coefficients of the expansion of the various quantities such as $\Psi_1(z_S)$.
If the coefficients are operators, we use the upper case letter $A$
and write $\mathcal A_k$ for the kth coefficient, whereas when they are functions (or operators, defined as the multiplication by a function),
we use the lower case letter $a$ and write $a_k$ for the k'th coefficient. The quantity, which is expanded, is indicated
as an argument of $\mathcal A_k$ and $a_k$.\\
(iii) The fact that up to a remainder term, $ \Psi_L(z_S, \cdot)$ is given by the pseudodifferential operator of order 0,  
$(Id + \sum_{k = 1}^N a_{k}(z_S; \Psi_1) \partial_x^{- k}) \circ \mathcal F^{-1}_\bot$, 
 acting on the scale of Hilbert spaces $h^s_\bot$, $s \in \Z_{\ge 0}$, is at the heart of this paper.
%Here $N \in \N$ is arbitrarily large and the remainder term is an operator which is smoothing of order $N+1$.
The result shows that the differential of the Birkhoff map $z \mapsto \Psi^{kdv}(z)$ at a finite gap potential, has distinctive features.
 \end{remark}
 
\medskip

A straightforward application of Theorem~\ref{lemma asintotica Floquet solutions}(ii)
 yields an expansion of the transpose operator $\Psi_1(z_S)^t$ of $\Psi_1(z_S)$.  
%In order to state the result in a precise way, we first need to introduce some further notations. 
%For any $k \in \N$, we define operator $\partial_x^{- k}$ by 
%$$ \partial_x^{- k}[1] = 0\,, \quad \partial_x^{- k}[e^{2 \pi \ii n x}] := \frac{e^{2 \pi \ii n x}}{(2 \pi \ii n)^k},, \quad n \neq 0 $$.
Since  ${\cal F}_\bot ^{- 1}$ is the restriction of the inverse of the Fourier transform to $h^0_\bot$, 
%$${\cal F}_\bot ^{- 1}: h^0_\bot \to L^2_0, \,\,  z = (z_n)_{n \in S^\bot} \mapsto {\cal F}^{- 1}_\bot[z] := \sum_{n \in S^\bot} z_n e^{2 \pi \ii n x}\,. $$
 the transpose  ${\cal F}_\bot^{- t} := ({{\cal F}_\bot^{- 1}})^t$ of ${\cal F}_\bot^{- 1}$ with respect to the standard inner products in $L^2_0$ and $h^0_\bot$ 
is given by the Fourier transform, i.e., for any $\widehat q \in L^2_0,$
%restricted to the subspace ${\cal F}_\bot^{- 1} (h^0_\bot)$ of $L^2_0(\T)$. 
$$
\langle {\cal F}^{- 1}_\bot[z_\bot], \widehat q \rangle = 
\int_0^1 \sum_{n \in S^\bot} z_n e^{2 \pi \ii n x} \widehat q (x) d x = \sum_{n \in S^\bot} z_n \int_0^1 \widehat q(x) e^{2 \pi \ii n x} d x = \sum_{n \in S^\bot} z_n \widehat q_{-n}
= \langle z_\bot, {\cal F}_\bot \widehat q \rangle\,.
$$
\begin{corollary}\label{lemma asintotiche Phi 1 q}
For any $z_S \in h^0_S$, $q = \Psi^{kdv}(z_S, 0)$, and $N \in \N$,  
$\Psi_1(z_S)^t : L^2 \to h^0_\bot, \widehat q \mapsto (\langle W_{-n}(\cdot, q), \, \widehat q \rangle )_{n \in S^\bot}$ 
 has an expansion of the form
\begin{align}
\Psi_1(z_S)^t & = {\cal F}_\bot  \circ ( \text{Id} + \sum_{k = 1}^N  a_k( z_S; \Psi_1^t)   \partial_x^{- k} ) + 
{\cal R}_N( z_S;  \Psi_1^t) \label{espansione finale Phi 1 t}
\end{align}
where for any $s \geq 0$, the coefficients $h^0_S \to H^s$, $z_S \mapsto  a_k( z_S; \Psi_1^t)$, $k \ge 1$, and the remainder
$h^0_S \to {\cal B}(H^s, h^{s + N + 1}_\bot)$, $z_S \mapsto {\cal R}_N( z_S;  \Psi_1^t),$ are real analytic. 
Furthermore, $a_k( z_S; \Psi_1^t) = - a_k( z_S; \Psi_1)$.
Corresponding properties hold for the map 
$$
\mathcal V \to \mathcal B(L^2, h^0_0), z \mapsto d\Psi_L(z)^t. 
$$
For any $z \in \mathcal V,$ $N \in \N$, $d\Psi_L(z)^t$ has an expansion of the form
\begin{align}
d\Psi_L(z)^t & = \big( 0, {\cal F}_\bot  \circ ( \text{Id} + \sum_{k = 1}^N  a_k( z; d\Psi_L^t)   \partial_x^{- k} ) \big) + 
{\cal R}_N( z;  d\Psi_L^t) \label{expansion Phi L t}
\end{align}
where $a_k( z; d\Psi_L^t) = a_k( z_S; \Psi_1^t)$ and where for any integer $s \ge 0,$  $\mathcal V \cap h^s_0 \to B(L^2, h^{s+N+1}_0), z \mapsto {\cal R}_N( z;  d\Psi_L^t)$
is real analytic.
\end{corollary}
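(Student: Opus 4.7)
The plan is to obtain the corollary in two stages: first identify $\Psi_1(z_S)^t$ explicitly by dualising the Fourier series representation of $\Psi_1(z_S)$, then recognise the resulting sum as a pseudodifferential operator by comparing the Neumann expansion of $1/(2\pi\ii m)^k$ to the given asymptotics of $W_n$, and finally extend the result from $\Psi_1(z_S)^t$ to $d\Psi_L(z)^t$ via Proposition~\ref{prop Psi L Psi kdv}(iii).

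First I would write down $\Psi_1(z_S)^t$ explicitly. From $\Psi_1(z_S)[\widehat z_\bot]=\sum_{n\in S^\bot}\widehat z_n W_n(\cdot,q)$ together with the pairings $\langle f,g\rangle_{L^2}=\int fg$ and $\langle z,w\rangle_{h^0_\bot}=\sum z_n w_{-n}$, duality gives $(\Psi_1(z_S)^t\widehat q)_n = \int_0^1 W_{-n}(x,q)\widehat q(x)\,dx$ for $n\in S^\bot$, which is the formula displayed in the statement. Inserting the expansion of $W_{-n}$ from Theorem~\ref{lemma asintotica Floquet solutions}(i), applied to index $-n$, the leading factor $e^{-2\pi\ii n x}$ contributes $\widehat q_n=(\mathcal F_\bot \widehat q)_n$, each middle term contributes $(-2\pi\ii n)^{-k}\,(W_k^{ae}\widehat q)\widehat{\phantom{a}}_n$, and the tail contributes a factor $(-2\pi\ii n)^{-(N+1)}$ times a bounded-derivative function, which, by Parseval together with the uniform bound $\sup_{n,x}|\partial_x^j \mathcal R_N^{W_n}|\le C_{N,j}$, is $(N{+}1)$-smoothing and will play the role of $\mathcal R_N(z_S;\Psi_1^t)$.

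The core step is to rewrite each middle term as $(\mathcal F_\bot\circ a_k\,\partial_x^{-k})[\widehat q]$ for suitable functions $a_k(z_S;\Psi_1^t)$. A direct Fourier computation gives
\[
(\mathcal F_\bot\circ a\,\partial_x^{-k})[\widehat q]_n=\sum_{\ell+m=n,\,m\ne 0}\widehat a_\ell\,\frac{\widehat q_m}{(2\pi\ii m)^k}.
\]
Writing $1/(2\pi\ii m)^k=(2\pi\ii n)^{-k}(1-\ell/n)^{-k}$ and Neumann-expanding produces, after reindexing $j\leftrightarrow$ power of $1/n$, the standard composition formula of pseudodifferential symbols: the $(2\pi\ii n)^{-k}$ block of $\Psi_1^t$ is a finite linear combination of $(\mathcal F_\bot\circ a_j\,\partial_x^{-j})[\widehat q]_n$ for $j\le k$, with coefficients that are derivatives of the $a_j$'s. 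This is a lower-triangular system in $k$, so one can solve recursively for $a_k(z_S;\Psi_1^t)$ as an explicit polynomial in $W_j^{ae}$, $j\le k$, and their $x$-derivatives; at the top of order $k=1$ the match reduces to $a_1(z_S;\Psi_1^t)=-W_1^{ae}=-a_1(z_S;\Psi_1)$. The truncation error at order $N$ is a tail of Neumann terms of size $|n|^{-(N+1)}$, which is folded into $\mathcal R_N(z_S;\Psi_1^t)$. Real analyticity of $z_S\mapsto a_k(z_S;\Psi_1^t)$ and of $z_S\mapsto\mathcal R_N(z_S;\Psi_1^t)\in\mathcal B(H^s,h^{s+N+1}_\bot)$ is inherited from the analyticity of $W_k^{ae}$ and $\mathcal R_N^{W_n}$ established in Theorem~\ref{lemma asintotica Floquet solutions}(i).

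For the corresponding statement about $d\Psi_L(z)^t$, the key observation is that $\Psi_L(z_S,z_\bot)=\Psi^{kdv}(z_S,0)+\Psi_1(z_S)[z_\bot]$ is affine in $z_\bot$, so $d_\bot\Psi_L(z)=\Psi_1(z_S)$ identically on $\mathcal V$ and its transpose is precisely the map already expanded above; this produces the $h^0_\bot$-component of the principal part. The $h^0_S$-component of $d\Psi_L(z)^t$ comes from transposing $d_S\Psi_L(z)\colon h^0_S\to L^2$, which by Proposition~\ref{prop Psi L Psi kdv}(iii) is the sum of $d_S\Psi^{kdv}(z_S,0)$ and $d_S(\Psi_1(\,\cdot\,)[z_\bot])$; both take values in the finite-dimensional tangent space to $M_S$, so their transposes $L^2\to h^0_S$ are finite-rank, hence smoothing to any order and absorbable into $\mathcal R_N(z;d\Psi_L^t)$. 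This explains the $0$ in the first component of \eqref{expansion Phi L t} and gives $a_k(z;d\Psi_L^t)=a_k(z_S;\Psi_1^t)$. The main obstacle throughout is the bookkeeping in the Neumann expansion of step two: one must check that at each order $k$ only $j<k$ coefficients appear in the correction, so that the recursion is genuinely solvable, and that the tail is uniformly controlled in $n\in S^\bot$ so that the remainder lands in the claimed Sobolev space with the stated analytic dependence on $z_S$.
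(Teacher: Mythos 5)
Your proposal is correct and follows essentially the same route as the paper: dualise to get $(\Psi_1(z_S)^t\widehat q)_n=\langle W_{-n}(\cdot,q),\widehat q\rangle$, insert the expansion of $W_{-n}$ (equivalently, transpose the operator expansion of $\Psi_1(z_S)$ using ${\cal F}_\bot^{-t}={\cal F}_\bot$ and $(\partial_x^{-k})^t=(-1)^k\partial_x^{-k}$), and then reorder $\partial_x^{-k}$ past the multiplication operators — which is exactly the content of Lemma \ref{lemma composizione pseudo} that you re-derive by hand via the Neumann expansion of $(1-\ell/n)^{-k}$, with the triangular structure and the tail control for $|\ell|\gtrsim|n|$ being precisely what that lemma packages (and which is harmless here since the $W_k^{ae}$ are smooth). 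Your sign bookkeeping giving $a_1(z_S;\Psi_1^t)=-a_1(z_S;\Psi_1)$ and your observation that the $h^0_S$-component of $d\Psi_L(z)^t$ is finite-rank, hence smoothing to any order and absorbable into ${\cal R}_N(z;d\Psi_L^t)$, both match the paper's (more tersely stated) argument via formula \eqref{formula d Psi L}.
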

\begin{remark}\label{extension of remainder} 
 Again we record that the pseudodifferential operator $ {\cal F}_\bot  \circ \big( \text{Id} + \sum_{k = 1}^N  a_k( z_S; \Psi_1^t)   \partial_x^{- k} \big)$
defines a bounded linear operator $H^s \to h^s_\bot$ for any $s \in \R$ whereas the  remainder ${\cal R}_N( z_S;  \Psi_1^t)$
defines a bounded linear operator  $H^s \to h_\bot^{s +N + 1}$ for any $s \ge -N - 1$. 
\end{remark}
\begin{proof}
By Theorem~\ref{lemma asintotica Floquet solutions}(ii), 
%$\Psi_1(z_S)$ has an expansion
$\Psi_1(z_S) =  {\cal F}_\bot^{-1}+ \sum_{k = 1}^N a_{k}(z_S; \Psi_1) \partial_x^{- k}  {\cal F}_\bot^{-1}+ {\cal R}_{N}(z_S; \Psi_1)$ where for any $z_\bot \in h^0_\bot,$
$$
{\cal R}_{N}(z_S; \Psi_1)[z_\bot] (x) := \sum_{n \in S^{\bot}} z_n \frac{{\cal R}_{N}^{W_n}(x, z_S)}{(2 \pi \ii n)^{N + 1}} e^{2 \pi \ii nx}\,.
$$
Note that the functions $a_{k}(z_S; \Psi_1)(x)$, $k \ge 1$, are real valued.
Taking into account that ${\cal F}_\bot^{- t} = {\cal F}_\bot$ and $(\partial_x^{-k})^t = (-1)^k \partial_x^{-k}$, the expansion of the transpose $\Psi_1(z_S)^t$ of $\Psi_1(z_S)$ then reads
$$
\Psi_1(z_S)^t = {\cal F}_\bot + \mathcal F_\bot \circ \sum_{k = 1}^N  (- 1)^k \partial_x^{- k} \circ a_{k}(z_S; \Psi_1)  + ({\cal R}_{N}(z_S; \Psi_1))^t\,.
$$
By Theorem~\ref{lemma asintotica Floquet solutions}(i), for any $ \widehat q \in H^s,$ 
$$
({\cal R}_{N}(z_S; \Psi_1))^t [\widehat q] = \big( \frac{1}{(2\pi \ii n)^{N+1}} \int_0^1 \widehat q(x) {\cal R}_{N}^{W_n}(x, z_S) e^{2 \pi \ii nx} \, dx \big)_{n \in S^\bot} \in h^{s + N +1}_\bot\,,
$$
$({\cal R}_{N}(z_S; \Psi_1))^t: H^s \to h^{s + N + 1}_\bot$ is bounded, and 
the map $h^0_S \to {\cal B}(H^s, h^{s + N + 1}_\bot)$, $z_S \mapsto {\cal R}_N( z_S;  \Psi_1^t),$ is real analytic.
Since by Lemma \ref{lemma composizione pseudo} and the notation introduced there,
%of Appendix~\ref{appendice B}, 
$$
 \partial_x^{- k} \circ a_{k}(z_S; \Psi_1)   = 
  a_{k}(z_S; \Psi_1) \,  \partial_x^{ -k} + \sum_{j = 1}^{N - k} C_j(k) \, (\partial_x^j a_{k}(z_S; \Psi_1))  \,  \partial_x^{ -k - j} 
+  {\cal R}_{N, k, 0}^{\psi do}( a_{k}(z_S; \Psi_1) )
$$
one sees that $\Psi_1(z_S)^t$ admits an expansion of the form,
\begin{align}
\Psi_1(z_S)^t & = {\cal F}_\bot +  {\cal F}_\bot \circ  \sum_{k = 1}^N  a_k( z_S; \Psi_1^t)   \partial_x^{- k} + {\cal R}_{N}(z_S; \Psi_1^t)\,,  \nonumber
\end{align}
where  $a_k( z_S; \Psi_1^t)$, $ k \ge 1$, and ${\cal R}_{N}(z_S; \Psi_1^t)$ satisfy the claimed properties.  Since
$$
d \Psi_L(z) [\widehat z] = \big( d_S\Psi^{kdv}(z_S, 0) +  d_S ( \Psi_1(z_S)[z_\bot]) \big) [\widehat z_S] + \Psi_1(z_S)[\widehat z_\bot],
$$
the claimed properties of $d \Psi_L(z)^t$ follow from the ones of $\Psi_1(z_S)^t$.
\end{proof}

\medskip

Using results of Appendix \ref{AppendixReversability} and Appendix \ref{appendix asymptotics}, one obtains the following properties of the functions $W_n$, $n \in S^\bot$, and the map $\Psi_L$
with regard to the reversible structure, introduced in Section \ref{introduzione paper}. 

\smallskip

\noindent
{\bf Addendum to Theorem \ref{lemma asintotica Floquet solutions}} {\em (i) For any $z_S \in h_S^0$, $q = \Psi^{kdv} (z_S, 0)$ satisfies $S_{rev}q = \Psi^{kdv} (\mathcal S_{rev}(z_S, 0))$
and for any  $n \in S^\bot$, $x \in \R$,
$W_n(x, S_{rev}q) = W_{-n}( - x, q)$ as well as ( $k \ge 1$, $N \ge 1$)
\begin{equation}\label{symmetries of expansion}
W^{ae}_k(x, \mathcal S_{rev}z_S) = (-1)^k W^{ae}_{k}( - x, z_S)\,,  \qquad {\cal R}_{N}^{W_n}(x, \mathcal S_{rev}z_S) = (-1)^{N+1} {\cal R}_{N}^{W_{-n}}(- x, z_S)\,.
\end{equation}
(ii)
For any $z = (z_S, z_\bot) \in h^0_S \times h^0_\bot$ and $x \in \R$, 
$$
\big(\Psi_1(\mathcal S_{rev}z_S)[\, \mathcal S_{rev}z_\bot] \big)(x) =  \big( \Psi_1(z_S)[z_\bot] \big) (-x).
$$
As a consequence,  
%$\Psi_L \circ \mathcal S_{rev} = S_{rev} \circ \Psi_L $ or, in more detail, 
for any $z \in \mathcal V$, $x \in \R$
\begin{equation}\label{symmetries of PsiL}
(\Psi_L( \mathcal S_{rev}z)) (x) = ( \Psi_L(z) ) (-x)\,, \qquad 
({\cal R}_{N}( \mathcal S_{rev}z_S; \Psi_1)[\mathcal S_{rev}z_\bot]) (x) = \big( {\cal R}_{N}(z_S; \Psi_1)[z_\bot] \big)(-x)\,.
\end{equation}
(iii) For any $z_S \in h^0_S$ and $\widehat q \in L^2_0$, one has
$ \Psi_1(\mathcal S_{rev} z_S)^t [S_{rev} \widehat q]  = \mathcal S_{rev}( \Psi_1(z_S)^t [ \widehat q])$. As a consequence,
 for any $k \ge 1$ and $N  \ge  1$,
$$
 a_k( \mathcal S_{rev}z_S; \Psi_1^t) (x)  = (-1)^k  a_k (z_S; \Psi_1^t) (-x) \,,  \qquad 
 {\cal R}_N(\mathcal S_{rev}z_S;  \Psi_1^t)[S_{rev} \widehat q]  = \mathcal S_{rev} \big( {\cal R}_N(z_S; \Psi_1^t) [\widehat q] \big)\,.
$$
}
\noindent
{\bf Proof of Addendum to Theorem \ref{lemma asintotica Floquet solutions}}
(i) By the Addendum to Theorem \ref{teorema espansione fn appendice}, we know that for any $q \in M_S$ and $n \in S^\bot_+$, $f_{\pm n}(x, S_{rev}q) = f_{\mp n}( - x, q)$. 
Furthermore, one has $\xi_n(S_{rev}q) = \xi_n(q)$, $\Delta(\lambda, S_{rev}q) = \Delta(\lambda, q ),$
$m_2(\lambda, S_{rev}q ) = m_2(\lambda, q )$ (Lemma \ref{lemma 2 reversibilita}) and $\b_n(S_{rev}q) = - \b_n(q)$ (Corollary \ref{corollario 3 reversibilita}).
In view of the definition \eqref{definition Wn} of $W_{\pm n}$ it then follows that $W_{\pm n}(x, S_{rev}q) = W_{\mp n}( - x, q)$
and in turn, comparing the expansion \eqref{final asymptotic Wn} of $W_{\pm n}(x, S_{rev}q)$ with the one of  $W_{\mp n}( - x, q)$,
one obtains the identities \eqref{symmetries of expansion}. (ii) By (i) one has for any $z_S \in h^0_S$ and $q = \Psi^{kdv} (z_S, 0)$,
$$
\big(\Psi_1(\mathcal S_{rev}z_S)[\, \mathcal S_{rev}z_\bot] \big)(x) = \sum_{n \in S^\bot} z_{-n} W_n( x, S_{rev} q) = \sum_{n \in S^\bot} z_{-n} W_{-n}( - x, q) = \big( \Psi_1(z_S)[z_\bot] \big) (-x)
$$
as well as $W^{ae}_k(x, \mathcal S_{rev}z_S) = (-1)^k W^{ae}_{k}( - x, z_S)$ and
$({\cal R}_{N}(\mathcal S_{rev}z_S; \, \Psi_1)[\mathcal S_{rev}z_\bot] )(x) = \big( {\cal R}_{N}(z_S; \Psi_1)[z_\bot] \big)(-x)$.
By \eqref{definizione Psi L Phi 1}, the claimed identities \eqref{symmetries of PsiL} then follow.
(iii) Recall that for any $z_S \in h^0_S$, $\widehat q \in L^2_0$, one has $\Psi_1(z_S)^t [\widehat q] = (\langle W_{-n}(\cdot, q), \, \widehat q \rangle )_{n \in S^\bot}$.
It then follows from item (i) that
$$
\Psi_1(\mathcal S_{rev}z_S)^t  [ S_{rev} \widehat q]  = \mathcal S_{rev} \big( \Psi_1(z_S)^t [  \widehat q] \big).
$$
Comparing the expansion \eqref{espansione finale Phi 1 t} for $\mathcal S_{rev} \big(\Psi_1(\mathcal S_{rev}z_S)^t  [\widehat q] \big) $  with the one for  $\Psi_1(z_S)^t [ S_{rev} \widehat q]$
and taking into account that $\mathcal S_{rev} \circ \mathcal F_\bot = \mathcal F_\bot \circ S_{rev}$ and $\partial_x \circ S_{rev} = - S_{rev} \circ \partial_x$ one sees that for any $k \ge 1,$
$$
 a_k (\mathcal S_{rev} z_S; \Psi_1^t) (x) = (-1)^k  a_k (z_S; \Psi_1^t) (-x)\,, \qquad   
 {\cal R}_N(\mathcal S_{rev}z_S;  \Psi_1^t)[ S_{rev}\widehat q] = \mathcal S_{rev} ( {\cal R}_N(z_S; \Psi_1^t) [ \widehat q] )\,.  \qquad \qquad \square
$$

\bigskip

%For the sequel, we need also the expansions of the operators $\partial_q \big(\Psi_1(q) [z] \big)$ and $\partial_q \big(\Psi_1(q) [z] \big)^t$. A direct caluclation shows that 
%\begin{equation}\label{espansione decrescente partial q Phi}
%\begin{aligned}
%\partial_q \big(\Psi_1(q)[z] \big)[\cdot]  & = \sum_{k = 1}^N \partial_q v_k(x, q)[\cdot] \partial_x^{- k} {\cal F}^{- 1}[z] + \partial_q \big({\cal R}_N(q)[z] \big)[\cdot ]\,, \\
%\partial_q \big( \Psi_1(q) [z] \big)^t & = \sum_{k = 1}^N \partial_q v_k(x, q)^t[ \partial_x^{- k} {\cal F}^{- 1}[z] \cdot ] +  \partial_q \big({\cal R}_N(q)[z] \big)^t[\cdot]
%\end{aligned}
%\end{equation}
%The map $\Psi_L$ has the the asymptotic expansion 
%\begin{equation}\label{espansione asintotica mappa Psi L}
%\Psi_L(z) = \begin{pmatrix}
%q  \\
% {\cal F}^{- 1}[z] + \sum_{k = 1}^N v_k(x, q) \partial_x^{- k} {\cal F}^{- 1}[z] + {\cal R}_N(q)[z]
%\end{pmatrix}\,. 
%\end{equation}
%Furthermore, for any $z \in h^0_S \times \ell_r^2(S^\bot)$, $(\widehat q, \widehat z) \in T_q(M_S) \times \ell^2_r(S^\bot)$
%\begin{equation}\label{formula asintotica differenziale Psi L}
%\begin{aligned}
%& d \Psi_L(z)[(\widehat q, \widehat z)] \\
%& =  \begin{pmatrix}
%\widehat q \\
%{\cal F}^{- 1}[\widehat z] + \sum_{k = 1}^N v_k(x, q) \partial_x^{- k} {\cal F}^{- 1}[\widehat z]  +  \sum_{k = 1}^N \partial_q v_k(x, q)[\widehat q] \partial_x^{- k} {\cal F}^{- 1}[z]+ {\cal R}_N(q)[\widehat z] +  \partial_q \big({\cal R}_N(q)[z] \big)[\widehat q]
%\end{pmatrix}  
%\end{aligned}
%\end{equation}

In the remaining part of this section we describe the pull back $\Psi_L^* \Lambda_G$ of the symplectic form $\Lambda_G$ by the map $\Psi_L$, defined in Proposition~\ref{prop Psi L Psi kdv},
where $\Lambda_G$, defined by the Gardner Poisson structure, is given by
$$
\Lambda_G [\widehat u , \widehat v] = \langle  \partial_x^{-1} \widehat u ,  \widehat v \rangle = \int_0^1 ( \partial_x^{-1} \widehat u)(x) \widehat v(x) d x   \,, \qquad \forall  \widehat u, \widehat v  \in L^2_0\,.
$$
Note that $\Lambda_G = d \lambda_G$ where the one form $\lambda_G$, defined on $L^2_0$, is given by 
\begin{equation}\label{definition lambda G}
\lambda_G(u)[ \widehat v] = \langle \partial_x^{-1} u , \widehat v \rangle = \int_0^1 (\partial_x^{-1} u)(x) \widehat v(x) d x\,, \quad \forall u, \widehat v \in L^2_0\,.
\end{equation}
To compute the pull back of $\Lambda_G$ by $\Psi_L$, note that for any $z = (z_S, z_\bot) \in \mathcal V = \mathcal V_S \times \mathcal V_\bot$,
%and $q = \Psi^{kdv}(z_S, 0)$
the derivative $d \Psi_L(z) $, when written in $1 \times 2$ matrix form, is given by (cf \eqref{forma finale Psi L def}) 
\begin{align}\label{formula differential of Psi L}
d \Psi_L(z) & = d \Psi_L(z_S, 0) + \begin{pmatrix}
d_S ( \Psi_1(z_S)[z_\bot] ) & 0
\end{pmatrix}  
 = \begin{pmatrix}
d_S\Psi^{kdv}(z_S, 0) &  \Psi_1(z_S)
\end{pmatrix} + \begin{pmatrix}
d_S ( \Psi_1(z_S)[z_\bot])  & 0
\end{pmatrix} \,. 
\end{align}

%Let $z = (z_S, z_\bot) \in {\cal K} \times h^0_\bot$ and 
For any $\widehat z = (\widehat z_{S}, \widehat z_{\bot}) ,$  $\widehat w = (\widehat w_{S}, \widehat w_{\bot})  \in h^0_0$ one has
\begin{align}
& (\Psi_L^* \Lambda_G)(z) [\widehat z, \widehat w]  = \Lambda_G \big[ d \Psi_L(z) [\widehat z], \, d \Psi_L(z)[\widehat w ]\big] 
 =  \big\langle \partial_x^{- 1} d \Psi_L(z)[\widehat z], \, d \Psi_L(z)[\widehat w] \big\rangle \nonumber\\
&  =
%\stackrel{\eqref{formula d Psi L}}{=}  
\Big\langle \partial_x^{- 1} \big( \, d \Psi_L(z_S, 0)[\widehat z] \big)+  \partial_x^{- 1} \big( d_S( \Psi_1(z_S)[z_\bot]\big)[\widehat z_S]) , \,
d \Psi_L(z_S, 0)[\widehat w]  + d_S  \big(\Psi_1(z_S)[z_\bot] \big) [\widehat w_S]  \Big\rangle \,. \nonumber
\end{align}
Since by construction, $d \Psi_L(z_S , 0) : h^0_0  \to  L^2_0$  is symplectic, one has 
$$
(\Psi_L^* \Lambda_G)(z_S, 0) = \Lambda
$$
where $\Lambda$ is the symplectic form on $h^0_0$,
\begin{equation}\label{1 forma Lambda M}
\Lambda [\widehat z, \widehat w] :=  \langle J^{-1}\widehat z, \widehat w \rangle = \sum_{n \ne 0} \frac{1}{ 2\pi \ii n} \widehat z_{ n} \widehat w_{-n }\,,
\quad \forall \widehat z, \widehat w \in h^0_0
\end{equation}
and $J^{-1}$ denotes the inverse of the diagonal operator, acting on the scale of Hilbert spaces $h^s_0,$ $s \in \R,$
\begin{equation}\label{definition J}
J: h^{s+1}_0 \to h^{s}_0 :\, (z_n)_{n \ne 0} \mapsto  (2\pi \ii n z_n)_{n \ne 0}\,.
\end{equation}
%$J^{-1}((z_n)_{n \ne 0}) = (\frac {1}{2\pi \ii n} z_n)_{n \ne 0}$.
Note that $\Lambda = d \lambda$ where $\lambda$ is the one form on $h^0_0$, 
\begin{equation}\label{definition lambda}
\lambda(z)[\widehat w] : =  \langle J^{-1} z, \widehat w \rangle =  \sum_{n \ne 0} \frac{1}{2 \pi \ii n} z_n \widehat w_{-n}\,, \quad \forall z, \widehat w \in h^0_0\,.
\end{equation}
Altogether we have that 
%$(\Psi_L^* \Lambda_G)(z) [\widehat z, \widehat w]  = (\Psi_L^* \Lambda_G)(z_S, 0)[\widehat z, \widehat w] +  \Lambda_L(z)[ \widehat z, \widehat w] $ leads to
\begin{equation}\label{Pull back of LambdaG}
 (\Psi_L^* \Lambda_G)(z) [\widehat z, \widehat w] 
% = (\Psi_L^* \Lambda_G)(z_S, 0)[\widehat z, \widehat w] +  \Lambda_L(z)[ \widehat z, \widehat w] 
 = \Lambda[\widehat z, \widehat w] +   \Lambda_L(z)[ \widehat z, \widehat w] \,, \qquad 
 \Lambda_L(z)[\widehat z, \widehat w]  = \big\langle \mathcal L(z)[\widehat z], \,  \widehat w \big\rangle
\end{equation}
where 
%$ \Lambda_L(z)[\widehat z, \widehat w]  = \big\langle \mathcal L(z)[\widehat z], \,  \widehat w \big\rangle$ and 
the operator $ \mathcal  L(z) : h^0_S \times h^0_\bot \to  h^0_S \times h^0_\bot$ has the form 
\begin{equation}\label{definition L z}
 \mathcal L(z) = \begin{pmatrix}
 \mathcal L_S^S(z) &  \mathcal L_S^{\bot}( z) \\
 \mathcal L_{\bot}^S( z) & 0
\end{pmatrix} 
\end{equation}
with $ \mathcal L_S^S(z) : h^0_S \to h^0_S$, $ \mathcal L_S^{\bot}(z) : h^0_\bot  \to h^0_S$, and $ \mathcal L_\bot^S : h^0_S \to h^0_\bot$ given by 
\begin{equation}\label{L SS botS Sbot}
\begin{aligned}
 \mathcal L_S^S(z) & := \big(d_S \Psi_1(z_S)[z_\bot]\big)^t \, \partial_x^{- 1} \big( d_S \Psi_1(z_S)[ z_\bot] \big)
 + (d_S \Psi^{kdv}(z_S, 0))^t  \partial_x^{- 1} \big( d_S \Psi_1(z_S)[ z_\bot] \big)\\
 & + \big( d_S \Psi_1(z_S)[ z_\bot] \big)^t \partial_x^{-1} d_S \Psi^{kdv}(z_S, 0) \,, \\
 \mathcal L_S^{\bot}(z) & :=  \big(d_S \Psi_1(z_S)[z_\bot] \big)^t \,  \partial_x^{- 1} \Psi_1(z_S) \,, \qquad
 \mathcal L_\bot^S (z)  :=\Psi_1(z_S)^t \, \partial_x^{- 1} \big( d_S \Psi_1(z_S)[z_\bot] \big) \,. 
\end{aligned}
\end{equation}
For any $z = (z_S, z_\bot) \in \mathcal V$, the operators $ \mathcal L(z),$ $ \mathcal L_S^S(z),$  $ \mathcal L_S^{\bot}(z),$ and $ \mathcal L_\bot^S(z)$ are bounded.
In the sequel, we will often write the operators \eqref{L SS botS Sbot} in the following way
\begin{equation}\label{L SS botS Sbot (1)}
\begin{aligned}
 \mathcal L_S^S(z) [\widehat z_S] & = \Big( \big\langle \partial_x^{- 1} d_S \big(\Psi_1(z_S)[z_\bot] \big)[\widehat z_S]\,,\, \, \partial_{z_n}\Psi_1(z_S)[z_\bot] \big\rangle \Big)_{n \in S}
  + \Big( \big\langle \partial_x^{- 1} d_S \big(\Psi_1(z_S)[z_\bot] \big)[\widehat z_S]\,,\, \, \partial_{z_n}\Psi^{kdv}(z_S, 0) \big\rangle \Big)_{n \in S}\\
  & +  \Big( \big\langle \partial_x^{- 1} d_S (\Psi^{kdv}(z_S, 0))[\widehat z_S] \,,\, \, \partial_{z_n}\Psi_1(z_S)[z_\bot] \big\rangle \Big)_{n \in S} \,, \\
 \mathcal L_S^{\bot}(z)[\widehat z_\bot] & =  \Big( \big\langle \partial_x^{- 1} \Psi_1(z_S)[\widehat z_\bot]\,,\,\, \partial_{z_n} \Psi_1(z_S)[z_\bot] \big\rangle \Big)_{n \in S} \,, \\
 \mathcal L_\bot^S(z) [\widehat z_S] & = \Psi_1(z_S)^t \partial_x^{- 1} d_S (\Psi_1(z_S)[z_\bot] ) [\widehat z_S] =
 \Big( \big\langle W_{-n}(\cdot, q), \,\,  \partial_x^{- 1} \, d_S (\Psi_1(z_S)[z_\bot] ) [\widehat z_S] \big\rangle \Big)_{n \in S^\bot} 
\end{aligned}
\end{equation}
where $q = \Psi^{kdv}(z_S, 0)$.
The operators $\mathcal L_S^S(z)$, $\mathcal L_S^{\bot}(z)$, and $\mathcal L_\bot^S(z)$ satisfy the following properties.
%We denote by ${\cal B}(T(M_S), T(M_S)) \to {\cal K} \times h^0_\bot$ the vector bundle with fibre ${\cal B}(T_q(M_S), T_q(M_S))$ at $z \in {\cal K} \times h^0_\bot$
%and by ${\cal B}\big(h_0^\bot, T(M_S) \big) \to {\cal K} \times h^0_\bot$ the vector bundle with fibre ${\cal B}\big(h_0^\bot, T_q(M_S) \big)$ at $z \in {\cal K} \times h^0_\bot$.
\begin{lemma}\label{espansione L S bot q z}
$(i)$ The maps 
$$
\mathcal V \to {\cal B}(h^0_S, h^0_S), \, z \mapsto  \mathcal L_S^S(z) \,, \qquad  
\mathcal V \to {\cal B}\big(h^0_\bot, h^0_S \big), \,  z \mapsto  \mathcal L_S^{\bot}(z)
$$
 are real analytic. Furthermore, the following estimates hold: for any $z= (z_S, z_\bot) \in \mathcal V$, $\widehat z_S \in h^0_S$, 
   and  $\widehat z_1, \ldots, \widehat z_l \in  h^0_0$, $l \geq 1$ ,
$$
\begin{aligned}
& \|  \mathcal L_S^S(z)[\widehat z_S]\| \lesssim \| \widehat z_S\| \| z_\bot \|_0 \,, \quad 
%\| d \big(  \mathcal L_S^S(z)[\widehat z_S] \big)[\widehat z_1] \| \lesssim \| \widehat z_S\| \| z_\bot \|_0 \| \widehat z_1\|_0 \,, \\
& \| d^l \big(  \mathcal L_S^S(z)[\widehat z_S] \big)[\widehat z_1, \ldots, \widehat z_l] \| 
\lesssim_{l} \| \widehat z_S\| \prod_{j = 1}^l \| \widehat z_j\|_0 \,, 
\end{aligned}
$$
and if in addition, $\widehat z_\bot \in h^0_\bot$,
$$
\begin{aligned}
& \|  \mathcal L_S^{\bot}(z) [\widehat z_\bot]\| \lesssim \| z_\bot \|_0 \| \widehat z_\bot \|_0\,, \qquad
 \| d^l \big(  \mathcal L_S^{\bot}(z) [\widehat z_\bot] \big)[\widehat z_1, \ldots, \widehat z_l] \| \lesssim_{l} \| \widehat z_\bot \|_0 \prod_{j = 1}^l \| \widehat z_j\|_0\,. 
\end{aligned}
$$
\noindent
$(ii)$  
For any $z = (z_S, z_\bot) \in \mathcal V$, 
%and $\widehat z_S \in h^0_S$, 
$ \mathcal L_\bot^S (z)$ has an expansion of arbitrary order $N \ge 2$,
 \begin{align}\label{expansion L bot S}
 \mathcal L_{\bot}^S(z)  & = {\cal F}_\bot \circ \sum_{k = 2}^N  \mathcal A_k(z_S; \mathcal L_\bot^S)   \,\, 
 \partial_x^{- k} {\cal F}_\bot^{- 1}[z_\bot] + {\cal R}_{N}(z; \mathcal L_{\bot}^S)
\end{align}
where for any $s \geq 0$, $k \ge 1,$ the maps 
$$ \mathcal V_S \mapsto {\cal B}(h^0_S, H^s), \, z_S \mapsto \mathcal A_k(z_S; \mathcal L_\bot^S) \,, \qquad 
%\mbox{and} \quad
\mathcal V \cap h^s_0 \to {\cal B}\big(h^0_S, h^{s + N +1}_\bot \big), \, z \mapsto {\cal R}_{N}(z; \mathcal L_{\bot}^S)
$$ 
are real analytic. In particular, the operator $\mathcal L_\bot^S(z)$ is two smooothing. More precisely, for any $s \ge 0$,
$$
 \mathcal V \cap h_0^s \to {\cal B}(h^0_S, h^{s+2}_\bot), \, z \mapsto  \mathcal L_\bot^S(z)
$$
is real analytic. 
The coefficients  $\mathcal A_k(z_S; \mathcal L_\bot^S)$ are independent of $z_\bot$ and satisfy for any $s \ge 0,$ $z_S \in \mathcal V_S,$
$\widehat z_S \in h^0_S,$ $\widehat z_1, \ldots, \widehat z_l \in h^0_0$, $l \geq 1$, the following estimates
$$
\| \mathcal A_k ( z_S; \mathcal L_{\bot}^S) [\widehat z_S]  \|_s \lesssim_{s, k}  \| \widehat z_S \|\,, \qquad
 \| d^l \big( \mathcal A_k ( z_S; \mathcal L_{\bot}^S) [\widehat z_S]  \big) [\widehat z_1, \ldots, \widehat z_l] \|_{s}  \lesssim_{s, k, l}  \| \widehat z_S \|  \prod_{j = 1}^l \| \widehat z_j\|_0\,.
$$
Furthermore, for any $s \in \Z_{\ge 0}$, $z = (z_S, z_\bot) \in \mathcal V \cap h^s_0$, $\widehat z_S \in h^0_S$, and
$\widehat z_1, \ldots, \widehat z_l \in h^s_0$, $l \geq 1$, ${\cal R}_{N}(z; \mathcal L_{\bot}^S) [\widehat z_S]$ satisfies
$ \| {\cal R}_{N}(z; \mathcal L_{\bot}^S) [\widehat z_S]\|_{s + N+1} \lesssim_{s, N} \| \widehat z_S\|  \| z_\bot \|_s$ and 
$$
 \| d^l \big( {\cal R}_{N}(z; \mathcal L_{\bot}^S) [\widehat z_S]  \big) [\widehat z_1, \ldots, \widehat z_l] \|_{s + N+1} 
\lesssim_{s, N, l}  \| \widehat z_S \| \big( \sum_{j = 1}^l \| \widehat z_j\|_s \prod_{i \neq j} \| \widehat z_i\|_0 \, + \, \| z_\bot \|_s \prod_{j = 1}^l \| \widehat z_j\|_0 \big) \,. 
$$
(iii) As a consequence, for any integer $s \ge 0$, the map $\mathcal V \cap h^s_0 \to \mathcal B(h^0_0, h^{s+2}_0), z \mapsto  \mathcal L(z)$ is real analytic. Furthermore,
for any $z = (z_S, z_\bot) \in \mathcal V \cap h^s_0$ and $\widehat z \in h^0_0$, it satisfies the estimates
\begin{equation}\label{estimate mathcal L}
\|  \mathcal L(z) [\widehat z]\|_{s+2} \le C(s; \mathcal L)  \| \widehat z \|_0 \| z_\bot \|_s
\end{equation}
and if in addition $\widehat z_1, \ldots, \widehat z_l \in h^s_0$, $l \geq 1$, one has
\begin{equation}\label{estimate derivative mathcal L}
\| d^l ( \mathcal L(z) [\widehat z] )[\widehat z_1, \ldots , \widehat z_l] \|_{s+2} 
\le C(s, l; \mathcal L)  \| \widehat z\|_0 \big( \sum_{j = 1}^l \| \widehat z_j\|_s \prod_{i \neq j} \| \widehat z_i\|_0 \, + \, \| z_\bot \|_s \prod_{j = 1}^l \| \widehat z_j\|_0 \big) 
\end{equation}
for some constants $C(s; \mathcal L) \ge 1$, $C(s, l; \mathcal L) \ge 1$. 
\end{lemma}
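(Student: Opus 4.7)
My strategy is to read off the three parts of the lemma directly from the explicit formulas \eqref{L SS botS Sbot (1)} by plugging in the expansions of $\Psi_1(z_S)$ and $\Psi_1(z_S)^t$ provided by Theorem~\ref{lemma asintotica Floquet solutions}(ii) and Corollary~\ref{lemma asintotiche Phi 1 q}, together with the real analyticity of $z_S \mapsto \Psi^{kdv}(z_S,0)$ on $\mathcal V_S$ from Proposition~\ref{prop Psi L Psi kdv}. A recurring simplification is that $h^0_S$ is finite dimensional, so any map into $h^0_S$ is automatically of arbitrary Sobolev regularity. Part (i) is then immediate: each component of $\mathcal L_S^S(z)[\widehat z_S]$ and $\mathcal L_S^\bot(z)[\widehat z_\bot]$ is an $L^2$ pairing of $\partial_x^{-1}$ applied to a real analytic $L^2_0$-valued map with another, so real analyticity in $z$ follows from that of the factors, and Cauchy--Schwarz combined with the linearity of $\Psi_1(z_S)[z_\bot]$ in $z_\bot$ yields the stated tame bounds with the explicit factors of $\| z_\bot \|_0$ or $\| \widehat z_\bot \|_0$; derivatives in $z_S$ are handled by local uniformity in $z_S \in \mathcal V_S$ of the real analytic expansions.

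\textbf{Part (ii) is the heart of the proof.} Starting from
$$
\mathcal L_\bot^S(z)[\widehat z_S] = \Psi_1(z_S)^t \, \partial_x^{-1} \, d_S\bigl(\Psi_1(z_S)[z_\bot]\bigr)[\widehat z_S],
$$
I differentiate the expansion of Theorem~\ref{lemma asintotica Floquet solutions}(ii) in $z_S$. The crucial cancellation is that the leading term $\mathcal F_\bot^{-1}[z_\bot]$ is $z_S$-independent, so
$$
d_S\bigl(\Psi_1(z_S)[z_\bot]\bigr)[\widehat z_S] = \sum_{k=1}^N \bigl(d_S a_k(z_S;\Psi_1)[\widehat z_S]\bigr)\, \partial_x^{-k} \mathcal F_\bot^{-1}[z_\bot] + d_S\mathcal R_N(z_S; \Psi_1)[z_\bot][\widehat z_S].
$$
Applying $\partial_x^{-1}$ on the left and commuting past the coefficient functions via Lemma~\ref{lemma composizione pseudo} (exactly as in the proof of Corollary~\ref{lemma asintotiche Phi 1 q}) shifts the leading order from $\partial_x^{-1}$ to $\partial_x^{-2}$, modulo a more smoothing remainder. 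Finally composing on the left with the order-zero operator $\Psi_1(z_S)^t$ via its expansion from Corollary~\ref{lemma asintotiche Phi 1 q} and once more applying Lemma~\ref{lemma composizione pseudo} to collect terms of like order produces the expansion \eqref{expansion L bot S} starting at $k = 2$. The tame estimates on $\mathcal A_k(z_S; \mathcal L_\bot^S)$ and $\mathcal R_N(z; \mathcal L_\bot^S)$ then follow from those on $a_k(z_S;\Psi_1)$, $a_j(z_S;\Psi_1^t)$ and the two corresponding remainders, combined with standard product/Moser estimates in Sobolev spaces on the torus; linearity in $\widehat z_S$ is manifest from the formula.

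\textbf{Part (iii) and main obstacle.} Because $\mathcal L(z)$ has the block form \eqref{definition L z}, part (iii) follows by combining the bounds of parts (i) and (ii): the three blocks with range in the finite-dimensional $h^0_S$ satisfy the $h^{s+2}$-estimate automatically by equivalence of norms in finite dimension, while the block $\mathcal L_\bot^S(z)$ supplies the two-smoothing bound from (ii). Real analyticity of $\mathcal L$ is inherited blockwise, and summing the two contributions yields \eqref{estimate mathcal L}--\eqref{estimate derivative mathcal L}. The only subtle point in the whole argument is the claim in (ii) that the expansion of $\mathcal L_\bot^S$ starts at $k = 2$, and hence that $\mathcal L_\bot^S$ is two-smoothing rather than one-smoothing; this rests entirely on the cancellation of $\mathcal F_\bot^{-1}[z_\bot]$ under $d_S$. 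Keeping track of how remainders compose through the triple product $\Psi_1(z_S)^t \, \partial_x^{-1} \, d_S\bigl(\Psi_1(z_S)[z_\bot]\bigr)$ while preserving the tame structure of the bounds is bookkeeping-intensive but routine given Lemma~\ref{lemma composizione pseudo} and the pseudodifferential/paradifferential calculus reviewed in Appendix~\ref{appendice B}.
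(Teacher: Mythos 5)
Your proposal is correct and follows essentially the same route as the paper, which simply cites the expansions of $\Psi_1(z_S)$ and $\Psi_1(z_S)^t$ together with the composition Lemma \ref{lemma composizione pseudo} and declares the rest straightforward. You have correctly identified the one genuinely important point — that $d_S$ annihilates the $z_S$-independent leading term $\mathcal F_\bot^{-1}[z_\bot]$, which is why the expansion of $\mathcal L_\bot^S$ begins at $k=2$ and the operator is two-smoothing — and your handling of the finite-dimensional $S$-blocks in parts (i) and (iii) is the intended argument.
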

\begin{remark}\label{extension of mathcal L(z)}
Recall that by Remark \ref{notation coefficients expansion}(i),  $ \partial_x^{- 1} \Psi_1(z_S): h^{-1}_\bot \to H^0_0$ 
is a bounded linear operator for any $z \in \mathcal V$. Since $\partial_{z_n} \Psi_1(z_S)[z_\bot] \in H^0_0$ for any $n \in S$, it then follows that
$ \mathcal L_S^{\bot}(z): h^{-1}_\bot \to h^0_S$
and in turn $\mathcal L(z): h^{-1}_0 \to h^0_0$ are bounded linear operators. Estimates, corresponding to the ones for $\mathcal L_S^{\bot}(z)$ and $\mathcal L(z)$ 
of Lemma \ref{espansione L S bot q z}, continue to hold, when these operators are extended to $h^{-1}_\bot$ and, respectively, $h^{-1}_0$.
\end{remark}
\begin{proof}
%In view of the definitions \eqref{L SS botS Sbot (1)}, 
The lemma follows in a straightforward way by using the properties of the maps $\Psi_1(z_S)$ and $\Psi_1(z_S)^t$
(cf Lemma \ref{lemma asintotiche Phi 1 q}) and  the expansion of the composition $\partial_x^{-n} \circ a \, \partial_x^{-k}$ (cf Lemma \ref{lemma composizione pseudo} in Appendix~\ref{appendice B}). 
\end{proof}

\smallskip

Finally, we discuss the properties of the symplectic forms $\Lambda_G$, $\Lambda$, and $\Psi_L^*\Lambda_G$ with respect to
the reversible structures introduced in Section \ref{introduzione paper}. 
First note that for any $\widehat u, \widehat v \in L^2_0,$
$$
(S_{rev}^*\Lambda_G)[ \widehat u, \widehat v] = \Lambda_G[ S_{rev}\widehat u, S_{rev}\widehat v] =
\int_0^1 \partial_x^{-1}( \widehat u(-x) ) \widehat v(-x) d x =  - \Lambda_G[ \widehat u, \widehat v]
$$
and similarly, for any $\widehat z, \widehat w \in h^0_0,$
$$
(\mathcal S_{rev}^*\Lambda)[ \widehat z, \widehat w] = \Lambda[ \mathcal S_{rev}\widehat z, \mathcal S_{rev}\widehat w] =
\sum_{n \ne 0} \frac{1}{2\pi \ii n} \widehat z_{-n} \widehat w_{n}  = - \Lambda[ \widehat z, \widehat w]\,.
$$
By  the Addendum to Theorem \ref{lemma asintotica Floquet solutions}, the pullback $\mathcal S_{rev}^* \Psi_L^*\Lambda_G$ can then be computed as
%then given by ($z \in \mathcal V$, $\widehat z, \widehat w \in h^0_0,$)
%\begin{align}
%(\mathcal S_{rev}^*\Psi_L^*\Lambda_G) (z) [\widehat z, \widehat w] & = 
%\Lambda_G[ d\Psi_L(\mathcal S_{rev} z) \mathcal S_{rev} \widehat z, d\Psi_L(\mathcal S_{rev} z) \mathcal S_{rev} \widehat z] \nonumber\\
%& =  \Lambda_G[S_{rev} d\Psi_L(z) \widehat z,  S_{rev} d\Psi_L(z) \widehat w] = - \Psi_L^*\Lambda_G (z) [\widehat z, \widehat w] \nonumber
%\end{align}
$$
(\mathcal S_{rev}^*\Psi_L^*) \Lambda_G  = \Psi_L^* (S_{rev}^* \Lambda_G) = - \Psi_L^*\Lambda_G
$$
implying together with \eqref{Pull back of LambdaG} that
$$
%\mathcal S_{rev}^*\Lambda_L(z) [ \widehat z, \hat w] = \Lambda_L(\mathcal S_{rev}z) [ \mathcal S_{rev}\widehat z, \mathcal S_{rev}\widehat w] 
\mathcal S_{rev}^*\Lambda_L = - \Lambda_L \,.
$$
It then follows that the operators $ \mathcal L_S^S(z)$, $ \mathcal L_S^{\bot}(z)$, and $ \mathcal L_\bot^S(z)$ have the following symmetry properties.
%Using the Addendum to Theorem \ref{lemma asintotica Floquet solutions}
% and results of Appendix \ref{AppendixReversability} and Appendix \ref{appendix asymptotics}, one obtains the following symmetry properties of the map $\Psi_L(z)$.

\medskip

\noindent
{\bf Addendum to Lemma \ref{espansione L S bot q z}} {\em For any $z = (z_S, z_\bot) \in h^0_S \times h^0_\bot$ and any $\widehat z_S \in h^0_S$, $\widehat z_\bot \in h^0_\bot$,
$$
 \mathcal L_S^S(\mathcal S_{rev} z) [\mathcal S_{rev} \widehat z_S] = -  \mathcal S_{rev} \big( \mathcal L_S^S(z) [\widehat z_S] \big)\,, \quad 
$$
$$
 \mathcal L_S^{\bot}(\mathcal S_{rev}z)[\mathcal S_{rev} \widehat z_\bot]   = - \mathcal S_{rev} \big( \mathcal L_S^{\bot}(z)[\widehat z_\bot] \big)\,, \quad
 \mathcal L_\bot^S(\mathcal S_{rev}z) [\mathcal S_{rev} \widehat z_S]  \big)= - \mathcal S_{rev} \big( \mathcal L_\bot^S(z) [\widehat z_S] \big)\,.
$$
By \eqref{expansion L bot S} it then follows that
$$
\mathcal A_k(\mathcal S_{rev} z_S; \mathcal L_\bot^S) [\mathcal S_{rev} \widehat z_S] (x)  = - (-1)^k \mathcal A_k(z_S; \mathcal L_\bot^S) [\widehat z_S] (-x)\,,
$$
$$
{\cal R}_{N}(\mathcal S_{rev}z;  \mathcal L_{\bot}^S)[\mathcal S_{rev} \widehat z_S]  = -  \mathcal S_{rev} \big({\cal R}_{N}(z; \mathcal L_{\bot}^S)[\widehat z_S] \big) \,.
$$
}

\section{The map $\Psi_C$}\label{sezione Psi C}

\noindent
In this section we construct the symplectic corrector $\Psi_C$. Our approach is based on a well known method of Moser and Weinstein,
implemented for an infinite dimensional setup in \cite{K} (cf also \cite{Kappeler-Montalto}).
We begin by briefly outlining the construction. At the end of Section \ref{sezione mappa Psi L}, we introduce the symplectic forms $\Lambda$
and $\Psi_L^*\Lambda_G.$ They are defined on $\mathcal V = \mathcal V_S \times \mathcal V_\bot$ and are related as follows ($z \in \mathcal V,$  $\widehat z, \widehat w \in h^0_0$)
$$
\Psi^*_L \Lambda_G (z) [\widehat z, \widehat w] = \Lambda[\widehat z, \widehat w] + \Lambda_L(z) [\widehat z, \widehat w]\,, 
\qquad \Lambda_L(z) [\widehat z, \widehat w] = \langle \mathcal L(z) [\widehat z], \widehat w \rangle\,, 
$$
where $\mathcal L(z)$ is the operator defined by \eqref{definition L z}. Our candidate for $\Psi_C$ is $\Psi_X^{0,1}$ where $X \equiv X(\tau, z)$
is a non-autonomous vector field, defined for $z \in \mathcal V$ and $0 \le \tau \le 1$,
so that $(\Psi_X^{0, 1})^* (\Psi^*_L \Lambda_G) = \Lambda$.
% on $\mathcal V$.
The  flow $\Psi^{\tau_0, \tau}_X$, corresponding to the vector field $X$, is required to be well defined on a neighborhood $\mathcal V'$ (cf Lemma \ref{estimates for flow}) for $0 \le \tau_0, \tau \le 1$ 
and to satisfy the standard normalization conditions $\Psi^{\tau_0, \tau_0}_X(z) = z$ for any $z \in \mathcal V'$ and $0 \le \tau_0 \le 1.$
To find $X$ with the desired properties, introduce the one parameter family of two forms,
 $$
 \Lambda_\tau (z) = \Lambda + \tau \Lambda_L(z)\,, \quad 0 \le \tau \le 1\,.
 $$
Note that $\Lambda_0 = \Lambda$, $\Lambda_1 = \Psi_L^* \Lambda_G$, and $(\Psi_X^{0,0})^* \Lambda_0 = \Lambda_0.$
The desired identity $(\Psi_X^{0,1})^* \Lambda_1 = \Lambda_0$ then follows if one can show that $(\Psi_X^{0,\tau})^* \Lambda_\tau$
is independent of $\tau$, that is, $\partial_\tau \big( (\Psi_X^{0,\tau})^* \Lambda_\tau \big) = 0$. By Cartan's identity,
$$
\partial_\tau \big( (\Psi_X^{0,\tau})^* \Lambda_\tau \big) =  (\Psi_X^{0,\tau})^* \big( \partial_\tau \Lambda_\tau + d (\Lambda_\tau [ X(\tau, \cdot), \cdot] ) \big)
= (\Psi_X^{0,\tau})^* \big(  \Lambda_L + d (\Lambda_\tau [ X(\tau, \cdot), \cdot] ) \big)\,.
$$
Hence we need to choose the vector field $X( \tau, z)$ in such a way that
\begin{equation}\label{equation for X}
\Lambda_L(z) +  d \big(\Lambda_\tau(z) [X(\tau, z),  \cdot ] \big) = 0 \,, \qquad  \Lambda_\tau(z) [X( \tau, z),  \cdot ] = \langle J^{-1} \mathcal L_\tau(z) [X(\tau, z)] , \, \cdot \rangle
\end{equation}
where for any $0 \le \tau \le 1$ and $z \in \mathcal V,$ the operator $\mathcal L_\tau(z) : h^0_0  \to  h^0_0$ is defined by
\begin{equation}\label{definition L tau}
\mathcal L_\tau(z) :=  {\rm Id} + \tau J \mathcal L(z) 
\end{equation}
and where $J^{-1}$ is the inverse of the diagonal operator $J$, defined by \eqref{definition J}.
In a next step we want to rewrite $\Lambda_L(z)$ as the differential of a properly chosen one form. First note that
since $\Lambda_G = d \lambda_G$ (cf \eqref{definition lambda G}) and $\Lambda = d \lambda$ (cf \eqref{definition lambda}),
the two form $\Lambda_L$ is closed, $\Lambda_L = d (\lambda_1 - \lambda_0)$ where $\lambda_1 := \Psi^*_L \lambda_G$ and $\lambda_0 := \lambda$.
 Furthermore, by Lemma \ref{espansione L S bot q z}, $\mathcal L(z_S, 0) = 0$ and hence $\Lambda_L(z_S, 0) = 0$ for any $z_S \in \mathcal V_S$. 
It then follows by the Poincar\'e Lemma (cf e.g. \cite[Appendix 1]{Kappeler-Montalto}) that  $d \lambda_L = \Lambda_L$ where
$$
\lambda_L(z) [\widehat z] :=  \int_0^1 \langle \mathcal L(z_S, tz_\bot)[0, z_\bot],\, (\widehat z_S, t \widehat z_\bot) \rangle d t  
\stackrel{\eqref{definition L z}}{=} \int_0^1 \langle \mathcal L^\bot_S(z_S, t z_\bot)[z_\bot], \, \widehat z_S \rangle dt\,.
$$
Since $\mathcal L^\bot_S(z_S, t z_\bot) = t \mathcal L^\bot_S(z_S, z_\bot)$ (cf \eqref{L SS botS Sbot (1)}) one is then led to 
\begin{equation}
   \lambda_{L} ( z) [ \widehat z] = \langle{ \mathcal E}(z) \,,\, \widehat z \rangle\,, \qquad 
   \mathcal E(z) := (\mathcal E_S(z), 0)\in h^0_S \times h^0_{\bot}\,,
   \qquad z \in \mathcal V, \,\, \, \widehat z \in  h^0_0\label{forma finale 1 forma Kuksin}
 \end{equation}
 where
 \begin{equation}\label{forma finale 1 forma Kuksin 2}
 \mathcal E_S: \mathcal V \to h^0_S, \, z \mapsto \mathcal E_S(z) := \frac12 \mathcal L_S^\bot(z)[z_\bot]\,.
  \end{equation}
 In view of \eqref{equation for X}, we will choose $X$ so that
 \begin{equation}\label{second equation for X}
 \mathcal E(z) + J^{-1}\mathcal L_\tau(z)[X(\tau, z)] = 0\,, \qquad \forall z \in \mathcal V\,, \,\, 0 \le \tau \le 1\,.
 \end{equation}
 Arguing as in the proof of \cite[Lemma 4.1]{Kappeler-Montalto} one can show that after shrinking the ball $\mathcal V_\bot$, if needed, 
 $\mathcal L_\tau(z)$ is invertible for any $0 \le \tau \le 1$ and $z \in {\cal V}$. In view of Lemma \ref{espansione L S bot q z}, 
 the following version of \cite[Lemma 4.1]{Kappeler-Montalto} holds:
\begin{lemma}\label{stime cal Lt inverso}
After shrinking the ball ${\cal V}_\bot \subset h^0_{\bot}$ in ${\cal V} = {\cal V}_S \times {\cal V}_\bot$, if needed, 
for any $s \ge 0,$ $z \in \mathcal V \cap h^s_0$, and $\tau \in [0, 1],$ the operator
${\cal L}_\tau(z) : h^s_0 \to h^{s}_0$ is invertible with inverse ${\cal L}_\tau(z)^{-1} : h^{s}_0 \to h^{s}_0$ given by the Neumann series,
\begin{equation}\label{definizione Lt (w)}
 {\cal L}_\tau(z)^{- 1} =  \text{Id} + \sum_{n \geq 1}(- 1)^n  ( \tau J   \mathcal L(z) )^n.
 \end{equation}
%for any $\widehat z, \widehat z_1, \ldots, \widehat z_l \in h^0_0$, $l \ge 1$, one has
%$$  \| ({\cal L}_\tau (z)^{- 1} - J) [\widehat z] \|_{0} \lesssim \| z_\bot \|_0 \| \widehat z\|_0\,, \qquad
% \| d^l \big( {\cal L}_\tau (z)^{- 1} [\widehat z] \big)[\widehat z_1, \ldots, \widehat z_l] \|_{0} \lesssim_{ l} \| \widehat z\|_0  \prod_{j = 1}^l \| \widehat z_j\|_0\,.$$
Furthermore, for any $s \ge 0$, the map
$$
 [0, 1] \times  ({\cal V} \cap h^s_0) \to {\cal B}(h^{s}_0, h^{s + 1}_0), \quad (\tau, z) \mapsto {\cal L}_\tau(z)^{- 1} - \text{Id} = - \tau J \mathcal L(z) {\cal L}_\tau(z)^{- 1}
$$
is real analytic and the following estimates hold: for any $z \in {\cal V} \cap h^s_0$, $0 \le \tau \le 1$,
$\widehat z, \widehat z_1, \ldots, \widehat z_l \in h^s_0$, $l \ge 1$,
$$
 \| ({\cal L}_\tau (z)^{- 1} -  \text{Id}) [\widehat z] \|_{s+1 } \lesssim_s \| z_\bot \|_s \| \widehat z\|_0\,, 
 $$
 $$ 
 \| d^l \big( ({\cal L}_\tau (z)^{- 1} - \text{Id}) [\widehat z] \big)[\widehat z_1, \ldots, \widehat z_l] \|_{s + 1} \lesssim_{s, l} 
 \| \widehat z\|_0 \sum_{j = 1}^l \| \widehat z_j\|_s \prod_{i \neq j} \| \widehat z_i\|_0  +  \| \widehat z\|_0 \| z_\bot\|_s \prod_{j = 1}^l \| \widehat z_j\|_0\,. 
 $$
 \end{lemma}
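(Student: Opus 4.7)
The plan is to follow the strategy of \cite[Lemma 4.1]{Kappeler-Montalto}, adapted to the present setting. The central input is the smoothing property of $\mathcal{L}(z)$ from Lemma \ref{espansione L S bot q z}(iii): since $J : h^{s+2}_0 \to h^{s+1}_0$ is an isomorphism by \eqref{definition J}, the composition $J\mathcal{L}(z)$ satisfies
\[
\|J\mathcal{L}(z)[\widehat z]\|_{s+1} \lesssim_s \|z_\bot\|_s \|\widehat z\|_0, \qquad z \in \mathcal{V} \cap h^s_0, \ \widehat z \in h^0_0.
\]
In particular, taking $s = 0$, the operator $J\mathcal{L}(z)$ acts on $h^0_0$ with norm $\lesssim \|z_\bot\|_0$.

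First I would shrink the ball $\mathcal{V}_\bot$ so that $\|z_\bot\|_0$ is small enough to ensure $\|\tau J\mathcal{L}(z)\|_{\mathcal{B}(h^0_0)} \le 1/2$ uniformly in $\tau \in [0,1]$ and $z \in \mathcal{V}$. The Neumann series \eqref{definizione Lt (w)} then converges absolutely in $\mathcal{B}(h^0_0)$ and provides a two-sided inverse of $\mathcal{L}_\tau(z) = \text{Id} + \tau J\mathcal{L}(z)$ on $h^0_0$. Rearranging $\mathcal{L}_\tau(z) \mathcal{L}_\tau(z)^{-1} = \text{Id}$ yields the identity already appearing in the statement,
\[
\mathcal{L}_\tau(z)^{-1} - \text{Id} = -\tau J\mathcal{L}(z)\, \mathcal{L}_\tau(z)^{-1}.
\]
Feeding the smoothing bound for $J\mathcal{L}(z)$ into this identity gives both the mapping property $\mathcal{L}_\tau(z)^{-1} - \text{Id} : h^s_0 \to h^{s+1}_0$ and, using the uniform bound $\|\mathcal{L}_\tau(z)^{-1}\|_{\mathcal{B}(h^0_0)} \le 2$, the zeroth-order estimate $\|(\mathcal{L}_\tau(z)^{-1} - \text{Id})[\widehat z]\|_{s+1} \lesssim_s \|z_\bot\|_s \|\widehat z\|_0$. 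In particular, the inverse preserves $h^s_0$, so the Neumann-series inverse on $h^0_0$ restricts to the inverse on $h^s_0$.

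Real analyticity of $(\tau, z) \mapsto \mathcal{L}_\tau(z)^{-1} - \text{Id}$ as a map into $\mathcal{B}(h^s_0, h^{s+1}_0)$ would follow from the real analyticity of $z \mapsto \mathcal{L}(z)$ (Lemma \ref{espansione L S bot q z}(iii)) together with the uniform convergence of the Neumann series: each partial sum is polynomial in $\tau$ with real analytic operator-valued coefficients, and the smoothing gain is extracted by factoring out one copy of $J\mathcal{L}(z)$ via the identity above.

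The bulk of the work is the tame bound on $d^l\big((\mathcal{L}_\tau(z)^{-1} - \text{Id})[\widehat z]\big)[\widehat z_1, \ldots, \widehat z_l]$. My plan is to differentiate the identity $(\mathcal{L}_\tau(z)^{-1} - \text{Id})[\widehat z] = -\tau J\mathcal{L}(z)\, \mathcal{L}_\tau(z)^{-1}[\widehat z]$ by induction on $l$ via the Leibniz/Faà di Bruno rule. Each resulting term is, up to combinatorial factors, a product consisting of one factor $d^{k} \mathcal{L}(z)[\cdot]$ applied to $\mathcal{L}_\tau(z)^{-1}[\widehat z]$ together with lower-order derivatives $d^{r} \mathcal{L}_\tau(z)^{-1}$ controlled by the inductive hypothesis. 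The tame estimates \eqref{estimate mathcal L}--\eqref{estimate derivative mathcal L} then yield the required bound by the standard rule that at most one factor carries the high Sobolev norm. The main obstacle I anticipate is purely bookkeeping: one has to set up the inductive hypothesis at order $l-1$ so that mixed norms of $d^r \mathcal{L}_\tau(z)^{-1}$ are already available and can absorb the nested derivatives produced when differentiating the right-hand side. Modulo this combinatorial verification, which proceeds exactly as in \cite[Lemma 4.1]{Kappeler-Montalto}, the claimed estimates follow.
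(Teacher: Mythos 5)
Your proposal is correct and follows essentially the same route as the paper, which simply invokes the argument of Lemma 4.1 in the cited Kappeler--Montalto paper: shrink $\mathcal V_\bot$ so that $\|\tau J\mathcal L(z)\|_{\mathcal B(h^0_0)}\le 1/2$, sum the Neumann series, extract the one-derivative gain from the identity $\mathcal L_\tau(z)^{-1}-\mathrm{Id}=-\tau J\mathcal L(z)\mathcal L_\tau(z)^{-1}$ together with the smoothing estimates \eqref{estimate mathcal L}--\eqref{estimate derivative mathcal L}, and obtain the derivative bounds by Leibniz and induction on $l$. Nothing essential is missing.
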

Note that by \eqref{forma finale 1 forma Kuksin 2} and \eqref{L SS botS Sbot (1)}, $\mathcal E(z)$ and hence $\lambda_L(z)$ are quadratic expressions in $z_\bot$. 
Applying Lemma \ref{espansione L S bot q z} to $\mathcal E(z)$, one obtains the following estimates:
   \begin{lemma}\label{lemma E(z)}
   The map ${\cal V} \to h^0_S \times h^0_\bot$, $z \mapsto \mathcal E(z) = (\mathcal E_S(z), 0)$ is real analytic. Furthermore, for any 
   $z \in {\cal V}$, $ \widehat z_1, \ldots, \widehat z_l \in h^0_0$, $l \geq 1$,  one has 
   $$
   \begin{aligned}
  &  \|  \mathcal E_S(z)\| \lesssim \| z_\bot \|_0^2 \,, \qquad \| d \mathcal E_S(z)[\widehat z_1]\| \lesssim \| z_\bot \|_0 \| \widehat z_1 \|_0 \,, \qquad
  & \| d^l \mathcal E_S(z)[\widehat z_1, \ldots, \widehat z_l]\| \lesssim_{ l} \prod_{j = 1}^l \| \widehat z_j\|_0\,, \quad l \geq 2\,.  
   \end{aligned}
   $$
   \end{lemma}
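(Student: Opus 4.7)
The plan is to derive Lemma~\ref{lemma E(z)} as a direct consequence of the formula $\mathcal E_S(z) = \tfrac12 \mathcal L_S^\bot(z)[z_\bot]$ from \eqref{forma finale 1 forma Kuksin 2} combined with the estimates already established for $\mathcal L_S^\bot$ in Lemma~\ref{espansione L S bot q z}. Real analyticity of $z \mapsto \mathcal E(z)$ is automatic from its definition: by Lemma~\ref{espansione L S bot q z}(i), the map $z \mapsto \mathcal L_S^\bot(z)$ is real analytic as a map $\mathcal V \to \mathcal B(h^0_\bot, h^0_S)$, and composing with the continuous linear projection $z \mapsto z_\bot$ and the continuous bilinear evaluation $(A, v) \mapsto A[v]$ yields analyticity of $\mathcal E_S$, while the second component of $\mathcal E$ is identically zero.

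For the pointwise estimate, I would simply specialize the inequality $\|\mathcal L_S^\bot(z)[\widehat z_\bot]\| \lesssim \|z_\bot\|_0 \|\widehat z_\bot\|_0$ of Lemma~\ref{espansione L S bot q z}(i) to the diagonal $\widehat z_\bot = z_\bot$, which gives $\|\mathcal E_S(z)\| \lesssim \|z_\bot\|_0^2$ at once.

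The derivative bounds then follow from the Leibniz rule, exploiting the key structural fact that $\mathcal L_S^\bot(z)[\,\cdot\,]$ is linear in its $h^0_\bot$-argument. As a result, for $l \ge 1$ the only terms that survive in $d^l \bigl( \mathcal L_S^\bot(z)[z_\bot] \bigr)$ are those where either all $l$ directions differentiate the operator (with $z_\bot$ still evaluated in the second slot), or exactly one direction $\widehat z_j$ is passed through the second slot while the remaining $l-1$ directions differentiate the operator:
\begin{equation*}
d^l \mathcal E_S(z)[\widehat z_1, \ldots, \widehat z_l] = \tfrac12 \bigl( d^l \mathcal L_S^\bot(z)[\widehat z_1, \ldots, \widehat z_l] \bigr)[z_\bot] + \tfrac12 \sum_{j = 1}^l \bigl( d^{l-1} \mathcal L_S^\bot(z)[(\widehat z_i)_{i \ne j}] \bigr)[\widehat z_{j, \bot}],
\end{equation*}
where $\widehat z_{j, \bot}$ denotes the $\bot$-component of $\widehat z_j$. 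Applying the second inequality of Lemma~\ref{espansione L S bot q z}(i) to each term and using $\|\widehat z_{j, \bot}\|_0 \le \|\widehat z_j\|_0$, the first term is controlled by a constant times $\|z_\bot\|_0 \prod_{j} \|\widehat z_j\|_0$ and each summand of the sum by a constant times $\prod_{i = 1}^l \|\widehat z_i\|_0$. For $l = 1$ the two contributions combine to give $\|d \mathcal E_S(z)[\widehat z_1]\| \lesssim \|z_\bot\|_0 \|\widehat z_1\|_0$; for $l \ge 2$, the factor $\|z_\bot\|_0$ in the first contribution is bounded on the bounded neighbourhood $\mathcal V$ and may be absorbed into the constant, producing the stated bound $\prod_j \|\widehat z_j\|_0$.

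No serious obstacle is anticipated: Lemma~\ref{lemma E(z)} is essentially a corollary of Lemma~\ref{espansione L S bot q z} once the bilinear dependence of $\mathcal E_S$ on $\mathcal L_S^\bot$ and $z_\bot$ (quadratic on the diagonal) is observed, and the only real bookkeeping is the Leibniz expansion displayed above.
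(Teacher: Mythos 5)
Your proposal is correct and follows exactly the route the paper intends: the paper gives no separate proof of Lemma~\ref{lemma E(z)}, stating only that it is obtained by applying Lemma~\ref{espansione L S bot q z} to $\mathcal E_S(z)=\tfrac12\,\mathcal L_S^\bot(z)[z_\bot]$, and your Leibniz expansion (using linearity of $\mathcal L_S^\bot(z)[\,\cdot\,]$ in the second slot) together with the bounds of Lemma~\ref{espansione L S bot q z}(i) is precisely the omitted bookkeeping.
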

\noindent 
Since ${\cal L}_\tau(z)$ is invertible (cf  Lemma \ref{stime cal Lt inverso}), equation \eqref{second equation for X} can be solved for $X(\tau, z)$,
  \begin{equation}\label{definizione campo vettoriale ausiliario}
 X(\tau, z) := - {\cal L}_\tau(z)^{- 1} [ J \mathcal E(z)] \,, \quad \forall z \in \mathcal V, \,\, \tau \in [0, 1].
 \end{equation}  
 Note that by Lemma \ref{lemma E(z)}, $J \mathcal E(z)$ is $C^\infty-$smooth.   
Hence it follows from  Lemma \ref{stime cal Lt inverso} that 
 for any integer $s \ge 0,$ $z \in \mathcal V \cap h^s_0$
 $$
 X(\tau, z)  =   - J \mathcal E(z)   -  \big({\cal L}_\tau(z)^{- 1} - \text{Id} \big) [J\mathcal E(z)] = - J \mathcal E(z) + \tau J \mathcal L(z) [X(\tau, z)]\,.
 $$
Lemma \ref{stime cal Lt inverso} and Lemma \ref{lemma E(z)}  then lead to the following results (cf Lemma \cite[4.3]{Kappeler-Montalto}).
 \begin{lemma}\label{estimates for X} For any $s \ge 0,$ the non-autonomous vector field
 $$
 X: [0, 1] \times (\mathcal V \cap h^s_0) \to  h^{s+1}_0
 $$
 is real analytic and the following estimates hold: for any $z \in \mathcal V \cap h^s_0,$ $0 \le \tau \le 1$, $\widehat z \in h^s_0$,
 $$
 \| X(\tau, z) \|_{s+1} \lesssim_s \|z_\bot\|_s \|z\|_0\,, \quad \|dX(\tau, z))[\widehat z_1]\|_{s+1} \lesssim_s \|z_\bot\|_s \|\widehat z\|_0 + \|z_\bot\|_0 \|\widehat z\|_s\,,
 $$
 and for any $\widehat z_1, \,  \ldots , \widehat z_l \in h^s_0$, $l \ge 2$,
 $$
 \| d^l X(\tau, z) [\widehat z_1, \, \ldots , \widehat z_l] \|_{s+1} \lesssim_{s, l}  \sum_{j=1}^l \|\widehat z_j\|_s \prod_{i \ne j}^l \|\widehat z_i\|_0  +  \|z_\bot\|_s \prod_{j = 1}^l \|\widehat z_j\|_0\,.
 $$
 \end{lemma}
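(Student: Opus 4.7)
The plan is to work directly from the closed-form expression
\[
X(\tau,z) \;=\; -\mathcal L_\tau(z)^{-1}\bigl[J\mathcal E(z)\bigr]
\;=\; -J\mathcal E(z) \;-\; \bigl(\mathcal L_\tau(z)^{-1}-\text{Id}\bigr)\bigl[J\mathcal E(z)\bigr],
\]
and feed the building blocks into the tame estimates already proved for each factor: Lemma \ref{stime cal Lt inverso} for $\mathcal L_\tau(z)^{-1}-\text{Id}$, and Lemma \ref{lemma E(z)} for $\mathcal E(z)$. Real analyticity of $X$ on $[0,1]\times(\mathcal V\cap h^s_0)$ is immediate from the composition of the real analytic maps given by those two lemmas (with $J$ a fixed continuous isomorphism $h^0_S\to h^0_S$, so that $J\mathcal E$ takes values in a fixed finite-dimensional subspace of $h^0_0$ on which all Sobolev norms are equivalent).

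For the zero-order estimate I would split as above. Since $J\mathcal E(z)$ is supported on $S$, one has $\|J\mathcal E(z)\|_t \lesssim_t \|\mathcal E_S(z)\| \lesssim \|z_\bot\|_0^2$ for every $t\ge 0$. Applying Lemma \ref{stime cal Lt inverso} with $\widehat z=J\mathcal E(z)$ gives
\[
\bigl\|(\mathcal L_\tau(z)^{-1}-\text{Id})[J\mathcal E(z)]\bigr\|_{s+1} \lesssim_s \|z_\bot\|_s\,\|J\mathcal E(z)\|_0 \lesssim_s \|z_\bot\|_s\,\|z_\bot\|_0^2,
\]
and adding the $-J\mathcal E(z)$ contribution, the claim $\|X(\tau,z)\|_{s+1}\lesssim_s\|z_\bot\|_s\|z\|_0$ follows after using $\|z_\bot\|_0\le\|z\|_0$ on a bounded neighborhood (where $\|z_\bot\|_0$ is $\lesssim 1$, absorbing the extra factor).

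For the higher derivatives I would apply the Leibniz rule,
\[
d^l\bigl(\mathcal L_\tau(z)^{-1}[J\mathcal E(z)]\bigr)[\widehat z_1,\dots,\widehat z_l]
\;=\; \sum_{A\sqcup B=\{1,\dots,l\}}
d^{|A|}\bigl(\mathcal L_\tau(z)^{-1}-\text{Id}\bigr)_{|A|}\!\bigl[d^{|B|}(J\mathcal E)(z)[\widehat z_B]\bigr][\widehat z_A]
\;+\; d^l(J\mathcal E)(z)[\widehat z_1,\dots,\widehat z_l],
\]
where derivatives of the operator $\mathcal L_\tau^{-1}-\text{Id}$ are taken in $z$ and then applied to the vector $d^{|B|}(J\mathcal E)[\widehat z_B]$. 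For each term I would use the higher-order bound of Lemma \ref{stime cal Lt inverso},
\[
\bigl\|d^k\!\bigl((\mathcal L_\tau^{-1}-\text{Id})[\widehat z]\bigr)[\widehat z_{j_1},\dots,\widehat z_{j_k}]\bigr\|_{s+1}
\lesssim_{s,k} \|\widehat z\|_0\Bigl(\sum_{i}\|\widehat z_{j_i}\|_s\!\prod_{i'\neq i}\!\|\widehat z_{j_{i'}}\|_0
+\|z_\bot\|_s\prod_{i}\|\widehat z_{j_i}\|_0\Bigr),
\]
paired with the bounds of Lemma \ref{lemma E(z)} for $d^{|B|}(J\mathcal E)(z)[\widehat z_B]$, and noting once more that all Sobolev norms of $d^{|B|}(J\mathcal E)$ are equivalent because the target lives in a fixed finite-dimensional space. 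This produces a sum of products in which exactly one $\widehat z_j$ (or $z_\bot$) appears at the high norm $\|\cdot\|_s$ and all remaining factors at $\|\cdot\|_0$; on the bounded set $\mathcal V$, the superfluous factors of $\|z_\bot\|_0$ coming from $\mathcal E$ and $d\mathcal E$ are absorbed into the constant. A short case check of $l=1$ and $l\ge 2$ then gives precisely the two displayed inequalities.

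The main bookkeeping obstacle will be to verify that in every Leibniz branch the tame structure survives: one must check that the factor $\|\widehat z\|_0$ in Lemma \ref{stime cal Lt inverso} really pairs with norms of $d^{|B|}(J\mathcal E)(z)$ evaluated at $\|\cdot\|_0$, and that at most one index $j$ is elevated to the $\|\cdot\|_s$ norm on each branch. Apart from this combinatorial accounting, no new analytic input is required; the statement is essentially a corollary of the already established tame estimates for $\mathcal L_\tau^{-1}-\text{Id}$ and $\mathcal E$.
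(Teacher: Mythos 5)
Your proposal is correct and follows exactly the route the paper takes: the paper derives this lemma directly from the decomposition $X(\tau,z)=-J\mathcal E(z)-(\mathcal L_\tau(z)^{-1}-\mathrm{Id})[J\mathcal E(z)]$ together with Lemma \ref{stime cal Lt inverso} and Lemma \ref{lemma E(z)}, which is precisely your argument. Your filled-in Leibniz bookkeeping and the absorption of the extra $\|z_\bot\|_0$ factors on the bounded neighbourhood are the details the paper leaves implicit, and they check out.
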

 By a standard contraction argument, there exists an open neighborhood $\mathcal V_S' \subset \mathcal V_S$ of $\mathcal K \subset h^0_S$ and a ball $\mathcal V_\bot' \subset \mathcal V_\bot,$ centered at $0$,
 so that for any $\tau, \tau_0 \in [0,1]$, the flow map $\Psi_X^{\tau_0, \tau}$  of  the non-autonomous differential equation $ \partial_\tau z = X(\tau, z)$ 
  %satisfying $\Psi_X^{\tau_0, \tau_0}(z) = z$
 is well defined on $\mathcal V' := \mathcal V_S' \times \mathcal V_\bot'$ and
 \begin{equation}\label{definition Psi_X {tau_0, tau}}
 \Psi_X^{\tau_0, \tau} : \mathcal V' \to \mathcal V
 \end{equation}
 is real analytic. Arguing as in the proof of \cite[Lemma 4.4]{Kappeler-Montalto} one shows that $\Psi_X^{\tau_0, \tau} - id$ 
 is one smoothing. More precisely, the following holds.
 \begin{lemma}\label{estimates for flow}
 Shrinking the ball ${\cal V}_\bot' \subset h^0_{\bot}$ in ${\cal V}' = {\cal V}_S' \times {\cal V}_\bot'$, if needed, it follows that 
for any $s \ge 0$, $\tau_0, \tau \in [0, 1],$ the map
$\Psi_X^{\tau_0, \tau} - id: \mathcal V' \cap h^s_0 \to h^{s+1}_0$ is real analytic and for any $z \in \mathcal V' \cap h^s_0$, $0 \le \tau_0, \tau \le 1$,
$\widehat z \in h^s_0$,
$$
\| \Psi_X^{\tau_0, \tau}(z)  - z\|_{s+1} \lesssim_s \|z_\bot\|_s \|z_\bot\|_0\,, \qquad 
\| (d \Psi_X^{\tau_0, \tau}(z)  - Id)[\widehat z]\|_{s+1}  \lesssim_s \|z_\bot\|_s \|\widehat z\|_0 + \|z_\bot\|_0 \|\widehat z\|_s
$$
and for any $\widehat z_1, \ldots , \widehat z_l \in h^s_0$, $l \ge 2$,
$$
\| d^l \Psi_X^{\tau_0, \tau}(z)[\widehat z_1, \ldots, \widehat z_l] \|_{s+1}  \lesssim_s  \sum_{j = 1}^l \|\widehat z_j\|_s \prod_{i \ne j}\|\widehat z_i\|_0 + \|z_\bot\|_s \prod_{j=1}^l \|\widehat z_j\|_0
$$
 \end{lemma}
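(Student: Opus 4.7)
The plan is to obtain the estimates by integrating the non-autonomous flow equation $\partial_\tau \Psi_X^{\tau_0, \tau}(z) = X(\tau, \Psi_X^{\tau_0, \tau}(z))$ with initial condition $\Psi_X^{\tau_0, \tau_0}(z) = z$, and applying the tame estimates on $X$ provided by Lemma \ref{estimates for X} together with a Gronwall-type bootstrap. The one-smoothing structure of $\Psi_X^{\tau_0, \tau} - \text{id}$ will be inherited directly from the one-smoothing structure of $X$ via the integral representation
$$
\Psi_X^{\tau_0, \tau}(z) - z = \int_{\tau_0}^\tau X(\sigma, \Psi_X^{\tau_0, \sigma}(z)) \, d\sigma.
$$

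First, at the base level $s = 0$: since $\|X(\tau, z)\|_1 \lesssim \|z_\bot\|_0 \|z\|_0$, a Gronwall argument applied to the integral equation above shows that, after shrinking $\mathcal V_\bot'$ if needed, the flow exists on all of $[0,1]^2 \times \mathcal V'$, remains in $\mathcal V$, and satisfies $\|\Psi_X^{\tau_0, \tau}(z)\|_0 \lesssim \|z\|_0$ with $\|(\Psi_X^{\tau_0, \tau}(z))_\bot\|_0 \lesssim \|z_\bot\|_0$ uniformly in $\tau_0, \tau$. Inserting these bounds back into the integral equation yields $\|\Psi_X^{\tau_0, \tau}(z) - z\|_1 \lesssim \|z_\bot\|_0^2$. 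For higher regularity $s \ge 1$, one propagates the $h^s_0$ norm along the flow via Gronwall applied with the high-regularity estimate of $X$, obtaining $\|\Psi_X^{\tau_0, \tau}(z)\|_s \lesssim_s \|z\|_s$ and a corresponding bound for the $\bot$-component. Substituting these propagated bounds into the integral equation with the gain-of-one-derivative estimate $\|X(\sigma, w)\|_{s+1} \lesssim_s \|w_\bot\|_s \|w\|_0$ then gives the claimed $\|\Psi_X^{\tau_0, \tau}(z) - z\|_{s+1} \lesssim_s \|z_\bot\|_s \|z_\bot\|_0$.

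For the differential, I would use that $d\Psi_X^{\tau_0, \tau}(z)[\widehat z]$ satisfies the linearized equation
$$
\partial_\tau \bigl(d\Psi_X^{\tau_0, \tau}(z)[\widehat z]\bigr) = dX(\tau, \Psi_X^{\tau_0, \tau}(z))\bigl[d\Psi_X^{\tau_0, \tau}(z)[\widehat z]\bigr], \qquad d\Psi_X^{\tau_0, \tau_0}(z) = \text{Id},
$$
so that $(d\Psi_X^{\tau_0, \tau}(z) - \text{Id})[\widehat z] = \int_{\tau_0}^\tau dX(\sigma, \Psi_X^{\tau_0, \sigma}(z))[d\Psi_X^{\tau_0, \sigma}(z)[\widehat z]] \, d\sigma$. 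The tame bound on $dX$ from Lemma \ref{estimates for X}, which has the structure $\|dX(\sigma, w)[\widehat z]\|_{s+1} \lesssim_s \|w_\bot\|_s \|\widehat z\|_0 + \|w_\bot\|_0 \|\widehat z\|_s$, combined with Gronwall, yields the desired first-order estimate. The higher derivatives $d^l \Psi_X^{\tau_0, \tau}(z)[\widehat z_1, \ldots, \widehat z_l]$ satisfy analogous linear inhomogeneous ODEs whose inhomogeneities, obtained by Fa\`a di Bruno expansion, involve $d^k X$ with $1 \le k \le l$ evaluated at $\Psi_X^{\tau_0, \sigma}(z)$ composed with lower-order flow derivatives; these are controlled by induction on $l$ using the corresponding tame bounds for $d^kX$.

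The main obstacle is the Fa\`a di Bruno bookkeeping required to preserve the tame structure $\sum_j \|\widehat z_j\|_s \prod_{i \ne j} \|\widehat z_i\|_0 + \|z_\bot\|_s \prod_j \|\widehat z_j\|_0$ at every order, since one must verify that in each term of the expansion at most one argument carries the high-regularity norm $\|\cdot\|_s$. This tame structure is inherited from the analogous one for the $d^k X$ in Lemma \ref{estimates for X}, but each Fa\`a di Bruno term has to be regrouped accordingly. The shrinking of $\mathcal V_\bot'$ serves to absorb the Gronwall constants via the smallness of $\|z_\bot\|_0$, ensuring both that the flow is confined to $\mathcal V$ and that the bootstrap at each order closes. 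The analyticity of the flow follows from the standard ODE existence theory in Banach scales together with the analyticity of $X$ established in Lemma \ref{estimates for X}.
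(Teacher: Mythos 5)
Your proposal is correct and follows essentially the same route as the paper, which itself establishes this lemma by the standard Duhamel--Gronwall bootstrap on the integral form of the flow equation combined with the tame estimates of Lemma \ref{estimates for X}, arguing as in \cite[Lemma 4.4]{Kappeler-Montalto} and handling the higher derivatives by induction on $l$ via the linearized (inhomogeneous) flow equations. The only point to watch is that the bound $\| X(\tau, z) \|_{s+1} \lesssim_s \|z_\bot\|_s \|z\|_0$ you quote carries a factor $\|z\|_0$ (which is merely $O(1)$ on the bounded set $\mathcal V'$) rather than $\|z_\bot\|_0$; to recover the claimed quadratic smallness $\|z_\bot\|_s\|z_\bot\|_0$ one uses the finer structure $X(\tau,z) = -J\mathcal E(z) - (\mathcal L_\tau(z)^{-1}-\text{Id})[J\mathcal E(z)]$ with $\|J\mathcal E(z)\|\lesssim \|z_\bot\|_0^2$, exactly as in Lemma \ref{lemma campo vettoriale}.
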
 

Our aim is  to derive expansions for the flow maps  $\Psi_X^{\tau_0, \tau}(z)$. 
To this end we derive such expansions for ${\cal L}_\tau(z)^{- 1}$ and in turn for the vector field $X(\tau, z)$.
Recall that ${\cal L}_\tau(z)^{- 1}$ is given by the Neumann series \eqref{definizione Lt (w)} and hence we
first derive an expansion for the operators $(J \mathcal L(z))^n$. 
It is convenient to introduce the projections 
\begin{equation}\label{Pi S Pi bot}
\Pi_S : h^0_S \times h^0_{ \bot } \to h^0_S \times h^0_{ \bot }\,,  (\widehat z_S, \widehat z_\bot) \mapsto (\widehat z_S, 0)\,, \quad 
\Pi_\bot : h^0_S \times h^0_{ \bot } \to h^0_S \times h^0_{ \bot }\,, (\widehat z_S, \widehat z_\bot ) \mapsto (0, \widehat z_\bot)
\end{equation}
and the maps 
\begin{equation}\label{pi S}
  \pi_S : h^0_S \times h^0_{ \bot } \to h^0_S\,, \quad z = (z_S, z_\bot) \mapsto z_S\,, \qquad
  \pi_\bot : h^0_S \times h^0_{ \bot } \to h^0_{ \bot }\,, \quad z= (z_S , z_\bot) \mapsto z_\bot\,.
  \end{equation}   
\noindent
Furthermore, let $J_S: = \pi_S J \pi_S$, $J_\bot: = \pi_\bot J \pi_\bot$. Then
$J_S^{-1} = \pi_S J^{-1} \pi_S$, $J_\bot^{-1}= \pi_\bot J^{-1} \pi_\bot$, or, more explicitly,
%(cf \eqref{1 forma Lambda M}, \eqref{definition J})
$$
\big\langle J_S^{- 1} \widehat z_S, \widehat w_S \big\rangle = \sum_{n \in S} \frac{1}{\ii 2\pi n} \widehat z_{n} \widehat w_{- n}  \,, \,\,\, \forall \widehat z_S, \widehat w_S \in h^0_S\,,
\qquad \big\langle J_\bot^{- 1} \widehat z_\bot, \widehat w_\bot \big\rangle = \sum_{n \in S^\bot} \frac{1}{\ii 2\pi n} \widehat z_{n} \widehat w_{- n}\,,
\,\,\,  \forall \widehat z_\bot, \widehat w_\bot \in h^0_\bot \,.
$$
%\noindent
%{\em Expansion of ${\cal L}_\tau(z)^{- 1}$:}
%In the following, we derive an expansion for the terms in the Neumann series \eqref{definizione Lt (w)} of the operator ${\cal L}_\tau(z)^{- 1}$. 
%\begin{align} {\cal L}_\tau(z)^{- 1} & = \big({\rm Id} + \tau J  L(z) \big)^{- 1} J =  \sum_{k \geq 0}(- 1)^k \big( \tau J   L(z) \big)^k J  \,.\end{align}
%We now prove the following lemma. 
\begin{lemma}\label{lemma potenze JM L(q,z)}
 For any $n \ge 1$, $z = (z_S, z_\bot) \in \mathcal V$, 
$(J \mathcal L(z))^n$ has an expansion of arbitrary order $N \ge 1$,
$$
   \begin{pmatrix}
0 & 0 \\
{\cal F}_\bot \circ \sum_{k = 1}^N  {\cal A}_k^S(z; (J\mathcal L)^n) \,   \partial_x^{- k} {\cal F}_\bot^{- 1}[z_\bot] \,\,\,
& {\cal F}_\bot \circ \sum_{k = 1}^N  {\cal A}_k^\bot(z; (J\mathcal L)^n ) \, \partial_x^{- k} {\cal F}_\bot^{- 1}[z_\bot]
\end{pmatrix}  + {\cal R}_{N}(z; (J\mathcal L)^n)
$$
where for any integers $s \geq 0$, $1 \le k \le N,$ the maps 
$$ \mathcal V \to {\cal B}(h^0_S, H^s), \, z \mapsto \mathcal A_k^S(z; (J\mathcal L)^n) \,, \qquad 
\mathcal V \to {\cal B}(h^0_\bot, H^s), \, z \mapsto \mathcal A_k^\bot(z; (J\mathcal L)^n) \,, 
$$
$$
%\mbox{and} \quad
\mathcal V \cap h^s_0 \to {\cal B}\big(h^0_0, h^{s + N +1}_0 \big), \, z \mapsto {\cal R}_{N}(z;  (J\mathcal L)^n)
$$ 
are real analytic. For any $z = (z_S, z_\bot) \in \mathcal V$, $\widehat z = (\widehat z_S, \widehat z_\bot) \in h^0_0$,
${\cal A}_k^S(z; (J\mathcal L)^n )[\widehat z_S]$ and  ${\cal A}_k^\bot(z; (J\mathcal L)^n )[\widehat z_\bot]$ satisfy the estimates
\begin{equation}\label{stime Neumann}
\begin{aligned}
& \| {\cal A}_k^S(z; (J\mathcal L)^n ) [\widehat z_S] \|_{s} \lesssim_{s, k}\, \| \widehat z_S\| \,  (C(k) \| z_\bot \|_0)^{n - 1} \, , \\  
& \| {\cal A}_k^\bot(z; (J\mathcal L)^n )[\widehat z_\bot] \|_{s} \lesssim_{s, k} \, \| \widehat z_\bot \|_0 (C(k) \| z_\bot \|_0)^{n - 1} 
\end{aligned}
\end{equation}
whereas for any integer $s \ge 0$, $z= (z_S, z_\bot) \in \mathcal V \cap h^s_0$, $\widehat z \in h^0_0$
\begin{equation}
 \| {\cal R}_N(z; (J\mathcal L)^n)[\widehat z]\|_{ s + N + 1} \lesssim_{s, N} \, \| z_\bot \|_s  \| \widehat z\|_0 (C_0(N)  \| z_\bot \|_0 )^{n - 1} \,,
\end{equation}
for some constants $C(k), C_0(N) \ge 1$. 
Furthermore, the following estimates hold for the derivatives of these maps: for $k, l \ge 1$, there exists a constant $C(k, l) \ge 1$, 
so that for any $z_S \in \mathcal V_S$, $\widehat z_S \in h^0_S$, $\widehat z_\bot \in h^0_\bot$, $\widehat z_1, \ldots , \widehat z_l \in h^0_0$,
$$
\| d^l \big( {\cal A}_k^S(z; (J \mathcal L)^{n}) [\widehat z_S] \big) [ \widehat z_1, \ldots , \widehat z_l] \|_s
\lesssim_{s, l, k} \,  \| \widehat z_S\|  \,  (\prod_{j =1}^l \|\widehat z_j\|_0) \, (C(k, l)  \| z_\bot \|_0 )^{0 \lor {(n -1 - l)}}  \,,
$$
$$
\| d^l \big( {\cal A}_k^\bot(z; (J \mathcal L)^{n}) [\widehat z_\bot] \big) [ \widehat z_1, \ldots , \widehat z_l] \|_s
\lesssim_ {s, l, k} \,  \| \widehat z_\bot\|_0  \,  (\prod_{j =1}^l \|\widehat z_j\|_0) \, (C(k, l)  \| z_\bot \|_0 )^{0 \lor {(n - 1 - l)}}  \,.
$$
Finally, there exist constants $C_0(N, l) \ge 1$, $l \ge 1$,  so that for any $z \in \mathcal V \cap h^s_0$, $\widehat z \in h^0_0$, $\widehat z_1, \ldots , \widehat z_l \in h^s_0$, 
\begin{align}
\| d^l ({\cal R}_N(z; (J \mathcal L)^n) & [ \widehat z])  [\widehat z_1, \ldots , \widehat z_l] \|_{ s + N + 1}  \nonumber\\
&  \lesssim_{s, l, N} \,  \| \widehat z\|_0  \, 
 \big( \, \sum_{j = 1}^l \| \widehat z_j \|_s \prod_{i \ne j} \|\widehat z_i \|_0  \, + \,  \| z_\bot \|_s   \prod_{j =1}^l \|\widehat z_j\|_0 \,  \big) \, (C_0(N, l)  \| z_\bot \|_0 )^{0 \lor {(n - l)}}  \,.
 \nonumber
\end{align}
We refer to Remark \ref{notation coefficients expansion} where we comment on the notation introduced for the coefficients in such expansions. 
Furthermore, we recall that  $\mathcal V_\bot$ denotes the open ball in $h^0_\bot,$ centered at $0,$ whose radius is smaller than one and downscaled according to our needs.
\end{lemma}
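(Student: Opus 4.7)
The plan is to prove the claim by induction on $n$, the engine being the expansion of $\mathcal L(z)$ provided by Lemma \ref{espansione L S bot q z} together with the composition rule for pseudodifferential operators from Lemma \ref{lemma composizione pseudo}. The zero top row is essentially a triviality: since $h^0_S$ is finite-dimensional, any bounded operator into it is automatically smoothing to every order, so the entire $S$-component of $(J\mathcal L(z))^n$ is absorbed into $\mathcal R_N(z;(J\mathcal L)^n)$. All substance is in the $\bot$-components.

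For the base case $n=1$, write in block form
\[
J\mathcal L(z) \,=\, \begin{pmatrix} J_S \mathcal L_S^S(z) & J_S \mathcal L_S^\bot(z) \\ J_\bot\mathcal L_\bot^S(z) & 0 \end{pmatrix}.
\]
The top row and the bottom-right entry contribute only to $\mathcal R_N$ (the bottom-right is in fact zero at $n=1$, so $\mathcal A_k^\bot(z;(J\mathcal L)^1)\equiv 0$). For the bottom-left entry, apply Lemma \ref{espansione L S bot q z}(ii) to $\mathcal L_\bot^S(z)$ and commute $J_\bot$ inside using $J_\bot \circ \mathcal F_\bot=\mathcal F_\bot\circ\partial_x$. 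A product-rule expansion $\partial_x\bigl(a\cdot\partial_x^{-k} g\bigr)=a\cdot\partial_x^{-k+1} g+(\partial_x a)\cdot\partial_x^{-k}g$ together with Lemma \ref{lemma composizione pseudo} reorganizes the resulting terms as $\mathcal F_\bot\circ\sum_{k\ge1}\mathcal A_k^S(z;J\mathcal L)\,\partial_x^{-k}\mathcal F_\bot^{-1}[z_\bot]$ up to an $(N+1)$-smoothing remainder. Tameness and $\|\mathcal A_k^S[\widehat z_S]\|_s\lesssim\|\widehat z_S\|$ follow from the corresponding bounds in Lemma \ref{espansione L S bot q z}(ii); the power $(C\|z_\bot\|_0)^{n-1}$ is trivially $1$ at $n=1$.

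For the inductive step assume the expansion for $(J\mathcal L(z))^n$ and compose on the left by $J\mathcal L(z)$. Again the $S$-component of the product lands in a finite-dimensional space and is pushed into $\mathcal R_{N}$. For the $\bot$-component, the composition
\[
J_\bot\mathcal L_\bot^S(z)\circ \pi_S(J\mathcal L(z))^n \;+\; J_\bot\cdot 0 \cdot\pi_\bot(J\mathcal L(z))^n
\]
is handled as follows: substitute the expansion of $(J\mathcal L(z))^n$ given inductively, substitute the base-case expansion of $J_\bot\mathcal L_\bot^S(z)$, and commute $\mathcal F_\bot^{-1}\circ\mathcal F_\bot$ through the composition. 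Applying the composition rule $\partial_x^{-j}\circ a\,\partial_x^{-k}=a\,\partial_x^{-j-k}+\sum_{m=1}^{N-j-k} c_m(j)\,(\partial_x^m a)\,\partial_x^{-j-k-m}+{\cal R}^{\psi do}_{N,j,k}(a)$ of Lemma \ref{lemma composizione pseudo} collapses double sums into a single sum $\sum_{k=1}^N$, defining $\mathcal A_k^{S/\bot}(z;(J\mathcal L)^{n+1})$, and produces an additional $(N+1)$-smoothing remainder that is merged into $\mathcal R_N$. All terms in which the remainder of either factor appears are swept into $\mathcal R_N$ using the $(N+1)$-smoothing estimates together with the mapping property $\|\,J\mathcal L(z)[\,\cdot\,]\|_{s+1}\lesssim \|z_\bot\|_0\|\cdot\|_s+\|z_\bot\|_s\|\cdot\|_0$ extracted from Lemma \ref{espansione L S bot q z}(iii).

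The geometric growth factor $(C\|z_\bot\|_0)^{n-1}$ follows automatically from this iterative composition: each extra copy of $J\mathcal L(z)$ costs one factor of $\|\mathcal L(z)\|_{\mathcal B(h^0_0,h^0_0)}\lesssim\|z_\bot\|_0$ by \eqref{estimate mathcal L} at $s=0$, while the explicit $\partial_x^{-k}\mathcal F_\bot^{-1}[z_\bot]$ already carries one factor of $z_\bot$. The derivative estimates are obtained by applying the Leibniz rule to the same inductive identity and invoking the derivative bounds of Lemma \ref{espansione L S bot q z} and Lemma \ref{stime cal Lt inverso} term by term; the threshold $0\lor(n-1-l)$ in the exponent arises because after $l$ differentiations up to $l$ of the $n-1$ factors of $\|z_\bot\|_0$ can be \textquotedblleft consumed\textquotedblright\ to supply the $\widehat z_j$. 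The main technical obstacle is purely bookkeeping: tracking how the inductive expansion interacts with the composition formula while keeping the single-sum form $\sum_{k=1}^N\mathcal A_k\,\partial_x^{-k}\mathcal F_\bot^{-1}[z_\bot]$ and maintaining the sharp dependence on $\|z_\bot\|_0$ at every step; no new analytic idea is required beyond those already present in Lemma \ref{espansione L S bot q z} and Lemma \ref{lemma composizione pseudo}.
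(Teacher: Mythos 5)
Your proposal is correct and follows essentially the same route as the paper: induction on $n$, with the base case $n=1$ extracted from the expansion of $\mathcal L_\bot^S(z)$ in Lemma \ref{espansione L S bot q z} via $J_\bot\circ\mathcal F_\bot=\mathcal F_\bot\circ\partial_x$, the entire top row absorbed into the remainder, the geometric factor supplied by $\|\mathcal L(z)\|_{\mathcal B(h^0_0)}\lesssim\|z_\bot\|_0$ from \eqref{estimate mathcal L}, and the $0\lor(n-1-l)$ threshold coming from distributing the $l$ derivatives among the $z_\bot$-factors. One remark: your inductive step is more complicated than necessary. Since the only block of $J\mathcal L(z)$ carrying the pseudodifferential expansion is the bottom-left one, which receives only the finite-dimensional $S$-component of $(J\mathcal L(z))^n$, there are no two pseudodifferential factors to compose and hence no double sum to collapse; the paper simply sets $\mathcal A_k^{S}(z;(J\mathcal L)^{n+1})=\mathcal A_k^S(z_S;J\mathcal L)\circ\big((J\mathcal L(z))^n\big)_S^S$, $\mathcal A_k^{\bot}(z;(J\mathcal L)^{n+1})=\mathcal A_k^S(z_S;J\mathcal L)\circ\big((J\mathcal L(z))^n\big)_S^\bot$ and $\mathcal R_N(z;(J\mathcal L)^{n+1})=\mathcal R_N(z;J\mathcal L)\circ(J\mathcal L(z))^n$, so Lemma \ref{lemma composizione pseudo} is not needed in the induction at all (only, if at all, in producing the base case). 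This does not affect correctness, but it is worth seeing why the symbol calculus plays no role here.
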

\begin{proof}
We begin by proving the claimed statements for $n = 1$. By \eqref{definition L z}, the operator $\mathcal L(z)$, $z \in \mathcal V$, can be written as  
$$
\mathcal L(z) = 
%\Pi_\bot L(z) \Pi_S + {\cal R}_L(z) = 
\begin{pmatrix}
0 & 0 \\
\mathcal L_\bot^S(z) & 0 
\end{pmatrix} + \begin{pmatrix}
\mathcal L_S^S(z) & \mathcal L_S^\bot(z) \\
0 & 0
\end{pmatrix}\,. 
$$
%$$\Pi_\bot L(z) \Pi_S = \begin{pmatrix}
%0 & 0 \\
%L_\bot^S(z) & 0 
%\end{pmatrix}, \quad {\cal R}_L(z) := \begin{pmatrix}
%L_S^S(z) & L_S^\bot(z) \\
%0 & 0
%\end{pmatrix}\,. 
%$$
 Using that $J_\bot \circ {\cal F}_\bot = {\cal F}_\bot \circ \partial_x$, it then follows from Lemma \ref{espansione L S bot q z} that 
\begin{equation} \label{formula JM L q z}
J \mathcal L(z)   = \begin{pmatrix}
0 & 0 \\
{\cal F}_\bot \circ \sum_{k = 1}^N  {\cal A}_k^S(z_S; J \mathcal L) \, \partial_x^{- k} {\cal F}_\bot^{- 1}[z_\bot]  & 0
\end{pmatrix} + {\cal R}_{N}(z; J\mathcal L ) 
\end{equation}
where ${\cal A}_k^S(z_S; J \mathcal L)$ and ${\cal R}_{N}(z; J\mathcal L )$ are obtained from 
$\mathcal A_k(z_S; \mathcal L_\bot^S)$ and ${\cal R}_{N}(z; \mathcal L_{\bot}^S)$, given by Lemma \ref{espansione L S bot q z},
in a straightforward way. It then follows that for any $s \geq 0$ and $1 \le k \le N$, the maps 
$$
{\cal V}_S \to {\cal B}( h^0_S, H^s), z_S \mapsto {\cal A}^S_k(z_S; J \mathcal L)\, ,  \qquad
\mathcal V \cap h^s_0 \to {\cal B}\big(h^0_0, h^{s + N +1}_0 \big), z \mapsto {\cal R}_N(z; J \mathcal L)
$$ 
are real analytic and that ${\cal R}_N(z; J \mathcal L)$ is of the form
$$
{\cal R}_{N}(z; J \mathcal L) = \begin{pmatrix}
{\cal R}_{N}(z; J \mathcal L)_S^S & {\cal R}_{N}(z; J \mathcal L)_S^\bot  \\
{\cal R}_{N}(z; J \mathcal L)_\bot^S  & 0
\end{pmatrix}\,.
$$
For any integer $s \ge 0$, $z_S \in \mathcal V_S$, $\widehat z_S \in h^0_S$, 
 $\widehat z_1, \ldots , \widehat z_l \in h^0_0$, $l \ge 1$, one has
 \begin{equation}\label{estimate of mathcal A k ^ S ( z_ S J L)}
  \| {\cal A}_k^S(z_S; J \mathcal L) [\widehat z_S] \|_{s} \lesssim_{s, k} \| \widehat z_S\| \,, \quad
   \| d^l \big({\cal A}_k^S(z_S; J \mathcal L) [\widehat z_S] \big)[\widehat z_1, \dots ,  \widehat z_l]\|_{s} \lesssim_{s, k, l} \| \widehat z_S\| \prod_{j =1}^l \|\widehat z_j\|_0\,.
 \end{equation}
Furthermore,  for any integer $s \ge 0$, $z = (z_S, z_\bot) \in \mathcal V \cap h^s_0,$ $\widehat z \in h^0_0$, 
 $\widehat z_1, \ldots , \widehat z_l \in h^s_0$, $l \ge 1$, the remainder term satisfies
$ \| {\cal R}_{N}(z; J \mathcal L) [\widehat z ] \|_{ s + N + 1} \lesssim_{s, N} \| z_\bot \|_s \| \widehat z\|_0$ and 
\begin{equation}\label{stima cal QN cal A n 1}
 \| d^l \big({\cal R}_N(z; J \mathcal L)[\widehat z]\big) [ \widehat z_1, \ldots , \widehat z_l ] \|_{ s + N + 1} \lesssim_{s, N, l}
\| \widehat z \|_0 \,  \big( \sum_{j = 1}^l \| \widehat z_j\|_s \prod_{i \ne j} \| \widehat z_i \|_0 \, +  \, \|z_\bot\|_s \prod_{j =1}^l \|\widehat z_j\|_0  \big) \,.\\
\end{equation}
To prove the claimed statements for $n+1$, $n \ge 1$, write $( J \mathcal L(z))^{n + 1} = J \mathcal L(z) (J \mathcal L(z) )^n$. 
By the expansion \eqref{formula JM L q z} it follows that
$ ( J \mathcal L(z) )^{n + 1}$ is of the form
%&  =
% \begin{pmatrix}
%0 & 0 \\
%\sum_{n = 1}^N {\cal F} \circ \Big( (\partial_x^{- n} {\cal F}^{- 1}[z]) {\cal A}_n^{(1)}(q) \Big) & 0
%\end{pmatrix}  \circ \nonumber\\
%& \begin{pmatrix}
%0 & 0 \\
%\sum_{n = 1}^N {\cal F} \circ \Big( (\partial_x^{- n} {\cal F}^{- 1}[z]) ({\cal A}_n^{(k)})_\bot^S(q) \Big)  & \sum_{n = 1}^N {\cal F} \circ \Big( (\partial_x^{- n} {\cal F}^{- 1}[z]) ({\cal A}_n^{(k)})_\bot^\bot(q)  \Big)
%\end{pmatrix} \nonumber\\
%& \quad + {\cal Q}_{N}(z) \begin{pmatrix}
%0 & 0 \\
%\sum_{n = 1}^N {\cal F} \circ \Big( (\partial_x^{- n} {\cal F}^{- 1}[z]) ({\cal A}_n^{(k)})_\bot^S(q)\Big)  & \sum_{n = 1}^N {\cal F} \circ \Big( (\partial_x^{- n} {\cal F}^{- 1}[z]) ({\cal A}_n^{(k)})_\bot^\bot(q)  \Big) 
%\end{pmatrix} \nonumber \\
%& \quad + \begin{pmatrix}
%0 & 0 \\
%\sum_{n = 1}^N {\cal F} \circ \Big( (\partial_x^{- n} {\cal F}^{- 1}[z]) {\cal A}_n^{(1)}(q) \Big) & 0
%\end{pmatrix} {\cal Q}_N^{(k)}(z) \nonumber\\
%& \quad + {\cal Q}_N(z) {\cal Q}_N^{(k)}(z) \nonumber\\
\begin{equation}
 \begin{pmatrix}
0 & 0 \\
{\cal F}_\bot \circ \sum_{k = 1}^N   {\cal A}_{k}^S(z ) \,  \partial_x^{- k} {\cal F}_\bot^{- 1}[z_\bot]  
& {\cal F}_\bot \circ  \sum_{k = 1}^N  {\cal A}_k^\bot(z )  \, \partial_x^{- k} {\cal F}_\bot^{- 1}[z_\bot]  
\end{pmatrix} + {\cal R}_N(z)
\end{equation}
where ${\cal A}_{k}^S(z ) \equiv {\cal A}_{k}^S(z; (J \mathcal L)^{n + 1} )$, ${\cal A}_k^\bot(z ) \equiv {\cal A}_k^\bot(z; (J \mathcal L)^{n + 1} )$,
and ${\cal R}_N(z) \equiv {\cal R}_N(z; (J \mathcal L)^{n + 1})$ are given by
\begin{equation}
\begin{aligned}
 {\cal A}_k^S(z; (J \mathcal L)^{n + 1}) := {\cal A}_k^S(z_S; J \mathcal L)  & \circ  (J \mathcal L (z) )^n)_S^S \,, \qquad  
{\cal A}_k^\bot(z;(J \mathcal L)^{n + 1} ) := {\cal A}_k^S (z_S; J \mathcal L) \circ \big( (J \mathcal L (z) )^n \big)_S^\bot \,, \\
& {\cal R}_N(z; (J \mathcal L)^{n+1}) :=  {\cal R}_{N}(z; J \mathcal L) \circ (J \mathcal L(z))^{n} \,. \nonumber
% \begin{pmatrix}
%0 & 0 \\
%\sum_{n = 1}^N {\cal F} \circ \Big( (\partial_x^{- n} {\cal F}^{- 1}[z]) ({\cal A}_n^{(k)})_\bot^S(z) \Big) & \sum_{n = 1}^N {\cal F} \circ \Big( (\partial_x^{- n} {\cal F}^{- 1}[z]) ({\cal A}_n^{(k)})_\bot^\bot(z)  \Big)
%\end{pmatrix}  \\
%& + {\cal Q}_N(z) {\cal Q}_N^{(k)}(z) \,.
\end{aligned}
\end{equation}
It then follows that these maps are real analytic as claimed in the statement of the lemma. Furthermore, for any $s \ge 0$, $z = (z_S, z_\bot) \in \mathcal V,$ $\widehat z_S \in h^0_S$
one has by \eqref{estimate of mathcal A k ^ S ( z_ S J L)}
$$
\| {\cal A}_k^S(z; (J \mathcal L)^{n + 1}) [\widehat z_S]\|_s \lesssim_{s, k}  \|( (J \mathcal L (z) )^n )_S^S [\widehat z_S]\| \lesssim_{s, k} ( C_0 \|z_\bot \|_0 )^n \| \widehat z_S\|
$$
where 
%$C(s, k) \equiv C(s; {\cal A}_k^S)$ is given by \eqref{estimate of mathcal A k ^ S ( z_ S J L)} and 
$ C_0 :=  C(0; \mathcal L)$ is given by \eqref{estimate mathcal L}.
Similarly, again by \eqref{estimate of mathcal A k ^ S ( z_ S J L)}, for any $z = (z_S, z_\bot) \in \mathcal V$, $\widehat z_\bot \in h^0_\bot$ one has
$$
\| {\cal A}_k^\bot(z;(J \mathcal L)^{n + 1} )[\widehat z_\bot] \|_s \lesssim_{s, k}  \|( (J \mathcal L (z) )^n )_S^\bot [\widehat z_\bot]\| 
\lesssim_{s, k} ( C_0 \|z_\bot \|_0 )^n \| \widehat z_\bot\|_0\,,
$$
whereas for any $s \ge 0,$ $z = (z_S, z_\bot) \in \mathcal V \cap h^s_0,$ $\widehat z = (\widehat z_S, \widehat z_\bot) \in h^0_0$
$$
\|  {\cal R}_N(z; (J \mathcal L)^{n+1}) [\widehat z] \|_{s+N+1} \lesssim_{s, N} \| z_\bot \|_s \| (J \mathcal L (z) )^n [\widehat z]\|_0 
\lesssim_{s, N} \| z_\bot \|_s  ( C_0 \|z_\bot \|_0 )^n \| \widehat z\|_0\,.
$$
Next we estimate the derivatives of ${\cal A}_k^S(z; (J \mathcal L)^{n + 1}) [\widehat z_S]$. 
For any $s \ge 0$, $z_S \in \mathcal V_S$, $\widehat z_S \in h^0_S$, $\widehat z_1, \ldots , \widehat z_l \in h^0_0$, $l \ge 1$,
the estimate of $\| d^l \big( {\cal A}_k^S(z; (J \mathcal L)^{n + 1}) [\widehat z_S] \big) [ \widehat z_1, \ldots , \widehat z_l] \|_s$ is obtained from the estimates of
$$
\| d^m \big( {\cal A}_k^S(z; J \mathcal L) [\widehat w_{l-m}] \big) [ \widehat z_1, \ldots , \widehat z_m] \|_s\,, \qquad 
\widehat w_{l-m}:= d^{l-m} \big( ( (J\mathcal L (z))^n )^S_S [\widehat z_S] \big) [ \widehat z_{m +1}, \ldots , \widehat z_l] \in h^0_S\,, 
$$ 
and $\| \widehat w_{l-m} \|$ where $0 \le m \le l$. By \eqref{estimate of mathcal A k ^ S ( z_ S J L)}, one has
$$
\| d^m \big( {\cal A}_k^S(z; J \mathcal L) [\widehat w_{l-m}] \big) [ \widehat z_1, \ldots , \widehat z_m] \|_s 
\lesssim_{s, m} \| \widehat w_{l-m}\| \prod_{j =1}^m \|\widehat z_j\|_0\,.
$$
Note that we introduced the element $\widehat w_{l-m}$ to indicate that in  $\| d^m \big( {\cal A}_k^S(z; J \mathcal L) [\widehat w_{l-m}] \big) [ \widehat z_1, \ldots , \widehat z_m] \|_s$
the derivative $d^m$ does not act on $\widehat w_{l-m}$.
Increasing the constant $C_0$ and/or decreasing the radius of the ball $\mathcal V_\bot$ depending on the size of $l$ 
%so that in particular, $C_0 \|z_\bot\|_0 \le 1$, 
it follows from \eqref{estimate mathcal L} - \eqref{estimate derivative mathcal L} that
$$
\| \widehat w_{l - m} \| 
%\le \| d^{l -m} \big( ( (J \mathcal L(z))^n)^S_S [\widehat z] \big) [\widehat z_{m +1}, \ldots , \widehat z_l] \|_0
\lesssim_{s, l - m}  \| \widehat z_S\| \, (\prod_{j = m+ 1}^l \| \widehat z_j\|_0) \, ( C_0 \|z_\bot \|_0 )^{0 \lor {(n-l +m)}}\,.
$$
Combining these estimates implies that
$$
\| d^l \big( {\cal A}_k^S(z; (J \mathcal L)^{n + 1}) [\widehat z_S] \big) [ \widehat z_1, \ldots , \widehat z_l] \|_s
\lesssim_{s, l, k} \,  \| \widehat z_S\|  \,  (\prod_{j =1}^l \|\widehat z_j\|_0) \, (C_0  \| z_\bot \|_0 )^{0 \lor {(n - l)}}  \,.
$$
In the same way one shows that for any $s \ge 0$, $z_S \in \mathcal V_S$, $\widehat z_\bot \in h^0_\bot$, $\widehat z_1, \ldots , \widehat z_l \in h^0_0$, $l \ge 1$,
$$
\| d^l \big( {\cal A}_k^\bot(z; (J \mathcal L)^{n + 1}) [\widehat z_\bot] \big) [ \widehat z_1, \ldots , \widehat z_l] \|_s
\lesssim_{s, l, k} \,  \| \widehat z_\bot\|_0  \,  (\prod_{j =1}^l \|\widehat z_j\|_0) \, (C_0  \| z_\bot \|_0 )^{0 \lor {(n - l)}}  \,.
$$
Finally, the claimed estimate for $\| d^l ({\cal R}_N(z; (J \mathcal L)^n)[ \widehat z]) [\widehat z_1, \ldots , \widehat z_l] \|_{ s + N + 1} $ 
where $s \ge 0$, $z \in \mathcal V \cap h^s_0$, $\widehat z \in h^0_0$, $\widehat z_1, \ldots , \widehat z_l \in h^s_0$, $l \ge 1$,
follows from the estimates of 
$$
\| d^m ({\cal R}_N(z; J \mathcal L)[ \widehat v_{l-m}]) [\widehat z_1, \ldots , \widehat z_m] \|_{ s + N + 1} \,, \quad 
\widehat v_{l-m} := d^{l -m} \big( (J \mathcal L(z))^n[ \widehat z] \big) [\widehat z_{m +1}, \ldots , \widehat z_l]
$$ 
and $\| \widehat v_{l-m} \|_0$ where $0 \le m \le l.$ Indeed, by \eqref{stima cal QN cal A n 1}, 
%$\| d^m \big({\cal R}_N(z; J \mathcal L)[\widehat v_{l - m}]\big) [ \widehat z_1, \ldots , \widehat z_m ] \|_{ s + N + 1}$ is bounded by 
$$ 
\| d^m \big({\cal R}_N(z; J \mathcal L)[\widehat v_{l - m}]\big) [ \widehat z_1, \ldots , \widehat z_m ] \|_{ s + N + 1} \lesssim_{s, N, m}
\| \widehat v_{l-m} \|_0 \,  \big( \sum_{j = 1}^m \| \widehat z_j\|_s \prod_{i \ne j, 1 \le i \le m} \| \widehat z_i \|_0 \, +  \, \|z_\bot\|_s \prod_{j =1}^m \|\widehat z_j\|_0  \big) \,.
$$
Increasing the constant $C_0$ and/or decreasing the radius of the ball $\mathcal V_\bot$ depending on the size of $l$, 
% so that in particular, $C_0 \|z_\bot\|_0 \le 1$, 
it follows from \eqref{estimate mathcal L} - \eqref{estimate derivative mathcal L} that
$$
\| \widehat v_{l - m} \|_0 \le 
\| d^{l -m} \big( (J \mathcal L(z))^n [\widehat z] \big) [\widehat z_{m +1}, \ldots , \widehat z_l] \|_0
\lesssim_{s, l - m}  \| \widehat z\|_0 \, ( \prod_{j = m+ 1}^l \| \widehat z_j\|_0 ) \, ( C_0 \|z_\bot \|_0 )^{0 \lor {(n-l +m)}}\,.
$$
Combining these estimates yields the claimed estimate for $\| d^l ({\cal R}_N(z; (J \mathcal L)^n)[ \widehat z]) [\widehat z_1, \ldots , \widehat z_l] \|_{ s + N + 1},$
with constants chosen appropriately.
\end{proof}
Recall that $\mathcal V_\bot$ denotes the open ball in $h^0_\bot,$ centered at $0,$ whose radius is smaller than one and downscaled at several instances in the course of our analysis.
% $$B_\delta^\bot (0) := \big\{ z \in h^0_\bot : \| z \|_0 \leq \delta \big\}\,. $$
\begin{lemma}\label{lemma campo vettoriale}
For any $N \ge 1,$  $X(\tau, z) = - \mathcal L_\tau(z)^{-1} [J \mathcal E(z)]$ ($0 \le \tau \le 1$, $z \in \mathcal V$)  has an expansion of the form
\begin{equation}\label{espansione campo vettoriale correttore}
 X(\tau, z) =    \Big( \, 0,  \,\, {\cal F}_\bot \circ \sum_{k = 1}^N    a_k (\tau, z; X ) \,  \partial_x^{- k} {\cal F}_\bot^{- 1}[z_\bot]  \, \Big)  + {\cal R}_N(\tau, z; X)
\end{equation}
where for any $s \ge 0$ and $k \ge 1,$ the maps
%The functions $a_{n, X}(\tau, z)$, $n = 1, \ldots, N$ and the reminder ${\cal R}_{N, X} (\tau, z)$ satisfy the following properties: there exists $\delta \in (0, 1)$ (small enough) such that for any $s \geq 0$, $k = 1, \ldots, N$ the maps
$$
 [0, 1] \times \mathcal V \to H^s\,, \,\, (\tau, z) \mapsto a_k(\tau, z; X)\,, \quad
  [0, 1] \times (\mathcal V \cap h^s_0 ) \to  h^{s + N +1}_0\,, \,\, (\tau, z) \mapsto {\cal R}_N (\tau, z; X)
$$
are real analytic. Furthermore, for any $0 \le \tau \le 1$, $z \in \mathcal V$, $ \widehat z \in h^0_0$, 
$$
 \| a_k(\tau, z; X)\|_s \lesssim_{s, k} \| z_\bot \|_0^2\,,  \quad \| d a_k(\tau, z; X)[\widehat z]  \|_s \lesssim_{s, k} \| z_\bot \|_0 \| \widehat z\|_0
 $$
 and for any $ \widehat z_1, \ldots, \widehat z_l \in h^0_0$, $l \ge 2,$
$$
 \| d^l a_k(\tau, z; X)[\widehat z_1, \ldots, \widehat z_l] \|_s \lesssim_{s, k, l} \prod_{j = 1}^l \| \widehat z_j\|_0\,.
$$
For any $z \in  \mathcal V \cap h^s_0$, $0 \le \tau \le 1$,  $\widehat z \in  h^s_0$, the remainder term ${\cal R}_N(\tau, z; X)$ satisfies
$$
 \| {\cal R}_N(\tau, z; X) \|_{s + N +1} \lesssim_{s, N}  \| z_\bot \|^2_0 +  \| z_\bot \|^2_0 \| z_\bot \|_s \lesssim_{s, N}    \| z_\bot \|_0 \| z_\bot \|_s\,, 
 $$
 $$
 \| d {\cal R}_N(\tau, z; X)[\widehat z] \|_{s + N +1} \lesssim_{s, N} \| z_\bot \|^2_0 \| \widehat z\|_s + \| z_\bot \|_0 \| \widehat z\|_0(1 + \| z_\bot \|_s)  
 \lesssim_{s, N} \| z_\bot \|_0 ( \| \widehat z\|_s + \| z_\bot \|_s \| \widehat z\|_0) \,, 
 $$
 whereas for any $\widehat z_1, \ldots, \widehat z_l \in h^s_0$, $l \geq 2$,
 $$
 \| d^l {\cal R}_N(\tau, z; X)[\widehat z_1, \ldots, \widehat z_l] \|_{s + N + 1} \lesssim_{s, N, l} \sum_{j = 1}^l \| \widehat z_j\|_s \prod_{i \neq j} \| \widehat z_i\|_0 + \| z_\bot \|_s \prod_{j = 1}^l \| \widehat z_j\|_0 \,. 
$$
\end{lemma}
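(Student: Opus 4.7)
The plan is to combine the Neumann series \eqref{definizione Lt (w)} for $\mathcal L_\tau(z)^{-1}$ with the matrix expansion of $(J\mathcal L(z))^n$ provided by Lemma \ref{lemma potenze JM L(q,z)}, and then to exploit the special structure of $J\mathcal E(z)$, namely that $J\mathcal E(z) = (J_S \mathcal E_S(z), 0) \in h^0_S \times \{0\}$ with $\mathcal E_S(z) = \tfrac12 \mathcal L_S^\bot(z)[z_\bot]$.

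First, I would write
$$
X(\tau, z) = - J\mathcal E(z) - \sum_{n \geq 1} (-\tau)^n (J\mathcal L(z))^n [J \mathcal E(z)].
$$
Since $J\mathcal E(z)$ has vanishing $\bot$-component, applying Lemma \ref{lemma potenze JM L(q,z)} kills the column corresponding to $\widehat z_\bot$, leaving only the coefficients $\mathcal A_k^S(z; (J\mathcal L)^n)$ acting on $J_S \mathcal E_S(z) \in h^0_S$. In particular, the $S$-component of the main part of $(J\mathcal L(z))^n[J\mathcal E(z)]$ is zero for every $n \geq 1$, while the $\bot$-component has the form
$$
\mathcal F_\bot \circ \sum_{k=1}^N \mathcal A_k^S(z; (J\mathcal L)^n)[J_S \mathcal E_S(z)] \cdot \partial_x^{-k} \mathcal F_\bot^{-1}[z_\bot] + \text{remainder}.
$$
The finite-dimensional $S$-component $(-J_S \mathcal E_S(z), 0)$ coming from the identity term of the Neumann series is smooth and, by Lemma \ref{lemma E(z)}, of size $O(\|z_\bot\|_0^2)$, hence can be absorbed into the remainder $\mathcal R_N(\tau, z; X)$. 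Summing the series term by term then yields the announced expansion with
$$
a_k(\tau, z; X) = - \sum_{n \geq 1} (-\tau)^n \mathcal A_k^S(z; (J\mathcal L)^n)[J_S \mathcal E_S(z)], \qquad 1 \leq k \leq N,
$$
and $\mathcal R_N(\tau, z; X)$ collecting the finite-dimensional $S$-part together with the Neumann sum of the remainders $\mathcal R_N(z;(J\mathcal L)^n)[J\mathcal E(z)]$.

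For the estimates, the key input is the geometric decay $(C(k)\|z_\bot\|_0)^{n-1}$ in Lemma \ref{lemma potenze JM L(q,z)}, combined with $\|\mathcal E_S(z)\| \lesssim \|z_\bot\|_0^2$ from Lemma \ref{lemma E(z)}. After shrinking $\mathcal V_\bot$ so that $C(k)\|z_\bot\|_0 < 1/2$ (which can be done uniformly in $1 \leq k \leq N$), the Neumann series converges in the operator norm of the relevant spaces and one obtains $\|a_k(\tau,z; X)\|_s \lesssim_{s,k} \|z_\bot\|_0^2$. The corresponding estimates for the $\bot$-part of the remainder follow analogously from the bound $\|\mathcal R_N(z;(J\mathcal L)^n)[\widehat z]\|_{s+N+1} \lesssim_{s,N} \|z_\bot\|_s \|\widehat z\|_0 (C_0(N)\|z_\bot\|_0)^{n-1}$ applied to $\widehat z = J\mathcal E(z)$, yielding $\lesssim \|z_\bot\|_s \|z_\bot\|_0^2$, and from the finite-dimensional $S$-contribution, which is bounded in any norm by $\|z_\bot\|_0^2$ (since $h^0_S$ is finite dimensional). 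Combined, this gives $\|\mathcal R_N(\tau,z; X)\|_{s+N+1} \lesssim_{s,N} \|z_\bot\|_0^2 + \|z_\bot\|_0^2\|z_\bot\|_s \lesssim \|z_\bot\|_0 \|z_\bot\|_s$ (using $\|z_\bot\|_0 \leq 1$).

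For the derivative estimates, I would apply the product/Leibniz rule in the defining formulas. For the coefficients $a_k$, each factor $\mathcal A_k^S(z;(J\mathcal L)^n)[J_S\mathcal E_S(z)]$ is a composition of three $z$-dependent objects, so $d^l$ splits into a sum over distributions $(l_1, l_2)$ with $l_1 + l_2 = l$ of derivatives falling on $\mathcal A_k^S$ and on $\mathcal E_S$; combining the derivative bounds of Lemma \ref{lemma potenze JM L(q,z)} (giving the modified power $(C(k,l)\|z_\bot\|_0)^{0 \lor (n-1-l)}$) with those of Lemma \ref{lemma E(z)} yields a convergent series after possibly shrinking $\mathcal V_\bot$ further, and gives the claimed bound $\prod_j \|\widehat z_j\|_0$ for $l \geq 2$ and a gain of one factor of $\|z_\bot\|_0$ for $l \leq 1$. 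The remainder estimates follow by the same mechanism from the derivative bounds for $\mathcal R_N(z;(J\mathcal L)^n)$ in Lemma \ref{lemma potenze JM L(q,z)}. Real analyticity in $(\tau, z)$ is inherited from the real analyticity of each $\mathcal A_k^S(z;(J\mathcal L)^n)$ and $\mathcal R_N(z;(J\mathcal L)^n)$, and the uniform geometric convergence of the Neumann series. The main technical obstacle is the careful bookkeeping in the derivative estimates so that the extra factors $\|z_\bot\|_0$ are tracked correctly through the Leibniz expansion and so that the resulting series remain geometrically convergent after increasing $C_0$ and decreasing the radius of $\mathcal V_\bot$ according to $l$.
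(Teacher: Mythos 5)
Your proposal is correct and follows essentially the same route as the paper's proof: expand $-\mathcal L_\tau(z)^{-1}[J\mathcal E(z)]$ via the Neumann series, use that $J\mathcal E(z)=(J_S\mathcal E_S(z),0)$ so only the coefficients $\mathcal A_k^S(z;(J\mathcal L)^n)$ of Lemma \ref{lemma potenze JM L(q,z)} survive, define $a_k(\tau,z;X)$ as the resulting sum and put $-J\mathcal E(z)$ together with the summed remainders into $\mathcal R_N(\tau,z;X)$, and then sum the geometric bounds after shrinking $\mathcal V_\bot$. The estimates for the derivatives via the Leibniz rule, distributing derivatives between $\mathcal A_k^S$ and $\mathcal E_S$ and using Lemma \ref{lemma E(z)}, are exactly what the paper does.
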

\begin{proof}
By the Neumann series expansion, one has for any $z \in \mathcal V$, $0 \le \tau \le 1$,
\begin{align}
- {\cal L}_\tau(z)^{- 1} [J \mathcal E(z)] & = - J \mathcal E(z) + \sum_{n \geq 1}(- 1)^{n + 1} \tau^n (  J  \mathcal L(z) )^n [J \mathcal E(z)] \,. \label{against the current 0}
\end{align}
Since $J \mathcal E(z) = (J_S \mathcal E_S(z), 0)$, Lemma \ref{lemma potenze JM L(q,z)} yields that for any $n \geq 1$,
%one then has that for any $k \geq 1$, 
%\begin{align}
%& \Big(  J   L(z) \Big)^k J E(z) \nonumber\\
%&  = \begin{pmatrix}
%0 & 0 \\
%\sum_{n = 1}^N {\cal F} \circ \Big(  (\partial_x^{- n} {\cal F}^{- 1}[z]) ({\cal A}_n^{(k)})_\bot^S(z ) \Big)  & \sum_{n = 1}^N {\cal F} \circ \Big(  (\partial_x^{- n} {\cal F}^{- 1}[z]) ({\cal A}_n^{(k)})_\bot^\bot(z )  \Big) 
%\end{pmatrix} \begin{pmatrix}
%J_S  E_S(z) \\
%0
%\end{pmatrix} \nonumber\\
%& \quad   + {\cal Q}_N^{(k)}(z) \begin{pmatrix}
%J_S  E_S(z) \\
%0
%\end{pmatrix} \nonumber\\
%& = \begin{pmatrix}
%0 \\
%\sum_{n = 1}^N {\cal F} \circ \Big(  (\partial_x^{- n} {\cal F}^{- 1}[z]) ({\cal A}_n^{(k)})_\bot^S(z )[ J_S E_S(z)] \Big) 
%\end{pmatrix} +  {\cal Q}_N^{(k)}(z) \begin{pmatrix}
%J_S E_S(z) \\
%0
%\end{pmatrix}\,. \nonumber\\
%\end{align}
$$
 (- 1)^{n + 1} \tau^n (J  \mathcal L(z))^n [J \mathcal E(z)]   = \big( 0, \, 
{\cal F}_\bot  \circ \sum_{k = 1}^N    a_{k}(\tau, z; (J  \mathcal L)^n [J \mathcal E]) \, \partial_x^{- k} {\cal F}_\bot^{- 1}[z_\bot] \, \big)  
+  {\cal R}_N(\tau, z; (J  \mathcal L)^n [J \mathcal E]) 
%& = \begin{pmatrix}
%0 \\
%\sum_{n = 1}^N {\cal F} \circ \Big(  (\partial_x^{- n} {\cal F}^{- 1}[z]) (- 1)^{k + 1} \tau^k ({\cal A}_n^{(k)})_\bot^S(z )[ J_S E_S(z)] \Big) 
%\end{pmatrix}  \nonumber\\
%& \quad +  (- 1)^{k + 1} \tau^k {\cal Q}_N^{(k)}(z) \begin{pmatrix}
%J_S  E_S(z) \\
%0
%\end{pmatrix} \nonumber\\
$$
where 
$$
a_k(\tau, z; (J  \mathcal L)^n [J \mathcal E]) := (- 1)^{n + 1} \tau^n {\cal A}_k^S(z; (J  \mathcal L)^n )[J_S \mathcal E_S(z)] \,, 
$$
$$
{\cal R}_N(\tau, z; (J  \mathcal L)^n [J \mathcal E] )  := (- 1)^{n + 1} \tau^n {\cal R}_N(z; (J  \mathcal L)^n) [J \mathcal E(z) ] \,.
$$
By applying the estimates of Lemma \ref{lemma E(z)} and Lemma \ref{lemma potenze JM L(q,z)}, one gets for any $s \ge 0$, $z \in \mathcal V$, $0 \le \tau \le 1$,
\begin{equation}\label{stime a n k X N k nel lemma}
\| a_k(\tau, z; (J  \mathcal L)^n [J \mathcal E]) \|_s \lesssim_{s, k} \| z_\bot \|_0^2 \big( C(k) \| z_\bot \|_0 \big)^{n - 1}
\end{equation}
and for any $s \ge 0$, $z \in \mathcal V \cap h^s_0$, $0 \le \tau \le 1$,
\begin{equation}\label{estimate R N J L ^ n}
\| {\cal R}_N(\tau, z; (J  \mathcal L)^n [J \mathcal E]) \|_{s + N +1} \lesssim_{s, N} \| z_\bot \|_0^2 \| z_\bot \|_s  \big( C_0 (N)  \| z_\bot \|_0 )^{n-1}\,. 
\end{equation}
In view of \eqref{against the current 0} we define for any $1 \le k \le N,$
\begin{equation}\label{z n X N def}
 a_k (\tau, z; X) := \sum_{n \geq 1} a_k(\tau, z; (J  \mathcal L)^n [J \mathcal E]) \,, \quad
 {\cal R}_N (\tau,  z; X) := - J \mathcal E(z) +  \sum_{n \geq 1} {\cal R}_N(\tau, z; (J  \mathcal L)^n [J \mathcal E])\,.
\end{equation}
Shrinking the radius of the ball $\mathcal V_\bot$, if needed, one can assume that  $C (k)  \| z_\bot \|_0 < 1$, $C_0 (N)  \| z_\bot \|_0 < 1$ for any 
$1 \le k \le N$, $z_\bot \in \mathcal V_\bot$.
The expansion  \eqref{espansione campo vettoriale correttore}, the analyticity statement, 
and the claimed estimates for $ \| a_k(\tau, z; X)\|_s$ and $ \| {\cal R}_N(\tau, z; X) \|_{s + N +1}$ 
then follow from the estimates \eqref{stime a n k X N k nel lemma}, \eqref{estimate R N J L ^ n}, and Lemma \ref{lemma E(z)}, Lemma \ref{lemma potenze JM L(q,z)}. 
The estimates for the derivatives of $a_k(\tau, z; X)$ and ${\cal R}_N(\tau, z; X)$ follow by similar arguments. 
Indeed, it follows from Lemma \ref{lemma potenze JM L(q,z)}
that for any for $1 \le k \le N$, $z \in \mathcal V$, $0 \le \tau \le 1$, $\widehat z \in h^0_0$,
$$
\| d a_k(\tau, z; (J  \mathcal L)^n [J \mathcal E]))[\widehat z]  \|_s \lesssim_{s, k} 
\| d \big( {\cal A}_k^S(z; (J \mathcal L)^{n}) [\widehat w] \big) [ \widehat z] \|_s |_{\widehat w = J_S \mathcal E_S(z)}
+ \| {\cal A}_k^S(z; (J \mathcal L)^{n}) [ \, d (J_S \mathcal E_S(z)) [ \widehat z] \, ]  \|_s
$$
Using that by Lemma \ref{lemma E(z)}, $\|J_S \mathcal E_S(z)\| \lesssim \|z_\bot\|_0^2$ and $\| d (J_S \mathcal E_S(z)) [\widehat z]\|_0 \lesssim \|z_\bot\|_0 \| \widehat z \|_0$
one then concludes from Lemma \ref{lemma potenze JM L(q,z)}
$$
\| d \big( {\cal A}_k^S(z; (J \mathcal L)^{n}) [\widehat w] \big) [ \widehat z] \|_s |_{\widehat w = J_S \mathcal E_S(z)} 
%\lesssim_{s, k} \,  \| \widehat J_S \mathcal E_S(z)\|  \,   \|\widehat z\|_0 (C(k, l)  \| z_\bot \|_0 )^{0 \lor {(n -2)}}  \,,
 \lesssim_{s, k} \|z_\bot\|_0^2  \|\widehat z\|_0 (C(k, 1)  \| z_\bot \|_0 )^{0 \lor {(n -2)}}
$$
and
$$
\| {\cal A}_k^S(z; (J \mathcal L)^{n}) [ \, d (J_S \mathcal E_S(z)) [ \widehat z] \, ]  \|_s \lesssim_{s, k}
\|z_\bot\|_0 \| \widehat z \|_0  (C(k, 1)  \| z_\bot \|_0 )^{n -1}\,.
$$
In view of the definition \eqref{z n X N def} of $a_k(\tau, z; X)$ one then obtains the claimed estimate 
$\| d a_k(\tau, z; X)[\widehat z]  \|_s \lesssim_{s, k} \| z_\bot \|_0 \| \widehat z\|_0$.
The estimates for  $\| d^l a_k(\tau, z; X)[\widehat z_1, \ldots, \widehat z_l]\|_s$
with $l \ge 2$ are derived in a similar fashion.
Finally let us consider the estimates of the derivatives of the remainder term.
For any $n \ge 1$, $z \in \mathcal V \cap h_0^s$, $0 \le \tau \le 1$, $\widehat z \in h_0^s,$
it follows from Lemma \ref{lemma potenze JM L(q,z)} and the product rule that
\begin{align}
\| d ({\cal R}_N(z; (J  \mathcal L)^n) [J \mathcal E(z) ])  [\widehat z] \|_{s + N +1} \lesssim_{s, N} 
&  \|J \mathcal E(z)\|_0 \big(\| \widehat z\|_s + \| z_\bot \|_s \|\widehat z \|_0  \big) \big( C_0 (N)  \| z_\bot \|_0 )^{n-1} \nonumber \\
& +  \| d J \mathcal E(z) [\widehat z]\|_0 |\|z \|_s \big( C_0 (N)  \| z_\bot \|_0 )^{n-1} \nonumber
\end{align}
Using again that by Lemma \ref{lemma E(z)}, $\|J \mathcal E(z)\|_0 \lesssim \|z_\bot\|_0^2$ and $\| d J \mathcal E(z) [\widehat z]\|_0 \lesssim \|z_\bot\|_0 \| \widehat z \|_0$
and taking into account the definition \eqref{z n X N def} of $ {\cal R}_N (\tau,  z; X)$ one sees that 
$$
\| d {\cal R}_N(\tau, z; X) [\widehat z] \|_{s+ N + 1} \lesssim_{s, N} 
\|z_\bot\|_0 \| \widehat z \|_0 + \|z_\bot\|_0^2 \big(\| \widehat z\|_s + \| z_\bot \|_s \|\widehat z \|_0  \big) +   \|z_\bot\|_0 \| z_\bot \|_s \|\widehat z \|_0\,,
$$
yielding the claimed estimate for $\| d {\cal R}_N(\tau, z; X) [\widehat z] \|_{s+ N + 1}$. The ones for $\| d^l {\cal R}_N(\tau, z; X)[\widehat z_1, \ldots, \widehat z_l] \|_{s + N + 1}$
with $l \ge 2$ are derived in a similar fashion.
\end{proof}

%\subsection{Expansion of the flow corresponding to $X$.}\label{costuzione mappa Psi simplettica}
%\noindent
%{\em Expansion of the flow corresponding to $X$:} We now consid the flow $\Psi^{\tau_0, \tau}$,  of the non autonomous PDE 
%\begin{equation}\label{ODE X} \partial_\tau z = X(\tau, z)\,. \end{equation}
After these preliminary considerations we can now state the main result of this section, saying  that for any $\tau_0, \tau \in [0, 1]$, 
the flow map $\Psi_X^{\tau_0, \tau}$, defined on $\mathcal V'$ and with values in $\mathcal V$, admits an expansion,
referred to as parametrix for the solution of the initial value problem of $\partial_\tau z = X(\tau, z)$. 
\begin{theorem}\label{espansione flusso per correttore}
(i) For any $\tau_0, \tau \in [0, 1]$, $N \in \N$, and $z = (z_S, z_\bot) \in \mathcal V'$,
\begin{equation}\label{espansione asintotica Psi tau 0 tau}
\Psi_X^{\tau_0, \tau}(z) = (z_S, \, z_\bot) +  \big( 0, \, \, {\cal F}_\bot \circ\sum_{k = 1}^N   a_k(z; \Psi_X^{\tau_0, \tau}) \,\, \partial_x^{- k}{\cal F}_\bot^{- 1}[z_\bot] \big)  + {\cal R}_N( z; \Psi_X^{\tau_0, \tau})
\end{equation}
where for any $\tau_0, \tau \in [0, 1]$, $1 \le k \le N$, and $s \geq 0$, the maps 
$$
\mathcal V' \to H^s, z \mapsto a_k(z; \Psi_X^{\tau_0, \tau}), \qquad
\mathcal V' \cap h^s_0 \to  h^{s + N +1}_0, z \mapsto {\cal R}_N(z; \Psi_X^{\tau_0, \tau})
$$ 
are real analytic. Furthermore, for any $z \in \mathcal V'$,  $ \widehat z \in h^0_0$,
$$
 \| a_k(z; \Psi_X^{\tau_0, \tau} ) \|_s \lesssim_{s, k} \| z_\bot \|_0^2\,, \qquad  \| d a_k(z; \Psi_X^{\tau_0, \tau} )[\widehat z]\|_s \lesssim_{s, k} \| z_\bot \|_0 \| \widehat z\|_0 
 $$
 and for any $ \widehat z_1, \ldots, \widehat z_l \in h^0_0$, $l \geq 2$,
$$
 \| d^l a_k(z; \Psi_X^{\tau_0, \tau} ) [\widehat z_1, \ldots, \widehat z_l]\|_s \lesssim_{s, k, l} \prod_{j = 1}^l \| \widehat z_j\|_0\,. 
 %\quad \| X_{\Psi, N}(\tau_0, \tau; z) \|_{s + N} \lesssim_{s, N} \|  z\|_s \| z \|_0\, \quad \forall s \geq 0\,. 
$$
The remainder term satisfies the following estimates: for any $z \in \mathcal V'  \cap h^s_0$,  $\widehat z \in h^s_0$
$$
 \|{\cal R}_N(z; \Psi_X^{\tau_0, \tau}) \|_{s + N +1} \lesssim_{s, N} \| z_\bot \|_s \| z_\bot \|_0\,, \qquad
 \| d {\cal R}_N(z; \Psi_X^{\tau_0, \tau}) [\widehat z]\|_{s + N +1} \lesssim_{s, N} \| z_\bot \|_s \| \widehat z\|_0 + \| z_\bot \|_0 \| \widehat z\|_s \,, 
 $$
and for any  $\widehat z_1, \ldots, \widehat z_l \in h^s_0$, $l \geq 2$,
 $$
 \| d^l {\cal R}_N(z; \Psi_X^{\tau_0, \tau})[\widehat z_1, \ldots, \widehat z_l] \|_{s + N +1} \lesssim_{s, N, l} \, 
 \sum_{j = 1}^l \| \widehat z_j \|_s \prod_{i \neq j} \| \widehat z_i\|_0  \, + \, \| z_\bot \|_s \prod_{j = 1}^l \| \widehat z_j\|_0\,.
$$
(ii) In particular, the statements of item (i)  hold for $\Psi_C := \Psi_X^{0, 1}: \mathcal V' \to \Psi_X^{0, 1}(\mathcal V')$, referred to as symplectic corrector,
and $ \Psi_X^{1, 0} : \mathcal V' \to  \Psi_X^{1, 0}(\mathcal V')$,
which by a slight abuse of terminology with respect to its domain of definition we refer to as the inverse of  $\Psi_C$ and denote by $\Psi_C^{- 1}$.
The expansion of the map $\Psi_C$ is then written as ($z \in \mathcal V'$)
$$
\Psi_C(z) = z +  \big( 0 , \,  {\cal F}_\bot  \circ \sum_{k = 1}^N  a_k(z;  \Psi_C) \partial_x^{- k} {\cal F}_\bot^{- 1}[z_\bot ] \, \big) + {\cal R}_N(z; \Psi_C)
$$
where
$$
a_k(z; \Psi_C) := a_k(z; \Psi_X^{0, 1})\,, \quad {\cal R}_N( z; \Psi_C) := {\cal R}_N(z; \Psi_X^{0, 1})\,.
$$
Similarly, the expansion for the inverse $\Psi_C^{- 1}(z)$, $z \in \mathcal V'$, is written as
$$
\Psi_C(z)^{- 1} = z +  \big( 0 , \,  {\cal F}_\bot  \circ \sum_{k = 1}^N  a_k(z;  \Psi_C^{- 1}) \partial_x^{- k} {\cal F}_\bot^{- 1}[z_\bot ] \, \big) + {\cal R}_N(z; \Psi_C^{- 1})
$$ 
where
$$
a_k(z;  \Psi^{- 1}_C) := a_k(z;  \Psi_X^{1, 0})\,, \quad {\cal R}_N(z; \Psi_C^{- 1}) := {\cal R}_N(z; \Psi_X^{1, 0}).
$$ 
\end{theorem}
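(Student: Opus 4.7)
The plan is to derive the expansion by substituting the expansion of the vector field from Lemma \ref{lemma campo vettoriale} into the integral form of the ODE and then isolating the principal pseudodifferential part from the remainder. Writing $w(s) := \Psi_X^{\tau_0, s}(z)$, the flow satisfies
$$
\Psi_X^{\tau_0, \tau}(z) - z = \int_{\tau_0}^\tau X(s, w(s)) \, ds = \int_{\tau_0}^\tau \Big( 0,\, \mathcal{F}_\bot \sum_{k=1}^N a_k(s, w(s); X)\, \partial_x^{-k}\, \mathcal{F}_\bot^{-1}[w(s)_\bot] \Big) ds + \int_{\tau_0}^\tau \mathcal{R}_N(s, w(s); X) \, ds.
$$

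The natural definition of the coefficients in the expansion \eqref{espansione asintotica Psi tau 0 tau} is
$$
a_k(z; \Psi_X^{\tau_0, \tau}) := \int_{\tau_0}^\tau a_k(s, w(s); X) \, ds,
$$
which is real analytic $\mathcal V' \to H^s$ as a composition of the real-analytic flow (Lemma \ref{estimates for flow}) with the real-analytic coefficients of Lemma \ref{lemma campo vettoriale}. The tame estimates on $a_k(z; \Psi_X^{\tau_0, \tau})$ and its $l$-th derivatives follow at once from the corresponding estimates of Lemma \ref{lemma campo vettoriale} together with the observation that, on $\mathcal V'$, one has $\|w(s)_\bot\|_0 \lesssim \|z_\bot\|_0$ and $\|d^l w(s)[\widehat z_1,\dots,\widehat z_l]\|_0 \lesssim \prod_j\|\widehat z_j\|_0$ uniformly in $s \in [0,1]$.

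Splitting $w(s)_\bot = z_\bot + (w(s)-z)_\bot$ to single out the principal term $\partial_x^{-k}\mathcal{F}_\bot^{-1}[z_\bot]$, everything else is collected in
$$
\mathcal{R}_N(z; \Psi_X^{\tau_0, \tau}) := \int_{\tau_0}^\tau \Big( 0,\, \mathcal{F}_\bot \sum_{k=1}^N a_k(s, w(s); X)\, \partial_x^{-k}\, \mathcal{F}_\bot^{-1}[(w(s) - z)_\bot] \Big) ds + \int_{\tau_0}^\tau \mathcal{R}_N(s, w(s); X) \, ds.
$$
The hard part is to verify that $\mathcal{R}_N(z; \Psi_X^{\tau_0, \tau})$ is genuinely $(N{+}1)$-smoothing. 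The second integral is $(N{+}1)$-smoothing directly by Lemma \ref{lemma campo vettoriale}, but for the first integral Lemma \ref{estimates for flow} provides only one-smoothing of $(w(s) - z)_\bot$, so $\partial_x^{-k}$ followed by multiplication by the $C^\infty$ symbol $a_k(s, w(s); X)$ gives a priori only $(k{+}1)$-smoothing — insufficient when $k < N$. The bootstrap is to substitute into this expression the already-established expansion for $\Psi_X^{\tau_0, s}(z) - z$ itself: the principal part of $(w(s) - z)_\bot$ is, by this expansion, a pseudodifferential image of $z_\bot$, which after being hit by another $\partial_x^{-k}$ reorganizes into a pseudodifferential operator of strictly lower order whose coefficients can be absorbed into $a_k(z;\Psi_X^{\tau_0,\tau})$ (via the composition rule of Lemma \ref{lemma composizione pseudo}), while the residual piece becomes at least two-smoothing. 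Iterating this reorganization $N$ times (or, equivalently, running a Picard-type argument in which each iterate gains one smoothing order) yields the claimed $(N{+}1)$-smoothing of $\mathcal R_N(z;\Psi_X^{\tau_0,\tau})$. The derivative estimates follow by differentiating under the integral, applying the chain and product rules, and invoking the multilinear estimates of Lemma \ref{lemma campo vettoriale} and Lemma \ref{estimates for flow}, exactly as in the analogous argument in \cite[Lemma 4.4]{Kappeler-Montalto}.

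Part (ii) is an immediate specialization of (i) to $(\tau_0,\tau) = (0,1)$ for $\Psi_C$ and to $(\tau_0,\tau) = (1,0)$ for $\Psi_C^{-1}$, the latter inversion being justified by the group property $\Psi_X^{1,0} \circ \Psi_X^{0,1} = \mathrm{id}$ on $\mathcal V'$, which follows from the uniqueness of solutions to the non-autonomous ODE $\partial_\tau z = X(\tau, z)$.
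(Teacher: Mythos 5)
Your proposal is correct and follows essentially the same route as the paper: Duhamel's formula, substitution of the expansion of $X$ from Lemma \ref{lemma campo vettoriale}, reorganization of the lower-order pseudodifferential corrections via Lemma \ref{lemma composizione pseudo} (which works because the correction to the coefficient of $\partial_x^{-k}$ involves only the coefficients of strictly higher order, exactly the paper's recursive definition of $a_k$ through the terms $b_k$), and a finite bootstrap in the smoothing index for the remainder, which is the paper's Volterra-equation-plus-Gronwall step in disguise. The only cosmetic caveat is that the tame bounds you claim ``follow at once'' hold for your naive coefficients $\int_{\tau_0}^\tau a_k(s,w(s);X)\,ds$; after absorbing the corrections you must re-verify them by induction on $k$ using the interpolation Lemma \ref{lemma interpolation}, as the paper does.
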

\begin{proof}
Clearly, item (ii) is a direct consequence of (i). Since the proof of item (i) is quite lengthy, we divide it up into several steps.
First note that the flow map $\Psi^{\tau_0, \tau} \equiv \Psi_X^{\tau_0, \tau}$ is a bounded nonlinear operator acting on $\mathcal V' \cap h^s_0$, $s \ge 0$, satisfying the integral equation
\begin{equation}\label{Duhamel ODE X}
\Psi^{\tau_0, \tau}(z) = z + \int_{\tau_0}^\tau X(t, \Psi^{\tau_0, t}(z))\, d t\,.  
\end{equation}
Using the latter equation, the coefficients $a_k(z; \Psi^{\tau_0, \tau})$, $k \ge 1,$ and the remainder term ${\cal R}_N(z; \Psi_X^{\tau_0, \tau})$ of the parametrix  \eqref{espansione asintotica Psi tau 0 tau}
are determined inductively. 
By \eqref{espansione campo vettoriale correttore}, one obtains for any $0 \le  \tau_0, t \le 1,$ $z \in \mathcal V'$, 
\begin{equation}\label{hoch schule - 1}
X(t, \Psi^{\tau_0, t}(z))  = 
\Big( \, 0, \,\, {\cal F}_\bot \circ \sum_{k = 1}^N  a_k(t, \Psi^{\tau_0, t}(z); X)  \,\, \partial_x^{- k} {\cal F}_\bot^{- 1}[\pi_\bot \Psi^{\tau_0, t}(z)]   \,\Big)  + {\cal R}_N(t , \Psi^{\tau_0, t}(z); X)\,.
\end{equation}
{\it Expansion of $\partial_x^{- k} {\cal F}_\bot^{- 1}[\pi_\bot \Psi^{\tau_0, t}(z)]$, $1 \le k \le N$:} To find {\em candidates} for the coeffcients $a_k(z; \Psi_X^{\tau_0, \tau})$
we argue formally and substitute the expansion \eqref{espansione asintotica Psi tau 0 tau}
into the expression $\partial_x^{- k} {\cal F}_\bot^{- 1}[\pi_\bot \Psi^{\tau_0, t}(z)]$ yielding
\begin{align}
\partial_x^{- k} {\cal F}_\bot^{- 1}[\pi_\bot \Psi^{\tau_0, t}(z)] & = \partial_x^{- k} \Big( {\cal F}_\bot^{- 1}[z_\bot] + 
\sum_{j = 1}^N  a_j(z; \Psi^{\tau_0, t}) \,\, \partial_x^{- j}{\cal F}_\bot^{- 1}[z_\bot] 
 + {\cal F}_\bot^{- 1} \pi_\bot {\cal R}_N(z; \Psi^{\tau_0, t})  \Big) 
\nonumber\\
& = \partial_x^{- k} \Big( {\cal F}_\bot^{- 1}[z_\bot] + \sum_{j = 1}^{N - k} a_j(z; \Psi^{\tau_0, t}) \,\, \partial_x^{- j} {\cal F}_\bot^{- 1}[z_\bot] \, \Big) + {\cal R}_{N, k}^{(1)}(t, z; \tau_0) \label{hoch schule 0}
\end{align}
where 
\begin{equation}\label{cal R n N (1)}
{\cal R}_{N, k}^{(1)}(t, z; \tau_0) := \partial_x^{- k} \Big(  \sum_{j = N - k + 1}^{N }  a_j(z; \Psi^{\tau_0, t}) \,\, \partial_x^{- j} {\cal F}_\bot^{- 1}[z_\bot]  + {\cal F}_\bot^{- 1} \pi_\bot {\cal R}_N(z; \Psi^{\tau_0, t}) \Big) \,.
\end{equation}
Using Lemma \ref{lemma composizione pseudo}(i) and the notation established there, one has 
%writing $a_j^{\Psi^{\tau_0, t}}(z) \equiv a_j(z; \Psi^{\tau_0, t})$ to simplify notation, one has
$$
\partial_x^{- k} \big( a_j(z; \Psi^{\tau_0, t}) \,  \partial_x^{- j}{\cal F}_\bot^{- 1}[z_\bot] \big)= 
\sum_{i = 0}^{N - k - j} C_i( k, j) (\partial_x^i a_j(z; \Psi^{\tau_0, t}) ) \, \partial_x^{- k - j - i} {\cal F}_\bot^{- 1}[z_\bot] + {\cal R}_{N, k, j}^{(2)}(t, z; \tau_0)
$$ 
where 
\begin{equation}\label{definition R (2)}
{\cal R}_{N,  k, j}^{(2)}(t, z; \tau_0) :=  {\cal R}_{N,  k, j}^{\psi do}(a_j(z; \Psi^{\tau_0, t})) \, {\cal F}_\bot^{- 1}[z_\bot]\,.
\end{equation}
By Lemma \ref{lemma composizione pseudo}, for any $z \in \mathcal V' \cap h^s_0$, $s \ge 0$, $0 \le t, \tau_0 \le 1$, $1 \le j \le N$
\begin{equation}\label{stima cal R n k N (2)}
\| {\cal R}_{N, k, j}^{(2)}(t, z; \tau_0)\|_{s + N + 1} \lesssim_{s, N} {\rm max}_{1 \le i  \le N}\| a_i (z ;  \Psi^{\tau_0, t})\|_{s + 2 N}  \| z_\bot \|_s\,. 
\end{equation}
Hence, \eqref{hoch schule 0} reads
$$
\partial_x^{- k} {\cal F}_\bot^{- 1}[\pi_\bot \Psi^{\tau_0, t}(z)] 
  = \partial_x^{- k} {\cal F}_\bot^{- 1}[z_\bot] + \sum_{j = 1}^{N - k} \sum_{i  =0}^{N - k - j} C_i(k, j)(\partial_x^i  a_j(z; \Psi^{\tau_0, t})) \,\,  \partial_x^{- k - j - i} {\cal F}_\bot^{- 1}[z_\bot]   +
  {\cal R}_{N, k}^{(3)}( t, z; \tau_0)
% {\cal R}_{k , N}^{(1)}(z) +  \sum_{j = 1}^{N - k} {\cal R}_{N, k, j}^{(2)}[z_\bot ] \,.
$$
where 
\begin{equation}\label{definizione cal R n N (3)}
{\cal R}_{N, k}^{(3)}(t, z; \tau_0) := {\cal R}_{N, k}^{(1)}(t, z;  \tau_0) +  \sum_{j = 1}^{N - k} {\cal R}_{N, k, j}^{(2)}(t, z;  \tau_0) \,. 
\end{equation}
Changing in the double sum $\sum_{j = 1}^{N - k} \sum_{i  =0}^{N - k - j}$  the index $i$ of summation to $n:= i+j$ and then interchanging the order of summation, one obtains
$$
\sum_{j = 1}^{N - k} \sum_{i  =0}^{N - k - j} C_i(k, j)(\partial_x^i  a_j) \,\,  \partial_x^{- k - j - i} 
%= \sum_{j = 1}^{N - k} \sum_{n  =j}^{N - k - j} C_{n-j} (k, j)(\partial_x^{n-j}  a_j) \,\,  \partial_x^{- k - n} 
=\sum_{n = 1}^{N - k} \sum_{j=1}^{n} C_{n-j} (k, j)(\partial_x^{n-j}  a_j) \,\,  \partial_x^{- k - n} 
$$
implying that  $\partial_x^{- k} {\cal F}_\bot^{- 1}[\pi_\bot \Psi^{\tau_0, t}(z)] $ equals
\begin{equation}\label{hoch schule 2}
 \partial_x^{- k} {\cal F}_\bot^{- 1}[z_\bot] + \sum_{n = 1}^{N - k} \Big( \sum_{ j=1}^{n} C_{n -j}(k , j) (\partial_x^{n-j} a_j(z; \Psi^{\tau_0, t})) \Big) \partial_x^{- k - n} {\cal F}_\bot^{- 1}[z_\bot] + 
{\cal R}_{N, k}^{(3)}(t, z;  \tau_0) \,.
\end{equation}

\medskip

\noindent
{\it Expansion of $ \sum_{k = 1}^N  a_k(t, \Psi^{\tau_0, t}(z); X)  \,\, \partial_x^{- k} {\cal F}_\bot^{- 1}[\pi_\bot \Psi^{\tau_0, t}(z)]$:} 
To simplify notation, introduce
\begin{equation}\label{definition a _ k (t,z; tau)}
a_k(t, z; \tau_0) := a_k(t, \Psi^{\tau_0, t}(z); X)
\end{equation}
and then substitute \eqref{hoch schule 2} into $ \sum_{k = 1}^N  a_k(t, \Psi^{\tau_0, t}(z); X)  \,\, \partial_x^{- k} {\cal F}_\bot^{- 1}[\pi_\bot \Psi^{\tau_0, t}(z)]$ to get
\begin{align}
   \sum_{k = 1}^N & a_k(t, z; \tau_0) \,\,  \partial_x^{- k} {\cal F}_\bot^{- 1}[\pi_\bot \Psi^{\tau_0, t}(z)]    \, 
  = \,  \sum_{k = 1}^N   a_k(t, z; \tau_0) \,\,    \partial_x^{- k} {\cal F}_\bot^{- 1}[z_\bot] \,\,   \label{sumatra 0}\\
& + \,  \sum_{k = 1}^N \sum_{n = 1}^{N - k} \sum_{ j = 1}^n C_{n-j}(k, j) a_k(t, z; \tau_0) (\partial_x^{n - j} a_j(z; \Psi^{\tau_0, t}))  \, \partial_x^{-k - n } {\cal F}_\bot^{- 1}[z_\bot] + 
\sum_{k = 1}^N   a_k(t, z; \tau_0) {\cal R}_{N, k}^{(3)}(t, z;  \tau_0)  \nonumber
%& = \sum_{k = 1}^N   a_{k, X}(\tau_0, t )  \partial_x^{- k} {\cal F}_\bot^{- 1}[z_\bot] + 
%\sum_{k = 1}^N \sum_{j = 1}^{N - k} \sum_{i= 1}^j C(j, k, i)  {\cal F} \circ  a_{k, X}(\tau_0, t )  (\partial_x^{j - i} a_{i, \Psi}) \circ \partial_x^{- k - j} {\cal F}_\bot^{- 1}[z_\bot]  \\
%& \quad + {\cal R}_{N, k}^{(3)}(z ; \tau_0, t) \nonumber
\end{align} 
Changing the index of summation $n$ to $l:= k+n$ and then interchanging the sum with respect to $k$ and $l$ and in turn with respect to $k$ and $j$, 
the triple sum in \eqref{sumatra 0} becomes
\begin{align}
& \sum_{k = 1}^N \sum_{l = k+ 1}^{N} \sum_{ j = 1}^{l - k} C_{l - k -j}(k, j) a_k(t, z; \tau_0) (\partial_x^{l - k - j} a_j(z; \Psi^{\tau_0, t}))  \, 
\partial_x^{-l } {\cal F}_\bot^{- 1}[z_\bot] \nonumber \\
& =  \sum_{l = 2}^N \Big( \sum_{k= 1}^{l - 1} \sum_{ j = 1}^{l - k} C_{l - k -j}(k, j) a_k(t, z; \tau_0) (\partial_x^{l - k - j} a_j(z; \Psi^{\tau_0, t})) \Big) \, 
\partial_x^{-l } {\cal F}_\bot^{- 1}[z_\bot] \nonumber \\
& =  \sum_{l = 2}^N \Big( \sum_{j= 1}^{l - 1} \sum_{ k = 1}^{l - j} C_{l - k -j}(k, j) a_k(t, z; \tau_0) (\partial_x^{l - k - j} a_j(z; \Psi^{\tau_0, t})) \Big) \, 
\partial_x^{-l } {\cal F}_\bot^{- 1}[z_\bot]\,.
\end{align}
{\it Expansion of $X(t, \Psi^{\tau_0, t}(z))$:}
Writing $k$ for $l$ and $n$ for $k$,  the expansion \eqref{hoch schule - 1} of $X(t, \Psi^{\tau_0, t}(z))$ takes the form
\begin{equation}\label{expansion for X(t, Psi ^ {tau_0, t}(z))}
\Big( \, 0, \,\, \mathcal F_\bot \big(  a_1(t, z; \tau_0) \, \partial_x^{- 1} {\cal F}_\bot^{- 1}[z_\bot] \big) \,
+ \, \mathcal F_\bot \circ \sum_{k = 2}^N \big( a_k(t, z; \tau_0) +  b_k(t, z; \tau_0) \big) \, \partial_x^{-k } {\cal F}_\bot^{- 1}[z_\bot] \,  \Big) 
+\, {\cal R}_N^{(4)}(t, z; \tau_0)
\end{equation}
where
\begin{equation}\label{definition b k}
b_k(t, z; \tau_0)  = 
 \sum_{j= 1}^{k - 1} \sum_{ n = 1}^{k - j} C_{k - n -j}(n, j) a_n(t, z; \tau_0) \, \partial_x^{k - n - j} a_j(z; \Psi^{\tau_0, t})
\end{equation}
and
\begin{equation}\label{cal R N (3)}
{\cal R}_N^{(4)}( t, z; \tau_0) := \Big( 0, \,\, {\cal F}_\bot \circ   \sum_{k = 1}^N  a_k(t, z; \tau_0) {\cal R}_{N, k}^{(3)}(t, z; \tau_0) \Big) 
+  \,{\cal R}_N(t , \Psi^{\tau_0, t}(z); X)\,.
\end{equation}

\noindent
{\it Definition and estimates of $a_k(z; \Psi^{\tau_0, t})$, $1 \le k \le N$:}
The fact that for any given $1 \le k \le N$, the coefficient $b_k(t, z; \tau_0)$ only depends on the unknown coefficients $a_j(z; \Psi^{\tau_0, t})$ with $1 \le j \le k-1,$ but not on $a_k(z; \Psi^{\tau_0, t})$
allows to determine $a_k(z; \Psi^{\tau_0, t})$ recursively by using equation \eqref{Duhamel ODE X}. Indeed,
substituting \eqref{expansion for X(t, Psi ^ {tau_0, t}(z))} for $X(t, \Psi^{\tau_0, t}(z))$ into equation \eqref{Duhamel ODE X} and combining it with \eqref{espansione asintotica Psi tau 0 tau},
leads, {\em up to remainder terms}, to the equations
$$
\mathcal F_\bot   \big( a_1(z; \Psi^{\tau_0, \tau}) \,\, \partial_x^{- 1}{\cal F}_\bot^{- 1}[z_\bot] \big)  
%= \int_{\tau_0}^\tau \mathcal F_\bot \big(  a_1(t, z; \tau_0)d\, t \, \, \partial_x^{- 1} {\cal F}_\bot^{- 1}[z_\bot] \big)
=  \mathcal F_\bot \, \big( \int_{\tau_0}^\tau   a_1(t, z; \tau_0)d\, t \, \, \partial_x^{- 1} {\cal F}_\bot^{- 1}[z_\bot] \big)
$$
and for any $2 \le k \le N$, 
$$
\mathcal F_\bot   \big( a_k(z; \Psi^{\tau_0, \tau}) \,\, \partial_x^{- k}{\cal F}_\bot^{- 1}[z_\bot] )
= \,  \mathcal F_\bot  \big( \int_{\tau_0}^\tau (a_k(t, z; \tau_0) +  b_k(t, z; \tau_0)) d\,t \,\, \partial_x^{-k } {\cal F}_\bot^{- 1}[z_\bot] \big)\,.
$$
We then define for any $z \in \mathcal V',$ $0 \le \tau_0, \tau \le 1$, 
$$
a_1(z; \Psi^{\tau_0, \tau})  :=  \int_{\tau_0}^\tau a_1(t, z; \tau_0)\, d t
$$ 
and for any $k \ge 2,$ $a_k(z; \Psi^{\tau_0, \tau}) := \int_{\tau_0}^\tau ( a_k(t, z; \tau_0) + b_k(t, z; \tau_0) )\, d t $, or more explicitly,
\begin{equation}\label{formule b rho}
a_k(z; \Psi^{\tau_0, \tau}) 
= \int_{\tau_0}^\tau \big( a_k(t, z; \tau_0) +  \sum_{j= 1}^{k - 1} \sum_{ n = 1}^{k - j} C_{k - n -j}(n, j) a_n(t, z; \tau_0) \, \partial_x^{k - n - j} a_j(z; \Psi^{\tau_0, t}) \big)\, d t \,.
\end{equation}
To prove the claimed estimates for $a_k(z; \Psi^{\tau_0, \tau})$, we first estimate $a_k(t, z; \tau_0)$. Recall that by \eqref{definition a _ k (t,z; tau)},
$a_k(t, z; \tau_0) = a_k(t, \Psi^{\tau_0, t}(z); X)$. 
By Lemma \ref{lemma campo vettoriale} and Lemma \ref{estimates for flow} one has
for any $0 \le \tau_0, t  \le 1$, $z \in \mathcal V'$, and $s \ge 0,$
\begin{equation}\label{estimate a k (t, z, tau 0)}
\| a_k(t, z; \tau_0) \|_s \lesssim_{s, k} \| \pi_\bot \Psi^{\tau_0, t}(z) \|_0^2 \lesssim_{s, k}  \| z_\bot \|^2_0 \,.
\end{equation}
It then follows from the definition of $ a_1(z; \Psi^{\tau_0, \tau})$ that for any $s \ge 0$,
$$
\| a_1(z; \Psi^{\tau_0, \tau})  \|_s  \lesssim_s  \, \, \| z_\bot \|_0^2 \,
\quad \forall  \, 0 \le \tau_0, \tau \le 1, \, \forall \, z \in \mathcal V'\,.
$$
To prove corresponding estimates for $a_k(z; \Psi^{\tau_0, \tau})$ with $2 \le k \le N,$ we argue by induction.
Assume that 
%the functions $a_1(z; \Psi^{\tau_0, \tau}), \ldots , a_{k-1}(z; \Psi^{\tau_0, \tau})$ 
for any $1 \le j \le k-1$ and $ s \geq 0$,
\begin{equation}\label{stima a Psi k completa nella dim}
\| a_j(z; \Psi^{\tau_0, \tau}) \|_s \lesssim_{s, j} \| z_\bot \|_0^2, \quad \forall  \, 0 \le \tau_0, \tau \le 1, \,  \forall  \, z \in \mathcal V'\,.
\end{equation}
%By the induction hyptothesis \eqref{stima induttiva a Psi k nella dim}, 
By the estimate \eqref{estimate a k (t, z, tau 0)}, the definition \eqref{formule b rho} of $a_k(z; \Psi^{\tau_0, \tau})$,
 and the interpolation Lemma \ref{lemma interpolation}, 
one then concludes that the estimate \eqref{stima a Psi k completa nella dim} is also satisfied for $j = k$. 
Using the analyticity properties established for $a_k(\tau, z; \tau_0)$ and $\Psi^{\tau_0, \tau}(z)$, one verifies the ones stated for the coefficients $a_k(z; \Psi^{\tau_0, \tau})$.
%This proves that 
%\begin{equation}\label{stima a Psi k completa nella dim}
%\| a_k(z; \Psi^{\tau_0, \tau}) \|_s \lesssim_s \| z_\bot \|_0^2, \quad k = 1, \ldots, N\,, \quad \forall s \geq 0\,, \quad \forall z \in \mathcal V'\,. 
%\end{equation}

\smallskip

\noindent
{\em Estimates of the derivatives of $a_k(z; \Psi^{\tau_0, \tau})$:} 
%In a similar way, one proves the claimed estimates for the derivatives of $a_k(z; \Psi^{\tau_0, \tau})$, $1 \le k \le N$. 
By Lemma \ref{lemma campo vettoriale}, Lemma \ref{estimates for flow} and the chain rule one has
for any $0 \le \tau_0, t  \le 1$, $z \in \mathcal V'$, $\widehat z \in h_0^0$, $s \ge 0,$
\begin{equation}\label{estimate d a k (t, z, tau 0)}
\| d a_k(t, z; \tau_0) [\widehat z] \|_s  \lesssim_{s, k}  
\| \pi_\bot \Psi^{\tau_0, \tau}(z) \|_0 \|d \Psi^{\tau_0, \tau}(z) [\widehat z] \|_0 \lesssim_s  \| z_\bot \|_0 \| \widehat z\|_0
\end{equation}
and if in addition, $\widehat z_1, \ldots , \widehat z_l \in h^0_0$, $l \ge 2,$
\begin{equation}\label{estimate d ^ l a k (t, z, tau 0)}
\| d^l a_k(t, z; \tau_0)[\widehat z_1, \ldots , \widehat z_l]  \|_s \lesssim_s  \prod_{j=1}^l \| \widehat z_j\|_0\,.
\end{equation}
By the definition of $a_1(z; \Psi^{\tau_0, \tau})$,  \eqref{estimate d a k (t, z, tau 0)} and \eqref{estimate d ^ l a k (t, z, tau 0)}
yield the claimed estimate for $\| d^l a_1(z; \Psi^{\tau_0, \tau})[\widehat z_1, \ldots , \widehat z_l]  \|_s$ for any $l \ge 1$.
To prove corresponding estimates for the derivatives of $a_k(z; \Psi^{\tau_0, \tau})$ with $2 \le k \le N,$ we again argue by induction.
Assume that for any $1 \le j \le k-1$ and $ s \geq 0$,
\begin{equation}\label{estimates for d a_k z; Psi tau_0, tau}
\| d a_j(z; \Psi^{\tau_0, \tau}) [\widehat z] \|_s \lesssim_{s, j} \| z_\bot \|_0 \| \widehat z\|_0, \quad \forall  \, 0 \le \tau_0, \tau \le 1, \,  \forall  \, z \in \mathcal V'\,, \, \widehat z \in h^0_0\,.
\end{equation}
By the definition \eqref{formule b rho} of $a_k(z; \Psi^{\tau_0, \tau})$, the estimate \eqref{estimate d a k (t, z, tau 0)} and the product rule it then follows that
\eqref{estimates for d a_k z; Psi tau_0, tau} also holds for $j = k.$
The estimates for  $\| d^l a_k(z; \Psi^{\tau_0, \tau}) [\widehat z_1, \ldots, \widehat z_l]\|_s$
with $l \ge 2$ are derived in a similar fashion.

\smallskip

\noindent
{\it Definition and estimate of ${\cal R}_N( z; \Psi^{\tau_0, \tau})$:}   The remainder term ${\cal R}_N( z; \Psi^{\tau_0, \tau})$
 is defined so that  the identity \eqref{espansione asintotica Psi tau 0 tau} holds,
$$
{\cal R}_N( z; \Psi_X^{\tau_0, \tau}) := 
\Psi_X^{\tau_0, \tau}(z) - z  - \big( 0, \, \, {\cal F}_\bot \circ\sum_{k = 1}^N   a_k(z; \Psi_X^{\tau_0, \tau}) \,\, \partial_x^{- k}{\cal F}_\bot^{- 1}[z_\bot] \big)
$$
where $a_k(z; \Psi_X^{\tau_0, \tau})$ are given by \eqref{formule b rho}. 
By \eqref{Duhamel ODE X} and the expansion \eqref{expansion for X(t, Psi ^ {tau_0, t}(z))} of $X$, 
${\cal R}_N( z; \Psi^{\tau_0, \tau})$ satisfies
$$
{\cal R}_N( z; \Psi^{\tau_0, \tau}) = \int_{\tau_0}^\tau {\cal R}_N^{(4)}(t, z; \tau_0) \,d t
$$
where by \eqref{cal R N (3)}
\begin{equation}\label{remainder R N Psi^ tau0, tau}
\int_{\tau_0}^\tau {\cal R}_N^{(4)}(t, z; \tau_0) \,d t =  \Big( 0, \,\, {\cal F}_\bot \circ  \sum_{k = 1}^N\int_{\tau_0}^\tau  a_k(t, z; \tau_0) {\cal R}_{N, k}^{(3)}(t, z; \tau_0)   \, d t \Big)+  
\int_{\tau_0}^\tau\,{\cal R}_N(t , \Psi^{\tau_0, t}(z); X) \,d t\,.
\end{equation}
%${\cal R}_{N, \Psi} = {\cal R}_N^{(1)}$ where ${\cal R}_N^{(1)}$ is given in formula \eqref{formule b rho}. 
We estimate the two components  $\pi_S \int_{\tau_0}^\tau {\cal R}_N^{(4)}(t, z; \tau_0) \,d t$ and $\pi_\bot \int_{\tau_0}^\tau {\cal R}_N^{(4)}(t, z; \tau_0) \,d t$ 
of $\int_{\tau_0}^\tau {\cal R}_N^{(4)}(t, z; \tau_0) \,d t $ separately. 
By \eqref{remainder R N Psi^ tau0, tau}
$$
\pi_S \int_{\tau_0}^\tau {\cal R}_N^{(4)}(t, z; \tau_0) \,d t  = \int_{\tau_0}^\tau \pi_S {\cal R}_{N}(t, \Psi^{\tau_0, t}(z) ; X)\, d t
$$
and 
$$
\pi_\bot \int_{\tau_0}^\tau {\cal R}_N^{(4)}(t, z; \tau_0) \,d t = \int_{\tau_0}^\tau \pi_\bot {\cal R}_{N}(t, \Psi^{\tau_0, t}(z); X )\, d t + 
{\cal F}_\bot \circ  \sum_{k = 1}^N\int_{\tau_0}^\tau  a_k(t, z; \tau_0) {\cal R}_{N, k}^{(3)}(t, z; \tau_0)   \, d t \,. 
$$
By Lemma \ref{lemma campo vettoriale} and Lemma \ref{estimates for flow}, for any $z \in \mathcal V'$, $0 \le \tau_0, t \le 1,$
\begin{equation}\label{stima X Psi N (q)}
\|\pi_S {\cal R}_{N}(t, \Psi^{\tau_0, t}(z) ; X) \| \lesssim_{N} \| {\cal R}_{N}(t, \Psi^{\tau_0, t}(z) ; X) \|_0  \lesssim_{N} \| z_\bot \|_0^2\,,
\end{equation}
implying that 
%for any $z \in \mathcal V'$, $0 \le \tau_0, \tau \le 1,$
$$
\| \int_{\tau_0}^\tau \pi_S {\cal R}_{N}(t, \Psi^{\tau_0, t}(z) ; X)\, d t \| \lesssim_{s, N} \| z_\bot \|_0^2\,.
$$
By the definition \eqref{definizione cal R n N (3)} of ${\cal R}_{N, k}^{(3)}(t, z; \tau_0)$, one has
$$
\int_{\tau_0}^\tau  a_k(t, z; \tau_0) {\cal R}_{N, k}^{(3)}(t,  z; \tau_0) \,d t=  \int_{\tau_0}^\tau  a_k(t, z; \tau_0){\cal R}_{N, k}^{(1)}(t, z;  \tau_0) \,d t +  
\sum_{j = 1}^{N - k} \int_{\tau_0}^\tau  a_k(t, z; \tau_0){\cal R}_{N, k, j}^{(2)}(t, z;  \tau_0) \,d t\,.
$$
Furthermore, by  the definition \eqref{cal R n N (1)} of ${\cal R}_{N, k}^{(1)}(t, z;  \tau_0)$, the term $ \int_{\tau_0}^\tau  a_k(t, z; \tau_0){\cal R}_{N, k}^{(1)}(t, z;  \tau_0) \,d t $ equals 
\begin{equation}
 \int_{\tau_0}^\tau  a_k(t, z; \tau_0) \, \partial_x^{- k}  \big(  \sum_{j = N - k + 1}^{N }  a_j(z; \Psi^{\tau_0, t})\, \partial_x^{- j} {\cal F}_\bot^{- 1}[z_\bot] \big)  \,d t \,
 + \,  \int_{\tau_0}^\tau  a_k(t, z; \tau_0) \partial_x^{-k} {\cal F}_\bot^{- 1} \pi_\bot {\cal R}_N(z; \Psi^{\tau_0, t}) \,d t \,.
\end{equation}
Altogether, one concludes that $\pi_\bot {\cal R}_N(z; \Psi^{\tau_0, \tau})$ satisfies the integral equation
\begin{equation}\label{equazione integrale resto}
\pi_\bot {\cal R}_N(z; \Psi^{\tau_0, \tau}) =  B_N( \tau, z; \tau_0) 
+ \mathcal F_\bot \circ \int_{\tau_0}^\tau  \big( \sum_{k = 1}^N a_k(t, z; \tau_0) \partial_x^{-k} \big) {\cal F}_\bot^{- 1} \pi_\bot {\cal R}_N(z; \Psi^{\tau_0, t})\, d t
\end{equation}
where 
%$$ A_N(t, z; \tau_0):=   \sum_{k = 1}^N  a_k(t, z; \tau_0) \partial_x^{-k} $$
\begin{align}
B_N(\tau, z; \tau_0) :=  & \int_{\tau_0}^\tau \pi_\bot {\cal R}_{N}(t, \Psi^{\tau_0, t}(z); X )\, d t 
+ {\cal F}_\bot \circ  \sum_{k = 1}^N \sum_{j = 1}^{N - k} \int_{\tau_0}^\tau  a_k(t, z; \tau_0){\cal R}_{N, k, j}^{(2)}(t, z;  \tau_0) \,d t \nonumber \\
&  + {\cal F}_\bot \circ  \sum_{k = 1}^N \sum_{j = N - k + 1}^{N } \int_{\tau_0}^\tau  a_k(t, z; \tau_0) \partial_x^{- k} \big( \, a_j(z; \Psi^{\tau_0, t})  \,\, \partial_x^{- j} {\cal F}_\bot^{- 1}[z_\bot] \,  \big) \, \,d t \,.
\end{align}
By the estimates \eqref{stima cal R n k N (2)} of ${\cal R}_{N, k, j}^{(2)}(t, z;  \tau_0)$, 
the ones of $a_k(\tau, z; X)$ and ${\cal R}_N(\tau, z; X)$ given by Lemma \ref{lemma campo vettoriale}, 
the estimates of $\Psi^{\tau_0, t}(z)$, given by Lemma \ref{estimates for flow}, and the ones of $a_k(z; \Psi^{\tau_0, \tau})$, given by \eqref{stima a Psi k completa nella dim},
and using the interpolation Lemma \ref{lemma interpolation}, one obtains for any $s \ge 0,$
$$
\| B_N( \tau, z; \tau_0) \|_{s + N +1 } \lesssim_{s, N} \| z_\bot \|_s \| z_\bot \|_0\,, \quad \forall z \in \mathcal V' \cap h^s_0\,, \,\,\, \forall \,\,0 \le \tau_0, \tau \le 1\,. 
$$
Note that $ \sum_{k = 1}^N a_k(t, z; \tau_0) \partial_x^{-k}$ is a pseudodifferential operator of order $-1$ where by \eqref{estimate a k (t, z, tau 0)}
the coefficients $a_k(t, z; \tau_0)$ satisfy $ \| a_k(t, z; \tau_0)\|_s  \lesssim_{s, k} \| z_\bot \|^2_0 $.
%for any $0 \le t, \tau_0 \le 1$, $z \in \mathcal V'$
Hence for any $z \in \mathcal V' \cap h^s_0$, $0 \le \tau_0, \tau \le 1$,
$$
 \|  \sum_{k = 1}^N a_k(t, z; \tau_0) \partial_x^{-k} {\cal F}_\bot^{- 1} \pi_\bot {\cal R}_N(z; \Psi^{\tau_0, t})  \|_{s+N+1} 
 \lesssim_{s, N} \| z_\bot \|_0^2 \,  \| {\cal F}_\bot^{- 1} \pi_\bot {\cal R}_N(z; \Psi^{\tau_0, t}) \|_{s + N + 1}\,.
 $$
By Gronwall's inequality and since $\mathcal V'_\bot$ is a ball of sufficiently small radius, the integral equation \eqref{equazione integrale resto} yields that for any $s \ge 0,$
\begin{equation}\label{stima X Psi N (z)}
\| \pi_\bot {\cal R}_{N}(z; \Psi^{\tau_0, \tau}) \|_{s + N + 1} \lesssim_{s, N} \| z_\bot \|_s \| z_\bot \|_0\,, \quad \forall z \in \mathcal V' \cap h^s_0 \,, \,\,\, \forall \,\,0 \le \tau_0, t \le 1\,. 
\end{equation}
The estimates \eqref{stima X Psi N (q)}, \eqref{stima X Psi N (z)} imply the claimed estimate of ${\cal R}_{N}(z; \Psi^{\tau_0, \tau})$. 
The stated analyticity property of ${\cal R}_N( z; \Psi_X^{\tau_0, \tau})$ then follows from the already established analyticity properties of  
$\Psi_X^{\tau_0, \tau}(z)$, $a_k(\tau, z; \tau_0)$, and $a_k(z; \Psi^{\tau_0, \tau})$ (cf e.g. \cite[Theorem A.5]{GK}).

\smallskip

\noindent
{\em Estimates of the derivatives of ${\cal R}_{N}(z ; \Psi^{\tau_0, \tau})$:} The estimates of the derivatives of ${\cal R}_{N}(z ; \Psi^{\tau_0, \tau})$  can be obtained in a similar way 
as the ones for ${\cal R}_{N}(z ; \Psi^{\tau_0, \tau})$. Indeed, for any $s \ge 0$, $z \in \mathcal V' \cap h_0^s$, $0 \le \tau_0, \tau \le 1$, $\widehat z \in h_0^s,$
one has
$$
 d{\cal R}_{N}(z ; \Psi^{\tau_0, \tau})[\widehat z]  =  
 \Big( 0, \,\, {\cal F}_\bot \circ  \sum_{k = 1}^N\int_{\tau_0}^\tau d \big( a_k(t, z; \tau_0) {\cal R}_{N, k}^{(3)}(t, z; \tau_0) \big) [\widehat z]   \, d t \, \Big)
 \, +  \, \int_{\tau_0}^\tau\, d \big( {\cal R}_N(t , \Psi^{\tau_0, t}(z); X) \big) [\widehat z] \,d t\,. 
$$
Again, we estimate 
$\pi_S \big( d{\cal R}_{N}(z ; \Psi^{\tau_0, \tau})[\widehat z] \big) = \int_{\tau_0}^\tau\,  \pi_S \big( d \big( {\cal R}_N(t , \Psi^{\tau_0, t}(z); X) \big) [\widehat z] \big) \,d t $
and $\pi_\bot \big( d{\cal R}_{N}(z ; \Psi^{\tau_0, \tau})[\widehat z] \big)$  
%{\cal F}_\bot \circ  \sum_{k = 1}^N\int_{\tau_0}^\tau d \big( a_k(t, z; \tau_0) {\cal R}_{N, k}^{(3)}(t, z; \tau_0) \big) [\widehat z]   \, d t + 
%\int_{\tau_0}^\tau\,  \pi_\bot \big( d \big( {\cal R}_N(t , \Psi^{\tau_0, t}(z); X) \big) [\widehat z] \big)  \,d t 
separately. By Lemma \ref{lemma campo vettoriale}, Lemma \ref{estimates for flow}, and the chain rule, one has
\begin{equation}\label{estimate of S projection of derivative}
\| \int_{\tau_0}^\tau\,  \pi_S \big( d \big( {\cal R}_N(t , \Psi^{\tau_0, t}(z); X) \big) [\widehat z] \big) \,d t \| \lesssim_{N} \| z_\bot \|_0 \|\widehat z\|_0
\end{equation}
whereas by \eqref{equazione integrale resto}, 
$\pi_\bot \big( d{\cal R}_{N}(z ; \Psi^{\tau_0, \tau})[\widehat z] \big)$
% = d \big( \pi_\bot  {\cal R}_{N}(z ; \Psi^{\tau_0, \tau}) \big) [\widehat z] $
satisfies
\begin{equation}\label{equation for the derivative of the remainder}
\pi_\bot \big( d {\cal R}_N(z; \Psi^{\tau_0, \tau}) \big[\widehat z] \big)=  B^{(1)}_N( \tau, z; \tau_0) [ \widehat z]
+ \mathcal F_\bot \circ \int_{\tau_0}^\tau  
\big( \sum_{k = 1}^N a_k(t, z; \tau_0) \partial_x^{-k} \big) {\cal F}_\bot^{- 1} \pi_\bot \big( d {\cal R}_N(z; \Psi^{\tau_0, t}) [\widehat z] \big) \, d t
\end{equation}
with $B^{(1)}_N( \tau, z; \tau_0)[ \widehat z]$ given by
$$
B^{(1)}_N( \tau, z; \tau_0) [ \widehat z] = d \big( B_N( \tau, z; \tau_0) \big) [\widehat z] + 
\mathcal F_\bot \circ \int_{\tau_0}^\tau  
\big( \sum_{k = 1}^N  (d a_k(t, z; \tau_0)[\widehat z]) \partial_x^{-k} \big) {\cal F}_\bot^{- 1} \pi_\bot {\cal R}_N(z; \Psi^{\tau_0, t}) \, d t\,.
$$
Since
$$
\| B_N^{(1)}( \tau, z; \tau_0) [\widehat z] \|_{s + N +1 } \lesssim_{s, N} 
\| z_\bot \|_s  \| \widehat z \|_0 +  \| z_\bot \|_0 \| \widehat z \|_s \,, \quad \forall z \in \mathcal V' \cap h^s_0\,, \,\, \widehat z \in h^s_0\,,  \,\,0 \le \tau_0, \tau \le 1
$$
we conclude from \eqref{equation for the derivative of the remainder} by Gronwall's inequality that
\begin{equation}\label{estimate of bot projection of derivative}
\| \pi_\bot \big( d{\cal R}_{N}(z ; \Psi^{\tau_0, \tau})[\widehat z] \big) \|_{s+N+1} \lesssim_{s, N} 
\| z_\bot \|_s  \| \widehat z \|_0 +  \| z_\bot \|_0 \| \widehat z \|_s \,, \quad \forall z \in \mathcal V' \cap h^s_0\,,  \,\, \widehat z \in h^s_0\,,  \,\, 0 \le \tau_0, \tau \le 1\,. 
\end{equation}
The estimates  \eqref{estimate of S projection of derivative} and \eqref{estimate of bot projection of derivative}
imply the claimed estimate for $d {\cal R}_N(z ; \Psi^{\tau_0, \tau}) [\widehat z]$.
In a similar fashion, one derives the estimates for $\| d^l {\cal R}_N(z ; \Psi^{\tau_0, \tau}) [\widehat z_1, \ldots, \widehat z_l] \|_{s + N + 1}$
with $\widehat z_1, \ldots, \widehat z_l \in h^s_0$, $l \ge 2$.
\end{proof}

\bigskip

It turns out that the flow maps $\Psi_X^{\tau_0, \tau}$ and hence the symplectic corrector $\Psi_C$ and its inverse $\Psi_C^{-1}$
preserve the reversible structures, introduced in Section \ref{introduzione paper}, acts on. To state the result in more detail, note that without loss of generality, we may assume that the neighborhood 
$\mathcal V' = \mathcal V'_S \times \mathcal V'_\bot$ (cf  Lemma \ref{estimates for flow}) is invariant  under the map $\mathcal S_{rev}$.

\noindent
{\bf Addendum to Theorem \ref{espansione flusso per correttore}}
{\em (i) For any $0 \le \tau_0, \tau \le 1$,
$\Psi_X^{\tau_0, \tau} \circ \mathcal S_{rev} = \mathcal S_{rev} \circ \Psi_X^{\tau_0, \tau}$ on $\mathcal V'$
and for any $z \in \mathcal V'$, $x \in \mathbb R$, $N \in \N,$ and  $1 \le k \le N$, 
$$
   a_k( \mathcal S_{rev} z; \Psi_X^{\tau_0, \tau})(x) = (-1)^k a_k(z; \Psi_X^{\tau_0, \tau})( - x) \,, \quad  
    {\cal R}_N( \mathcal S_{rev} z; \Psi_X^{\tau_0, \tau}) = \mathcal S_{rev} ( {\cal R}_N( z; \Psi_X^{\tau_0, \tau}))\,.
$$
(ii) As a consequence, $\Psi_C$ and $\Psi_C^{-1}$ are invariant under $\mathcal S_{rev}$ on $\mathcal V'$,
\begin{equation}\label{reversability and flow}
\Psi_C \circ \mathcal S_{rev} = \mathcal S_{rev} \circ \Psi_C\,, \quad  \Psi_C^{-1} \circ \mathcal S_{rev} = \mathcal S_{rev} \circ \Psi_C^{-1}
\end{equation}
and for any $z \in \mathcal V'$, $x \in \mathbb R$, $N \in \N,$ and  $1 \le k \le N$, 
$$
   a_k( \mathcal S_{rev} z; \Psi_C)(x) = (-1)^k a_k(z; \Psi_C)( - x) \,, \quad  
    {\cal R}_N( \mathcal S_{rev} z; \Psi_C) = \mathcal S_{rev} ( {\cal R}_N( z; \Psi_C))\,.
$$
}
\noindent
{\bf Proof of Addendum to Theorem \ref{espansione flusso per correttore}} Clearly, item (ii) is a direct consequence of item (i).
By the  Addendum to Lemma \ref{espansione L S bot q z}, the operator $\mathcal L(z)$, introduced in \eqref{Pull back of LambdaG}, 
satisfies $\mathcal L( \mathcal S_{rev} z) \circ \mathcal S_{rev} = - \mathcal S_{rev} \circ \mathcal L(z)$ on $\mathcal V$.
It implies that for any $z \in \mathcal V$, 
$\mathcal E(\mathcal S_{rev} z) = - \mathcal S_{rev} \mathcal E(z)$ where $\mathcal E(z)$ has been introduced in \eqref{forma finale 1 forma Kuksin 2}.
Altogether we then conclude that the vector field $X(\tau, z),$ introduced in \eqref{definizione campo vettoriale ausiliario}, satisfies 
$$
X(\tau, \mathcal S_{rev}z) = \mathcal S_{rev}X(\tau, z)\,, \qquad \forall \, z \in \mathcal V, \,\, 0 \le \tau \le 1
$$
and hence by the uniqueness of the initial value problem of $\partial_\tau z = X(\tau, z)$,  the solution map satisfies 
$$
\Psi_X^{\tau_0, \tau}( \mathcal S_{rev}z) = \mathcal S_{rev} \Psi_X^{\tau_0, \tau}(z) \,, \quad \forall \, z \in \mathcal V' \,, \, \, 0 \le \tau_0,  \tau \le 1\,.
$$
The claimed identities for  $a_k(z; \Psi_X^{\tau_0, \tau})$ and ${\cal R}_N( z; \Psi_X^{\tau_0, \tau})$
then follow from the expansion \eqref{espansione asintotica Psi tau 0 tau}.
\hfill   $\square$

\bigskip
We now discuss two applications of Theorem \ref{espansione flusso per correttore}. The first one concerns the expansion 
of the transpose $d\Psi^{0, \tau}_X(z)^t$ of the differential $ d\Psi^{0, \tau}_X(z)$ which will be used in Section \ref{Hamiltoniana trasformata} 
in the proof of Lemma \ref{proprieta hamiltoniana cal P 3 (2b)}. Recall that for any $z \in \mathcal V',$ and $\widehat z$, $\widehat w \in h^0_0$
$$
\Lambda_\tau(z)[\widehat z, \widehat w] = \langle J^{-1}\mathcal L_\tau(z)[\widehat z], \widehat w  \rangle, \qquad
\mathcal L_\tau(z) = \text{Id} + \tau J \mathcal L(z), \quad 0 \le \tau \le 1
$$
and that the flow $\Psi^{0, \tau}_X$ satisfies $\partial_\tau \big( (\Psi^{0, \tau}_X)^* \Lambda_\tau \big) = 0$ and hence $ (\Psi^{0, \tau}_X)^* \Lambda_\tau = \Lambda_0$.
By the definition of the pullback this means that for any $z \in V',$ $0 \le \tau \le 1,$ $\widehat z$, $\widehat w \in h^0_0$,
$$
\langle J^{-1} \mathcal L_\tau (\Psi^{0, \tau}_X(z))  [d\Psi^{0, \tau}_X(z) [\widehat z]],  d\Psi^{0, \tau}_X(z) [\widehat w] \rangle
= \langle J^{-1} \widehat z, \widehat w \rangle \quad \text{or} \quad
d\Psi^{0, \tau}_X(z)^t  J^{-1} \mathcal L_\tau (\Psi^{0, \tau}_X(z))  d\Psi^{0, \tau}_X(z) = J^{-1}.
$$
Using that $d\Psi^{0, \tau}_X(z)^{-1} = d\Psi^{ \tau, 0}_X(\Psi^{0, \tau}_X(z))$ one obtains the following formula for $d\Psi^{0, \tau}_X(z)^t$,
\begin{equation}\label{formula for d Psi ^{0, tau}_X (z) ^ t }
d\Psi^{0, \tau}_X(z)^t = J^{-1} \,  d\Psi^{ \tau, 0}_X(\Psi^{0, \tau}_X(z)) \, \mathcal L_\tau (\Psi^{0, \tau}_X(z))^{-1} J.
\end{equation}
%In particular, the latter formula holds for $d\Psi_C(z)^t = d\Psi^{0, 1}_X(z)^t$.
Note that $ d\Psi^{ \tau, 0}_X(\Psi^{0, \tau}_X(z))$ and $ \mathcal L_\tau (\Psi^{0, \tau}_X(z))^{-1}$ are bounded linear operators on $h^0_0$,
implying that $d\Psi^{0, \tau}_X(z)^t$ is one on $h^1_0$, and that these operators and their derivatives depend continuously on $0 \le \tau \le 1.$
\begin{corollary}\label{expansion of  of differential of Psi}
 For any $0 \le \tau \le 1$, $z  \in {\cal V'}$, the transpose $d\Psi^{0, \tau}_X(z)^t$
(with respect to the standard inner product) of the differential
$d\Psi^{0, \tau}_X(z)$ is a bounded linear operator $ d\Psi^{0, \tau}_X(z)^t : h^1_0 \to  h^1_0$ and for any $N \in \N$ and $\widehat z \in h^1_0$, 
$d\Psi^{0, \tau}_X(z)^t [\widehat z]$ admits an expansion of the form
$$
\big( \, 0, \,  \widehat z_\bot + {\cal F}_\bot \circ \sum_{k = 1}^N  a_k(z; (d\Psi^{0, \tau}_X)^t)  \partial_x^{- k} \mathcal F^{-1}_\bot [\widehat z_\bot] +  
{\cal F}_\bot \circ \sum_{k = 1}^N  \mathcal A_k(z; (d\Psi^{0, \tau}_X)^t)[\widehat z]  \partial_x^{- k} \mathcal F^{-1}_\bot z_\bot  \big) 
 + {\cal R}_{N}(z; (d\Psi^{0, \tau}_X)^t)[\widehat z]
$$
where for any integer $s \ge 0$ and $1 \le  k \le N$, 
$$
a_k( \, \cdot \, ; (d\Psi^{0, \tau}_X)^t) : {\cal V'} \to H^s\,, \,\, z \mapsto  a_k (z; (d\Psi^{0, \tau}_X)^t )\,, \quad
\mathcal A_k( \, \cdot \, ; (d\Psi^{0, \tau}_X)^t) : {\cal V'} \to \mathcal B( h_0^1, H^s), \,\, z \mapsto  \mathcal A_k (z; (d\Psi^{0, \tau}_X)^t )\,,
$$
$$
 {\cal R}_N( \, \cdot \, ;  (d\Psi^{0, \tau}_X)^t ) : {\cal V'} \cap h^s_0 \to {\cal B}(h^{s+1}_0, h_0^{s + 1 + N +1}), \,\,
z  \mapsto {\cal R}_N( z; (d\Psi^{0, \tau}_X)^t)
$$
are real analytic maps. Furthermore, for any $z \in {\cal V'}$, $1 \le k \le N$, $\widehat z_1, \ldots, \widehat z_l \in h^0_0$, $l \ge 2$,
    $$
   \| a_k( z; (d\Psi^{0, \tau}_X)^t)\|_s \lesssim_{s, k} \| z_\bot\|^2_0 \,, \qquad  \| d a_k( z; (d\Psi^{0, \tau}_X)^t)[\widehat z_1]\|_s  \lesssim_{s, k}  \| z_\bot\|_0 \| \widehat z_1\|_0 \,,
   $$
   $$
    \| d^l a_k( z; (d\Psi^{0, \tau}_X)^t)[\widehat z_1, \ldots, \widehat z_l]\|_s  \lesssim_{s, k, l}  \prod_{j = 1}^l \| \widehat z_j\|_0\,,
   $$
and for any  $z \in {\cal V'}$, $\widehat z \in h^1_0$, $\widehat z_1, \ldots, \widehat z_l \in h^0_0$, $l \ge 1$,
 $$ 
   \| \mathcal A_k( z; (d\Psi^{0, \tau}_X)^t) [\widehat z]\|_s \lesssim_{s, k}  \| z_\bot\|_0 \|\widehat z \|_1\,, \qquad
   \| d^l (\mathcal A_k( z; (d\Psi^{0, \tau}_X)^t)  [\widehat z] )[\widehat z_1, \ldots, \widehat z_l]\|_s  \lesssim_{s, k, l} \|\widehat z \|_1 \prod_{j = 1}^l \| \widehat z_j\|_0\,.
  $$
The remainder ${\cal R}_N( z; (d\Psi^{0, \tau}_X)^t)$ satisfies for any $z \in {\cal V'} \cap h^s_0$, $\widehat z \in h^{s+1}_0,$ and $\widehat z_1, \ldots, \widehat z_l \in h^s_0$, $l \in \N$,
  $$
   \| {\cal R}_N( z; (d\Psi^{0, \tau}_X)^t) [\widehat z]\|_{s + 1 + N + 1}  \lesssim_{s, N} \| z_\bot \|_0 \| \widehat z\|_{s+1} + \| z_\bot \|_s \| \widehat z\|_1\,,  \qquad \qquad \qquad \qquad \qquad
   $$
   $$
   \begin{aligned}
   \| d^l \big(  {\cal R}_N & ( z; (d\Psi^{0, \tau}_X)^t)  [\widehat z] \big)[\widehat z_1,  \ldots,  \widehat z_l]  \|_{s + 1 + N + 1} \\
  & \lesssim_{s, N, l} \| \widehat z\|_{s+1} \prod_{j = 1}^l \| \widehat z_j\|_0 + \| \widehat z\|_1 \sum_{j = 1}^l \| \widehat z_j\|_s \prod_{i \neq j} \| \widehat z_i\|_0 + 
   \| z_\bot\|_s  \| \widehat z\|_1 \prod_{j = 1}^l \| \widehat z_j \|_0\,.
    \end{aligned}
  $$
\end{corollary}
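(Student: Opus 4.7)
The starting point is the identity \eqref{formula for d Psi ^{0, tau}_X (z) ^ t }: denoting $w := \Psi_X^{0,\tau}(z)$, one has $d\Psi_X^{0,\tau}(z)^t = J^{-1}\, d\Psi_X^{\tau,0}(w)\, \mathcal L_\tau(w)^{-1}\, J$. The plan is to expand each of the three central factors into a pseudodifferential asymptotic expansion and then compose them using the symbolic calculus of Appendix \ref{appendice B}. The operators $J^{\pm 1}$ act as $\partial_x^{\pm 1}$ in the $\bot$-direction (via $\mathcal F_\bot$) and preserve the finite-dimensional $S$-subspace, so they fit directly into the calculus. Differentiating the expansion for $\Psi_X^{\tau, 0}$ provided by Theorem \ref{espansione flusso per correttore} yields
$$
d\Psi_X^{\tau, 0}(w)[\widehat v] = \widehat v + \bigl( 0, \, \mathcal F_\bot \circ \sum_{k=1}^N a_k(w; \Psi_X^{\tau, 0})\, \partial_x^{-k}\mathcal F_\bot^{-1}[\widehat v_\bot]\bigr) + \bigl(0, \, \mathcal F_\bot \circ \sum_{k=1}^N da_k(w; \Psi_X^{\tau, 0})[\widehat v]\, \partial_x^{-k}\mathcal F_\bot^{-1}[w_\bot]\bigr) + d\mathcal R_N(w; \Psi_X^{\tau, 0})[\widehat v],
$$
which already exhibits the two kinds of symbolic terms that will reappear in the final expansion: one acting symbolically on $\widehat v_\bot$ and one acting symbolically on $w_\bot$ with a coefficient linear in $\widehat v$. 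Next, expanding $\mathcal L_\tau(w)^{-1}$ through its Neumann series \eqref{definizione Lt (w)} and applying Lemma \ref{lemma potenze JM L(q,z)} to each power $(J\mathcal L(w))^n$ gives a pseudodifferential expansion whose $S$-component vanishes to top order and whose $\bot$-component is a symbol times $w_\bot$ with coefficients linear in the input. Finally, since $w - z$ is $1$-smoothing by Theorem \ref{espansione flusso per correttore}, I would replace $w_\bot$ by $z_\bot$ in the leading symbols and $a_k(w; \, \cdot\,)$ by $a_k(z; \,\cdot\,)$, absorbing the differences into the remainder.

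Substituting $\widehat v = \mathcal L_\tau(w)^{-1} J\widehat z$ into the expansion of $d\Psi_X^{\tau, 0}(w)[\widehat v]$ and pre-multiplying by $J^{-1}$, the identity-on-identity contribution gives $J^{-1}\cdot J\widehat z = \widehat z$, whose $\bot$-part supplies the leading $\widehat z_\bot$ in the claimed expansion. After reorganizing the product by Lemma \ref{lemma composizione pseudo}, so that $\partial_x^{-1}$ coming from $J^{-1}$ is moved to the right through the multiplication operators, every remaining term falls into one of three categories: (i) contributions linear in $\widehat z_\bot$ with no explicit dependence on $z_\bot$ other than through the coefficient, which collect into $a_k(z; (d\Psi_X^{0, \tau})^t)$; (ii) contributions in which $\widehat z$ enters through a coefficient while $z_\bot$ enters symbolically via $\partial_x^{-k}\mathcal F_\bot^{-1}[z_\bot]$, which collect into $\mathcal A_k(z; (d\Psi_X^{0, \tau})^t)$; and (iii) all smoothing contributions together with the bookkeeping leftovers produced by the reorderings and by the replacement of $w$ by $z$, which form ${\cal R}_N(z; (d\Psi_X^{0, \tau})^t)$. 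Real analyticity of the maps $z \mapsto a_k$, $\mathcal A_k$, $\mathcal R_N$ is inherited from the analyticity statements in Theorem \ref{espansione flusso per correttore}, Lemma \ref{stime cal Lt inverso}, and Lemma \ref{lemma potenze JM L(q,z)}.

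The tame estimates are then obtained by combining the tame estimates of the three input expansions with the interpolation Lemma \ref{lemma interpolation} and the product rule, in the style of the proof of Theorem \ref{espansione flusso per correttore}. The quadratic $\|z_\bot\|_0^2$ bound on $a_k$ reflects that one factor of $\|w_\bot\|_0 \sim \|z_\bot\|_0$ comes from $\mathcal L_\tau(w)^{-1} - \mathrm{Id}$ and another from $da_k(w; \Psi_X^{\tau, 0})$; the $\|z_\bot\|_0\|\widehat z\|_1$ bound on $\mathcal A_k$ records its linearity in $\widehat z$ together with a single factor of $w_\bot \approx z_\bot$. The main obstacle is the accountancy of the composition: every commutator picked up when passing $\partial_x^{-1}$ through a multiplication operator, and every commutator arising from composing $\partial_x^{-k}$ with $\partial_x^{-j}$, has to be tracked through Lemma \ref{lemma composizione pseudo} and allocated either to the next-order symbolic coefficient or to ${\cal R}_N$, so that the stated smoothing order $N+1$ is preserved. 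The estimates for the derivatives $d^l \mathcal R_N$ follow by the same scheme applied inductively, using the derivative bounds already available for each of the three input expansions.
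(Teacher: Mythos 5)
Your proposal is correct and follows essentially the same route as the paper: both start from the identity $d\Psi^{0,\tau}_X(z)^t = J^{-1}\, d\Psi^{\tau,0}_X(w)\, \mathcal L_\tau(w)^{-1} J$ with $w = \Psi^{0,\tau}_X(z)$, expand $d\Psi^{\tau,0}_X(w)$ by differentiating the parametrix of Theorem \ref{espansione flusso per correttore}, expand $\mathcal L_\tau(w)^{-1}$ via the Neumann series and Lemma \ref{lemma potenze JM L(q,z)}, and compose using Lemma \ref{lemma composizione pseudo}, with $J^{\pm1}$ accounting for the one-unit shift of Sobolev indices. Your write-up merely spells out the bookkeeping (reallocation of commutators, passage from $w$ back to $z$) that the paper leaves implicit.
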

\begin{remark}
Corollary \ref{expansion of  of differential of Psi} holds in particular for $ d\Psi_C(z)^t = d\Psi^{0, 1}_X(z)^t$.
\end{remark}
\begin{proof}
The starting point is the formula \eqref{formula for d Psi ^{0, tau}_X (z) ^ t } for $d\Psi^{0, \tau}_X(z)^t$.
Note that $\| J \widehat z \|_{s} \lesssim_s \| \widehat z \|_{s + 1}$ and $\| J^{-1} \widehat z \|_{s + 1} \lesssim_s \| \widehat z \|_{s}$.
Hence it suffices to derive corresponding estimates for the operator
$d\Psi^{ \tau, 0}_X(\Psi^{0, \tau}_X(z)) \, \mathcal L_\tau (\Psi^{0, \tau}_X(z))^{-1}$.
By Theorem \ref{espansione flusso per correttore}, for any $\widehat w \in h^0_0$, $d\Psi^{\tau, 0}_X(w)[\widehat w]$ admits an expansion of the form
$$
  \widehat w +  \big( 0, \, \, {\cal F}_\bot \circ\sum_{k = 1}^N   a_k(w; \Psi_X^{\tau, 0}) \,\, \partial_x^{- k}{\cal F}_\bot^{- 1}[\widehat w_\bot]  
+ {\cal F}_\bot \circ\sum_{k = 1}^N   d a_k(w; \Psi_X^{\tau, 0})[\widehat w] \,\, \partial_x^{- k}{\cal F}_\bot^{- 1}[w_\bot] \big) 
+d {\cal R}_N( w; \Psi_X^{\tau, 0})[\widehat w]
$$
where in the situation at hand
$$
w =  \Psi^{0, \tau}_X(z) = z +  \big( 0, \, \, {\cal F}_\bot \circ\sum_{k = 1}^N   a_k(z; \Psi_X^{0, \tau}) \,\, \partial_x^{- k}{\cal F}_\bot^{- 1}[z_\bot] \big)  + {\cal R}_N( z; \Psi_X^{0, \tau}).
$$ 
By writing $\mathcal L_1 (w)^{-1} = \big( \text{Id} +  J \mathcal L (w) \big)^{-1}$ as  a Neumann series, its asymptotic expansion is then obtained 
from Lemma \ref{lemma potenze JM L(q,z)} (cf also proof of Theorem \ref{espansione flusso per correttore}). 
Combining these results, one obtains the corresponding asymptotic expansion of $d\Psi^{ \tau, 0}_X(\Psi^{0, \tau}_X(z)) \, \mathcal L_\tau (\Psi^{0, \tau}_X(z))^{-1}$.
The claimed estimate then follow from Lemma \ref{lemma potenze JM L(q,z)} and Theorem \ref{espansione flusso per correttore}.
\end{proof}
\medskip

As a second application of Theorem \ref{espansione flusso per correttore}, we compute the Taylor expansion of the symplectic corrector $\Psi_C(z_S, z_\bot)$ in $z_\bot$ at $0$.
This expansion will be needed in the subsequent section to show that the KdV Hamiltonian, when expressed in the new coordinates
provided by the map $\Psi_L \circ \Psi_C$, is in Birkhoff normal
form up to order three. Note that by Theorem \ref{espansione flusso per correttore}, for any $z_S \in \mathcal V'_S,$ $\widehat z_\bot \in h^0_\bot$, $1 \le k \le N$,
$$
a_k((z_S, 0); \Psi_C) = 0\,, \,\,\, d_\bot a_k((z_S, 0); \Psi_C) [\widehat z_\bot] = 0\,, \quad
{\cal R}_{N}((z_S, 0); \Psi_C) = 0 \,, \,\,\, d_\bot \big( {\cal R}_{N}((z_S, 0); \Psi_C)\big)[\widehat z_\bot] = 0 \,.
$$
Hence the Taylor expansion of ${\cal R}_{ N}(z; \Psi_C)$ in $z_\bot$ of order three at $0$ reads
\begin{equation}\label{Taylor expansion R_N}
{\cal R}_{ N}(z; \Psi_C) = {\cal R}_{ N, 2}(z; \Psi_C) + {\cal R}_{ N, 3}(z; \Psi_C)\,, \qquad \mathcal R_{N, 2} (z; \Psi_C) : =  \frac{1}{2} d^2_\bot {\cal R}_{N}((z_S, 0) ; \Psi_C) [z_\bot, z_\bot]
\end{equation}
with  the Taylor remainder term ${\cal R}_{N, 3}(z; \Psi_C)$ given by 
\begin{equation}\label{Taylor remainder term}
{\cal R}_{N, 3}(z; \Psi_C) = \int_0^1 d^3_\bot {\cal R}_{N}((z_S, t z_\bot); \Psi_C)[z_\bot, z_\bot, z_\bot] \, \frac{1}{2}(1 - t)^2 \, dt
\end{equation}
whereas for any $1 \le k \le N$, ${\cal F}_\bot \big( a_{k} ( z; \Psi_C) \partial_x^{- k} {\cal F}_\bot^{- 1}[z_\bot]  \big)$ vanishes in $z_\bot$ at $0$ up to order two.
Furthermore, according to Corollary \ref{expansion of  of differential of Psi}, for any $(z_S, 0) \in \mathcal V'$, $ {\cal R}_{N}((z_S, 0); d \Psi_C^t)=0$ and hence for any
$\widehat z \in h^1_0$,  the Taylor expansion of ${\cal R}_{N}(z; d \Psi_C^t)[\widehat z]$ of order $2$ in $z_\bot$ around $0$ reads
\begin{equation}\label{Taylor expansion of of d Psi C t}
 {\cal R}_{N}(z; d \Psi_C^t)[\widehat z] = {\cal R}_{N, 1}(z; d \Psi_C^t)[\widehat z] + {\cal R}_{N, 2}(z; d \Psi_C^t)[\widehat z] 
\end{equation}
where ${\cal R}_{N, 2}(z; d \Psi_C^t)[\widehat z]$ denotes the Taylor remainder term of order $2$.
\begin{corollary}\label{proposizione espansione taylor correttore simplettico}
 $(i)$ For any integer $N \ge 1$, the Taylor expansion of the symplectic corrector $\Psi_C(z_S, z_\bot)$ in $z_\bot$ around $0$ reads   
$$
 \Psi_C(z) = (z_S, 0)  + ( 0, z_\bot) + {\cal R}_{N, 2}(z; \Psi_C) +  \Psi_{C, 3}(z)
 $$
 where $ \Psi_{C, 3}(z) \equiv  \Psi_{C, N, 3}(z)$ is given by
 \begin{equation}\label{espansione Psi C ordini taglia}
 \Psi_{C, N, 3}(z) := \big( 0,  \, {\cal F}_\bot \circ \sum_{k = 1}^N  a_{k} ( z; \Psi_C) \partial_x^{- k} {\cal F}_\bot^{- 1}[z_\bot]  \big)  + {\cal R}_{ N, 3} (z; \Psi_{C})\,.
  \end{equation}
For any $s \geq 0$, the map $\mathcal V' \cap h^s_0 \to  h^{s + N +1}_0$, $z \mapsto {\cal R}_{N, 2}(z; \Psi_C)$  is real analytic
and  the following estimates hold: for any $z \in \mathcal V' \cap h^s_0 $, $\widehat z \in  h^s_0$,
$$
 \| {\cal R}_{N, 2}(z; \Psi_C)\|_{s + N +1} \lesssim_{s, N} \| z_\bot \|_s \| z_\bot \|_0\,, \qquad  
 \| d {\cal R}_{N, 2}(z; \Psi_C)[\widehat z]\|_{s + N +1} \lesssim_{s, N} \| z_\bot \|_0 \| \widehat z\|_s + \| z_\bot \|_s \| \widehat z\|_0\,, 
 $$
 and, if in addition $ \widehat z_1, \ldots, \widehat z_l \in  h^s_0$, $l \ge 2,$
 $$
 \| d^l {\cal R}_{N, 2}(z; \Psi_C)[\widehat z_1, \ldots, \widehat z_l] \|_{s + N +1} \lesssim_{s, N, l} \sum_{j = 1}^l \| \widehat z_j \|_s \prod_{i \neq j} \| \widehat z_i \|_0 + 
\| z_\bot \|_s \prod_{j = 1}^l \| \widehat z_j\|_0\,.
$$
Similarly, for any $s \geq 0$, the map $\mathcal V' \cap h^s_0 \to  h^{s + N +1}_0$, $z \mapsto {\cal R}_{N, 3}(z; \Psi_C)$  is real analytic
and  the following estimates hold: for any $z \in \mathcal V' \cap h^s_0 $, $ \widehat z_1, \widehat z_2 \in  h^s_0$,
$$
 \| {\cal R}_{N, 3}(z; \Psi_C)\|_{s + N + 1} \lesssim_{s, N} \| z_\bot \|_s \| z_\bot \|_0^2 \,,  
 \qquad \| d {\cal R}_{N, 3}(z; \Psi_C )[\widehat z_1]\|_{s + N +1} \lesssim_{s, N} \| z_\bot \|_s \| z_\bot \|_0 \| \widehat z_1\|_0 + \| z_\bot \|_0^2 \| \widehat z_1\|_s \,, 
 $$
$$ 
\| d^2  {\cal R}_{N, 3}(z; \Psi_C )[\widehat z_1, \widehat z_2] \|_{s + N +1} \lesssim_{s, N} 
 \| z_\bot \|_0 \big( \| \widehat z_1\|_s \| \widehat z_2\|_0 + \| \widehat z_1\|_0 \| \widehat z_2\|_s \big) + \| z_\bot \|_s \| \widehat z_1\|_0 \| \widehat z_2\|_0\,, 
$$
and if in addition $\widehat z_1, \ldots, \widehat z_l \in  h^s_0$, $l \ge 3$,
$$
 \| d^l {\cal R}_{N, 3}(z; \Psi_C )[\widehat z_1, \ldots, \widehat z_l ]\|_{s + N +1} 
\lesssim_{s, N, l} \sum_{j = 1}^l \| \widehat z_j\|_s \prod_{i \neq j} \| \widehat z_i\|_0 + \| z_\bot \|_s \prod_{j = 1}^l \| \widehat z_j\|_0\,. 
$$
\noindent
$(ii)$ For any integer $N \ge 1$ and $\widehat z \in h^1_0$, the Taylor expansion of $d \Psi_C(z_S, z_\bot)^t[\widehat z]$ in $z_\bot$ around $0$ is given by 
$$
d \Psi_C(z)^t [\widehat z]= \widehat z + \Psi_{C, 1}^t(z) [\widehat z] + \Psi_{C, 2}^t(z)[\widehat z]
%\begin{pmatrix}
 $$
 where $\Psi_{C, 1}^t(z) =  {\cal R}_{N, 1}(z; d \Psi_C^t)$ (cf \eqref{Taylor expansion of of d Psi C t}) and $\Psi_{C, 2}^t(z) [\widehat z]$ has an expansion of the form 
 $$
\Psi_{C, 2}^t (z)[\widehat z] =   \big( \, 0, \,  {\cal F}_\bot \circ \sum_{k = 1}^N  a_k(z; d\Psi_{C}^t)  \partial_x^{- k} \mathcal F^{-1}_\bot [\widehat z_\bot] +  
{\cal F}_\bot \circ \sum_{k = 1}^N  \mathcal A_k(z; d\Psi_{C}^t)[\widehat z]  \partial_x^{- k} \mathcal F^{-1}_\bot [z_\bot]  \big)
%\end{pmatrix} 
+ {\cal R}_{N, 2}(z; d \Psi_C^t)[\widehat z]
$$
with $a_k(z; d\Psi_{C}^t) $, $\mathcal A_k(z; d\Psi_{C}^t)$ given as in Corollary \ref{expansion of  of differential of Psi},
and ${\cal R}_{N, 2}(z; d \Psi_C^t)[\widehat z]$ given by \eqref{Taylor expansion of of d Psi C t}.
For any $i =1, 2$, $s \ge 0$, 
%$$a_k( \, \cdot \, ; \Psi_{C, 2}^t) : {\cal V'} \to H^s\,, \,\, z \mapsto  a_k (z; \Psi_{C, 2}^t )\,, \qquad
%\mathcal A_k( \, \cdot \, ; \Psi_{C, 2}^t) : {\cal V'} \to \mathcal B( h_0^1, H^s), \,\, z \mapsto  \mathcal A_k (z; \Psi_{C, 2}^t )\,,
%$$
$$
 {\cal R}_{N, i}( \, \cdot \, ; \, d\Psi_{C}^t ) : {\cal V'} \cap h^s_0 \to {\cal B}(h^{s+1}_0, h_0^{s + 1 + N +1}), \,\,
z  \mapsto {\cal R}_{N, i}( z; d\Psi_{C}^t)
$$
is a real analytic maps. Furthermore, for any $z \in {\cal V'} \cap h^s_0$, $\widehat z \in h^{s+1}_0,$
  $$
   \| {\cal R}_{N, 1}( z; d\Psi_{C}^t) [\widehat z]\|_{s + 1 + N + 1}  \lesssim_{s, N}  \|  z_\bot\|_0 \| \widehat z\|_{s+1} + \| z_\bot \|_s \| \widehat z\|_1\,, 
   $$
   and if in addition $\widehat z_1, \ldots, \widehat z_l \in h^s_0$, $l \in \N$,
   $$
   \begin{aligned}
   \| d^l \big(  {\cal R}_{N, 1} & ( z; d\Psi_{C}^t)  [\widehat z] \big)[\widehat z_1,  \ldots,  \widehat z_l]  \|_{s + 1 + N + 1} \\
  & \lesssim_{s, N, l} \| \widehat z\|_{s+1} \prod_{j = 1}^l \| \widehat z_j\|_0 + \| \widehat z\|_1 \sum_{j = 1}^l \| \widehat z_j\|_s \prod_{i \neq j} \| \widehat z_i\|_0 + 
   \| z_\bot\|_s  \| \widehat z\|_1 \prod_{j = 1}^l \| \widehat z_j \|_0\,.
    \end{aligned}
  $$
%Furthermore, for any $z \in {\cal V'}$, $1 \le k \le N$, $\widehat z\in h^1_0$, $\widehat z_1, \ldots, \widehat z_l \in h^0_0$, $l \in \N$,
 % $$  \begin{aligned}
 % & \| a_k( z; \Psi_{C, 2}^t)\|_s \lesssim_{s, k}  \|  z_\bot \|_0  \,, \qquad
 %  \| d^l a_k( z; \Psi_{C, 2}^t)[\widehat z_1, \ldots, \widehat z_l]\|_s  \lesssim_{s, k, l}  \prod_{j = 1}^l \| \widehat z_j\|_0\,, \\
 % & \| \mathcal A_k( z; \Psi_{C, 2}^t) [\widehat z]\|_s \lesssim_{s, k}  \|\widehat z \|_1\,, \qquad
  % \| d^l (\mathcal A_k( z; \Psi_{C, 2}^t)  [\widehat z] )[\widehat z_1, \ldots, \widehat z_l]\|_s  \lesssim_{s, k, l} \|\widehat z \|_1 \prod_{j = 1}^l \| \widehat z_j\|_1\,.
  %\end{aligned}
  %$$
  and for any $z \in {\cal V'} \cap h^s_0$, $\widehat z \in h^{s+1}_0,$ $\widehat z_1 \in h^s_0$,
  $$
   \| {\cal R}_{N, 2}( z; d\Psi_{C}^t) [\widehat z]\|_{s + 1 + N + 1}  \lesssim_{s, N}  \|  z_\bot\|_0^2 \| \widehat z\|_{s+1} + \| z_\bot \|_s \| z_\bot\|_0 \| \widehat z\|_1\,, 
   $$
   $$
    \|d \big( {\cal R}_{N, 2}  ( z; d\Psi_{C}^t)  [\widehat z] \big)[\widehat z_1] \|_{s + 1 + N + 1} 
    \lesssim_{s, N} \|  z_\bot\|_0 \| \widehat z_1 \|_0 \| \widehat z\|_{s+1} + \|  z_\bot\|_0 \| \widehat z_1 \|_s \| \widehat z\|_1 + \| z_\bot \|_s \|  \widehat z_1 \|_0 \| \widehat z\|_1 \,,
    $$
   and if in addition $\widehat z_2, \ldots, \widehat z_l \in h^s_0$, $l \ge 2$,
   $$
    \| d^l \big(  {\cal R}_{N, 2}  ( z; d \Psi_{C}^t)  [\widehat z] \big)[\widehat z_1,  \ldots,  \widehat z_l]  \|_{s + 1 + N + 1}  \qquad \qquad \qquad \qquad \qquad \qquad \qquad
    $$
    $$
   \lesssim_{s, N, l} \| \widehat z\|_{s+1} \prod_{j = 1}^l \| \widehat z_j\|_0 + \| \widehat z\|_1 \sum_{j = 1}^l \| \widehat z_j\|_s \prod_{i \neq j} \| \widehat z_i\|_0 + 
   \| \widehat z\|_1  \| z_\bot\|_s  \prod_{j = 1}^l \| \widehat z_j \|_0\,.
  $$
\end{corollary}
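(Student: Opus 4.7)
\begin{pf}
The plan is to derive both Taylor expansions directly from the already established expansions of $\Psi_C = \Psi_X^{0,1}$ in Theorem \ref{espansione flusso per correttore} and of $d\Psi_C^t$ in Corollary \ref{expansion of  of differential of Psi}, by exploiting the vanishing of the relevant coefficients and remainders at $z_\bot = 0$, which is forced by the tame estimates themselves. Concretely, for $(i)$, the bounds $\|a_k(z; \Psi_C)\|_s \lesssim \|z_\bot\|_0^2$ and $\|da_k(z; \Psi_C)[\widehat z]\|_s \lesssim \|z_\bot\|_0 \|\widehat z\|_0$ of Theorem \ref{espansione flusso per correttore} imply $a_k((z_S, 0); \Psi_C) = 0$ and $d_\bot a_k((z_S, 0); \Psi_C)[\widehat z_\bot] = 0$; likewise, the bounds $\|\mathcal R_N(z; \Psi_C)\|_{s+N+1} \lesssim \|z_\bot\|_s \|z_\bot\|_0$ and $\|d\mathcal R_N(z; \Psi_C)[\widehat z]\|_{s+N+1} \lesssim \|z_\bot\|_s\|\widehat z\|_0 + \|z_\bot\|_0\|\widehat z\|_s$ yield $\mathcal R_N((z_S, 0); \Psi_C) = 0$ and $d_\bot \mathcal R_N((z_S, 0); \Psi_C)[\widehat z_\bot] = 0$. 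Consequently, the Taylor expansion \eqref{Taylor expansion R_N} of $\mathcal R_N(z; \Psi_C)$ in $z_\bot$ around $0$ starts at order two, while the sum $\mathcal F_\bot \circ \sum_k a_k(z; \Psi_C)\partial_x^{-k}\mathcal F_\bot^{-1}[z_\bot]$ vanishes at order three (two orders from $a_k$, one from the outer $z_\bot$). Substituting these facts into the expansion of Theorem \ref{espansione flusso per correttore} and rearranging yields the claimed identity $\Psi_C(z) = (z_S, 0) + (0, z_\bot) + \mathcal R_{N, 2}(z; \Psi_C) + \Psi_{C, N, 3}(z)$.

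The tame estimate for $\mathcal R_{N, 2}(z; \Psi_C) = \tfrac12 d_\bot^2 \mathcal R_N((z_S, 0); \Psi_C)[z_\bot, z_\bot]$ is obtained by applying the $l=2$ derivative estimate of Theorem \ref{espansione flusso per correttore} at the base point $(z_S, 0)$, where the $\|z_\bot\|_s$-term on the right vanishes, leaving $\|\widehat z_1\|_s\|\widehat z_2\|_0 + \|\widehat z_1\|_0\|\widehat z_2\|_s$; setting $\widehat z_1 = \widehat z_2 = z_\bot$ produces the claimed bound $\|z_\bot\|_s\|z_\bot\|_0$. The estimates for $d^l \mathcal R_{N, 2}(z; \Psi_C)$ would then follow by differentiating in $z = (z_S, z_\bot)$ and distributing the derivatives between the second-differential factor and the two $z_\bot$-insertions via the product rule, invoking the $(l+2)$nd derivative estimate of Theorem \ref{espansione flusso per correttore} at $(z_S, 0)$. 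In the same spirit, the bounds on $\mathcal R_{N, 3}(z; \Psi_C)$ and its derivatives will be derived from the integral representation \eqref{Taylor remainder term} together with the third-derivative estimate of $\mathcal R_N$ at the base point $(z_S, tz_\bot)$: with three insertions of $z_\bot$ one gets $\|\mathcal R_{N, 3}(z; \Psi_C)\|_{s+N+1} \lesssim \|z_\bot\|_s\|z_\bot\|_0^2$, and higher $l$ are handled by differentiating under the integral sign and reapplying the same bounds. The analyticity statements follow automatically from the analyticity of $\mathcal R_N$ and the explicit Taylor formulas.

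Part $(ii)$ is treated in exactly the same spirit using Corollary \ref{expansion of  of differential of Psi}. Its tame estimates force, at $(z_S, 0)$, the vanishing of $a_k(( z_S, 0); d\Psi_C^t)$ and $d_\bot a_k((z_S, 0); d\Psi_C^t)$, of $\mathcal A_k((z_S, 0); d\Psi_C^t)$, and of $\mathcal R_N((z_S, 0); d\Psi_C^t)$, so the Taylor expansion \eqref{Taylor expansion of of d Psi C t} of $\mathcal R_N(z; d\Psi_C^t)[\widehat z]$ in $z_\bot$ around $0$ of order two reads $\mathcal R_{N, 1}(z; d\Psi_C^t)[\widehat z] + \mathcal R_{N, 2}(z; d\Psi_C^t)[\widehat z]$, with $\mathcal R_{N, 1}$ the linear Taylor term and $\mathcal R_{N, 2}$ the quadratic Taylor remainder. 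The two sums inside $\Psi_{C, 2}^t$ are both at least quadratic in $z_\bot$ (the first because $a_k$ vanishes quadratically, the second because $\mathcal A_k$ vanishes linearly and is multiplied by the outer factor $z_\bot$), so $\Psi_{C, 2}^t(z)[\widehat z]$ collects precisely the contributions of order $\geq 2$ in $z_\bot$, while the linear-in-$z_\bot$ part of $d\Psi_C(z)^t[\widehat z]$ is exactly $\Psi_{C, 1}^t(z)[\widehat z] = \mathcal R_{N, 1}(z; d\Psi_C^t)[\widehat z]$. The tame estimates for $\mathcal R_{N, 1}$ will be obtained by applying the first-derivative estimate of Corollary \ref{expansion of  of differential of Psi} in the $z_\bot$-direction at $(z_S, 0)$, and those for $\mathcal R_{N, 2}$ by combining the integral form of the Taylor remainder with the second-derivative estimate at $(z_S, tz_\bot)$. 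Derivatives in $z$ are then handled exactly as in part $(i)$. Throughout, the main obstacle is purely combinatorial: the product and chain rule contributions must be carefully tracked and redistributed among the various factors in order to match the exact multilinear form of the stated tame bounds, but no new analytic input is required.
\end{pf}
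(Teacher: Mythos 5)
Your proposal is correct and follows essentially the same route as the paper: the paper also reads off the vanishing of $a_k$, $\mathcal A_k$, $\mathcal R_N$ and their first $z_\bot$-derivatives at $(z_S,0)$ from the tame estimates of Theorem \ref{espansione flusso per correttore} and Corollary \ref{expansion of  of differential of Psi}, sets up the Taylor expansions \eqref{Taylor expansion R_N}, \eqref{Taylor remainder term}, \eqref{Taylor expansion of of d Psi C t}, and then obtains all stated bounds by evaluating the derivative estimates of those two results at the base points $(z_S,0)$ (resp. $(z_S,tz_\bot)$) and distributing derivatives by the product rule. You have merely spelled out in detail what the paper dispatches in three sentences; the only cosmetic discrepancy is your phrase ``vanishes at order three'' where the paper says ``vanishes up to order two'', both meaning the term is $O(\|z_\bot\|^3)$.
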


\begin{proof} 
$(i)$ The claimed properties of ${\cal R}_{N, 2}(z; \Psi_C)$ follow directly from Theorem \ref{espansione flusso per correttore}. In view of the formula \eqref{Taylor remainder term}
the same is true for the ones of ${\cal R}_{N, 3}(z; \Psi_C)$. 
Item $(ii)$ is a direct consequence of Corollary \ref{expansion of  of differential of Psi}. 
%and the Taylor expansion of $d \Psi_C(z_S, z_\bot)^t$ in $z_\bot$ around $0$, obtained from formula \eqref{formula for d Psi ^{0, tau}_X (z) ^ t }.
\end{proof}

%%%%%%%%%%%%%%%%%%%%%%%%%%%%%%%%%%%%%%%%%%%%%%%%%%%%%%%%%%%%%%%%%%%%%%%%
%%%%%%%%%%%%%%%%%%%%%%%%%%%%%%%%%%%%%%%%%%%%%%%%%%%%%%%%%%%%%%%%%%%%%%%%

 \section{The KdV Hamiltonian in new coordinates}\label{Hamiltoniana trasformata}
In this section we provide an expansion of the transformed KdV Hamiltonian $\mathcal H = H^{kdv} \circ \Psi$
where the map $\Psi = \Psi_L \circ \Psi_C$ is the composition of $\Psi_L$ (cf Section \ref{sezione mappa Psi L}) with the symplectic corrector $\Psi_C$ (cf Section \ref{sezione Psi C})
and $H^{kdv}$ is the KdV Hamiltonian given by 
 \begin{equation}\label{cal H kdv espansione}
  H^{kdv}(u) =\frac12 \int_0^1 u_x^2\, d x + \int_0^1 u^3 \, d x\,. 
  \end{equation}
  First we need to make some preliminary considerations. Recall that
  for any finite subset $S_+ \subset \mathbb N$, the Birkhoff map $\Psi^{kdv}$ establishes a one to one correspondance between $\mathcal M_S$ 
 and the set $M_S$ of $S-$gap potentials
 where $S = S_+ \cup (- S_+)$.
  For any $S-$gap potential $q$, the corresponding KdV actions $I = (I_S, I_\bot)$, defined in terms of the Birkhoff coordinates $\Phi^{kdv}(q)$, satisfy $I_\bot = 0.$
 Denote by $\Omega_\bot(I_S) \equiv \Omega_\bot^{kdv}(I_S)$ and $\Omega_S(I_S) \equiv \Omega_S^{kdv}(I_S)$ the  diagonal linear operators defined by 
  \begin{equation}\label{Omega_S}
 \qquad \Omega_S(I_S) :=   {\rm diag}(( \Omega_n(I_S))_{n \in S}) :h^0_S \to  h^0_S\,, (z_n)_{n \in S} \to ( \Omega_n(I_S) z_n)_{n \in S}
  \end{equation}
   \begin{equation}\label{splitting Omega}
   \qquad \Omega_\bot(I_S) := 
 {\rm diag}(( \Omega_n(I_S))_{n \in S^\bot}) :h^2_\bot \to  h^{0}_\bot\,, (z_n)_{n \in S^\bot} \to ( \Omega_n(I_S) z_n)_{n \in S^\bot}
  \end{equation}
   where for any $n \ge 1$,
   \begin{equation}\label{definition Omega n}
    \Omega_n(I_S)  \equiv \Omega_n^{kdv}(I_S) := \frac{1}{2 \pi n } \omega_n((I_S, 0))\,, \quad   
    \Omega_{-n}(I_S) \equiv \Omega_{-n}^{kdv}(I_S) :=  \Omega_{n}^{kdv}(I_S)
   \end{equation} 
  and $\omega_n(I) \equiv \omega_n^{kdv}(I)$ is the $n$th KdV frequency, viewed as a function of the actions. 
  By Lemma \ref{Lemma appendice espansione frequenze}, one has: 
  \begin{lemma}\label{espansione asintotica frequenza thomas}
  For any finite gap potential $q \in M_S$, $n \ge 1$, and $N \ge 1$ one has 
  \begin{equation}\label{asintotica frequenze kdv}
  \Omega_n(I_S) = (2 \pi n)^2 +  \sum_{k = 1}^N \frac{\Omega_{2k}^{ae}(I_S)}{(2 \pi n)^{2k}} + \frac{{\cal R}_{2N}^{\Omega_n}(I_S)}{(2 \pi n)^{2 N + 1}} 
  \end{equation}
  where $\Omega_{2k}^{ae}(I_S) = \omega_{2k -1}^{ae}(I_S, 0)$, ${\cal R}_{2N}^{\Omega_n}(I_S) = {\cal R}_{2N}^{\omega_n}(I_S, 0)$
  and $\omega_{2k - 1}^{ae}(I_S, 0)$, ${\cal R}_{2N}^{\omega_n}(I_S, 0)$ are given by Lemma \ref{Lemma appendice espansione frequenze}.
  \end{lemma}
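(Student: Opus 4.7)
The plan is to reduce this statement directly to the appendix lemma \ref{Lemma appendice espansione frequenze}, which already provides the asymptotic expansion of the KdV frequencies $\omega_n(I)$ in powers of $1/(2\pi n)$ on the full phase space. Recall the normalization \eqref{definition Omega n}, namely
\[
\Omega_n(I_S) = \frac{1}{2\pi n}\,\omega_n\bigl((I_S, 0)\bigr), \qquad n \ge 1,
\]
so a proof of \eqref{asintotica frequenze kdv} is obtained by writing out the expansion of $\omega_n(I)$ guaranteed by Lemma \ref{Lemma appendice espansione frequenze}, specializing to $I = (I_S, 0)$, and multiplying through by $1/(2\pi n)$. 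The leading term $(2\pi n)^3$ of $\omega_n(I)$ then produces the leading term $(2\pi n)^2$ of $\Omega_n(I_S)$.

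The key structural feature to be used is that the expansion of $\omega_n(I)$ contains only odd powers of $2\pi n$, i.e.
\[
\omega_n(I) = (2\pi n)^3 + \sum_{k = 1}^{N}\frac{\omega_{2k-1}^{ae}(I)}{(2\pi n)^{2k-1}} + \frac{{\cal R}_{2N}^{\omega_n}(I)}{(2\pi n)^{2N}}.
\]
After dividing by $2\pi n$ this parity shifts so that $\Omega_n(I_S)$ carries only even powers of $2\pi n$, which is exactly what the statement claims with the identifications
\[
\Omega_{2k}^{ae}(I_S) = \omega_{2k-1}^{ae}(I_S, 0), \qquad {\cal R}_{2N}^{\Omega_n}(I_S) = {\cal R}_{2N}^{\omega_n}(I_S, 0).
\]
All that remains for the proof of the lemma itself is thus a one-line invocation of the appendix result and a relabelling of coefficients and remainder.

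The substantive content is therefore hidden in Lemma \ref{Lemma appendice espansione frequenze}, which is established in the appendix independently of the material of this section. The potentially delicate point, which I would only need to double-check rather than reprove, is that the expansion indeed has the asserted parity (only odd powers of $2\pi n$ in $\omega_n$). This comes from the classical identification of the KdV Hamiltonians with traces of odd-order differential operators in the Lax pair formulation, combined with the fact that the discriminant $\Delta(\lambda,q)$ has an asymptotic expansion in integer powers of $\sqrt{\lambda}$ only of a definite parity; since $\omega_n = \partial_{I_n}\mathcal{H}^{kdv}$ and $\mathcal{H}^{kdv}$ sits at the appropriate level of the KdV hierarchy, this parity propagates to the frequencies. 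Granting this fact (stated in the appendix), the proof of Lemma \ref{espansione asintotica frequenza thomas} is essentially a matter of bookkeeping: substitute $I = (I_S, 0)$, divide by $2\pi n$, and read off the coefficients and the remainder, so there is no genuine obstacle at the level of this section.
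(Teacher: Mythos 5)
Your proposal is correct and is exactly what the paper does: Lemma \ref{espansione asintotica frequenza thomas} is stated as an immediate consequence of Lemma \ref{Lemma appendice espansione frequenze}, obtained by setting $I = (I_S,0)$, dividing the expansion of $\omega_n$ by $2\pi n$, and relabelling the coefficients and remainder. The only point to watch is the power of $2\pi n$ in the remainder (the appendix lemma has denominator $(2\pi n)^{2N+1}$, so division by $2\pi n$ yields $(2\pi n)^{2N+2}$, which is in fact slightly stronger than what the lemma asserts), but this is pure bookkeeping and does not affect the argument.
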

 Assume that  $q(t)$ is a solution of the KdV equation \eqref{1.1} in $M_S$ with 
  $z(t) := \Phi^{kdv}(q(t)) \in \mathcal V$ for any $t$. Note that $z(t)$ is of the form $(z_S(t), 0)$,
 the actions $I = (I_n)_{n \ge 1}$ of $q(t)$ are independent of $t$, and $I= (I_S, 0)$
 where $I_S = ( \frac{1}{2\pi n} z_n(0) z_{-n}(0))_{n \in S_+}$. Furthermore,
 $ \partial_t  z_S (t) = J_S \Omega_S(I_S)[z_S(t)]$, or in more detail, for any $n \in S$,
$$
\partial_t  z_n (t)  =  2\pi \ii n \Omega_n (I_S)  z_n(t)\,.
$$
 Denote by $\widehat q(t)$ the solution of the equation, obtained by linearizing the KdV equation along $q(t)$, 
\begin{equation}\label{lin KdV}
  \partial_t \widehat q(t) = \partial_x d  \nabla H^{kdv}(q(t)) [\widehat q(t)]\,.
  \end{equation}
  We need to investigate $\partial_x d  \nabla H^{kdv}(q(t)) [\widehat q(t)]$ further.
 If $\widehat q(0)$ is of the form $d \Psi_L(z_S(0), 0)[ 0, \widehat z_\bot(0)]$ ($= \Psi_1(z_S(0))[\widehat z_\bot(0)]$\,) with $\widehat z_\bot(0) \in h^3_\bot$, 
 then by \eqref{definition Psi_L} (definition of $\Psi_L$)
 and \eqref{formula d Psi L} (formula of the differential $d\Psi_L$), 
  $\widehat z_\bot (t)$, defined by $\widehat q(t) = \Psi_1(z_S(t)) [\widehat z_\bot(t)]$, solves the equation 
  $$\partial_t \widehat z_\bot (t) =   J_\bot \Omega_\bot(I_S) [ \widehat z_\bot (t) ]
  $$
  or more explicitly, for any  $n \in S^\bot_+$,
 $$
 \partial_t \widehat z_n (t)  = 2\pi \ii n \Omega_n (I_S) \widehat z_n(t) ,  \qquad 
  \partial_t \widehat z_{-n} (t)  = 2\pi \ii (-n) \Omega_{-n} (I_S) \widehat z_{-n}(t)\,.
 $$
 %\quad z_\bot (0) = z_\bot^{(0)} = \Psi_1(q^{(0)})^{- 1} w_0\,,
  By differentiating $\widehat q(t) = \Psi_1(z_S(t))[\widehat z_\bot(t)]$ with respect to $t$, one gets  
  \begin{align}
  \partial_t \widehat q (t)& = \Psi_1(z_S(t))[\partial_t \widehat z_\bot (t)] + d_S \big( \Psi_1(z_S(t))[\widehat z_\bot (t)] \big) [\partial_t z_S(t) ]\nonumber\\
  & = \Psi_1(z_S(t)) \big[ J_\bot \Omega_\bot(I_S) [ \widehat z_\bot (t)]  \big] +
  d_S \big( \Psi_1(z_S(t))[\widehat z_\bot(t)] \big)[ \partial_t z_S(t)]\,. 
  \label{maradona 1} 
%  & = d \Psi^{kdv}(z_S(t), 0) J \Omega^{kdv}(I_S, 0) [0, \widehat z(t)] + \partial_S \big( d \Psi^{kdv}(z_S(t), 0)[(0, \widehat z(t))] \big) \cdot J \Omega_S^{kdv}(I_S, 0) z_S(t) \nonumber\\
%  & = d \Psi^{kdv}(z_S(t), 0) J \Omega^{kdv}(I_S, 0) d \Psi^{kdv}(z_S(t), 0)^{- 1} \widehatw(t) \nonumber\\
%  & \quad + \partial_S d \Psi^{kdv}(z_S(t), 0) \Big[ d \Psi^{kdv}(z_S(t), 0)^{- 1} \widehatw(t) \Big] \cdot J \Omega_S^{kdv}(I_S, 0) z_S(t)\,. 
  \end{align}
  Comparing \eqref{lin KdV} and \eqref{maradona 1} 
  %for $z_S(0) : = z_S$, $\widehat z(0) := \widehat z$ 
  and using that $ \partial_t z_S(t) = J_S \Omega_S(I_S) [z_S(t)]$, one gets 
  \begin{align}
\partial_x d \nabla H^{kdv}(q(t)) & \big[ \Psi_1(z_S(t)) [\widehat z_\bot(t) ] \big] 
=  \Psi_1(z_S(t)) \big[ J_\bot \Omega_\bot(I_S) [\widehat z_\bot(t) ] \big] \,  \nonumber\\
  & \quad + \, d_S \big( \Psi_1(z_S(t))[\widehat z_\bot(t) ] \big) [J_S \Omega_S(I_S) [z_S(t)]]\,.  \label{maradona 3}
  \end{align}
  Now apply $\Psi_1(z_S(t))^{- 1}$ to both sides of the latter equality yielding
  \begin{align}
\Psi_1(z_S(t))^{- 1}\partial_x & d \nabla H^{kdv}(q(t))  \big[ \Psi_1(z_S(t)) [\widehat z_\bot(t)]  \big] 
=  J_\bot \Omega_\bot(I_S)  [\widehat z_\bot(t)]\,  \nonumber\\
  & \quad + \, \Psi_1(z_S(t))^{- 1} d_S \big( \Psi_1(z_S(t))[\widehat z(t) ] \big)[J_S \Omega_S(I_S) [z_S(t)]]\,.  \label{maradona 3-1}
  \end{align}
  Since $\Psi_1(z_S)$ is symplectic one has $ \Psi_1(z_S)^t \partial_x^{- 1} \Psi_1(z_S) = J_\bot^{- 1}$ or
   $ \Psi_1(z_S)^{- 1} \partial_x  = J_\bot \Psi_1(z_S)^{ t},$
  implying that 
   \begin{align}
J_\bot \Psi_1(z_S(t))^t d \nabla H^{kdv}(q(t)) & \big[ \Psi_1(z_S(t))[\widehat z_\bot(t)]  \big] 
=  J_\bot \Omega_\bot(I_S) [\widehat z_\bot(t)]\,  \nonumber\\
  & \quad + \, \Psi_1(z_S(t))^{- 1} d_S \big( \Psi_1(z_S(t))[\widehat z_\bot(t) ] \big)[J_S \Omega_S(I_S) [z_S(t)]]\,.  \label{maradona 3-2}
  \end{align}
The latter identity implies that  for any $z_S \in \mathcal V_S$, $I_S= (\frac{1}{2\pi n} z_nz_{-n})_{n \in S_+}$, $q = \Psi^{kdv}(z_S, 0)$, $\widehat z_\bot \in h^3_\bot$,
   \begin{align}
 \Psi_1(z_S)^t d \nabla H^{kdv}(q) & \big[ \Psi_1(z_S) [\widehat z_\bot]  \big] 
=   \Omega_\bot(I_S) [\widehat z_\bot] + \,{\cal G}(z_S)[\widehat z_\bot]   \label{maradona 4}
  \end{align}
  where $ {\cal G}(z_S): h^0_\bot \to h^0_\bot$ is given by
  \begin{equation}\label{definizione cal M (wS)}
  \begin{aligned}
  {\cal G}(z_S)[\widehat z_\bot] & := J_\bot^{- 1} \Psi_1(z_S)^{- 1} d_S \big( \Psi_1(z_S)[\widehat z_\bot] \big)[ J_S \Omega_S(I_S) [z_S]]\,. 
  \end{aligned}
  \end{equation}
%( Actually, by a density argument one sees that \eqref{maradona 4} holds for any $\widehat z_\bot \in h^1_\bot$, when viewed as an identity in $h^{-1}_\bot$.)
  In the next lemma we record an expansion for the operator ${\cal G}(z_S)$. 
      \begin{lemma}\label{stime cal M(wS)}
  For any integer $N \ge 1$, the operator ${\cal G}(z_S): h^0_\bot \to h^0_\bot$ admits an expansion of the form 
  $$
   {\cal G}(z_S) =  {\cal F}_\bot \circ \sum_{k =  1}^{N}  a_k( z_S;  {\cal G}) \partial_x^{- k} \circ {\cal F}_\bot^{- 1} + {\cal R}_N(z_S;  {\cal G})
  $$  
  where for any $1 \le k \le N,$ $s \ge 0,$
  the maps 
  $$
  \mathcal V_S \to H^s, \, z_S  \mapsto a_k(z_S; {\cal G})\,, \qquad 
  \mathcal V_S \mapsto {\cal B}(h^s_\bot, h^{s + N + 1}_\bot), \, z_S  \mapsto {\cal R}_N(z_S; \mathcal G)
  $$
   are real analytic. 
  \end{lemma}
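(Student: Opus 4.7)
The plan is to rewrite $\mathcal{G}(z_S)$ using the canonical nature of $\Psi_1(z_S)$ so that the (unbounded) inverse $\Psi_1(z_S)^{-1}$ disappears, and then to reduce the problem to composing the pseudodifferential expansions of $\Psi_1(z_S)$ and $\Psi_1(z_S)^t$ already established in Theorem~\ref{lemma asintotica Floquet solutions} and Corollary~\ref{lemma asintotiche Phi 1 q}.

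First, I would apply the symplectic identity recorded above \eqref{maradona 4}, namely $\Psi_1(z_S)^{-1} = J_\bot\,\Psi_1(z_S)^t\,\partial_x^{-1}$, in the definition \eqref{definizione cal M (wS)} to obtain
$$
\mathcal{G}(z_S) = \Psi_1(z_S)^t \, \partial_x^{-1} \, M(z_S),\qquad M(z_S)[\widehat z_\bot] := d_S\big(\Psi_1(z_S)[\widehat z_\bot]\big)\big[J_S\Omega_S(I_S)[z_S]\big].
$$
Next, I would expand $M(z_S)$ by differentiating, in the fixed direction $J_S\Omega_S(I_S)[z_S]$, the expansion $\Psi_1(z_S) = \big(\mathrm{Id}+\sum_{k=1}^{N'}a_k(z_S;\Psi_1)\partial_x^{-k}\big)\circ\mathcal{F}_\bot^{-1} + \mathcal{R}_{N'}(z_S;\Psi_1)$. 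The identity $\mathcal{F}_\bot^{-1}$ is independent of $z_S$ and therefore drops out, leaving
$$
M(z_S) = \sum_{k=1}^{N'} b_k(z_S)\,\partial_x^{-k}\circ\mathcal{F}_\bot^{-1} + \widetilde{\mathcal R}_{N'}(z_S),
$$
with $b_k(z_S) := d_S a_k(z_S;\Psi_1)[J_S\Omega_S(I_S)[z_S]] \in H^s$ for every $s\ge 0$, and $\widetilde{\mathcal R}_{N'}(z_S): h^s_\bot \to H^{s+N'+1}$ a smoothing operator; both depend real analytically on $z_S$ by Theorem~\ref{lemma asintotica Floquet solutions} and the real analyticity of $z_S\mapsto J_S\Omega_S(I_S)[z_S]$ (Lemma~\ref{espansione asintotica frequenza thomas}).

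The last step is the composition $\Psi_1(z_S)^t\circ\partial_x^{-1}\circ M(z_S)$. Multiplying on the left by $\partial_x^{-1}$ and then by the expansion of $\Psi_1(z_S)^t$ given by Corollary~\ref{lemma asintotiche Phi 1 q},
$$
\Psi_1(z_S)^t = \mathcal{F}_\bot\circ\Big(\mathrm{Id} + \sum_{k=1}^{N'} a_k(z_S;\Psi_1^t)\,\partial_x^{-k}\Big) + \mathcal{R}_{N'}(z_S;\Psi_1^t),
$$
and reorganizing each product $\partial_x^{-j}\circ c\,\partial_x^{-k}$ via the Leibniz–type formula of Lemma~\ref{lemma composizione pseudo}, one assembles the pseudodifferential expansion
$$
\mathcal{G}(z_S) = \mathcal{F}_\bot\circ\sum_{k=1}^{N} a_k(z_S;\mathcal{G})\,\partial_x^{-k}\circ\mathcal{F}_\bot^{-1} + \mathcal{R}_N(z_S;\mathcal{G})
$$
for any prescribed $N$, provided $N'$ is taken sufficiently large (essentially $N' = N+2$) so that each of the four pairings (principal$\times$principal, principal$\times$remainder, remainder$\times$principal, remainder$\times$remainder) either contributes to one of the coefficients $a_k(z_S;\mathcal{G})$ with $1\le k\le N$ or is absorbed into an $(N+1)$-smoothing operator. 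The real analyticity of $a_k(z_S;\mathcal{G})$ and $\mathcal{R}_N(z_S;\mathcal{G})$ into the stated target spaces inherits from the analyticity of the ingredients.

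The main obstacle will be the bookkeeping of this last composition: one needs to check that the Leibniz expansion of Lemma~\ref{lemma composizione pseudo} applied term by term, combined with the tame estimates of Theorem~\ref{lemma asintotica Floquet solutions} and Corollary~\ref{lemma asintotiche Phi 1 q}, produces a remainder with the required gain of $N+1$ derivatives and coefficients in $H^s$ uniformly in $s$. Once this algebraic reorganization is carried out, no further analytic difficulty arises, since all estimates and analyticity statements have already been collected in Section~\ref{sezione mappa Psi L}.
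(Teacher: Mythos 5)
Your proposal is correct and follows essentially the same route as the paper, whose proof is a one-line citation of the definition \eqref{definizione cal M (wS)}, the expansion of $\Psi_1(z_S)$ in Theorem \ref{lemma asintotica Floquet solutions}(ii), and Lemma \ref{lemma composizione pseudo}; your use of the symplectic identity $\Psi_1(z_S)^{-1}\partial_x = J_\bot\Psi_1(z_S)^t$ to replace $J_\bot^{-1}\Psi_1(z_S)^{-1}$ by $\Psi_1(z_S)^t\partial_x^{-1}$ is exactly the mechanism the paper relies on implicitly (it is the identity stated just before \eqref{maradona 4}, and it brings in the expansion of $\Psi_1(z_S)^t$ from Corollary \ref{lemma asintotiche Phi 1 q}). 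The remaining bookkeeping you describe is routine and matches the intended argument.
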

  \begin{proof}
  In view of the definition \eqref{definizione cal M (wS)} of ${\cal G}$,
  the lemma follows from item (ii) of  Theorem \ref{lemma asintotica Floquet solutions} (expansion of $ \Psi_1(z_S)$)
  and  Lemma \ref{lemma composizione pseudo}.
    \end{proof}

  \smallskip
  
    After this preliminary discussion, we can now study the transformed Hamiltonian ${H}^{kdv} \circ \Psi$ where $\Psi = \Psi_L \circ \Psi_C$. 
    We split the analysis into two parts. First we expand ${\cal H}^{(1)} := { H}^{kdv} \circ \Psi_L$ and then we analyze ${\cal H}^{(2)} = {\cal H}^{(1)} \circ \Psi_C$. 
   
   \medskip
   
  \noindent 
 {\bf Expansion of ${\cal H}^{(1)} := H^{kdv} \circ \Psi_L$}
 
 \noindent
To expand $H^{kdv} \circ \Psi_L$, it is useful to write $H^{kdv}(u) $ as $H^{kdv}(u) =H_2^{kdv} (u)+ H_3^{kdv}(u)$ where
  \begin{equation}\label{forma compatta hamiltoniana d-NLS}
  H_2^{kdv}(u) := \frac12 \langle (- \partial_x^2) u\,,\,u \rangle\,, 
  \qquad H_3^{kdv}(u) := \int_0^1 u^3  \,dx\,.
  \end{equation}
   The $L^2-$gradient $\nabla H^{kdv}$ of $H^{kdv}$ and its derivative are then given by 
 \begin{equation}\label{espressione gradiente Hamiltoniana originaria nls}
  \nabla H^{kdv}(u) = - \partial_x^2  u + 3u^2\,, \quad
  % \nabla H_3^{kdv}(u) = 3 u^2\,, \quad   
  d \nabla H^{kdv}(u) = - \partial_x^2 + 6u\,.
  %d \nabla H_3^{kdv}(u) = 6u\, .
  \end{equation} 
Let $z_S \in \mathcal V_S$, $q = \Psi^{kdv}(z_S, 0)$. 
The Taylor expansion of $H^{kdv}(q+v)$ around $q$ in direction 
$v = \Psi_1(z_S)[z_\bot]$ 
with $z_\bot \in \mathcal V_\bot \cap h_\bot^1$ reads
\begin{align}
H^{kdv}(q+v) =  H^{kdv}(q )  + \langle \nabla H^{kdv}(q ), v \rangle +
\frac12 \langle d \nabla H^{kdv}(q )[v]\,,\, v \rangle +   \int_0^1 v^3 \, dx\,. \label{espansione taylor H4 nls}
\end{align}
Since $v = d\Psi^{kdv}(z_S,0)[0, z_\bot]$ one has
$\langle \nabla H^{kdv}(q ), v \rangle  = \partial_y |_{y=0} H^{kdv} (\Psi^{kdv}(z_S, yz_\bot))$.
Recall that $\mathcal H^{kdv} = H^{kdv} \circ \Psi^{kdv}$ is a function of the actions alone and $I_n = \frac{1}{2\pi n}z_nz_{-n}$, $n \ge 1$.
It implies that  
$$
\partial_y |_{y=0} H^{kdv} (\Psi^{kdv}(z_S, yz_\bot)) = \sum_{n \in S^\bot_+} \omega_n(I_S, 0) \partial_y |_{y=0} \, y^2 I_n =0,
$$
and hence $\langle \nabla H^{kdv}(q ), v \rangle  =0$.
%where ${\cal T}_{3}^{(1)}(z_S, w) $ is the Taylor remainder term of order three, given by
%\begin{align}
%{\cal T}_{3}^{(1)}(z_S, w) & := \frac12 \int_0^1 (1 - t)^2 d^3 {\cal H}^{kdv}(\Psi^{kdv}(\Pi_S z) + t w )[w, w, w]\, d t \nonumber\\
% & \stackrel{\eqref{forma compatta hamiltoniana d-NLS}, \eqref{definizione cal H 2 4 nls}}{=} 
%\frac12 \int_0^1 (1 - t)^2 d^3 {\cal H}^{kdv}_4(\Psi^{kdv}(\Pi_S z) + t w )[w, w, w]\, d t \,. \label{resto ordine 3 taylor H4 nls}
%\end{align}
%For later use we record that the third derivative of $ {\cal H}^{kdv}_4$ at
% $w_0 = (u_0, v_0) \in H^1_r$ in direction $w = (u, v)$ in $H^1_r$ can be computed as 
%\begin{equation}\label{formula for third derivative of cal H4nls}
%d^3 {\cal H}^{kdv}_4(w_0)[w, w, w] = 12 \int_0^1 \big( u_0 u v^2 + u^2 v_0 v  \big) dx\,.
%\end{equation}
Since $\Psi_L(z) = q  + \Psi_1(z_S)[z_\bot]$ one then gets
  \begin{align}
 {\cal H}^{(1)}(z)  = H^{kdv}(\Psi_L(z)) & = H^{kdv}(q)  + \frac12 \big\langle d \nabla H^{kdv}(q) \big[ \Psi_1(z_S)[z_\bot] \big]\,,\, \Psi_1(z_S)[z_\bot] \big\rangle + 
 \int_0^1 (\Psi_1(z_S)[z_\bot])^3 \, d x \,.  \nonumber
  \end{align}
  By formula \eqref{maradona 4}, 
  $$
   \big\langle d \nabla {H}^{kdv}(q )  \big[ \Psi_1(z_S)[z_\bot] \big]\,,\, \Psi_1(z_S)[z_\bot ] \big\rangle
   = \big\langle \Omega_\bot^{kdv}(I_S) [z_\bot]\,,\, z_\bot \big\rangle  +
  \big\langle{\cal G}(z_S) [z_\bot]\,,\, z_\bot \big\rangle\,.
  $$
   Since $\Psi_1(z_S)^t d \nabla H^{kdv}(q) \Psi_1(z_S)$ and  $\Omega_\bot^{kdv}(I_S)$ are symmetric,
  so is the operator ${\cal G}(z_S)$. 
In summary,  
     \begin{align}
{\cal H}^{(1)}(z) = \mathcal H^{kdv}_S(z) + 
\frac12 \big\langle { \Omega}^{kdv}_\bot(I_S)[ z_\bot], \,  z_\bot \big\rangle \,
+ {\cal P}_2^{(1)}(z) + {\cal P}_3^{(1)}(z)    \label{forma semifinale H nls circ Psi}
  \end{align}
where for any $z = (z_S, z_\bot) \in \mathcal V \cap h^1_0,$
  \begin{align}
  & \mathcal H^{kdv}_S(z) :=  H^{kdv}(\Psi^{kdv}(z_S, 0))\,, \quad
  & {\cal P}_2^{(1)}(z) :=  \frac12 \langle {\cal G}(z_S)[z_\bot], \, z_\bot \rangle\,, \quad 
  {\cal P}_3^{(1)}(z) := \int_0^1 (\Psi_1(z_S)[z_\bot] )^3\, d x\,. 
  \label{perturbazione composizione con Psi L}
  \end{align}
  Note that $ \mathcal H^{kdv}_S(z) = \mathcal H^{kdv}_S(\Pi_S z)$ where 
$\Pi_S : h^0_S \times h^0_{ \bot } \to h^0_S \times h^0_{ \bot }$ denotes the projection, given by  
$(\widehat z_S, \widehat z_\bot) \mapsto (\widehat z_S, 0)$ (cf \eqref{Pi S Pi bot}).
  We record that ${\cal P}_2^{(1)}(z)$ is quadratic and ${\cal P}_3^{(1)}(z)$ cubic  in $z_\bot$ where the superscript $(1)$ refers to the Hamiltonian $\mathcal H^{(1)}$.
  Recall from \eqref{definition prarproduct} that for any $a \in H^1$, the paraproduct $T_a u$ of the function $a$  with $u \in L^2$ 
  with respect to the cut-off function $\chi$ is defined as $(T_a u) (x) = \sum_{k, n \in \Z} \chi(k, n) a_k u_n e^{\ii 2 \pi (k + n) x}$
  with $u_n$, $n \in \Z$, denoting the Fourier coefficients of $u$.
  \begin{lemma}\label{lemma stima cal P2 P3}
 For any integer $N \ge 1$, there exists an integer $\sigma_N \ge N$ (loss of regularity) so that on $\mathcal V \cap h^{\sigma_N}_0$,
  the $L^2-$gradient $\nabla{\cal P}_3^{(1)}$ of  ${\cal P}_3^{(1)}$ admits the asymptotic expansion of the form
  \begin{equation}\label{epansione d nabla P3}
  \nabla {\cal P}_3^{(1)}(z) = \big( 0, \, {\cal F}_\bot \circ  \sum_{k = 0}^N   T_{a_k(z; \nabla {\cal P}_3^{(1)})} \partial_x^{- k}{\cal F}_\bot^{- 1} [z_\bot] \, \big) + {\cal R}_N(z; \nabla {\cal P}_3^{(1)})
  \end{equation}
  where for any $s \geq 0$, $1 \le k \le N$, the maps
   $$
   \mathcal V \cap h^{s + \sigma_N}_0 \to H^s, \, z  \mapsto a_k(z; \nabla {\cal P}_3^{(1)})\,, \qquad
   \mathcal V \cap h^{s \lor \sigma_N}_0 \to h^{s + N + 1}_0, \, z \mapsto {\cal R}_N(z; \nabla {\cal P}_3^{(1)})
   $$
 are real analytic. Furthermore, for any $z \in \mathcal V \cap h^{s + \sigma_N}_0$ with  $\| z \|_{\sigma_N} \leq 1$, 
 $
 \| a_k(z; \nabla {\cal P}_3^{(1)}) \|_s \lesssim_{s, N} \| z_\bot\|_{s + \sigma_N}
 $
 and if in addition $\widehat z_1, \ldots , \widehat z_l \in h^{s+ \sigma_N}_0$, $l \ge 1$, 
  \begin{equation}\label{stima P3 dopo Psi L}
   \| d^l a_k(z; \nabla {\cal P}_3^{(1)}) [\widehat z_1 , \ldots, \widehat z_l] \|_s \, \lesssim_{s, k, l} \,
   \sum_{j = 1}^l \| \widehat z_j\|_{s + \sigma_N} \prod_{i \neq j} \| \widehat z_i \|_{\sigma_N} + \| z_\bot \|_{s + \sigma_N} \prod_{j = 1}^l \| \widehat z_j\|_{\sigma_N}\,.
   \end{equation}
  Similarly, for any $ z \in \mathcal V \cap h^{s \lor \sigma_N}_0$ with  $\| z \|_{\sigma_N} \leq 1$, $\widehat z \in h^{s \lor \sigma_N}_0$,
  $ \| {\cal R}_N(z; \nabla {\cal P}_3^{(1)}) \|_{s + N + 1} \lesssim_{s, N} \| z_\bot \|_{s \lor \sigma_N} \| z_\bot \|_{\sigma_N}$ and
   \begin{equation}\label{stima P3 dopo Psi L (part 2)}
  \| d {\cal R}_N(z; \nabla {\cal P}_3^{(1)}) [\widehat z] \|_{s + N + 1} \lesssim_{s, N} 
  \| z_\bot\|_{s \lor \sigma_N } \| \widehat z \|_{\sigma_N} + \| z_\bot\|_{\sigma_N} \| \widehat z \|_{s \lor \sigma_N }
  \end{equation}
  and if in addition $\widehat z_1, \ldots , \widehat z_l \in h^{s \lor \sigma_N}_0$, $l \ge 2$, then
  \begin{equation}\label{stima P3 dopo Psi L (part 3)}
   \| d^l {\cal R}_N(z; \nabla {\cal P}_3^{(1)})[\widehat z_1, \ldots, \widehat z_l] \|_{s + N + 1} \lesssim_{s, N, l} 
  \sum_{j = 1}^l \| \widehat z_j\|_{s \lor \sigma_N} \prod_{i \neq j} \| \widehat z_i\|_{\sigma_N} + \| z_\bot \|_{s \lor\sigma_N} \prod_{j = 1}^l \| \widehat z_j\|_{\sigma_N}\,.
%  & \| d^k \big( {\cal R}_N^{(2)}(\psi)[\widehat z] \big)[\widehat \psi_1, \ldots, \widehat \psi_k] \|_{s + N} \lesssim_{s, N, k}
%\| \widehat z\|_{0} \sum_{j = 1}^k \| \widehat \psi_j\|_{s + \sigma_N} \prod_{i \neq j} \| \widehat \psi_i\|_{\sigma_N} + 
%\| z\|_{s + \sigma_N} \| \widehat z\|_{0} \prod_{j = 1}^k \| \widehat \psi_j\|_{\sigma_N} \\
%  & \quad  + \| \widehat z\|_s \prod_{j = 1}^k \| \widehat \psi_j\|_{\sigma_N}\,, \quad k \geq 0\,. 
  \end{equation}
   \end{lemma}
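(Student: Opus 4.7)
The plan is to compute $\nabla \mathcal P_3^{(1)}$ directly from $\mathcal P_3^{(1)}(z) = \int_0^1 u^3\, dx$, with $u := \Psi_1(z_S)[z_\bot]$, and then extract its asymptotic expansion from those of $\Psi_1(z_S)$ and $\Psi_1(z_S)^t$ established in Theorem \ref{lemma asintotica Floquet solutions}(ii) and Corollary \ref{lemma asintotiche Phi 1 q}. By the chain rule and the definition of $\nabla$ with respect to the standard inner products,
$$
\nabla_{z_\bot}\mathcal P_3^{(1)}(z) \,=\, 3\,\Psi_1(z_S)^t[u^2],
\qquad
\nabla_{z_S}\mathcal P_3^{(1)}(z) \,=\, 3\bigl(d_S\Psi_1(z_S)[z_\bot]\bigr)^t[u^2].
$$

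The $S$-component takes values in the finite-dimensional space $h^0_S$; its entries are $L^2$-pairings of $u^2$ with the $C^\infty$-smooth functions $\partial_{z_n}\Psi_1(z_S)[z_\bot]$, $n\in S$ (recall that $\Psi^{kdv}$ is analytic on $M_S \subset \cap_s H^s_0$). Cauchy--Schwarz combined with the $H^s$-bounds for $u$ deduced from Theorem \ref{lemma asintotica Floquet solutions}(ii) then shows that $\nabla_{z_S}\mathcal P_3^{(1)}$ gains arbitrarily many derivatives and is quadratic in $z_\bot$; the whole $S$-component is therefore absorbed into the remainder $\mathcal R_N(z;\nabla \mathcal P_3^{(1)})$ in \eqref{epansione d nabla P3}, contributing to the tame bounds \eqref{stima P3 dopo Psi L (part 2)}--\eqref{stima P3 dopo Psi L (part 3)}.

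For the $\bot$-component I would apply Bony's paraproduct decomposition (Appendix \ref{appendice B}) to write $u^2 = 2\,T_u u + R(u,u)$, where the symmetric bilinear remainder $R(u,u)$ is smoothing of any prescribed order, provided $u$ is regular enough; this contribution is absorbed into $\mathcal R_N$. Into $T_u$ I substitute the expansion $u = \mathcal F_\bot^{-1}[z_\bot] + \sum_{k=1}^{N} a_k(z_S;\Psi_L)\,\partial_x^{-k}\mathcal F_\bot^{-1}[z_\bot] + (\text{smoothing})$, and into $\Psi_1(z_S)^t$ the expansion $\Psi_1(z_S)^t = \mathcal F_\bot\circ(\mathrm{Id}+\sum_{k=1}^{N} a_k(z_S;\Psi_1^t)\partial_x^{-k}) + (\text{smoothing})$. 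Then $3\Psi_1(z_S)^t[u^2] = 6\,\Psi_1(z_S)^t[T_u u]$ up to a smoothing contribution, and repeated application of the symbolic calculus of paradifferential and pseudodifferential operators (Appendix \ref{appendice B} and Lemma \ref{lemma composizione pseudo}: commutators of $T_a$ with $\partial_x^{-k}$, composition $T_aT_b = T_{ab}+\cdots$, and the composition of a $\psi$do with a paramultiplication) collects the result, after reordering in powers of $\partial_x^{-1}$, into the form $\mathcal F_\bot\circ\sum_{k=0}^{N} T_{a_k^{\nabla\mathcal P_3^{(1)}}}\partial_x^{-k}\mathcal F_\bot^{-1}[z_\bot]$, with leading coefficient $a_0^{\nabla\mathcal P_3^{(1)}} = 6u$ coming from $2T_u u$ paired with the identity part of $\Psi_1(z_S)^t$.

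The hard part will be the bookkeeping: every commutator or composition produces symbols one order lower in $\partial_x^{-1}$, and these must be regrouped into the finite sum $\sum_{k=0}^{N}$, with all residuals pushed into $\mathcal R_N$. The loss of regularity $\sigma_N$ arises because each coefficient $a_k^{\nabla\mathcal P_3^{(1)}}$ involves finitely many $x$-derivatives of $u$ (and hence of $z_\bot$), and because the smoothing properties of $R(u,u)$ and of the remainders in Theorem \ref{lemma asintotica Floquet solutions}(ii) and Corollary \ref{lemma asintotiche Phi 1 q} take effect only once $u$ has sufficient regularity; an inductive choice of $\sigma_N$, growing with $N$, absorbs all these losses. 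The tame estimates \eqref{stima P3 dopo Psi L}--\eqref{stima P3 dopo Psi L (part 3)} then follow by propagating through the finitely many compositions the tame bilinear bounds of Theorem \ref{lemma asintotica Floquet solutions}(ii), Corollary \ref{lemma asintotiche Phi 1 q}, and the paradifferential calculus, combined with Leibniz and the chain rule for the $z$-derivatives and interpolation where needed.
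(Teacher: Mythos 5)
Your proposal follows essentially the same route as the paper's proof: the formula $\nabla_\bot\mathcal P_3^{(1)}(z)=3\,\Psi_1(z_S)^t\big[(\Psi_1(z_S)[z_\bot])^2\big]$, the Bony decomposition $u^2=2T_uu+\mathcal R^{(B)}(u,u)$ from Lemma \ref{primo lemma paraprodotti}(ii), and then the expansions of $\Psi_1(z_S)$, $\Psi_1(z_S)^t$ together with the composition and interpolation lemmas of Appendix \ref{appendice B}. The only (harmless) slip is that the $S$-component $3\big(d_S\Psi_1(z_S)[z_\bot]\big)^t[u^2]$ is cubic, not quadratic, in $z_\bot$ — which only strengthens the bound needed to absorb it into $\mathcal R_N$.
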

  \begin{proof}
  By a straightforward calculation, one has
  $\nabla_\bot {\cal P}_3^{(1)}(z) = 3 \Psi_1(z_S)^t ( \Psi_1(z_S)[z_\bot] )^2$.
 By the Bony decomposition given in Lemma \ref{primo lemma paraprodotti}$(ii)$,
  $$
( \Psi_1(z_S)[z_\bot])^2 = 2  \, T_{\Psi_1(z_S)[z_\bot]} \Psi_1(z_S)[z_\bot] + \, {\cal R}^{(B)} (\Psi_1(z_S)[z_\bot]\,,\, \Psi_1(z_S)[z_\bot] ) \,. 
  $$
 The expansion \eqref{epansione d nabla P3} and the stated estimates follow from Theorem \ref{lemma asintotica Floquet solutions}$(ii)$,
 Corollary \ref{lemma asintotiche Phi 1 q}, and Lemmata  \ref{primo lemma paraprodotti}, \ref{lemma composizione pseudo}, \ref{lemma interpolation}.    
  \end{proof}
  
 \smallskip

\noindent  
{\bf Expansion of ${\cal H}^{(2)} := {\cal H}^{(1)} \circ \Psi_C$}

\noindent
To compute the expansion of ${\cal H}^{(2)}(z)= {\cal H}^{(1)} ( \Psi_C(z))$ on $\mathcal V' \cap h^1_0$, 
%of the Hamiltonian ${\cal H}^{(1)}$ with the symplectic corrector $\Psi_C$, constructed in Section \ref{sezione Psi C}, 
we study the composition of each of the terms in \eqref{forma semifinale H nls circ Psi} with the symplectic corrector $\Psi_C$ separately. 
Recall that $\Psi_C$ is defined on $\mathcal V' $ and takes values in $\mathcal V$.

\smallskip

\noindent
{\em Term $\mathcal H_S^{kdv} $:} 
By Corollary \ref{proposizione espansione taylor correttore simplettico}, $\Psi_C(z)$ has a Taylor expansion in $z_\bot$ around $0$ of the form
$$
\Psi_C(z) = (z_S, 0) + (0, z_\bot) + \tilde \Psi_C(z), \qquad
\tilde \Psi_C(z):= {\cal R}_{N, 2}(z; \Psi_C) +  \Psi_{C, 3}(z)\,, \qquad  \Psi_{C, 3}(z) \equiv  \Psi_{C, N, 3}(z)
$$ 
where $R_{N, 2} (z; \Psi_C)$ is the term of order two, given by $R_{N, 2} (z; \Psi_C) =  \frac{1}{2} d^2_\bot {\cal R}_{N}((z_S, 0) ; \Psi_C) [z_\bot, z_\bot]$ (cf \eqref{Taylor expansion R_N}),
and $\Psi_{C, 3}(z)$ is given by \eqref{espansione Psi C ordini taglia}
$$
\Psi_{C, 3}(z) = \big( 0,  \, {\cal F}_\bot \circ \sum_{k = 1}^N  a_{k} ( z; \Psi_C) \partial_x^{- k} {\cal F}_\bot^{- 1}[z_\bot]  \big)  + {\cal R}_{ N, 3} (z; \Psi_{C})
$$ 
with ${\cal R}_{ N, 3} (z; \Psi_{C})$ denoting the Taylor remainder term \eqref{Taylor remainder term}. 
Since $ \mathcal H^{kdv}_S(z) =  \mathcal H^{kdv}_S(\Pi_S z)$ (cf \eqref{perturbazione composizione con Psi L}),
the Taylor expansion of $\mathcal H^{kdv}_S( \Psi_C(z)) = \mathcal H^{kdv}_S(z  + \tilde \Psi_C(z))$ reads
\begin{align}\label{h nls circ PsiC}
\mathcal H_S^{kdv}(\Psi_C(z)) & = \mathcal H_S^{kdv}(z) + \langle \nabla_S \mathcal H_S^{kdv}(z), \,  \pi_S {\cal R}_{N, 2}(z; \Psi_C) \rangle + 
{\cal P}^{(2a)}_3(z)\,, 
%\quad q = \Psi^{kdv}(z_S, 0)
\end{align}
where ${\cal P}^{(2a)}_3(z)$ is the Taylor remainder term of order three, given by
$$
 \langle \nabla_S \mathcal H_S^{kdv}(z),  \pi_S  \Psi_{C, 3}(z) \rangle + 
\int_0^1 (1 - y) \big\langle \, d_S \big( \nabla_S \mathcal H_S^{kdv}(z + y \tilde \Psi_C(z) ) \big) 
[ \pi_S  \tilde \Psi_C(z)], \, \pi_S  \tilde \Psi_C(z) \, \big\rangle \, d y 
$$
and $\pi_S : h^0_S \times h^0_{ \bot } \to h^0_S$ denotes the map given by $z = (z_S, z_\bot) \mapsto z_S$ (cf \eqref{pi S}).
Since $\pi_S  \Psi_{C, 3}(z) = \pi_S {\cal R}_{ N, 3} (z; \Psi_{C})$ and $\pi_S \tilde \Psi_C(z) = \pi_S {\cal R}_{N}(z; \Psi_C)$,
one has
\begin{align}\label{definizione h3 nls}
{\cal P}^{(2a)}_3(z)  & =   \langle \nabla_S \mathcal H_S^{kdv}(z),  \pi_S  R_{N, 3}(z; \Psi_C) \rangle \nonumber\\
 & +  \int_0^1 (1 - y) \big\langle \, d_S \big( \nabla_S \mathcal H_S^{kdv}(z + y \tilde \Psi_C(z) ) \big) 
[\pi_S {\cal R}_{N}(z; \Psi_C)], \, \pi_S {\cal R}_{N}(z; \Psi_C)  \, \big\rangle \, d y .
\end{align}

In the next lemma we show that $ \nabla {\cal P}^{(2a)}_3(z)$ is in $h^{s+N + 1}_0$ for any $z \in \mathcal V' \cap h^s_0.$ 
\begin{lemma}\label{stime tame h3 nls}
The Hamiltonian ${\cal P}^{(2a)}_3 : \mathcal V'  \to \R$ is real analytic and for any integers $s \geq 0$, $N \ge 1$,
the map $\mathcal V' \cap h^s_0 \to h^{s + N + 1}_0$, $z \mapsto \nabla {\cal P}_3^{(2a)}(z)$ is real analytic. 
Furthermore, for any $z \in \mathcal V' \cap h^s_0 $, and
$ \widehat z \in h^s_0$,
  $$
  \| \nabla {\cal P}^{(2a)}_3(z)\|_{s + N + 1} \lesssim_{s, N}  \, \| z_\bot\|_s \| z_\bot \|_0\,, \quad 
  \| d  \nabla {\cal P}^{(2a)}_3(z) [\widehat z_1]\|_{s + N + 1} \lesssim_{s, N} \, \| z_\bot \|_s \| \widehat z_1\|_0 +  \| z_\bot \|_0 \| \widehat z_1\|_s  
  $$
  and if in addition $ \widehat z_1, \ldots, \widehat z_l \in h^s_0$, $l \geq 2$, 
  $$
  \| d^l \nabla {\cal P}^{(2a)}_3(z) [\widehat z_1, \ldots, \widehat z_l]\|_{s + N + 1} \lesssim_{s, N, l} \,
  \sum_{j = 1}^l \| \widehat z_j\|_s\prod_{i \neq j} \|\widehat z_i \|_0 + 
  \| z_\bot \|_s \prod_{j = 1}^l \|\widehat z_j \|_0\,.
  $$
\end{lemma}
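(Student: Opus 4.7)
The starting point is the compact rewriting
$${\cal P}_3^{(2a)}(z) = \mathcal H_S^{kdv}(\Psi_C z) - \mathcal H_S^{kdv}(z) - \big\langle \nabla_S \mathcal H_S^{kdv}(z),\, \pi_S {\cal R}_{N,2}(z; \Psi_C) \big\rangle,$$
which follows from (\ref{definizione h3 nls}) upon using $\pi_S \tilde\Psi_C = \pi_S {\cal R}_N$ and the second-order Taylor expansion of $\mathcal H_S^{kdv}$ in the $\pi_S \tilde\Psi_C$-direction (the pseudodifferential tail of $\Psi_{C,3}$ lives in $h^0_\bot$, so contributes nothing after $\pi_S$). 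Taking the $L^2$-gradient by the chain rule gives
$$\nabla {\cal P}_3^{(2a)}(z) = d\Psi_C(z)^t\big[(\nabla_S \mathcal H_S^{kdv}(\Psi_C z),\, 0)\big] - (\nabla_S \mathcal H_S^{kdv}(z),\, 0) - \nabla_z\big\langle \nabla_S \mathcal H_S^{kdv}(z),\, \pi_S {\cal R}_{N,2}(z; \Psi_C)\big\rangle.$$
Every constituent is built from expansions already established in Sections \ref{sezione mappa Psi L}--\ref{sezione Psi C} together with derivatives of the finite-gap Hamiltonian $\mathcal H_S^{kdv}$, which is $C^\infty$ in the finite-dimensional variable $z_S$ alone and hence contributes only smooth, finite-rank quantities to the final formula.

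\textbf{Combining the expansions.} The core step is to substitute into the first summand the expansion of $d\Psi_C(z)^t$ applied to $(v_S, 0)$, where $v_S := \nabla_S \mathcal H_S^{kdv}(\Psi_C z) \in h^0_S$, via Corollary \ref{expansion of  of differential of Psi} (noting that $\widehat z_\bot = 0$ kills the pseudodifferential piece proportional to $\widehat z_\bot$), and simultaneously to expand
$$v_S = \nabla_S \mathcal H_S^{kdv}(z) + \int_0^1 d_S \nabla_S \mathcal H_S^{kdv}(z + y\,\tilde\Psi_C(z))\big[\pi_S {\cal R}_N(z; \Psi_C)\big]\, dy$$
using Theorem \ref{espansione flusso per correttore} for $\tilde\Psi_C$ and ${\cal R}_N$. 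The constant-in-$z_\bot$ contribution of the first summand is $(\nabla_S \mathcal H_S^{kdv}(z),0)$ (since $d\Psi_C((z_S,0)) = \text{Id}$ and $v_S|_{z_\bot = 0} = \nabla_S \mathcal H_S^{kdv}(z)$), which cancels the second summand. The order-one-in-$z_\bot$ contribution arises from the linear-in-$z_\bot$ parts of the $\mathcal A_k$-tail and of $\mathcal R_N(z; d\Psi_C^t)$ from Corollary \ref{expansion of  of differential of Psi}, combined with the first-variation piece of $v_S$; this part matches exactly the third summand $-\nabla_z\langle \nabla_S \mathcal H_S^{kdv}(z),\, \pi_S {\cal R}_{N,2}(z;\Psi_C)\rangle$, computed by the product rule using the expansion of ${\cal R}_{N,2}$ from Corollary \ref{proposizione espansione taylor correttore simplettico}.

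\textbf{Conclusion and main obstacle.} After these two cancellations, every surviving summand of $\nabla {\cal P}_3^{(2a)}(z)$ carries (i) at least two factors of $z_\bot$ (either explicit, or through the smoothing remainders $\pi_S {\cal R}_{N,2},\, \pi_S {\cal R}_{N,3}$ which are of size $\|z_\bot\|^2$), and (ii) at least one operator of smoothing order $\le -(N+1)$, coming either from the paradifferential coefficient $\mathcal A_k(z; d\Psi_C^t)$ or the remainder ${\cal R}_N(z; d\Psi_C^t)$ of Corollary \ref{expansion of  of differential of Psi}, or from a remainder ${\cal R}_{N,j}(z; \Psi_C)$ of Corollary \ref{proposizione espansione taylor correttore simplettico}. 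Inserting the tame estimates of those corollaries, the pseudodifferential composition Lemma \ref{lemma composizione pseudo} of Appendix \ref{appendice B}, and bilinear interpolation (Lemma \ref{lemma interpolation}), then yields the target bound $\|\nabla {\cal P}_3^{(2a)}(z)\|_{s + N + 1} \lesssim_{s,N} \|z_\bot\|_s \|z_\bot\|_0$. The higher-derivative estimates follow from a Faà di Bruno-type application of the same scheme, with the $h^s$-norm assigned to exactly one of the tangent vectors $\widehat z_j$ in each summand. The principal obstacle is the algebraic bookkeeping required for the first- and second-order cancellations: one must produce coherent Taylor expansions of $\Psi_C(z)$, $d\Psi_C(z)^t$, and $v_S$ in $z_\bot$ to sufficient order, and verify that the collected linear-in-$z_\bot$ contributions from the three distinct sources recombine into $-\nabla_z\langle \nabla_S \mathcal H_S^{kdv}(z),\, \pi_S {\cal R}_{N,2}(z;\Psi_C)\rangle$. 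Once this cancellation is confirmed, the regularity and multiplicative estimates for $\nabla {\cal P}_3^{(2a)}$ follow routinely from the tame bounds already in hand.
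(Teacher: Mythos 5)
Your route is genuinely different from the paper's and, as written, has a gap in the smoothing bookkeeping. The paper never differentiates the unresolved difference $\mathcal H_S^{kdv}\circ\Psi_C-\mathcal H_S^{kdv}-\langle\nabla_S\mathcal H_S^{kdv},\pi_S{\cal R}_{N,2}\rangle$; it works directly with the explicit integral form \eqref{definizione h3 nls} of the third-order Taylor remainder, in which every term is a finite sum over $n\in S$ (or $n,k\in S$) of smooth scalar functions of $z_S$ multiplied by the scalars $\langle{\cal R}_{N,3}(z;\Psi_C),e_n\rangle$ or $\langle{\cal R}_{N}(z;\Psi_C),e_n\rangle$. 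The gradient is then computed by the product rule on these finite sums, and both the quadratic smallness in $z_\bot$ and the $(N+1)$-smoothing are manifest term by term; the only nontrivial ingredient is that $(d{\cal R}_{N,j}(z;\Psi_C))^t[e_n]$, $n\in S$, lands in $h^{s+N+1}_0$ with the bounds of Corollary \ref{proposizione espansione taylor correttore simplettico}. No cancellation has to be tracked at the level of gradients.

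The gap in your version is concrete: after your two cancellations, the surviving part of $d\Psi_C(z)^t[(v_S,0)]$ still contains, by your own use of Corollary \ref{expansion of  of differential of Psi}, the terms ${\cal F}_\bot\big(\mathcal A_k(z;d\Psi_C^t)[(v_S,0)]\,\partial_x^{-k}{\cal F}_\bot^{-1}[z_\bot]\big)$ for $1\le k\le N$ (minus their linear-in-$z_\bot$ parts). These are quadratic in $z_\bot$ but only $k$-smoothing, so your claim that every surviving summand carries smoothing order at most $-(N+1)$ ``coming from the paradifferential coefficient $\mathcal A_k$'' fails for $k\le N$; and they cannot be absorbed by the claimed cancellation against the third summand, whose gradient is $(N+1)$-smoothing by Corollary \ref{proposizione espansione taylor correttore simplettico} — terms of different smoothing orders cannot cancel one another. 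The repair is precisely the structural fact you mention in your Plan but do not exploit: since the pseudodifferential tail of $\Psi_C-{\rm id}$ is purely $\bot$-valued, one has $\pi_S\, d\Psi_C(z)=\pi_S+\pi_S\, d{\cal R}_{N}(z;\Psi_C)$, hence $d\Psi_C(z)^t[(v_S,0)]=(v_S,0)+(d{\cal R}_{N}(z;\Psi_C))^t[(v_S,0)]$ exactly, with no paradifferential tail at all. Using this (or, equivalently, differentiating \eqref{definizione h3 nls} directly, as the paper does) reduces everything to the finite-rank, $(N+1)$-smoothing objects $(d{\cal R}_{N,j})^t[e_n]$ and the estimates of Lemma \ref{stime tame h3 nls} follow.
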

\begin{proof}
We begin by analyzing the first term
$\langle \nabla_S \mathcal H_S^{kdv}(z),  \pi_S  \Psi_{C, 3}(z) \rangle$ on the right hand side of \eqref{definizione h3 nls}.
It is given by the finite sum $\sum_{n \in S} h_n(z)$  where 
$$
h_{n}(z) :=  (\nabla \mathcal H_S^{kdv}(z))_{n} \, (\Psi_{C, 3}(z))_{-n}  
%=(\partial_{z_{-n}} \mathcal H_S^{kdv}(z)) \, ( \Psi_{C, 3}(z))_{-n}, 
=(\partial_{z_{-n}} \mathcal H_S^{kdv}(z)) \, \langle {\cal R}_{ N, 3} (z; \Psi_{C}), e_n \rangle\,, 
\quad \forall n \in S \,,
$$
and $(e_n)_{n \in S}$ denotes the standard basis of $h^0_S$.
The derivative of $h_n$ in direction $\widehat z \in h^0_0$ then reads
$$
\begin{aligned}
\langle  \nabla h_{n}(z), \widehat z \rangle 
%=  d h_{n}(z)[\widehat z]  
& = \langle  \nabla \partial_{z_{-n}} \mathcal H_S^{kdv}(z), \widehat z  \rangle \, \langle {\cal R}_{ N, 3} (z; \Psi_{C}), e_n \rangle
+ \partial_{z_{-n}} \mathcal H_S^{kdv}(z) \langle d ( {\cal R}_{ N, 3} (z; \Psi_{C})) [\widehat z]\,,\,  e_{n} \rangle\\
 & =  \langle  \nabla \partial_{z_{-n}} \mathcal H_S^{kdv}(z) , \widehat z \rangle \, \langle {\cal R}_{ N, 3} (z; \Psi_{C}), e_n \rangle
+  \partial_{z_{-n}} \mathcal H_S^{kdv}(z) \, \langle   (d {\cal R}_{ N, 3} (z; \Psi_{C}))^t[e_{n}], \widehat z \rangle
  \end{aligned}
$$
 implying that 
 $$
 \nabla h_{n}(z) = \langle {\cal R}_{ N, 3} (z; \Psi_{C}), e_n \rangle  \, \nabla \partial_{z_{-n}} \mathcal H_S^{kdv}(z) 
 + \partial_{z_{-n}} \mathcal H_S^{kdv}(z) \,\,  (d {\cal R}_{ N, 3} (z; \Psi_{C}))^t[e_{n}]\,.  
 $$
By 
%Corollary \ref{expansion of  of differential of Psi} and 
Corollary \ref{proposizione espansione taylor correttore simplettico},  for any $s \ge 0$,
$\mathcal V' \cap h^s_0 \to h^{s + N + 1}_0$, $z \mapsto \nabla  h_{n}(z)$ is real analytic and satisfies the estimates 
$\| \nabla h_{n}(z)\|_{s + N + 1} \lesssim_{s, N}  \| z_\bot \|_s \| z_\bot \|_0\,.$
The estimates for the higher order derivatives of $h_n$, $n \in S,$ are obtained by differentiating the expression for $ \nabla h_{n}(z)$ 
and using the estimates of 
%Corollary \ref{expansion of  of differential of Psi} and 
Corollary \ref{proposizione espansione taylor correttore simplettico}.  \\
In order to analyze the second term on the right hand side of \eqref{definizione h3 nls}
%in the expression \eqref{definizione h3 nls} for ${\cal P}^{(2a)}_3(z)$,
 it suffices to study the functions $h_{n, k}(z)$, $n, k \in S$, given by
$$
h_{n, k}(z; y) := \partial_{z_{-n}} \partial_{z_{-k}} \mathcal H_S^{kdv}(z + y\tilde \Psi_C(z) ) \,
\langle {\cal R}_{ N} (z; \Psi_{C}), e_{n} \rangle \, \langle {\cal R}_{ N} (z; \Psi_{C}), e_{k} \rangle 
$$
where $0 \le y \le 1$. Clearly, $h_{n, k}(z; y)$ depends continuously on $y$ as do all the derivatives with respect to the variable $z$.
Since $\mathcal H_S^{kdv}( z + y\tilde \Psi_C(z) )$ only depends on $\pi_S(z + y\tilde \Psi_C(z))$ 
one sees that
$$
\begin{aligned}
\langle \nabla h_{n, k}(z; y), \,\widehat z \rangle & = 
\langle \nabla_S \big( \, \partial_{z_{-n}} \partial_ {z_{-k}}  \mathcal H_S^{kdv}(z + y  \tilde \Psi_C(z)) \big), 
\, \pi_S \big(\rm{ Id} +  y \, d \tilde \Psi_C(z) \big) [\widehat z]  \rangle
	\, \langle \mathcal R_N (z; \Psi_C), e_n \rangle \, \langle {\cal R}_{ N} (z; \Psi_{C}) , e_{k} \rangle \nonumber\\
& \quad + \partial_{z_{-n}} \partial_ {z_{-k}} \mathcal H_S^{kdv}(z + y  \tilde \Psi_C(z)) 
	\, \langle  (d {\cal R}_{ N} (z; \Psi_{C}))^t[e_{n}], \widehat z \rangle \, \langle \mathcal R_N (z; \Psi_C), e_k \rangle \nonumber\\
& \quad + \partial_{z_{-n}} \partial_ {z_{-k}}  \mathcal H_S^{kdv}(z + y  \tilde \Psi_C(z)) 
	\langle {\cal R}_{ N} (z; \Psi_{C}), e_{n } \rangle \, \langle  (d {\cal R}_{ N} (z; \Psi_{C}))^t[e_{k}], \widehat z \rangle \nonumber\\
\end{aligned}
$$
implying that 
$$
\begin{aligned}
\nabla h_{n, k}(z; y) & = \big(\rm{ Id} +  s \ d \tilde \Psi_C(z) \big)^t 
\big[\Pi_S \nabla_S \big( \, \partial_{z_{-n}} \partial_ {z_{-k}}  \mathcal H_S^{kdv}(z + y  \tilde \Psi_C(z)) \, \big) \big]   
	\, \langle {\cal R}_{ N} (z; \Psi_{C}), e_n \rangle \, \langle {\cal R}_{ N} (z; \Psi_{C}), e_k \rangle \nonumber\\
& \quad + \partial_{z_{-n}} \partial_ {z_{-k}} \mathcal H_S^{kdv}(z + y  \tilde \Psi_C(z)) 
	\, \langle {\cal R}_{ N} (z; \Psi_{C}), e_{k} \rangle \,  (d {\cal R}_{ N} (z; \Psi_{C}))^t [e_{n}]   \nonumber\\
& \quad + \partial_{z_{-n}} \partial_ {z_{-k}}  \mathcal H_S^{kdv}(z + y  \tilde \Psi_C(z)) \,
	\langle {\cal R}_{ N} (z; \Psi_{C}), e_{n} \rangle \, (d {\cal R}_{ N} (z; \Psi_{C}))^t [e_{k}] \nonumber\\
\end{aligned}
$$
By Corollary \ref{proposizione espansione taylor correttore simplettico}, for any $s \geq 0$, 
the map $\mathcal V'  \cap h^s_0 \to h^{s + N + 1}_0$, $z \mapsto \nabla h_{n, k}(z; y)$ is real analytic and satisfies the estimate 
$\| \nabla h_{n, k}(z; y)\|_{s + N +1} \lesssim_{s, N} \| z_\bot \|_s \| z _\bot\|_0^2$. 
The estimates for the higher order derivatives are obtained by differentiating $\nabla h_{n, k}$ 
and  applying again Corollary \ref{proposizione espansione taylor correttore simplettico}.
\end{proof}

\smallskip
  
  \noindent
  {\em Term ${\cal H}_\Omega(z) :=  \frac{1}{2} \langle \Omega_\bot(I_S) [z_\bot], z_\bot \rangle $:} 
According to \eqref{splitting Omega}, the operator  $\Omega_\bot(I_S)$ reads
 % , introduced in \eqref{splitting Omega}, 
  \begin{equation}\label{splitting Omega bot (IS)}
  \Omega_\bot(I_S) = D^2_\bot+ \Omega_\bot^{(0)}(I_S)\,,  
  \end{equation}
  where  
  \begin{equation}\label{definizione Omega bot (0) (IS)}
  D_\bot :=   {\rm diag}_{n \in S^\bot} (2 \pi n) \,, \qquad  
  \Omega_\bot^{(0)}(I_S) := {\rm diag}_{n \in S^\bot} (\Omega_n(I_S) - 4 \pi^2 n^2)\,.
    \end{equation}
 By Lemma \ref{espansione asintotica frequenza thomas}, the following holds.
  \begin{lemma}\label{lemma Omega bot q}
  For any integer $N \ge 2$, the operator $\Omega_\bot^{(0)}(I_S)$ admits the expansion
  $$
 \Omega_\bot^{(0)}(I_S) = {\cal F}_\bot \circ \sum_{k = 2}^N  a_k(I_S; \Omega_\bot^{(0)})  \partial_x^{- k} {\cal F}_\bot^{- 1} + {\cal R}_N(I_S; \Omega_\bot^{(0)})
 $$
 where for any $2 \le k \le N$ and $s \geq 0$, the maps
 $$
 \mathbb R_{\ge 0}^{S_+} \to \R, \, I_S \mapsto a_k(I_S; \Omega_\bot^{(0)})\,, \qquad  
 \mathbb R^{S_+}_{\ge 0} \to {\cal B}(h^s_\bot, h^{s + N +1}_\bot), \, I_S \mapsto  {\cal R}_N( I_S; \Omega_\bot^{(0)})
 $$
 are real analytic. 
 
  \end{lemma}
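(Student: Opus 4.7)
\medskip
\noindent
\textbf{Proof proposal for Lemma 4.2.} The plan is to read off the claimed pseudodifferential expansion of $\Omega_\bot^{(0)}(I_S) = \mathrm{diag}_{n \in S^\bot}(\Omega_n(I_S) - 4\pi^2 n^2)$ directly from the asymptotic expansion of the frequencies provided by Lemma \ref{espansione asintotica frequenza thomas}, which gives
\[
\Omega_n(I_S) - (2\pi n)^2 = \sum_{j = 1}^{N'} \frac{\Omega_{2j}^{ae}(I_S)}{(2 \pi n)^{2j}} + \frac{\mathcal{R}_{2N'}^{\Omega_n}(I_S)}{(2 \pi n)^{2N' + 1}}
\]
for any $N' \ge 1$. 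Recall that $\partial_x^{-k}[e^{2 \pi \ii n x}] = (2 \pi \ii n)^{-k} e^{2 \pi \ii n x}$ for $n \neq 0$, so for any real coefficient $a$ the operator $\mathcal F_\bot \circ a \cdot \partial_x^{-k} \circ \mathcal F_\bot^{-1}$ on $h^0_\bot$ is diagonal with eigenvalues $a/(2 \pi \ii n)^k$. In particular, for even $k = 2j$ this gives eigenvalues $(-1)^j a/(2 \pi n)^{2j}$, whereas for odd $k$ the eigenvalues are purely imaginary; since $\Omega_n(I_S) - (2\pi n)^2$ is real, only even powers of $\partial_x^{-1}$ can contribute.

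First, given $N \ge 2$, I would choose $N' := \lceil N/2 \rceil$ so that $2N' \ge N$ and $2N' + 1 \ge N + 1$. I would then define
\[
a_k(I_S; \Omega_\bot^{(0)}) := \begin{cases} (-1)^j\,\Omega_{2j}^{ae}(I_S) & \text{if } k = 2j, \ 2 \le 2j \le N, \\ 0 & \text{if } k \text{ odd}, \ 2 \le k \le N,\end{cases}
\]
and the remainder
\[
\mathcal R_N(I_S; \Omega_\bot^{(0)}) := \Omega_\bot^{(0)}(I_S) - \mathcal F_\bot \circ \sum_{k = 2}^N a_k(I_S; \Omega_\bot^{(0)}) \partial_x^{-k} \circ \mathcal F_\bot^{-1}.
\]
Computing the Fourier multiplier of the right-hand side on the nth basis vector and using Lemma \ref{espansione asintotica frequenza thomas}, one obtains that $\mathcal R_N(I_S; \Omega_\bot^{(0)})$ is the diagonal operator with nth eigenvalue
\[
\sum_{2j = N + 1}^{2N'} \frac{\Omega_{2j}^{ae}(I_S)}{(2 \pi n)^{2j}} + \frac{\mathcal R_{2N'}^{\Omega_n}(I_S)}{(2 \pi n)^{2N' + 1}},
\]
where the first sum is empty when $N$ is even. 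Since every summand decays at least like $|n|^{-(N + 1)}$, this immediately yields that $\mathcal R_N(I_S; \Omega_\bot^{(0)}) : h^s_\bot \to h^{s + N + 1}_\bot$ is bounded for every $s \ge 0$.

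Finally, the real analyticity of $I_S \mapsto a_k(I_S; \Omega_\bot^{(0)})$ follows directly from the real analyticity of $\Omega_{2j}^{ae}$ given by Lemma \ref{espansione asintotica frequenza thomas}, whereas the real analyticity of $I_S \mapsto \mathcal R_N(I_S; \Omega_\bot^{(0)})$ (as a map into $\mathcal B(h^s_\bot, h^{s + N + 1}_\bot)$) follows from that of $\Omega_{2j}^{ae}$ and $\mathcal R_{2N'}^{\Omega_n}$, together with the locally uniform bound on $\mathcal R_{2N'}^{\Omega_n}(I_S)$ in Lemma \ref{espansione asintotica frequenza thomas}, which allows one to differentiate termwise and bound the result uniformly in $n$. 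There is no real obstacle here: everything is diagonal in Fourier, so the argument reduces to bookkeeping on the pointwise asymptotic expansion of $\Omega_n$, the mildest nuance being the choice of $N'$ and the reshuffling of the explicit higher-order terms into the remainder when $N$ is odd.
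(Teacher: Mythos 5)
Your proposal is correct and takes essentially the same route as the paper, which simply derives the lemma from the frequency expansion of Lemma \ref{espansione asintotica frequenza thomas} (the paper gives no further details). Your bookkeeping — only even $k$ contribute because the multipliers of $\partial_x^{-k}$ for odd $k$ are purely imaginary, the sign $(-1)^j$ from $(2\pi\ii n)^{2j}=(-1)^j(2\pi n)^{2j}$, the choice of $N'$ with $2N'+1\ge N+1$, and the reshuffling of leftover terms into the remainder — is exactly what is needed to make that derivation explicit.
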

  To analyze ${\cal H}_\Omega(\Psi_C(z))$, we write the quadratic form $ \big\langle \Omega_\bot(I_S) [z_\bot], \, z_\bot \big\rangle$, $z_\bot \in h^1_0$, as a sum
  $$
   \langle \Omega_\bot(I_S) [z_\bot], \, z_\bot \rangle  =  
   \ \langle D_\bot^2 [z_\bot], \, z_\bot  \rangle 
   +  \ \langle  \Omega_\bot^{(0)}(I_S) [z_\bot] \,, \, z_\bot  \rangle
  $$
  and consider $\langle D_\bot^2 [z_\bot], \, z_\bot  \rangle$ and $ \langle  \Omega_\bot^{(0)}(I_S) [z_\bot] \,, \, z_\bot  \rangle$ separately.
%Here $D_\bot$ is the diagonal operator defined in \eqref{definizione Omega bot (0) (IS)}.
When substituting for $z_\bot$ in $ \big\langle D_\bot^2 [z_\bot], z_\bot \big\rangle$ 
the expression $\pi_\bot \Psi_C(z) = z_\bot + \pi_\bot \tilde \Psi_C(z)$, one gets
\begin{align}
  \big\langle D_\bot^2 [z_\bot + \pi_\bot \tilde \Psi_C(z)], & \, z_\bot  + \pi_\bot \tilde \Psi_C(z) \big\rangle =  
 \big\langle D_\bot^2 [z_\bot] , \, z_\bot  \big\rangle +  
 \big\langle D_\bot^2 [z_\bot] , \,  \pi_\bot \tilde \Psi_C(z) \big\rangle  \nonumber\\
& +   \big\langle D_\bot^2 [ \pi_\bot \tilde \Psi_C(z)], \, z_\bot  \big\rangle +  
 \big\langle D_\bot^2 [ \pi_\bot \tilde \Psi_C(z)],  \, \pi_\bot \tilde \Psi_C(z) \big\rangle
\end{align}
where the map $\pi_\bot$ is defined in \eqref{pi S}. With a view towards the expansion of $H^{kdv} \circ \Psi$, stated in Theorem \ref{modified Birkhoff map},
we treat the difference
$$
 \frac12  \big\langle D_\bot^2 [z_\bot + \pi_\bot \tilde \Psi_C(z)], \, z_\bot + \pi_\bot \tilde \Psi_C(z) \big\rangle  - \,
\frac12   \big\langle D_\bot^2 [z_\bot] , \, z_\bot   \big\rangle
$$
as part of the error term ${\cal P}_3(z)$.  It needs special care since the two terms
$$
\langle D_\bot^2 [z_\bot] ,  \, \pi_\bot \tilde \Psi_C(z) \rangle \,,  \qquad 
 \langle D_\bot^2 [ \pi_\bot \tilde \Psi_C(z)], \, z_\bot  \rangle
$$ 
could have the property that the associated Hamiltonian vector field is unbounded.
We write
  \begin{equation}\label{parte pericolosa Psi C}
  {\cal H}_\Omega(\Psi_C(z)) = {\cal H}_\Omega(z) + {\cal P}_3^{(2b)}(z)\,, 
  \qquad {\cal P}_3^{(2b)}(z) := {\cal H}_\Omega(\Psi_C(z)) - {\cal H}_\Omega(z)\,. 
  \end{equation}
 and note that by the mean value theorem, 
  \begin{align}
  & {\cal P}_3^{(2b)}(z) = \int_0^1 {\cal P}_{\Omega}(\tau, \Psi_{X}^{0, \tau}(z)) \, d \tau   \label{caniggia 0}
  \end{align}
where for $\tau \in [0, 1]$, $z \in \mathcal V$, ${\cal P}_{\Omega}(\tau, z)$ is defined by
  \begin{equation}\label{definition cal H Omega tau}
{\cal P}_{\Omega} (\tau, z) := \langle \nabla {\cal H}_\Omega(z), \, X(\tau, z) \rangle \,.
  \end{equation}
In a first step we analyze ${\cal P}_{\Omega} (\tau, z)$.   One has 
  \begin{align}
   \langle \nabla {\cal H}_\Omega(z), \, X(\tau, z) \rangle &  = 
     \langle \nabla_S {\cal H}_\Omega(z)\,,\, \pi_S X(\tau, z) \rangle + 
   \frac{1}{2} \langle \Omega_\bot(I_S) z_\bot, \,  \pi_\bot {X}(\tau, z)  \rangle\,. \label{termine delicato sergey}
  \end{align}
Since ${\cal H}_\Omega = \frac{1}{2} \langle \Omega_\bot(I_S) [z_\bot], \, z_\bot \rangle$
and $\Omega_\bot(I_S) = D_\bot^2 +  \Omega_\bot^{(0)}(I_S)$ one has
\begin{equation}\label{formula for nabla_S H_ Omega}
 \langle \nabla_S {\cal H}_\Omega(z)\,,\, \pi_S X(\tau, z) \rangle  
 = \frac{1}{2} \sum_{j \in S} \, (\partial_{z_{-j}} \mathcal H_\Omega(z)) \, \langle X(\tau, z), e_j \rangle
 =  \frac{1}{2} \sum_{j \in S} \, \langle \partial_{z_{-j}} \Omega^{(0)}_\bot(I_S) [z_\bot], \, z_\bot \rangle \, \langle X(\tau, z), e_j \rangle \,.
 \end{equation}
 Concerning the term $\frac{1}{2} \langle \Omega_\bot(I_S) z_\bot, \,  \pi_\bot {X}(\tau, z)  \rangle$ in \eqref{termine delicato sergey}, recall that 
  $X(\tau, z) = - {\cal L}_\tau(z)^{- 1} [ J \mathcal E(z)]$ (cf  \eqref{definizione campo vettoriale ausiliario}), 
  ${\cal L}_\tau(z) = \rm{Id} + \tau J \mathcal L(z)$ (cf \eqref{definition L tau})
  and hence 
  \begin{equation}\label{proprieta Neumann F tau w}
  X(\tau, z)  = - J \mathcal E(z) -  \tau J \mathcal L(z) [X(\tau, z)]\,.
  \end{equation}
 Since $\mathcal E(z) = (\mathcal E_S(z), 0)$ and $J^t = - J$, the term  $\big\langle \Omega_\bot(I_S) z_\bot, \,  \pi_\bot {X}(\tau, z)  \big\rangle$ 
 %in \eqref{termine delicato sergey} 
 becomes 
  \begin{align}
 -  \big\langle \Omega_\bot(I_S) z_\bot , \, \pi_\bot \tau J \mathcal L(z) X(\tau, z) \big\rangle
   = \tau  \big\langle J_\bot \Omega_\bot(I_S) z_\bot , \, \pi_\bot  \mathcal L(z) X(\tau, z) \big\rangle \,. \label{torres 0}
  \end{align}
  By \eqref{definition L z} the component $\mathcal L_\bot^\bot(z)$ of $\mathcal L(z)$ vanishes, implying that
  $ \pi_\bot \mathcal L(z) X(\tau, z) = \mathcal L_\bot^S(z) \pi_S X(\tau, z)$.  Substituting the latter expression into \eqref{torres 0}  
  and using that $ \mathcal L_\bot^S(z)^t  = - \mathcal L_S^\bot(z)$ since $\mathcal L(z)$ is skew adjoint (cf \eqref{Pull back of LambdaG})
  then leads to
  \begin{equation}\label{cappello 10}
  \big\langle \Omega_\bot(I_S) z_\bot, \,  \pi_\bot {X}(\tau, z)  \big\rangle
  = \tau \big\langle J_\bot \Omega_\bot(I_S) z_\bot, \, \mathcal L_\bot^S(z) \pi_S X(\tau, z) \big\rangle 
  % =  \tau  \big\langle \mathcal L_\bot^S(z)^t J_\bot \Omega_\bot(I_S) z_\bot \,,\, \pi_S X(\tau, z) \big\rangle 
   =  - \tau \big\langle \mathcal L_S^\bot(z) J_\bot \Omega_\bot(I_S) z_\bot  \,,\, \pi_S X(\tau, z) \big\rangle.
   \end{equation}
 Furthermore, by \eqref{L SS botS Sbot (1)},
  \begin{align}
  &  \mathcal L_S^\bot(z) [J_\bot \Omega_\bot(I_S) z_\bot ]= 
  \big( \big\langle \partial_x^{- 1} \Psi_1(z_S)[ J_\bot \Omega_\bot(I_S ) z_\bot ] \,,\, \partial_{z_n}  \Psi_1(z_S)[ z_\bot]   \big\rangle \big)_{n \in S} 
\,. \label{cappello 0}
  \end{align}
  Since $ \Omega_\bot(I_S ) = D_\bot^2  +  \Omega_\bot^{(0)}(I_S)$ and $J_\bot D_\bot^2 = \ii D_\bot^3$ we need
  to analyze $\Psi_1(z_S) \ii D_\bot^3$. By Remark \ref{notation coefficients expansion},
 $\Psi_1(z_S) \ii D_\bot^3$ is a bounded linear operator $h^s_\bot \to H^{s-3}_0$ for any $s \ge 0$.
  \begin{lemma}\label{lemma cal T(q)}
  For any integer $N \ge 0$, the operator ${\cal T}(z_S) := \Psi_1(z_S) \ii D_\bot^3 -  {\cal F}_\bot^{- 1} \ii D_\bot^3 {\cal F}_\bot \Psi_1(z_S)$ admits the expansion
  $$
  \begin{aligned}
   {\cal T}(z_S) & = \sum_{k = - 1}^N a_k( z_S; {\cal T}) \partial_x^{- k} {\cal F}_\bot^{- 1} + {\cal R}_N(z_S;  {\cal T})
  \end{aligned}
  $$
  where for any $s \geq 0$, $-1 \le k \le N$, the maps 
  $$
  \mathcal V_S \to H^s, \, z_S \mapsto a_k( z_S; {\cal T})\,, \qquad 
  \mathcal V_S \to {\cal B}(h^s_\bot, H^{s + N + 1}), \, z_S \mapsto {\cal R}_N( z_S;  {\cal T})
  $$ 
  are real analytic. A similar statement holds for the transpose ${\cal T}(z_S)^t$ of ${\cal T}(z_S)$. 
  \end{lemma}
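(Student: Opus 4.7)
\noindent
\textbf{Proof plan for Lemma \ref{lemma cal T(q)}.}
The strategy is purely algebraic, building directly on the pseudodifferential expansion of $\Psi_1(z_S)$ from Theorem \ref{lemma asintotica Floquet solutions}(ii). The two key identities driving the argument are
\begin{equation*}
\mathcal F_\bot^{-1} \ii D_\bot^3 = -\partial_x^3 \circ \mathcal F_\bot^{-1} \qquad \text{and} \qquad \mathcal F_\bot^{-1} \ii D_\bot^3 \mathcal F_\bot = -\partial_x^3 \circ \pi_{S^\bot}\,,
\end{equation*}
where $\pi_{S^\bot} = \mathrm{Id} - \pi_S$ and $\pi_S$ is the Fourier projection onto the (finitely many) $S$-modes. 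Since $\pi_S$ has finite rank with range in $C^\infty$, the error $\partial_x^3 \circ \pi_S \circ (\cdot)$ is smoothing to any order and will be absorbed into the remainder.

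First I would fix $M := N + 3$ and substitute the expansion
$$\Psi_1(z_S) = \bigl(\mathrm{Id} + \sum_{k=1}^M a_k(z_S; \Psi_1)\partial_x^{-k}\bigr)\circ \mathcal F_\bot^{-1} + \mathcal R_M(z_S; \Psi_1)$$
into both summands of $\mathcal T(z_S)$. Using the first identity in the first summand and the second identity in the second summand, the leading term $-\partial_x^3 \circ \mathcal F_\bot^{-1}$ cancels exactly. For each $1 \le k \le M$ one then compares
\begin{equation*}
-a_k(z_S;\Psi_1)\,\partial_x^{3-k} \circ \mathcal F_\bot^{-1} \qquad \text{versus} \qquad -\partial_x^3 \circ a_k(z_S;\Psi_1)\,\partial_x^{-k} \circ \mathcal F_\bot^{-1}
\end{equation*}
(modulo the harmless $\pi_S$-contribution). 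Since $\partial_x^3$ is a genuine differential operator, the exact Leibniz rule $\partial_x^3 \circ a = \sum_{j=0}^3 \binom{3}{j}(\partial_x^j a)\partial_x^{3-j}$ produces no pseudodifferential remainder; the $j=0$ terms cancel, leaving for each $k \ge 1$ the contributions
$$3(\partial_x a_k(z_S;\Psi_1))\,\partial_x^{2-k} \,+\, 3(\partial_x^2 a_k(z_S;\Psi_1))\,\partial_x^{1-k} \,+\, (\partial_x^3 a_k(z_S;\Psi_1))\,\partial_x^{-k}\,.$$
Regrouping these by the total order of $\partial_x$ yields the expansion $\sum_{k=-1}^{N} a_k(z_S; \mathcal T)\,\partial_x^{-k}\circ \mathcal F_\bot^{-1}$, with the coefficients $a_k(z_S;\mathcal T)$ given by explicit finite linear combinations of $\partial_x^j a_m(z_S;\Psi_1)$ for $1 \le m \le k + 2$ and $1 \le j \le 3$. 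The real analyticity and the required estimates of $a_k(z_S;\mathcal T)$ then follow at once from the corresponding properties of $a_m(z_S;\Psi_1)$ stated in Theorem \ref{lemma asintotica Floquet solutions}(ii).

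The remainder $\mathcal R_N(z_S;\mathcal T)$ collects three types of contributions: (i) the terms produced by the Leibniz expansion for indices $k$ with $2-k < -N$, (ii) the smoothing $\partial_x^3 \circ \pi_S$ contributions, and (iii) the error coming from $\mathcal R_M(z_S;\Psi_1)\,\ii D_\bot^3$ and $\mathcal F_\bot^{-1}\ii D_\bot^3\mathcal F_\bot\,\mathcal R_M(z_S;\Psi_1)$. Types (i) and (ii) are manifestly smoothing to any order; for (iii) the choice $M = N+3$ gives a loss of $3$ derivatives from $\ii D_\bot^3$ against a gain of $M + 1 = N + 4$, leaving $N + 1$ smoothing. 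The real analyticity of the resulting map $\mathcal V_S \to \mathcal B(h^s_\bot, H^{s+N+1})$ is inherited from that of $\mathcal R_M(z_S;\Psi_1)$.

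For the transpose $\mathcal T(z_S)^t = \ii D_\bot^3\,\Psi_1(z_S)^t - \Psi_1(z_S)^t\,\mathcal F_\bot^{-1}\ii D_\bot^3 \mathcal F_\bot$, I would run an exactly parallel computation starting from the expansion of $\Psi_1(z_S)^t$ given by Corollary \ref{lemma asintotiche Phi 1 q}, now using $\ii D_\bot^3 \mathcal F_\bot = -\mathcal F_\bot \partial_x^3$ on the left and Lemma \ref{lemma composizione pseudo} to commute $\partial_x^{-k}$ past multiplication operators. I do not foresee any real obstacle beyond careful bookkeeping of orders and of the finite-rank smoothing contributions generated by $\pi_S$; the main care is to take $M$ large enough relative to $N$ so that the unbounded factor $\ii D_\bot^3$ is compensated by the smoothing of $\mathcal R_M(z_S;\Psi_1)$ (respectively $\mathcal R_M(z_S;\Psi_1^t)$).
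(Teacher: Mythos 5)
Your proposal is correct and follows essentially the same route as the paper: substitute the pseudodifferential expansion of $\Psi_1(z_S)$ from Theorem \ref{lemma asintotica Floquet solutions}(ii), use $\ii D_\bot^3 = -\mathcal F_\bot \partial_x^3 \mathcal F_\bot^{-1}$ so that the leading terms cancel, and read off the order-one commutator structure --- the paper merely invokes the general order count for commutators of pseudodifferential operators, where you carry out the Leibniz computation explicitly (which in fact verifies the claimed order $1$ more convincingly, since the principal symbol of $\Psi_1(z_S)\circ\mathcal F_\bot$ is constant). The only cosmetic slip is that $(\ii D_\bot^3)^t = -\ii D_\bot^3$, so your formula for $\mathcal T(z_S)^t$ is off by an overall sign, which affects nothing in the argument.
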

  \begin{proof} 
  First note that the expression obtained from $\Psi_1(z_S) \ii D_\bot^3 -  {\cal F}_\bot^{- 1} \ii D_\bot^3 {\cal F}_\bot \Psi_1(z_S)$
  by replacing  $\Psi_1(z_S)$ by its highest order part ${\cal F}_\bot^{- 1}$ (cf Theorem \ref{lemma asintotica Floquet solutions}), 
  vanishes. Since the order of the commutator of two scalar
  pseudodifferential operators of order one is again of order one, it follows that
  the operator $ {\cal T}(z_S)$ is of order $1$, meaning that 
  the expansion of ${\cal T}(z_S)$ is of the form as stated. 
  Taking into account that $\ii D_\bot^3 =  - {\cal F}_\bot  \partial_x^3 {\cal F}_\bot^{- 1}$, 
  the claimed statements follow from Theorem \ref{lemma asintotica Floquet solutions} (expansion of $\Psi_1(z_S)$)
  and Corollary \ref{lemma asintotiche Phi 1 q} (expansion of $\Psi_1(z_S)^t$).
%  ${\cal F}_\bot^{- 1} D_\bot^3 {\cal F}_\bot = \ii \partial_x^3$
  \end{proof}
  Taking into account that  $J_\bot \Omega_\bot(I_S) = \ii D_\bot^3 + J_\bot \Omega_\bot^{(0)}(I_S)$,
  the operator $\partial_x^{- 1}  \Psi_1(z_S) J_\bot \Omega_\bot(I_S)$, 
  appearing in formula \eqref{cappello 0} reads
  $$
  \partial_x^{- 1}  \Psi_1(z_S) J_\bot \Omega_\bot(I_S) =
  % \partial_x^{- 1}  \Psi_1(z_S) J_\bot D_\bot^2 + \partial_x^{- 1}  \Psi_1(z_S) J_\bot \Omega^{(0)}_\bot(I_S) =
    \partial_x^{-1}\Psi_1(z_S)  \ii D_\bot^3 +  \partial_x^{- 1}  \Psi_1(z_S) J_\bot \Omega^{(0)}_\bot(I_S).
  $$
  By the definition of ${\cal T}(z_S)$ and using that $\partial_x^{- 1} {\cal F}_\bot^{-1} \ii D_\bot^3 = - \partial_x^2 {\cal F}_\bot^{-1}$ , one then gets
  $$
 \partial_x^{- 1}  \Psi_1(z_S) \ii D_\bot^3 = \partial_x^{- 1} {\cal F}_\bot^{-1} \ii D_\bot^3 {\cal F}_\bot \Psi_1(z_S) + \partial_x^{- 1} \mathcal T(z_S) 
 =  - \partial_x^2 \Psi_1(z_S) +  \partial_x^{- 1} \mathcal T(z_S)\,.
  $$
  Altogether we thus have shown that the nth component $ \big\langle \partial_x^{- 1} \Psi_1(z_S)[ J_\bot \Omega_\bot(I_S ) z_\bot ] \,,\, \partial_{z_n}  \Psi_1(z_S)[ z_\bot]   \big\rangle$, $n \in S$,
  of $\mathcal L_S^\bot(z) [J_\bot \Omega_\bot(I_S) z_\bot ]$ 
  %(cf \eqref{cappello 0}),
  is given by
  \begin{align}
 %  \big\langle \partial_x^{- 1} & \Psi_1(z_S)[ D_\bot^3 z_\bot ] \,,\, \partial_{z_n}  \Psi_1(q)[ z]   \big\rangle +  
 % \big\langle \partial_x^{- 1} \Psi_1(z_S)[ J_\bot \Omega^{(0)}_\bot(I_S) z_\bot ] \,,\, \partial_{z_n}  \Psi_1(z_S)[ z_\bot]   \big\rangle \nonumber\\
  %& =  \big\langle \ii \partial_x^2  \Psi_1(z_S)[ z_\bot ] \,,\, \partial_{z_n}  \Psi_1(q)[ z_\bot]   \big\rangle + \big\langle \partial_x^{- 1} {\cal T}(z_S)[ z_\bot ] \,,\, 
 % \partial_{q_n}  \Psi_1(z_S)[ z_\bot]   \big\rangle  \nonumber\\
  %& \quad +  \big\langle \partial_x^{- 1} \Psi_1(z_S)[ J_\bot \Omega^{(0)}_\bot(I_S ) z_\bot ] \,,\, \partial_{z_n}  \Psi_1(z_S)[ z_\bot]   \big\rangle \nonumber\\
  &  \big( \mathcal L_S^\bot(z) [J_\bot \Omega_\bot(I_S) z_\bot ] \big)_n
  = -  \big\langle  \partial_x^2  \Psi_1(z_S)[ z_\bot ] \,,\, \partial_{z_n}  \Psi_1(z_S)[ z_\bot]   \big\rangle + \big\langle {\cal T}_{1, n}(z_S)[ z_\bot ] \,,\,  z_\bot   \big\rangle \label{cappello 20}
  \end{align}
  where for any $z_S \in \mathcal V_S$, the operator ${\cal T}_{1, n}(z_S)$ is given by
  \begin{equation}\label{definizione cal T1 (q)}
  {\cal T}_{1, n}(z_S) := (\partial_{z_n} \Psi_1(z_S))^t \partial_x^{- 1} {\cal T}(z_S) +  (\partial_{z_n} \Psi_1(z_S))^t \partial_x^{- 1} \Psi_1(z_S) J_\bot \Omega^{(0)}_\bot(I_S )\,. 
  \end{equation}
  Since $(\partial_{z_n} \Psi_1(z_S))^t$ is one smoothing (cf Corollary  \ref{lemma asintotiche Phi 1 q}) and 
  $\partial_x^{- 1} {\cal T}(z_S)$ is of order zero (cf Lemma \ref{lemma cal T(q)}), one sees that
  $  {\cal T}_{1, n}(z_S)$ maps $h^0_\bot$ into $h^{1}_\bot$. More precisely, the following result holds. 
  \begin{lemma}\label{proprieta cal T 1 j (q)}
  For any $n \in S$ and $N \in \N$, the operator ${\cal T}_{1, n}(z_S)$, defined by \eqref{definizione cal T1 (q)} for $z_S \in \mathcal V_S$, admits the expansion 
  $$
  {\cal T}_{1, n}(z_S) =  {\cal F}_\bot \circ \sum_{k = 1}^N  a_k(z_S;  {\cal T}_{1, n}) \partial_x^{- k} {\cal F}_\bot^{- 1} + {\cal R}_N( z_S;   {\cal T}_{1, n})
  $$
  where for any $s \geq 0$, $1 \le k \le N$, the maps 
  $$
  \mathcal V_S \to H^s, \, z_S \mapsto a_k( z_S;  {\cal T}_{1, n})\,, \qquad 
  \mathcal V_S \to {\cal B}(h^s_\bot , h^{s + N +1}_\bot), \, z_S \mapsto {\cal R}_N( z_S;  {\cal T}_{1, n})
  $$ 
  are real analytic. 
  \end{lemma}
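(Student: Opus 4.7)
The plan is to prove Lemma \ref{proprieta cal T 1 j (q)} by writing each of the two summands in the definition \eqref{definizione cal T1 (q)} of ${\cal T}_{1,n}(z_S)$ as a composition of pseudodifferential operators whose expansions have already been established, and then applying the composition rules of Lemma \ref{lemma composizione pseudo} together with the real analyticity statements of the relevant cited results. The key observation that makes the expansion start at $k=1$ is a simple order count: $\Psi_1(z_S)$ and hence $\partial_{z_n}\Psi_1(z_S)$ is a pseudodifferential operator of order $0$ by Theorem \ref{lemma asintotica Floquet solutions}, so by Corollary \ref{lemma asintotiche Phi 1 q} the transpose $(\partial_{z_n}\Psi_1(z_S))^t$ is itself one-smoothing, i.e.\ of order $-1$. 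Since $\partial_x^{-1}$ is of order $-1$ and ${\cal T}(z_S)$ is of order $+1$ by Lemma \ref{lemma cal T(q)}, the first summand $(\partial_{z_n}\Psi_1(z_S))^t\partial_x^{-1}{\cal T}(z_S)$ has total order $-1+(-1)+1 = -1$. For the second summand, Lemma \ref{lemma Omega bot q} gives that $\Omega_\bot^{(0)}(I_S)$ is of order $-2$, so $J_\bot\Omega_\bot^{(0)}(I_S)$ is of order $-1$, and multiplying by the remaining factors of orders $-1,-1,0$ yields an operator of order $-3$, which is a fortiori of order $-1$.

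First I would record the expansions of the four building blocks, writing each in the normal form ${\cal F}_\bot\circ(\sum a_k\partial_x^{-k})\circ{\cal F}_\bot^{-1}$ modulo a smoothing remainder, namely the expansion of $\Psi_1(z_S)$ given by Theorem \ref{lemma asintotica Floquet solutions}(ii), the expansion of $(\partial_{z_n}\Psi_1(z_S))^t$ obtained by differentiating the expansion \eqref{espansione finale Phi 1 t} of Corollary \ref{lemma asintotiche Phi 1 q} with respect to $z_n$ (note that differentiation in $z_n$ commutes with ${\cal F}_\bot$ and $\partial_x^{-k}$ and acts only on the coefficients $a_k(z_S;\Psi_1^t)$ and the remainder), the expansion of ${\cal T}(z_S)$ from Lemma \ref{lemma cal T(q)}, and the expansion of $\Omega_\bot^{(0)}(I_S)$ from Lemma \ref{lemma Omega bot q}. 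Each of these expansions comes with real analyticity of coefficients and remainders in the variables $z_S\in\mathcal V_S$ (and in $I_S$, which depends real analytically on $z_S$).

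Next I would iterate Lemma \ref{lemma composizione pseudo} to compose the three (respectively four) factors in each summand. Composition of a symbol expansion of order $\alpha$ with one of order $\beta$ yields a symbol expansion of order $\alpha+\beta$ whose coefficients are polynomial expressions in the derivatives of the original coefficients, plus a remainder that is obtained by combining the two individual remainders with the composition remainders ${\cal R}_{N,k,j}^{\psi do}$ of Lemma \ref{lemma composizione pseudo}. Carrying out this composition for the first summand produces an expansion of order $-1$, while for the second summand one uses in addition that $J_\bot\circ{\cal F}_\bot={\cal F}_\bot\circ\partial_x$ to absorb $J_\bot$ into the pseudodifferential calculus before composing with the expansion of $\Omega_\bot^{(0)}(I_S)$. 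Adding the two resulting expansions gives the claimed form for ${\cal T}_{1,n}(z_S)$, with coefficients $a_k(z_S;{\cal T}_{1,n})\in H^s$ and remainder ${\cal R}_N(z_S;{\cal T}_{1,n})\in{\cal B}(h^s_\bot,h^{s+N+1}_\bot)$, and the real analyticity in $z_S\in\mathcal V_S$ is inherited from the building blocks and from the real analyticity of all operations used in the composition rule.

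The only step that requires a small amount of care is ensuring that one expands all factors to sufficiently high order $N'\ge N$ (depending on $N$ and the orders of the factors) so that, after composition, the final remainder is truly $(N+1)$-smoothing; this is a routine bookkeeping exercise using the fact that Lemma \ref{lemma composizione pseudo} allows one to choose the order of expansion arbitrarily large. No genuine obstacle arises, since the statement is structurally a composition of objects whose pseudodifferential expansions are already in hand.
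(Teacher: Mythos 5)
Your proposal is correct and follows essentially the same route as the paper, whose proof of this lemma consists precisely of citing Corollary \ref{lemma asintotiche Phi 1 q}, Lemmata \ref{lemma Omega bot q} and \ref{lemma cal T(q)}, and the composition Lemma \ref{lemma composizione pseudo}. Your order count (in particular that $(\partial_{z_n}\Psi_1(z_S))^t$ is one-smoothing because the $z_S$-independent leading term $\mathcal F_\bot$ drops out under $\partial_{z_n}$, so both summands have order at most $-1$) correctly justifies why the expansion starts at $k=1$, which is the only point the paper leaves implicit.
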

  \begin{proof}
  The claimed statements follow from Corollary  \ref{lemma asintotiche Phi 1 q},  Lemmata \ref{lemma Omega bot q}, \ref{lemma cal T(q)},
  and Lemma \ref{lemma composizione pseudo}. 
  \end{proof}
We now turn our attention to the term $- \big\langle  \partial_x^2  \Psi_1(z_S)[ z_\bot ] \,,\, \partial_{z_n}  \Psi_1(z_S)[ z_\bot]   \big\rangle$
 in \eqref{cappello 20}. 
By \eqref{forma compatta hamiltoniana d-NLS}
$$
d \nabla H^{kdv}(q) = - \partial_x^{2} + d \nabla H_3^{kdv}(q) = - \partial_x^{2} + 6 q , \quad H_3^{kdv}(q) := \int_0^1 q^3 dx \,.
$$ 
Hence using that $- \partial_x^2 = d \nabla H^{kdv}(q) - 6q$ one obtains for any $n \in S$
\begin{align}
- \big\langle & \partial_x^2  \Psi_1(z_S)[ z_\bot ] \,,\, \partial_{z_n}  \Psi_1(z_S)[ z_\bot]   \big\rangle 
 = \frac{1}{2} \partial_{z_n} \big\langle - \partial_x^2  \Psi_1(z_S)[ z_\bot ] \,,\,  \Psi_1(z_S)[ z_\bot]   \big\rangle \nonumber\\
& =  \frac{1}{2} \partial_{z_n} \big\langle d \nabla H^{kdv}(q) \big[ \Psi_1(z_S)[ z_\bot ]\big] \,,\,  \Psi_1(z_S)[ z_\bot]   \big\rangle 
-  \frac{1}{2} \partial_{z_n} \big\langle 6 q  \Psi_1(z_S)[ z_\bot ] \,,\,  \Psi_1(z_S)[ z_\bot]   \big\rangle\,. \nonumber\\
\end{align}
Since by \eqref{maradona 4}, 
$$  
\Psi_1(z_S)^t d \nabla H^{kdv}(q)  \Psi_1(z_S) =   \Omega_\bot(I_S) + \,{\cal G}(z_S)
$$ 
we conclude that $- \big\langle  \partial_x^2  \Psi_1(z_S)[ z_\bot ] \,,\, \partial_{z_n}  \Psi_1(z_S)[ z_\bot]   \big\rangle$ equals
\begin{align}
\frac12 \big\langle \partial_{z_n}\Omega_\bot(I_S)  [z_\bot]  \,,\,  z_\bot    \big\rangle 
+  \frac12  \big\langle  \partial_{z_n}{\cal G}(z_S)  [z_\bot]  \,,\,  z_\bot    \big\rangle  
-  \frac12 \big\langle  \partial_{z_n} \big( 6 \Psi_1(z_S)^t \, q \, \Psi_1(z_S) \big) [ z_\bot ] \,,\,   z_\bot   \big\rangle\,.
\end{align}
Using again that for any $n \in S$,
$\partial_{z_n} \Omega_\bot(I_S) =  \partial_{z_n} \Omega_\bot^{(0)}(I_S),$
one thus obtains 
$$
- \langle  \partial_x^2  \Psi_1(z_S)[ z_\bot ] \,,\, \partial_{z_n}  \Psi_1(z_S)[ z_\bot]   \rangle =   \langle {\cal T}_{2, n}(z_S)[z_\bot]\,, z_\bot \rangle
$$
where
\begin{equation}\label{definizione cal T2}
 {\cal T}_{2, n}(z_S) := \frac 12 \partial_{z_n} \big( \,  \Omega_\bot^{(0)}(I_S) + {\cal G}(z_S) - 6 \Psi_1(z_S)^t q  \Psi_1(z_S) \,  \big)\,. 
\end{equation}
\begin{lemma}\label{lemma cal T2 j}
For any $n \in S$ and any integer $N \ge 0$, the operator ${\cal T}_{2, n}(z_S) : h^0_\bot \to h^0_\bot$, defined by \eqref{definizione cal T2} for $z_S \in \mathcal V_S$, admits the expansion 
$$
{\cal T}_{2, n}(z_S) = {\cal F}_\bot \circ  \sum_{k = 0}^{N} a_k( z_S; {\cal T}_{2, n}) \partial_x^{- k} {\cal F}_\bot^{- 1} + {\cal R}_N( z_S; {\cal T}_{2, n})
$$
where for any $s \geq 0$, $0 \le k \le N,$  the maps 
$$
\mathcal V_S \to H^s, \, z_S \mapsto a_k( z_S;  {\cal T}_{2, n})\,,  \qquad 
\mathcal V_S \to {\cal B}(h^s_\bot , h^{s + N + 1}_\bot), \, z_S \mapsto  {\cal R}_N ( z_S; {\cal T}_{2, n})
$$ 
are real analytic. 
A similar statement holds for the transpose ${\cal T}_{2, n}(z_S)^t$ of the operator ${\cal T}_{2, n}(z_S)$. 
\end{lemma}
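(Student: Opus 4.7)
The plan is to analyze the three summands composing $\mathcal T_{2,n}(z_S)$ separately, derive a pseudodifferential expansion for each, and then combine them and differentiate in $z_n$. Recall that the action coordinates satisfy $I_m = \frac{1}{2\pi m} z_m z_{-m}$, $m \in S_+$, so $I_S$ is a real analytic function of $z_S$, and the derivative $\partial_{z_n}$ will act on coefficients that are already known to be analytic functions of $z_S$ (or equivalently of $(I_S, z_S)$); hence differentiation in $z_n$ preserves the class of admissible expansions.

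First, the term $\partial_{z_n}\Omega_\bot^{(0)}(I_S)$ follows directly from Lemma \ref{lemma Omega bot q}: its expansion begins at order $k = 2$ and all coefficients $a_k(I_S;\Omega_\bot^{(0)})$, as well as the remainder, are real analytic in $I_S$. Differentiating coefficient-wise in $z_n$ and using the chain rule via $I_S(z_S)$ gives the required expansion for this piece. Second, the term $\partial_{z_n}\mathcal G(z_S)$ is obtained from Lemma \ref{stime cal M(wS)} in the same way: the coefficients $a_k(z_S;\mathcal G)$ and the remainder $\mathcal R_N(z_S;\mathcal G)$ are real analytic in $z_S$, so $\partial_{z_n}$ acts through them and the resulting expansion is of the stated form.

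The core of the argument is the third summand $\partial_{z_n}\bigl(\Psi_1(z_S)^t\, q\, \Psi_1(z_S)\bigr)$ with $q = \Psi^{kdv}(z_S, 0) \in M_S$. Choosing an auxiliary order $M \ge N$ (to be fixed by the bookkeeping below), insert the expansions of $\Psi_1(z_S)$ from Theorem \ref{lemma asintotica Floquet solutions}(ii) and of $\Psi_1(z_S)^t$ from Corollary \ref{lemma asintotiche Phi 1 q}:
\begin{align*}
\Psi_1(z_S) &= \bigl(\mathrm{Id} + \sum_{k=1}^M a_k(z_S; \Psi_1)\partial_x^{-k}\bigr)\circ \mathcal F_\bot^{-1} + \mathcal R_M(z_S;\Psi_1),\\
\Psi_1(z_S)^t &= \mathcal F_\bot\circ \bigl(\mathrm{Id} + \sum_{k=1}^M a_k(z_S;\Psi_1^t)\partial_x^{-k}\bigr) + \mathcal R_M(z_S;\Psi_1^t),
\end{align*}
and then compose with the multiplication operator by $q$, which is a fixed $C^\infty$-smooth finite gap potential depending real analytically on $z_S$. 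Each pairwise composition $\partial_x^{-j}\circ q\, a_k(z_S;\Psi_1)\circ \partial_x^{-k}$ is expanded using Lemma \ref{lemma composizione pseudo}, which turns it into a finite asymptotic sum in $\partial_x^{-(j+k+\ell)}$ with real analytic coefficients, plus a smoothing remainder. Collecting the terms of total order $\le N$ yields the principal part $\mathcal F_\bot\circ \sum_{k=0}^N a_k(z_S;\cdot)\partial_x^{-k}\mathcal F_\bot^{-1}$, while all leftover contributions (those of order $> N$, cross-terms involving any of the three remainders $\mathcal R_M$, and the residual symbols from Lemma \ref{lemma composizione pseudo}) are absorbed into a single $(N+1)$-smoothing operator $\mathcal R_N$; choosing $M$ large enough ensures that every such contribution maps $h^s_\bot$ into $h^{s+N+1}_\bot$. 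Real analyticity in $z_S$ is inherited from the analyticity of all the building blocks. Applying $\partial_{z_n}$ to this representation then acts analytically on the coefficients and remainder, giving the claimed expansion for this piece; summing the three contributions yields the lemma.

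The corresponding statement for the transpose $\mathcal T_{2,n}(z_S)^t$ follows by the same scheme: $\Omega_\bot^{(0)}(I_S)$ and its derivative are diagonal (hence self-transpose), $\mathcal G(z_S)^t$ is obtained by transposing the expansion of Lemma \ref{stime cal M(wS)} via $(\partial_x^{-k})^t = (-1)^k \partial_x^{-k}$ and Lemma \ref{lemma composizione pseudo} (exactly as in the proof of Corollary \ref{lemma asintotiche Phi 1 q}), and the third term is symmetric in the roles of $\Psi_1(z_S)$ and $\Psi_1(z_S)^t$, so its transpose is handled by the same composition argument. The main obstacle, and the place where care is required, is the bookkeeping in the third summand: one must verify that every cross-term produced by Lemma \ref{lemma composizione pseudo} from compositions involving smoothing remainders and/or high-order symbols can be consistently grouped into a single smoothing remainder of the stated order, rather than spuriously raising the order of the principal part.
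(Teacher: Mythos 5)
Your proposal is correct and follows essentially the same route as the paper, whose proof is simply a citation of Corollary \ref{lemma asintotiche Phi 1 q}, Lemmata \ref{stime cal M(wS)}, \ref{lemma Omega bot q}, and Lemma \ref{lemma composizione pseudo}; you have merely written out the term-by-term bookkeeping that those citations implicitly delegate to the reader. The treatment of the three summands, the use of Lemma \ref{lemma composizione pseudo} for the compositions in $\Psi_1(z_S)^t\,q\,\Psi_1(z_S)$, and the handling of the transpose all match the intended argument.
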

\begin{proof}
The lemma follows by Lemmata \ref{lemma asintotiche Phi 1 q}, \ref{stime cal M(wS)}, \ref{lemma Omega bot q}, and Lemma \ref{lemma composizione pseudo}. 
\end{proof}
By \eqref{definition cal H Omega tau} -- 
%\eqref{termine delicato sergey}, 
\eqref{formula for nabla_S H_ Omega},
 \eqref{cappello 10} -- 
 %\eqref{cappello 0}, 
 \eqref{cappello 20}, 
 and \eqref{definizione cal T2} 
 the Hamiltonian ${\cal P}_\Omega(\tau, z)$, defined in \eqref{definition cal H Omega tau}, is given by
  \begin{equation}\label{forma finale cal P Omega tau}
  \begin{aligned}
{\cal P}_\Omega(\tau, z) & = \frac12  \langle \nabla_S {\cal H}_\Omega(z)\,,\, \pi_S X(\tau, z) \rangle 
+ \frac{\tau}{2} \sum_{j \in S}  \big\langle \big(  {\cal T}_{1 , j}(z_S) +   {\cal T}_{2, j}(z_S) \big) [z_\bot]\,,\, z_\bot \big\rangle \cdot
\langle X(\tau, z)\,,\, e_j \rangle \\
%& = \frac12 \sum_{j \in S} \big\langle \big( \, \partial_{z_j} \Omega_\bot^{(0)}(I_S) +  \tau {\cal T}_{1 , j}(z_S) + \tau {\cal T}_{2, j}(z_S)\, \big)[z_\bot]\,,\, z_\bot \big\rangle \cdot
%\langle X(\tau, z)\,,\, e_j \rangle \\
& = \frac12 \sum_{j \in S}  \langle {\cal T}_{3, j}(\tau, z_S)[z_\bot]\,,\, z_\bot \rangle  \cdot  \langle X(\tau, z)\,,\, e_j \rangle
\end{aligned}
  \end{equation}
where for any $j \in S$, $z_S \in \mathcal V_S$, and $0 \le \tau \le 1,$ the operator ${\cal T}_{3, j}(\tau, z_S): h^0_\bot \to h^0_\bot$ is defined by
\begin{equation}\label{definition cal T 3 j}
{\cal T}_{3, j}(\tau, z_S) := \partial_{z_{-j}} \Omega_\bot^{(0)}(I_S) +  \tau {\cal T}_{1 , j}(z_S) + \tau {\cal T}_{2, j}(z_S)\,. 
\end{equation}
The Hamiltonian ${\cal P}_\Omega(\tau, z)$ has the following properties. 
\begin{lemma}\label{proprieta hamiltoniana cal P Omega tau psi}
For any $0 \le \tau \le 1$ and any integer $N \ge 0$, the Hamiltonian ${\cal P}_\Omega(\tau, \cdot ) : \mathcal V \to \R$ is real analytic and $\nabla {\cal P}_\Omega (\tau, z)$ admits the expansion
$$
\nabla {\cal P}_\Omega(\tau, z) = \big( 0, \, {\cal F}_\bot \circ \sum_{k = 0}^N a_k( \tau, z; \nabla {\cal P}_\Omega) \partial_x^{- k} {\cal F}_\bot^{- 1} [z_\bot] \,\big) 
+ {\cal R}_N(\tau, z;  \nabla {\cal P}_\Omega)
$$
where for any $s \geq 0$, $0 \le k \le N,$ the maps 
$$
\mathcal V \to H^s, \, z \mapsto a_k(\tau, z; \nabla {\cal P}_\Omega)\,, \qquad 
\mathcal V \cap h^s_0  \to  h^{s + N + 1}_0, \, z \mapsto {\cal R}_N(\tau, z; \nabla {\cal P}_\Omega)
$$ 
are real analytic. 
Furthermore, for any $0 \le \tau \le 1$, $z \in \mathcal V$, $\widehat z \in h^0_0$
$$
 \|  a_k(\tau, z; \nabla {\cal P}_\Omega) \|_s \lesssim_{s} \| z_\bot \|_0^2 \,, \qquad  \| d a_k (\tau, z;  \nabla {\cal P}_\Omega)[\widehat z] \|_s \lesssim_s  \| z_\bot \|_0 \| \widehat z\|_0\,, 
 $$
 and if in addition, $\widehat z_1, \ldots, \widehat z_l \in h^0_0$, $l \ge 2$,
$$
 \| d^l  a_k(\tau, z;  \nabla {\cal P}_\Omega)[\widehat z_1, \ldots, \widehat z_l] \|_s  \lesssim_{s, l} \prod_{j = 1}^l \| \widehat z_j\|_0 \,.
 $$
 Similarly, for any $0 \le \tau \le 1$, $z \in \mathcal V \cap h^s_0$, $\widehat z_1, \widehat z_2 \in h^s_0,$
$$
 \| {\cal R}_N (\tau, z; \nabla {\cal P}_\Omega) \|_{s + N + 1} \lesssim_{s, N} \| z_\bot \|_s \| z_\bot \|_0^2 \,, \quad 
 \| d {\cal R}_N(\tau, z;  \nabla {\cal P}_\Omega)[\widehat z_1] \|_{s + N + 1} \lesssim_{s, N}  \| z_\bot \|_0^2 \| \widehat z_1\|_s
 +  \| z_\bot \|_s \| z_\bot \|_0 \| \widehat z_1\|_0\,, 
 $$
 $$
  \| d^2 {\cal R}_N(\tau, z;  \nabla {\cal P}_\Omega)[\widehat z_1, \widehat z_2] \|_{s + N + 1} \lesssim_{s, N}  
 \| z_\bot \|_0 \big( \| \widehat z_1\|_s \| \widehat z_2\|_0 +  \| \widehat z_1\|_0 \| \widehat z_2\|_s\big) + \|z_\bot  \|_s \| \widehat z_1\|_0 \| \widehat z_2 \|_0\,,  
 $$
 and if in addition $\widehat z_1, \ldots, \widehat z_l \in h^s_0$, $l \ge 3$,
 $$
  \| d^l {\cal R}_N(\tau, z;  \nabla {\cal P}_\Omega) [\widehat z_1, \ldots, \widehat z_l]\|_{s+N+1}  \lesssim_{s, N, l} 
 \sum_{j = 1}^l \| \widehat z_j\|_s \prod_{i \neq j} \| \widehat z_i\|_0 + \| z_\bot \|_s \prod_{j = 1}^l \| \widehat z_j\|_0\,. 
$$
\end{lemma}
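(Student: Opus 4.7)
The plan is to differentiate the explicit formula
$$
{\cal P}_\Omega(\tau, z) = \frac12 \sum_{j \in S} \big\langle {\cal T}_{3, j}(\tau, z_S)[z_\bot], z_\bot \big\rangle \, \langle X(\tau, z), e_j \rangle
$$
by the Leibniz rule, then read off the expansion and the estimates piece by piece using the results already established. Writing $Q_j(z) := \frac12 \langle {\cal T}_{3,j}(\tau, z_S)[z_\bot], z_\bot \rangle$ and $L_j(z) := \langle X(\tau, z), e_j \rangle$, we have $\nabla {\cal P}_\Omega = \sum_{j \in S}\big( L_j(z)\, \nabla Q_j(z) + Q_j(z)\, \nabla L_j(z) \big)$. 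The $\bot$-gradient of $Q_j$ equals $\frac12({\cal T}_{3,j}(\tau, z_S) + {\cal T}_{3,j}(\tau, z_S)^t)[z_\bot]$, while its $S$-gradient only involves $\partial_{z_{\pm n}}$ of the coefficients of ${\cal T}_{3,j}$; both are dominated by the corresponding expansions and tame estimates of Lemma \ref{lemma Omega bot q}, Lemma \ref{proprieta cal T 1 j (q)}, Lemma \ref{lemma cal T2 j}. The gradient of $L_j$ is given by $d X(\tau, z)^t[e_j]$, and its structure follows from the expansion of $X(\tau, z)$ in Lemma \ref{lemma campo vettoriale} by transposing.

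Next I would combine the pieces. The term $L_j(z)\, \nabla Q_j(z)$ has the form (a scalar quadratic in $z_\bot$ of size $\|z_\bot\|_0^2$ coming from $X$) times (an operator of order $0$ applied to $z_\bot$), so Lemma \ref{lemma composizione pseudo} gives the pseudodifferential expansion $\mathcal F_\bot \circ \sum_{k=0}^N a_k(\tau, z; \nabla {\cal P}_\Omega)\, \partial_x^{-k}\, \mathcal F_\bot^{-1}[z_\bot]$ in the $\bot$-component, with the scalar $L_j(z)$ absorbed into the coefficients $a_k$. The term $Q_j(z)\, \nabla L_j(z)$ produces the remaining contribution: using the expansion of $dX(\tau, z)^t$ obtained by the same transposition procedure as in Corollary \ref{lemma asintotiche Phi 1 q} and Corollary \ref{expansion of  of differential of Psi}, its principal part is again of the form $\mathcal F_\bot \circ a \, \partial_x^{-k} \mathcal F_\bot^{-1}[z_\bot]$ for suitable scalar-valued coefficients, multiplied by the scalar $Q_j(z)$ which contributes an additional factor of $\|z_\bot\|_0^2$. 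The $S$-components of both terms produce purely smoothing contributions that are absorbed into ${\cal R}_N(\tau, z; \nabla {\cal P}_\Omega)$.

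The estimates then follow by bookkeeping. For the coefficients $a_k$, each is a finite product of scalar factors that are $O(\|z_\bot\|_0^2)$ (the $L_j(z)$ factor or the $Q_j(z)$ factor, and the coefficient of an expansion of an order-$0$ operator acting on $z_\bot$), which gives the bound $\lesssim \|z_\bot\|_0^2$; differentiation reduces the polynomial degree in $z_\bot$ by one and introduces derivatives in $\widehat z_j$, yielding the stated multilinear estimates after the usual Leibniz expansion. For the remainder ${\cal R}_N$, the cubic structure in $z_\bot$ (quadratic form $Q_j$ times linear part of $X$ which is $O(\|z_\bot\|_0^2)$ itself) combined with the $(N+1)$-smoothing remainders appearing in the expansions of ${\cal T}_{1,j}$, ${\cal T}_{2,j}$ and of $X(\tau, z)$ produces the bound $\lesssim \|z_\bot\|_s \|z_\bot\|_0^2$, and the derivative estimates follow from the Leibniz rule combined with the tame estimates for the remainder terms in Lemma \ref{lemma campo vettoriale} and Lemmata \ref{proprieta cal T 1 j (q)}, \ref{lemma cal T2 j}, together with the interpolation Lemma \ref{lemma interpolation}.

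I expect the main technical obstacle to be the careful treatment of the contribution coming from $Q_j(z) \nabla L_j(z)$, since $\nabla L_j(z) = dX(\tau, z)^t[e_j]$ involves differentiating $X$ in both the $S$- and $\bot$-directions, and the transposition must be done while preserving the $\mathcal F_\bot \circ (\cdots) \circ \partial_x^{-k} \circ \mathcal F_\bot^{-1}$ structure required in the statement. The computation follows the pattern of Corollary \ref{expansion of  of differential of Psi}, but one has to verify that after transposition the leading symbols again act as multiplication operators on $\mathcal F_\bot^{-1}[z_\bot]$, rather than on $e_j$ (which would only contribute to the $S$-component of the remainder). All analyticity assertions are routine consequences of the analyticity of the maps entering the construction (Lemmata \ref{lemma campo vettoriale}, \ref{lemma Omega bot q}, \ref{proprieta cal T 1 j (q)}, \ref{lemma cal T2 j}).
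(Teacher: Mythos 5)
Your proposal is correct and follows essentially the same route as the paper: differentiate the product formula $\mathcal P_\Omega(\tau,z)=\frac12\sum_{j\in S}\langle \mathcal T_{3,j}(\tau,z_S)[z_\bot],z_\bot\rangle\,\langle X(\tau,z),e_j\rangle$ by the Leibniz rule, identify $\nabla_\bot \mathcal P_\Omega$ as $\sum_j\langle X,e_j\rangle\,\mathcal T_{3,j}[z_\bot]$ plus the term with $(d_\bot X)^t[e_j]$, and read off the expansion and tame estimates from Lemmata \ref{lemma campo vettoriale}, \ref{lemma Omega bot q}, \ref{proprieta cal T 1 j (q)}, \ref{lemma cal T2 j}. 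The paper's proof is exactly this computation, stated even more tersely.
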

\begin{proof}
One has $\nabla_S {\cal P}_\Omega(\tau, z) = (\partial_{z_{-n}} {\cal P}_\Omega(\tau, z))_{n \in S}$ with
%For any $n \in S$,  $\partial_{z_n} {\cal P}_\Omega (\tau, z)$ is given by
$$
 \partial_{z_{-n}} {\cal P}_\Omega (\tau, z) = \frac12 \sum_{j \in S} \,  \langle \partial_{z_{-n}} {\cal T}_{3, j}(\tau, z_S)[z_\bot]\,,\,  z_\bot \rangle  \cdot  \langle X(\tau, z)\,,\, e_j \rangle  
+ \frac12 \sum_{j \in S} \,  \langle {\cal T}_{3, j}(\tau, z_S)[z_\bot]\,,\, z_\bot \rangle  \cdot \langle \partial_{z_{-n}}X(\tau, z)\,,\, e_j \rangle,
$$
whereas $\nabla_\bot {\cal P}_\Omega(\tau, z)$ can be computed to be
$$
\nabla_\bot {\cal P}_\Omega(\tau, z) =  \sum_{j \in S} \,  \langle X(\tau, z)\,,\, e_j \rangle \,   {\cal T}_{3, j}(\tau, z_S)[z_\bot]    
+ \frac12 \sum_{j \in S}  \, \langle {\cal T}_{3, j}(\tau, z_S)[z_\bot]\,,\, z_\bot \rangle  \, (d_\bot X(\tau, z))^t [e_j]\,. 
$$
The claimed statements then follow by Lemmata \ref{lemma campo vettoriale}, \ref{lemma Omega bot q}, \ref{proprieta cal T 1 j (q)}, \ref{lemma cal T2 j}.
\end{proof}
We are now ready to analyze the gradient of the Hamiltonian ${\cal P}_3^{(2b)}(z) := \int_0^1 {\cal P}_{\Omega}(\tau, \Psi_{X}^{0, \tau}(z)) \, d \tau$ (cf  \eqref{caniggia 0}). 
\begin{lemma}\label{proprieta hamiltoniana cal P 3 (2b)}
The Hamiltonian ${\cal P}_3^{(2b)} : \mathcal V' \to \R$ is real analytic and for any integer $N \ge 0$, its gradient $\nabla {\cal P}_3^{(2b)} (z)$ admits the expansion
$$
\nabla {\cal P}_3^{(2b)}(z) = \big( 0, \, {\cal F}_\bot \circ \sum_{k = 0}^N  a_k(z; \nabla {\cal P}_3^{(2b)}) \, \partial_x^{- k} {\cal F}_\bot^{- 1} [z_\bot] \big) \,  + \, {\cal R}_N( z; \nabla {\cal P}_3^{(2b)})
$$
where for any $s \geq 0$, $0 \le k \le N$, the maps 
$$
\mathcal V' \to H^s, \, z \mapsto a_k( z; \nabla {\cal P}_3^{(2b)})\,, \qquad  
\mathcal V' \cap h^s_0 \to  h^{s + N + 1}_0, \, z \mapsto {\cal R}_N( z;  \nabla {\cal P}_3^{(2b)})
$$ 
are real analytic. Furthermore, the following estimates hold: for any $z \in \mathcal V'$, $\widehat z \in h^0_0$,
$$
 \|  a_k( z;  \nabla {\cal P}_3^{(2b)}) \|_s \lesssim_{s} \| z_\bot \|_0^2 \,, \quad \| d a_k( z; \nabla {\cal P}_3^{(2b)})[\widehat z] \|_s \lesssim_s  \| z_\bot \|_0 \| \widehat z\|_0\,, 
 $$
 and if in addition $\widehat z_1, \ldots, \widehat z_l \in h^0_0,$ $l \ge 2$, then
 $\| d^l  a_k(z; \nabla {\cal P}_3^{(2b)})[\widehat z_1, \ldots, \widehat z_l] \|_s  \lesssim_{s, l} \prod_{j = 1}^l \| \widehat z_j\|_0$.
 
 \noindent
 Similarly, for any $z \in \mathcal V' \cap h^s_0$, $\widehat z_1, \widehat z_2 \in h^s_0$,
 $$
 \| {\cal R}_N( z;  \nabla {\cal P}_3^{(2b)}) \|_{s + N + 1} \lesssim_{s, N} \| z_\bot \|_s \| z_\bot \|_0^2 \,, \quad
 \| d {\cal R}_N( z;  \nabla {\cal P}_3^{(2b)})[\widehat z_1] \|_{s + N + 1} \lesssim_{s, N}  \| z_\bot \|_0^2 \| \widehat z_1\|_s  +  \| z_\bot \|_s \| z_\bot \|_0 \| \widehat z_1\|_0\,, 
 $$
 $$
  \| d^2 {\cal R}_N( z; \nabla {\cal P}_3^{(2b)})[\widehat z_1, \widehat z_2] \|_{s + N + 1} \lesssim_{s, N}  
 \| z_\bot \|_0 \big( \| \widehat z_1\|_s \| \widehat z_2\|_0 +  \| \widehat z_1\|_0 \| \widehat z_2\|_s\big) + \|z_\bot  \|_s \| \widehat z_1\|_0 \| \widehat z_2 \|_0\,, 
 $$
 and if in addition $\widehat z_1, \ldots, \widehat z_l \in h^s_0,$ $l \ge 2$, then
 $$
  \| d^l   {\cal R}_N( z; \nabla {\cal P}_3^{(2b)}) [\widehat z_1, \ldots, \widehat z_l]\|_{s + N + 1}  \lesssim_{s, N, l} 
 \sum_{j = 1}^l \| \widehat z_j\|_s \prod_{i \neq j} \| \widehat z_i\|_0 + \| z_\bot \|_s \prod_{j = 1}^l \| \widehat z_j\|_0 \,. 
$$
\end{lemma}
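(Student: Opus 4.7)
The plan is to start from the formula $\mathcal P_3^{(2b)}(z) = \int_0^1 \mathcal P_\Omega(\tau, \Psi_X^{0,\tau}(z))\, d\tau$ coming from \eqref{caniggia 0} and apply the chain rule to obtain
$$
\nabla \mathcal P_3^{(2b)}(z) = \int_0^1 d\Psi_X^{0,\tau}(z)^t\bigl[ \nabla \mathcal P_\Omega(\tau, \Psi_X^{0,\tau}(z)) \bigr]\, d\tau.
$$
The real analyticity of $\mathcal P_3^{(2b)}$ on $\mathcal V'$ follows from the real analyticity of $\Psi_X^{0,\tau}(z)$ (Theorem \ref{espansione flusso per correttore}) and of $\mathcal P_\Omega(\tau, \cdot)$ on $\mathcal V$ (Lemma \ref{proprieta hamiltoniana cal P Omega tau psi}), together with the fact that $\Psi_X^{0,\tau}$ maps $\mathcal V'$ into $\mathcal V$ uniformly in $\tau \in [0,1]$.

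For the expansion, I would substitute into the integrand two ingredients. First, by Lemma \ref{proprieta hamiltoniana cal P Omega tau psi} applied at $w = \Psi_X^{0,\tau}(z)$,
$$
\nabla \mathcal P_\Omega(\tau, w) = \bigl( 0,\; \mathcal F_\bot \circ \sum_{k=0}^N a_k(\tau, w; \nabla \mathcal P_\Omega)\, \partial_x^{-k} \mathcal F_\bot^{-1}[w_\bot] \bigr) + \mathcal R_N(\tau, w; \nabla \mathcal P_\Omega).
$$
Using the expansion \eqref{espansione asintotica Psi tau 0 tau} of $\Psi_X^{0,\tau}(z)$, one can expand $w_\bot = z_\bot + (\text{lower order in } \partial_x^{-k})$, which after the composition rule of Lemma \ref{lemma composizione pseudo} and Taylor-expanding $a_k(\tau, w; \nabla \mathcal P_\Omega)$ around $w = z$ produces an expansion of $\nabla \mathcal P_\Omega(\tau, \Psi_X^{0,\tau}(z))$ of the desired form, with coefficients real analytic in $z \in \mathcal V'$. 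Second, by Corollary \ref{expansion of  of differential of Psi}, $d\Psi_X^{0,\tau}(z)^t$ has an expansion with a diagonal part $\mathrm{Id}$ plus terms involving $a_k(z; (d\Psi_X^{0,\tau})^t)\partial_x^{-k}\mathcal F_\bot^{-1}$ and $\mathcal A_k(z;(d\Psi_X^{0,\tau})^t)[\cdot]\partial_x^{-k}\mathcal F_\bot^{-1} z_\bot$, plus a smoothing remainder. Composing the two expansions via Lemma \ref{lemma composizione pseudo} and collecting terms up to order $N$ (absorbing everything of order $\leq -(N+1)$ into the remainder) yields the claimed form
$$
\nabla \mathcal P_3^{(2b)}(z) = \bigl( 0,\; \mathcal F_\bot \circ \sum_{k=0}^N a_k(z; \nabla \mathcal P_3^{(2b)})\, \partial_x^{-k} \mathcal F_\bot^{-1}[z_\bot] \bigr) + \mathcal R_N(z; \nabla \mathcal P_3^{(2b)}).
$$

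For the tame estimates, I would combine three inputs uniformly in $\tau \in [0,1]$: the estimates on $a_k(\tau, w; \nabla \mathcal P_\Omega)$ and $\mathcal R_N(\tau, w; \nabla \mathcal P_\Omega)$ from Lemma \ref{proprieta hamiltoniana cal P Omega tau psi}, the estimates on $a_k, \mathcal A_k, \mathcal R_N$ for $d\Psi_X^{0,\tau}(z)^t$ from Corollary \ref{expansion of  of differential of Psi}, and the flow estimates $\|\pi_\bot \Psi_X^{0,\tau}(z)\|_s \lesssim \|z_\bot\|_s$ and $\|\pi_S \Psi_X^{0,\tau}(z) - z_S\|_s \lesssim \|z_\bot\|_s\|z_\bot\|_0$ from Lemma \ref{estimates for flow}. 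The key quadratic factor $\|z_\bot\|_0^2$ in the coefficient estimates is inherited from Lemma \ref{proprieta hamiltoniana cal P Omega tau psi}, which itself comes from the fact that $\mathcal E(z)$ is quadratic in $z_\bot$ (Lemma \ref{lemma E(z)}) and therefore so is $X(\tau, z)$ at leading order. Differentiating the composition and using the product rule, one gets the stated estimates on $d^l a_k$ and $d^l \mathcal R_N$; interpolation (Lemma \ref{lemma interpolation}) is invoked to reshuffle the distribution of Sobolev regularity across factors. Finally, integration in $\tau$ preserves all estimates because they hold uniformly on $[0,1]$.

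The main obstacle will be the careful bookkeeping in the composition step: the expansion of $d\Psi_X^{0,\tau}(z)^t$ contains a term of the form $\mathcal A_k(z;(d\Psi_X^{0,\tau})^t)[\,\cdot\,]\,\partial_x^{-k}\mathcal F_\bot^{-1}[z_\bot]$ which is linear in its argument but already involves the unknown $z_\bot$, so when composed with the pseudodifferential part of $\nabla\mathcal P_\Omega$ one must argue that the result can still be written in the canonical form $\sum_k a_k\,\partial_x^{-k}\mathcal F_\bot^{-1}[z_\bot]$. This is possible because both the $\mathcal A_k[\widehat q]$ and $a_k\widehat q$ outputs are functions of $x$ acting by multiplication, so composition with further $\partial_x^{-j}\mathcal F_\bot^{-1}[z_\bot]$ via Lemma \ref{lemma composizione pseudo} keeps us within the target form; the resulting coefficients $a_k(z;\nabla\mathcal P_3^{(2b)})$ are then a finite sum of products of coefficients already controlled by the previous lemmas, and the remainder is assembled from the three smoothing remainders plus the tails of the $\psi$do compositions.
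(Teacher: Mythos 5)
Your proposal is correct and follows essentially the same route as the paper: the paper's proof consists precisely of the chain-rule identity $\nabla {\cal P}_3^{(2b)}(z) = \int_0^1 (d \Psi_X^{0, \tau}(z))^t \nabla {\cal P}_{\Omega}(\tau, \Psi_{X}^{0, \tau}(z)) \, d \tau$ followed by an appeal to Corollary \ref{expansion of  of differential of Psi}, Lemma \ref{proprieta hamiltoniana cal P Omega tau psi}, Theorem \ref{espansione flusso per correttore}, and Lemma \ref{lemma composizione pseudo}. Your additional bookkeeping about how the $\mathcal A_k$ terms recombine into the canonical form and where the quadratic factor $\| z_\bot \|_0^2$ originates is a faithful elaboration of the details the paper leaves implicit.
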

\begin{proof}
By a straightforward computation, one has for any $z \in \mathcal V'$,
$$
\nabla {\cal P}_3^{(2b)}(z) = \int_0^1 (d \Psi_X^{0, \tau}(z))^t \nabla {\cal P}_{\Omega}(\tau, \Psi_{X}^{0, \tau}(z)) \, d \tau\,.
$$
The claimed statements then follow by applying Corollary \ref{expansion of  of differential of Psi} (expansion of $d \Psi_X^{0, \tau}(z)^t$), 
Lemma \ref{proprieta hamiltoniana cal P Omega tau psi} (expansion of $\nabla {\cal P}_\Omega(\tau, z)$), 
Theorem \ref{espansione flusso per correttore} (expansion of $\Psi_{X}^{0, \tau}(z)$), and Lemma \ref{lemma composizione pseudo}.  
\end{proof}
 
 \smallskip
  
  \noindent
  {\em Terms ${\cal P}_2^{(1)}$, ${\cal P}_3^{(1)}$:} Recall that the Hamiltonians ${\cal P}_2^{(1)}$ and ${\cal P}_3^{(1)}$ were introduced in \eqref{perturbazione composizione con Psi L}. We write
  \begin{equation}\label{cal P 12 Psi C}
 {\cal P}_2^{(1)}(\Psi_C(z)) + {\cal P}_3^{(1)}(\Psi_C(z)) = {\cal P}_2^{(1)}(z) + {\cal P}_3^{(2c)}(z)\,, \quad 
  {\cal P}_3^{(2c)}(z):=   {\cal P}_2^{(1)}(\Psi_C(z)) - {\cal P}_2^{(1)}(z) + {\cal P}_3^{(1)}(\Psi_C(z))
  \end{equation}
  where by the mean value theorem
  $$
 {\cal P}_2^{(1)}(\Psi_C(z)) - {\cal P}_2^{(1)}(z) = \int_0^1\big\langle \nabla {\cal P}_2^{(1)}\big( z + y (\Psi_C(z) - z)   \big)\,,\, \Psi_C(z) - z \big\rangle d y \,. 
  $$
  The Hamiltonian ${\cal P}_3^{(2c)}(\Psi_C(z))$ has the following properties.
  \begin{lemma}\label{composizione cal P 12 Psi C}
  The Hamiltonian ${\cal P}_3^{(2c)} : \mathcal V' \cap h^1_0 \to \R$ is real analytic and for any integer $N \ge 0$ its gradient $\nabla {\cal P}_3^{(2c)} (z)$ admits the expansion
   $$
  \nabla {\cal P}_3^{(2c)}(z) = \big( 0, \,   {\cal F}_\bot \circ  \sum_{k = 0}^N T_{a_k(z; \nabla {\cal P}_3^{(2c)})}  \partial_x^{- k} {\cal F}_\bot^{- 1}[z_\bot] \,\big) + {\cal R}_N (z;  \nabla {\cal P}_3^{(2c)})
  $$ 
  with the property that there exists an integer $\sigma_N \ge N$ (loss of regularity) such that for any $s \geq 0$, $0 \le k \le N$, the maps 
  $$
  \mathcal V' \cap h^{s + \sigma_N}  \to H^s, \, z \mapsto a_k(z; \nabla {\cal P}_3^{(2c)})\,,
  \qquad 
  \mathcal V'  \cap h_0^{s \lor \sigma_N} \to  h_0^{s + N + 1}, \, z \mapsto {\cal R}_N(z; \nabla {\cal P}_3^{(2c)})
  $$
   are real analytic. Furthermore, for any $s \ge 0,$ $z \in \mathcal V' \cap h^{s + \sigma_N}_0$ with  $\| z \|_{\sigma_N} \leq 1$, $\widehat z_1, \ldots, \widehat z_l \in h^{s + \sigma_N}_0$, $l \ge 1$,
  $$
  \begin{aligned}
   & \|   a_k( z; \nabla {\cal P}_3^{(2c)})  \|_s \lesssim_{s, N} \| z_\bot \|_{s + \sigma_N}\,, \\
   & \| d^l a_k( z; \nabla {\cal P}_3^{(2c)}) [\widehat z_1, \ldots, \widehat z_l] \|_s \lesssim_{s, N, l} 
  \sum_{j = 1}^l \| \widehat z_j\|_{s + \sigma_N} \prod_{i \neq j} \| \widehat z_j\|_{\sigma_N} + \| z_\bot \|_{s + \sigma_N} \prod_{j = 1}^l \| \widehat z_j\|_{\sigma_N}\,.
    \end{aligned}
  $$
  Similarly, for any $s \ge 0,$ $z \in \mathcal V' \cap h^{s \lor \sigma_N}_0$ with  $\| z \|_{\sigma_N} \leq 1$, $\widehat z\in h^{s \lor \sigma_N}_0$, 
  $$
  \begin{aligned}
   &\| {\cal R}_N(z ; \nabla {\cal P}_3^{(2c)})\|_{s + N + 1} \lesssim_{s, N} \| z_\bot \|_{s \lor \sigma_N} \| z_\bot \|_{\sigma_N} \,, \\
   &\| d {\cal R}_N( z; \nabla {\cal P}_3^{(2c)}) [\widehat z]\|_{s + N + 1} \lesssim_{s, N} \| z_\bot \|_{\sigma_N} \| \widehat z\|_{s \lor \sigma_N} + 
   \| z_\bot \|_{s \lor \sigma_N} \| \widehat z\|_{\sigma_N}\,, 
   \end{aligned}
   $$
   and if in addition $\widehat z_1, \ldots, \widehat z_l \in h^{s \lor \sigma_N}_0$, $ l \geq 2$,
   $$
   \| d^l {\cal R}_N( z; \nabla {\cal P}_3^{(2c)}) [\widehat z_1, \ldots, \widehat z_l] \|_{s + N + 1} \lesssim_{s, N, l} 
  \sum_{j = 1}^l \|\widehat z_j \|_{s \lor \sigma_N} \prod_{i \neq j} \| \widehat z_i\|_{\sigma_N} 
  + \| z_\bot \|_{s \lor \sigma_N} \prod_{j = 1}^l \| \widehat z_j\|_{\sigma_N}\,. 
  $$
   \end{lemma}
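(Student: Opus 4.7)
The plan is to split $\mathcal{P}_3^{(2c)}$ naturally as
$$
\mathcal{P}_3^{(2c)}(z) \;=\; \Delta(z) \,+\, \big( \mathcal{P}_3^{(1)} \circ \Psi_C \big)(z), \qquad \Delta(z) := \mathcal{P}_2^{(1)}(\Psi_C(z)) - \mathcal{P}_2^{(1)}(z),
$$
and to treat the two summands separately. Real analyticity on $\mathcal V' \cap h^1_0$ is immediate by composing the analytic maps $\Psi_C : \mathcal V' \to \mathcal V$ (Theorem \ref{espansione flusso per correttore}), $\mathcal{P}_2^{(1)}$ (via $\mathcal G$ from Lemma \ref{stime cal M(wS)}) and $\mathcal{P}_3^{(1)}$ (Lemma \ref{lemma stima cal P2 P3}).

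For $\nabla(\mathcal{P}_3^{(1)} \circ \Psi_C)(z) = d\Psi_C(z)^t\,\nabla \mathcal{P}_3^{(1)}(\Psi_C(z))$, I would feed in three expansions: (a) Lemma \ref{lemma stima cal P2 P3} for $\nabla \mathcal{P}_3^{(1)}$, which already supplies the canonical form $\mathcal F_\bot \circ \sum T_{a_k} \partial_x^{-k} \mathcal F_\bot^{-1}$ together with the loss of regularity $\sigma_N$ --- it is here that the $\sigma_N$ in the statement is born; (b) Theorem \ref{espansione flusso per correttore} to write $\pi_\bot \Psi_C(z) = z_\bot + (\text{smoothing corrections quadratic in } z_\bot)$, and to control $\pi_S \Psi_C(z) = z_S + O(\|z_\bot\|_0^2)$; (c) Corollary \ref{expansion of  of differential of Psi} for $d\Psi_C(z)^t$, whose principal part is $(0,\widehat z_\bot)$ plus paradifferential corrections. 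Composing these via Lemma \ref{lemma composizione pseudo} (to reorder $a\,\partial_x^{-j}$ factors) and Lemma \ref{primo lemma paraprodotti} (Bony decomposition, converting residual ordinary products into paraproducts plus smoothing remainders) produces an expansion of the prescribed form, and the tame estimates follow by combining those of the three inputs and interpolating with Lemma \ref{lemma interpolation}.

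For $\Delta$ I would differentiate directly and regroup the standard way,
\begin{equation*}
\nabla \Delta(z) \;=\; \big( d\Psi_C(z)^t - \mathrm{Id} \big)\,\nabla \mathcal{P}_2^{(1)}(\Psi_C(z)) \;+\; \big( \nabla \mathcal{P}_2^{(1)}(\Psi_C(z)) - \nabla \mathcal{P}_2^{(1)}(z) \big).
\end{equation*}
The crucial simplification is that $\nabla \mathcal{P}_2^{(1)}$ is polynomial: its $\bot$-component equals $\mathcal G(z_S)[z_\bot]$ (linear in $z_\bot$) and its $S$-component is $\tfrac12 \langle (\partial_{z_S}\mathcal G(z_S))[z_\bot], z_\bot\rangle$ (quadratic in $z_\bot$). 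Consequently $\nabla \mathcal{P}_2^{(1)}$ admits an expansion of the canonical form as a direct consequence of Lemma \ref{stime cal M(wS)}, \emph{without} any loss of regularity. The first bracket gains an extra factor of $z_\bot$ and one smoothing order from Corollary \ref{expansion of  of differential of Psi} (since $d\Psi_C^t - \mathrm{Id}$ starts at first order in $z_\bot$ and is one-smoothing); the second bracket gains the extra factor by a mean-value argument, exploiting the smallness and smoothing of $\Psi_C(z)-z$ from Theorem \ref{espansione flusso per correttore}. Both contributions can then be put in the required paradifferential form by the same Bony/composition manipulations as above.

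The main obstacle, as in Lemmas \ref{lemma stima cal P2 P3} and \ref{proprieta hamiltoniana cal P 3 (2b)}, is bookkeeping: after each composition the expansion must be brought back into the canonical form $\mathcal F_\bot \circ \sum T_{a_k} \partial_x^{-k} \mathcal F_\bot^{-1}$ modulo a genuinely $(N+1)$-smoothing remainder, which requires iterated use of Lemma \ref{lemma composizione pseudo} and of Bony's decomposition. Since only $\nabla \mathcal{P}_3^{(1)}$ carries a loss of regularity, the final $\sigma_N$ coincides with the one of Lemma \ref{lemma stima cal P2 P3}, possibly enlarged by a finite amount depending on $N$; the contribution from $\Delta$ does not enlarge it. All tame bounds are then assembled with Lemma \ref{lemma interpolation} exactly in the style of the preceding two lemmas, which completes the proof.
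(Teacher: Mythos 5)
Your proposal is correct and follows essentially the same route as the paper: the authors likewise differentiate the definition ${\cal P}_3^{(2c)}(z)={\cal P}_2^{(1)}(\Psi_C(z))-{\cal P}_2^{(1)}(z)+{\cal P}_3^{(1)}(\Psi_C(z))$ and then combine the Taylor expansion of $\Psi_C$ and $d\Psi_C^t$ (Corollary \ref{proposizione espansione taylor correttore simplettico}), the expansion of ${\cal G}$ (Lemma \ref{stime cal M(wS)}) and of $\nabla{\cal P}_3^{(1)}$ (Lemma \ref{lemma stima cal P2 P3}) with the paraproduct and pseudodifferential composition Lemmata \ref{primo lemma paraprodotti}, \ref{lemma composizione pseudo}. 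Your observations that the loss of regularity $\sigma_N$ originates solely from $\nabla{\cal P}_3^{(1)}$ and that the cubic smallness of $\Delta$ comes from $\Psi_C(z)-z$ being quadratic in $z_\bot$ are exactly the points the paper's (much terser) proof relies on.
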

  \begin{proof}
   The lemma follows by differentiating  the Hamiltonian ${\cal P}_3^{(2c)}$, defined  in \eqref{cal P 12 Psi C} and then  applying Corollary \ref{proposizione espansione taylor correttore simplettico}, 
   Lemmata \ref{stime cal M(wS)}, \ref{lemma stima cal P2 P3} and using Lemmata \ref{primo lemma paraprodotti}, \ref{lemma composizione pseudo}.   
  \end{proof}
  
  \bigskip
  
%  \noindent
%  {\bf Term ${\cal P}_3^{(1)} $ :} using the expansions of the Hamiltonian ${\cal P}_3^{(1)}$ given in Lemma \ref{lemma stima cal P2 P3} and 
%applying Corollary \ref{proposizione espansione taylor correttore simplettico} and Lemma \ref{lemma composizione pseudo}, we get the following Lemma on the Hamiltonian ${\cal P}_3^{(2d)} := {\cal P}_3^{(1)} \circ \Psi_C$: 
%  \begin{lemma}\label{composizione cal P 13 Psi C}
% The Hamiltonian ${\cal P}_3^{(1)} \circ \Psi_C : {\cal K} \times B_\delta^\bot(0) \to \R$ is real analytic. Moreover, the following asymptotic expansion holds  

%  $$
%  \| \nabla ({\cal P}_{3}^{(1)} \circ \Psi_C)(z)\|_{s} \lesssim_s \, \| z\|_s \| z\|_0\,, \quad 
%  \| d  \nabla ({\cal P}_{3}^{(1)} \circ \Psi_C)(z)[\widehat z]\|_{s } \lesssim_s \,
%  \| z\|_s \| \widehat z\|_0 + \| z\|_0 \| \widehat z\|_s \,,
%  $$
%  and  for any $k \in \Z_{\geq 2}$, $\widehat z_1, \ldots, \widehat z_k \in h^s_\C$,
%  $$
%  \| d^k \nabla({\cal P}_{3}^{(1)} \circ \Psi_C)(z)[\widehat z_1, \ldots, \widehat z_k]\|_{s} \lesssim_{s, k} \,
%  \sum_{j = 1}^k \| \widehat z_j\|_s\prod_{i \neq j} \|\widehat z_i \|_0 + \| z\|_s \prod_{j = 1}^k \|\widehat z_j \|_0\,.
%  $$
%  \end{lemma}
%  \begin{proof}
%  The lemma follows by differentiating the Hamiltonian ${\cal P}_3^{(1)} \circ \Psi_C$ and applying Corollary\ref{proposizione espansione taylor correttore simplettico} and Lemma \ref{lemma stima cal P2 P3} $(ii)$.
%  \end{proof}

\smallskip

By \eqref{forma semifinale H nls circ Psi}, \eqref{h nls circ PsiC}, \eqref{parte pericolosa Psi C}, \eqref{cal P 12 Psi C} it follows that 
for $z= (z_S, z_\bot) \in \mathcal V'$, the Hamiltonian 
${\cal H}^{(2)}(z)$ is given by 
%{\cal H}^{(1)} \circ \Psi_C = H^{kdv} \circ \Psi_L \circ \Psi_C$ has the form 
\begin{equation}\label{cal H2 nls}
{\cal H}^{(2)} (z) = {\cal H}^{kdv}(I_S ) + \frac12 \big\langle \Omega_\bot(I_S)[z_\bot], z_\bot \big\rangle
+ {\cal P}_2^{(2)}(z) + {\cal P}_3^{(2)}(z)
\end{equation}
where 
\begin{equation} \label{definizione cap P2 (3)}
{\cal P}_2^{(2)} (z)  :=  \langle \nabla_S \mathcal H_S^{kdv}(z), \,  \pi_S {\cal R}_{N, 2}(z; \Psi_C) \rangle + {\cal P}_2^{(1)}(z)\,, \qquad 
{\cal P}_3^{(2)} (z)  := {\cal P}^{(2a)}_3(z) +{\cal P}_3^{(2b)}(z) + {\cal P}_3^{(2c)}(z).
\end{equation}
and where we recall that by \eqref{Taylor expansion R_N} and \eqref{perturbazione composizione con Psi L}
\begin{align}\label{definizione cap P2 (2)} 
\mathcal R_{N, 2} (z; \Psi_C) =  \frac{1}{2} d^2_\bot {\cal R}_{N}((z_S, 0) ; \Psi_C) [z_\bot, z_\bot]\,, \qquad
\quad {\cal P}_2^{(1)}(z) =  \frac12 \langle {\cal G}(z_S)[z_\bot], \, z_\bot \rangle\,. \quad
\end{align}
Note that ${\cal P}_2^{(2)} $ is quadratic with respect to $z_\bot$, whereas ${\cal P}^{(2)}_3$
is a remainder term of order three in  $z_\bot$. 
Being quadratic with respect to $z_\bot$, ${\cal P}^{(2)}_2$ can be written as 
\begin{equation}\label{bla bla cal P2}
{\cal P}_2^{(2)} (z) = \frac12 \langle d_\bot \nabla_\bot {\cal P}_2^{(2)}(z_S, 0)   [z_\bot], \, z_\bot \rangle\,.
\end{equation}
 The following vanishing lemma is due to Kuksin \cite{K}. Since our setup is different from the one in \cite{K},
 we include its proof for the convenience of the reader.
      \begin{lemma}\label{cancellazione finale termini quadratici}
  The Hamiltonian ${\cal P}^{(2)}_2$ vanishes on $ \mathcal V'$. 
    \end{lemma}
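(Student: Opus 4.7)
The plan is to verify by direct computation that the two constituents of
$\mathcal{P}_2^{(2)}(z) = \langle \nabla_S \mathcal{H}_S^{kdv}(z), \pi_S\mathcal{R}_{N, 2}(z; \Psi_C)\rangle + \tfrac{1}{2}\langle \mathcal{G}(z_S)[z_\bot], z_\bot\rangle$
are exact negatives of one another, the cancellation being driven by skew-adjointness of $\partial_x^{-1}$ and by the canonical identity $\Psi_1(z_S)^t\partial_x^{-1}\Psi_1(z_S) = J_\bot^{-1}$ (which encodes that $d\Psi_L(z_S, 0) = d\Psi^{kdv}(z_S, 0)$ is symplectic, cf.\ Proposition~\ref{prop Psi L Psi kdv}).

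First I would pin down the quadratic Taylor coefficient $\mathcal{R}_{N, 2}(z; \Psi_C)$ in closed form. By Lemma~\ref{espansione L S bot q z} one has $\mathcal{L}(z_S, 0) = 0$, and from \eqref{forma finale 1 forma Kuksin}--\eqref{forma finale 1 forma Kuksin 2} the generator $\mathcal{E}(z) = \bigl(\tfrac{1}{2}\mathcal{L}_S^\bot(z)[z_\bot], 0\bigr)$ is exactly quadratic in $z_\bot$. The Neumann series for $\mathcal{L}_\tau(z)^{-1}$ in Lemma~\ref{stime cal Lt inverso} therefore yields $X(\tau, z) = -\mathcal{L}_\tau(z)^{-1}[J\mathcal{E}(z)] = -J\mathcal{E}(z) + O(\|z_\bot\|_0^3)$ uniformly in $\tau \in [0,1]$. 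Integrating \eqref{Duhamel ODE X} gives $\Psi_C(z) = z - J\mathcal{E}(z) + O(\|z_\bot\|_0^3)$, and identifying the quadratic Taylor term yields
\[
\pi_S\mathcal{R}_{N,2}(z; \Psi_C) = -\tfrac{1}{2}\,J_S\,\mathcal{L}_S^\bot(z)[z_\bot], \qquad \pi_\bot\mathcal{R}_{N,2}(z; \Psi_C) = 0.
\]

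Next I would rewrite both terms of $\mathcal{P}_2^{(2)}$ in a common form. Setting $V := \Psi_1(z_S)[z_\bot]$ and exploiting the canonical identity in the form $\Psi_1(z_S)^{-1} = J_\bot \Psi_1(z_S)^t \partial_x^{-1}$, the definition \eqref{definizione cal M (wS)} of $\mathcal{G}$ simplifies to $\mathcal{G}(z_S)[z_\bot] = \Psi_1(z_S)^t \partial_x^{-1}\, d_S V[J_S\Omega_S(I_S) z_S]$. Since $\mathcal{H}_S^{kdv}$ depends on $z_S$ only through the actions $I_k = \tfrac{1}{2\pi k} z_k z_{-k}$, $k \in S_+$, one checks directly that $\nabla_S \mathcal{H}_S^{kdv}(z) = \Omega_S(I_S)[z_S]$. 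Applying the skew-adjointness $\langle \partial_x^{-1}u, v\rangle = -\langle u, \partial_x^{-1}v\rangle$ to the first contribution, and combining $J^t = -J$ with the defining formula \eqref{L SS botS Sbot} of $\mathcal{L}_S^\bot$ on the second, yields
\[
\tfrac{1}{2}\langle \mathcal{G}(z_S)[z_\bot], z_\bot\rangle = -\tfrac{1}{2}\langle d_S V[J_S\Omega_S(I_S) z_S], \partial_x^{-1}V\rangle,
\]
\[
\langle \nabla_S\mathcal{H}_S^{kdv}(z), \pi_S\mathcal{R}_{N, 2}(z; \Psi_C)\rangle = +\tfrac{1}{2}\langle d_S V[J_S\Omega_S(I_S) z_S], \partial_x^{-1}V\rangle.
\]
Summing the two identities produces $\mathcal{P}_2^{(2)}(z) = 0$ for every $z \in \mathcal{V}'$.

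The main step requiring care is the rigorous justification that the higher-order remainders in the Neumann expansion of $X(\tau, z)$ and in the Picard iteration for $\Psi_X^{0,1}$ contribute only $O(\|z_\bot\|_0^3)$ terms to $\Psi_C - z$, so that the closed-form identification $\mathcal{R}_{N, 2}(z; \Psi_C) = -J\mathcal{E}(z)$ holds; the subsequent algebraic manipulation is routine sign-tracking. Conceptually, the vanishing expresses that the symplectic corrector $\Psi_C$, which was constructed precisely to kill the quadratic obstruction $\Lambda_L = \Psi_L^*\Lambda_G - \Lambda$ via the Moser/Weinstein deformation generated by $J\mathcal{E}$, exactly cancels the contribution $\mathcal{P}_2^{(1)}$ arising from the second-order expansion of $H^{kdv}$ along the normal direction $\Psi_1(z_S)[z_\bot]$.
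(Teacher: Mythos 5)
Your proof is correct, but it follows a genuinely different route from the paper. The paper proves the lemma via Kuksin's ``vanishing lemma'': writing ${\cal P}_2^{(2)}(z)=\frac12\langle d_\bot\nabla_\bot{\cal P}_2^{(2)}(z_S,0)[z_\bot],z_\bot\rangle$, it compares the linearization of $\partial_t w=J\nabla{\cal H}^{(2)}(w)$ along a finite gap solution $(z_S(t),0)$ with the linearized KdV flow transported by $d\Psi^{kdv}$; since $\Psi$ is symplectic and $d\Psi(z_S,0)=d\Psi^{kdv}(z_S,0)$, uniqueness of the linearized initial value problem forces the two flows to coincide, whence $J_\bot d_\bot\nabla_\bot{\cal P}_2^{(2)}(z_S,0)=0$. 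You instead compute both quadratic contributions in closed form and exhibit the cancellation algebraically. I checked the key steps: the identification $\pi_S{\cal R}_{N,2}(z;\Psi_C)=-\frac12 J_S{\cal L}_S^\bot(z)[z_\bot]$ is legitimate because ${\cal E}(z)$ is homogeneous of degree two in $z_\bot$, ${\cal L}_\tau(z)^{-1}-{\rm Id}=O(\|z_\bot\|_0)$, and the terms $a_k(z;\Psi_C)\partial_x^{-k}{\cal F}_\bot^{-1}[z_\bot]$ are cubic; the identity $\nabla_S{\cal H}_S^{kdv}(z)=\Omega_S(I_S)[z_S]$ follows from $\partial_{z_{-n}}I_n=z_n/(2\pi n)$; and the signs in the two displays (using $J_S^t=-J_S$, the symmetry of the pairings, $(\partial_x^{-1})^t=-\partial_x^{-1}$, and the left-inverse $\Psi_1(z_S)^{-1}=J_\bot\Psi_1(z_S)^t\partial_x^{-1}$, which is the correct reading of the paper's formula \eqref{definizione cal M (wS)} since $\Psi_1(z_S)$ is not surjective onto $L^2_0$) do produce exact cancellation. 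What each approach buys: yours is self-contained and makes the mechanism of the cancellation completely explicit, at the cost of the careful Taylor-coefficient bookkeeping you flag; the paper's argument avoids all computation and is structurally robust --- it only uses that $\Psi$ is symplectic and agrees with $\Psi^{kdv}$ to first order on $z_\bot=0$, which is why it transfers verbatim to other integrable PDEs.
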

  \begin{proof}
  In view of \eqref{bla bla cal P2}, it suffices to prove that for any $z_S \in \mathcal V_S'$, the operator $d_\bot \nabla_\bot {\cal P}_2^{(2)}(z_S, 0)$ vanishes.
  We establish that $d_\bot \nabla_\bot {\cal P}_2^{(2)}(z_S, 0) = 0$ by studying the linearization of $\partial_t w = J \nabla \mathcal H^{(2)}(w)$ along an arbitrary
  solution $w(t)$ of the form $w(t) = (w_S(t), 0)$. 
  First we need to make some preliminary considerations.
Let $t \mapsto q(t) \in M_S$ be a solution of the KdV equation $\partial_t q = \partial_x \nabla H^{kdv}(q)$
and denote by $t \mapsto z(t) := (z_S(t), 0)$ the corresponding solution in Birkhoff coordinates, defined by $q(t) = \Psi^{kdv}(z(t))$.
It satisfies $\partial_t z(t) = J \Omega(I_S) [z(t)]$.
Furthermore, let $\widehat q(t)$ be the solution of the equation, obtained by  linearizing the KdV equation along $q(t)$, 
$$
\partial_t \widehat q(t) = \partial_x d \nabla H^{kdv}(q(t)) [\widehat q(t)] \,,
$$ 
with initial data $\widehat q^0 := d\Psi^{kdv}(z_S(0), 0)[ 0, \widehat z_{\bot}^0]$ and  $\widehat z_{\bot}^0 \in h^3_\bot$. 
Similarly, denote by $\widehat z(t)$ the solution of the equation, obtained by linearizing $\partial_t z = J \Omega(I_S) [z]$
along the solution $z(t)$ with initial data $\widehat z^0 = (0, \widehat z_{\bot}^0)$,
$\partial_t \widehat z (t) =  J d \nabla  \mathcal H^{kdv}(z(t)) [\widehat z (t)]$.
Since  $\partial_t z(t) = J \Omega(I_S) [z(t)]$ one concludes that
\begin{equation}\label{equation widehat z bot}
 \widehat z(t) = (0, \widehat z_\bot(t))\,, \qquad \partial_t \widehat z_\bot (t) =   J_\bot \Omega_\bot(I_S) [\widehat z_\bot (t)] \,.
\end{equation}
and since $\Psi^{kdv}$ is symplectic and $\mathcal H^{kdv} = H^{kdv} \circ \Psi^{kdv}$, one has  
$
\widehat q(t) = d\Psi^{kdv}(z_S(t), 0)[\widehat z(t)]\,.
$
%$\widehat u(t)  = d_\bot \Psi^{kdv}(z_S(t), 0)[\widehat z_{\bot}(t)]  =  \Psi_1(z_S(t))[ \widehat z_\bot(t)]$ 
%or $\partial_t  \widehat z_\bot (t) =  J_\bot d_\bot \nabla_\bot  \mathcal H^{kdv}(z(t)) [\widehat z_\bot (t)]   = J_\bot \Omega_\bot(z_S(t))[\widehat  z_\bot(t)]$.
Recall that for any $z_S \in \mathcal V'_S$,  $\Psi_L(z_S, 0) = \Psi^{kdv} (z_S, 0)$ (cf definition \eqref{definition Psi_L} of $\Psi_L$) 
and $\Psi_C(z_S, 0) = (z_S, 0)$ 
%$d \Psi_C(z_S, 0) = {\rm Id}$ 
(cf Corollary \ref{proposizione espansione taylor correttore simplettico}),
implying that $ \Psi(z_S, 0) = \Psi^{kdv}(z_S, 0)$ and hence $q(t) = \Psi (z_S(t), 0)$ for any $t$.
Since  $\Psi : \mathcal V' \to H^0_0$ is symplectic and $\mathcal H^{(2)} = H^{kdv}\circ \Psi$, 
one sees that $z(t) = (z_S(t), 0)$ is also a solution of the equation 
$\partial_t w = J \nabla {\cal H}^{(2)}(w)$.
%by \eqref{formula differential of Psi L},  $\Psi = \Psi_L \circ \Psi_C$ satisfies $ \Psi(z_S, 0) = \Psi^{kdv}(z_S, 0)$.
 With these preliminary considerations made, we are now ready to prove that $d_\bot \nabla_\bot {\cal P}_2^{(2)}(z_S, 0)$ vanishes.
 To this end consider the solution $\widehat w(t)$ of the equation obtained by linearizing  $\partial_t w = J \nabla {\cal H}^{(2)}(w)$ along the solution $z(t) = (z_S(t), 0)$
 with initial data $\widehat w^0 = (0, \widehat z_{\bot}^0)$. Again using that the map $\Psi$ is symplectic and $\mathcal H^{(2)} = H^{kdv}\circ \Psi$,
 it follows that $d\Psi(z(t))[\widehat w(t)]$ solves the linearized KdV equation. Since $d\Psi(z(0)) = d\Psi^{kdv}(z(0))$ and $\widehat w^0 = \widehat z^0$,
 one then concludes from the uniqueness of the initial value problem that  $d\Psi(z(t))[\widehat w(t)] = d\Psi^{kdv}(z(t))[\widehat z(t)]$ and hence 
 $\widehat w(t) = \widehat z(t)$ for any $t$. It means that $ \widehat z(t)$ satisfies also the linear equation
 $$
 \partial_t \widehat z (t) =  J d \nabla  \mathcal H^{(2)}(z(t)) [\widehat z (t)]\,.
 $$
 In view of the expansion \eqref{cal H2 nls} of $\mathcal H^{(2)}$ one then obtains
$$
\partial_t \widehat z_\bot(t) = J_\bot \Omega_\bot (I_S)[\widehat z_\bot(t)] + J_\bot d_\bot \nabla_\bot {\cal P}_2(z_S(t), 0)[\widehat z_\bot(t)]\,. 
$$
Comparing the latter identity with \eqref{equation widehat z bot} one concludes that in particular, 
$d_\bot \nabla_\bot {\cal P}_2 (z_S(0), 0) = 0$. Since the initial data  $z_S(0) \in \mathcal V_S'$ can be chosen arbitrarily,
we thus have $d_\bot \nabla_\bot {\cal P}_2 (z_S, 0) = 0$ for any $z_S \in \mathcal V_S'$ as claimed. 
 \end{proof}
In summary, we have proved the following results on the Hamiltonian $\mathcal {\cal H}^{(2)}= H^{kdv} \circ \Psi$.  
\begin{theorem} \label{stime finali grado 3 perturbazione}
The Hamiltonian $\mathcal {\cal H}^{(2)}: \mathcal V' \cap h^1_0 \to \R $ has an expansion of the form
\begin{equation}\label{forma finalissima hamiltoniana trasformata}
{\cal H}^{(2)}(z) = H^{kdv}(q) + \frac12 \langle {\Omega}_\bot(I_S)[z_\bot], \, z_\bot \rangle + {\cal P}_3^{(2)}(z)
\end{equation}
where $\Omega_\bot(I_S)$ is given by \eqref{splitting Omega} and the remainder term  ${\cal P}_3^{(2)}$, defined by  \eqref{definizione cap P2 (3)},
 satisfies the following:
 ${\cal P}_3^{(2)} : \mathcal V' \cap h^1_0  \to \R$ is real analytic and for any integer $N \ge 1$,  its gradient $\nabla {\cal P}_3^{(2)}(z)$ admits the asymptotic expansion
  $$
  \nabla {\cal P}_3^{(2)}(z) = 
  \big( 0, \,  {\cal F}_\bot \circ \sum_{k = 0}^N  T_{a_k(z; \nabla {\cal P}_3^{(2)})}  \partial_x^{- k} {\cal F}_\bot^{- 1}[z_\bot] \,\big)
   + {\cal R}_N( z; \nabla {\cal P}_3^{(2)})
  $$ 
  with the property that there there exists an integer  $\sigma_N \ge N$ (loss of regularity) so that for any $s \ge 0$, $0 \le k \le N$, the maps 
  $$
  \mathcal V'  \cap h^{s + \sigma_N}_0 \to H^s, \, z \mapsto a_k( z; \nabla{\cal P}_3^{(2)})\,, \qquad
  \mathcal V' \cap h^{s \lor \sigma_N}_0 \to h^{s + N + 1}_0, \, z \mapsto {\cal R}_N( z; \nabla {\cal P}_3^{(2)})
  $$
   are real analytic and satisfy the following estimates: for any $s \ge 0,$ $z \in \mathcal V' \cap h^{s + \sigma_N}_0$ with  $\| z \|_{\sigma_N} \leq 1$,
   $\widehat z_1, \ldots, \widehat z_l \in h^{s + \sigma_N}_0$, $l \ge 1$,
  $$
  \begin{aligned}
  & \|   a_k(z; \nabla {\cal P}_3^{(2)})  \|_s \lesssim_{s, N} \| z_\bot \|_{s + \sigma_N}\,, \\
  & \| d^l a_k( z; \nabla {\cal P}_3^{(2)}) [\widehat z_1, \ldots, \widehat z_l] \|_s \lesssim_{s, N, l} 
  \sum_{j = 1}^l \| \widehat z_j\|_{s + \sigma_N} \prod_{i \neq j} \| \widehat z_j\|_{\sigma_N} + \| z_\bot \|_{s + \sigma_N} \prod_{j = 1}^l \| \widehat z_j\|_{\sigma_N}\,.
  \end{aligned}
  $$
  Similarly, for any $s \ge 1$, $z \in \mathcal V' \cap h^{s \lor \sigma_N}_0$ with $\| z_\bot \|_{\sigma_N} \leq 1$,  $\widehat z\in h^{s \lor \sigma_N}_0$,   
  $$
    \begin{aligned}
  & \| {\cal R}_N( z; \nabla {\cal P}_3^{(2)})\|_{s + N + 1} \lesssim_{s, N} \| z_\bot \|_{s \lor \sigma_N} \| z_\bot \|_{\sigma_N} \,, \\
  &\| d {\cal R}_N( z; \nabla {\cal P}_3^{(2)}) [\widehat z]\|_{s + N + 1} \lesssim_{s, N} \| z_\bot \|_{\sigma_N} \| \widehat z\|_{s \lor \sigma_N} + \| z_\bot \|_{s \lor \sigma_N} \| \widehat z\|_{\sigma_N}\,, 
  \end{aligned}
  $$
  and if in addition  $\widehat z_1, \ldots, \widehat z_l \in h^{s \lor \sigma_N}_0$, $ l \geq 2$,
 $$
 \| d^l {\cal R}_N( z; \nabla {\cal P}_3^{(2)}) [\widehat z_1, \ldots, \widehat z_l] \|_{s + N + 1} \lesssim_{s, N, l} 
  \sum_{j = 1}^l \|\widehat z_j \|_{s \lor \sigma_N} \prod_{i \neq j} \| \widehat z_i\|_{\sigma_N} + \| z_\bot \|_{s \lor \sigma_N} \prod_{j = 1}^l \| \widehat z_j\|_{\sigma_N}\,. 
  $$
\end{theorem}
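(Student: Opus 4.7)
The plan is to treat Theorem \ref{stime finali grado 3 perturbazione} essentially as an assembly statement: the expansion \eqref{forma finalissima hamiltoniana trasformata} and all estimates for $\nabla\mathcal P_3^{(2)}$ are obtained by combining the three lemmas already established for the constituent pieces of $\mathcal P_3^{(2)}$, together with Kuksin's vanishing lemma for the quadratic correction. No new analytic input beyond the tools of Sections 2--4 should be needed.

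I would proceed as follows. First, identify the structural decomposition: by the identities \eqref{forma semifinale H nls circ Psi}, \eqref{h nls circ PsiC}, \eqref{parte pericolosa Psi C}, and \eqref{cal P 12 Psi C}, together with $\Psi=\Psi_L\circ\Psi_C$ and $\mathcal H_S^{kdv}(z)=H^{kdv}(\Psi^{kdv}(z_S,0))=H^{kdv}(q)$ (since $\Psi_C(z_S,0)=(z_S,0)$), the transformed Hamiltonian reads
$$
\mathcal H^{(2)}(z)=H^{kdv}(q)+\tfrac12\langle\Omega_\bot(I_S)[z_\bot],z_\bot\rangle+\mathcal P_2^{(2)}(z)+\mathcal P_3^{(2)}(z),
$$
with $\mathcal P_3^{(2)}=\mathcal P_3^{(2a)}+\mathcal P_3^{(2b)}+\mathcal P_3^{(2c)}$ as in \eqref{definizione cap P2 (3)}. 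Invoking Lemma \ref{cancellazione finale termini quadratici}, the quadratic contribution $\mathcal P_2^{(2)}$ vanishes identically on $\mathcal V'$, which yields \eqref{forma finalissima hamiltoniana trasformata}.

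Second, assemble the expansion of $\nabla\mathcal P_3^{(2)}$. Lemmas \ref{stime tame h3 nls} and \ref{proprieta hamiltoniana cal P 3 (2b)} supply $\nabla\mathcal P_3^{(2a)}$ and $\nabla\mathcal P_3^{(2b)}$ as smoothing plus pseudodifferential expansions of the form ${\cal F}_\bot\circ\sum_{k=0}^N a_k\partial_x^{-k}{\cal F}_\bot^{-1}[z_\bot]+\mathcal R_N$ with real analytic, tame coefficients and smoothing remainder, and \emph{no} loss of regularity. Lemma \ref{composizione cal P 12 Psi C} yields the analogous expansion for $\nabla\mathcal P_3^{(2c)}$, but in paraproduct form $T_{a_k}$ and with loss of regularity $\sigma_N^{(c)}\geq N$. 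To reconcile the two, convert each multiplication operator $a_k(z;\nabla\mathcal P_3^{(2a)})$ and $a_k(z;\nabla\mathcal P_3^{(2b)})$ into its paraproduct $T_{a_k}$ plus the complementary piece (Bony decomposition, cf.~Lemma \ref{primo lemma paraprodotti} in Appendix \ref{appendice B}); the complementary piece is one-smoothing on symbols of non-positive order and can be absorbed into the remainder after applying Lemma \ref{lemma composizione pseudo}. Setting $\sigma_N:=\sigma_N^{(c)}\vee N$ (increasing it if needed to accommodate the Bony correction), one defines $a_k(z;\nabla\mathcal P_3^{(2)})$ and $\mathcal R_N(z;\nabla\mathcal P_3^{(2)})$ as the sums of the three contributions.

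Third, derive the tame estimates. For each of the three pieces the required bounds are already in place: the ones in Lemmas \ref{stime tame h3 nls} and \ref{proprieta hamiltoniana cal P 3 (2b)} are actually stronger than needed (no loss of regularity, quadratic in $\|z_\bot\|_0$ times a Sobolev factor), so after summing they immediately satisfy the bounds stated with $\sigma_N$ in place of $N$. The paraproduct pieces from Lemma \ref{composizione cal P 12 Psi C} directly give the $\sigma_N$-type bounds on the coefficients $a_k$ and the remainder $\mathcal R_N$. Sums, derivatives, and products are handled by the interpolation Lemma \ref{lemma interpolation} and the product rule, following exactly the pattern used in the proofs of Lemmas \ref{lemma stima cal P2 P3} and \ref{composizione cal P 12 Psi C}. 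Real analyticity of the maps $z\mapsto a_k(z;\nabla\mathcal P_3^{(2)})$ and $z\mapsto\mathcal R_N(z;\nabla\mathcal P_3^{(2)})$ follows because each summand is real analytic on the indicated neighbourhood.

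I expect no essential obstacle here: every ingredient has been constructed earlier, and the main care needed is bookkeeping—reconciling the two expansion formats (multiplication vs. paraproduct), choosing $\sigma_N$ to dominate all contributions, and verifying that the Bony remainders produced by this conversion are $(N+1)$-smoothing with the required tame structure. The one subtlety worth flagging is that the cubic pieces $\mathcal P_3^{(2a)}$ and $\mathcal P_3^{(2b)}$ come with quadratic-in-$\|z_\bot\|_0$ factors (coming from their Taylor structure at $z_\bot=0$), which is more than enough to fit into the $\|z_\bot\|_{s\lor\sigma_N}\|z_\bot\|_{\sigma_N}$ bound demanded of $\mathcal R_N$; the binding constraint on the remainder estimate is therefore \emph{only} the paraproduct-generated contribution from $\mathcal P_3^{(2c)}$, whose bound is already of the correct form.
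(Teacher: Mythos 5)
Your proposal is correct and follows essentially the same route as the paper: the identity comes from \eqref{cal H2 nls} together with the vanishing Lemma \ref{cancellazione finale termini quadratici}, and the expansion and tame estimates for $\nabla\mathcal P_3^{(2)}$ are assembled from Lemmata \ref{stime tame h3 nls}, \ref{proprieta hamiltoniana cal P 3 (2b)}, \ref{composizione cal P 12 Psi C} with Lemma \ref{primo lemma paraprodotti} used to put the multiplication-operator coefficients into paraproduct form. The only (immaterial) slip is that Lemma \ref{stime tame h3 nls} makes $\nabla\mathcal P_3^{(2a)}$ purely $(N{+}1)$-smoothing, so it contributes only to the remainder and needs no Bony conversion.
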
 
\begin{proof}
The identity \eqref{forma finalissima hamiltoniana trasformata} folllows from formula \eqref{cal H2 nls} and Lemma \ref{cancellazione finale termini quadratici}.
The claimed asymptotic expansion of the gradient of  ${\cal P}_3^{(2)}$ and its properties follow from
Lemmata \ref{stime tame h3 nls}, \ref{proprieta hamiltoniana cal P 3 (2b)}, \ref{composizione cal P 12 Psi C} and Lemma \ref{primo lemma paraprodotti}. 
\end{proof}

%%%%%%%%%%%%%%%%%%%%%%%%%%%%%%%%%%%%%%%%%%%%%%%%%%%%%%%%%%%%%%%%%%%%%%%%%%%%%
%%%%%%%%%%%%%%%%%%%%%%%%%%%%%%%%%%%%%%%%%%%%%%%%%%%%%%%%%%%%%%%%%%%%%%%%%%%%%

\section{Summary of the proofs of Theorem \ref{modified Birkhoff map} and Theorem \ref{modified Birkhoff map 2}}\label{synopsis of proof}

In this section we summarize the proofs of Theorem \ref{modified Birkhoff map}, of its addendum, and of Theorem \ref{modified Birkhoff map 2}. 
First recall that in view of the envisioned applications, these theorems are formulated in terms of action angle coordinates on the submanifold $M_S^o$ of proper $S-$gap potentials.
 Denote by $\Xi $ the map relating action angle variables and complex Birkhoff coordinates,
$$
\Xi: \T^{S_+} \times \R_{>0}^{S_+} \times h^0_\bot \to h_S^0 \times h^0_\bot, \,  (\theta_S, I_S, z_\bot) \mapsto (z_S(\theta_S, I_S), z_\bot)\,, \quad z_{\pm n} = \sqrt{2\pi n I_n} e^{\mp\ii \theta_n}\,, \, n\in S_+\,.
$$
Clearly, $\Xi$ is symplectic and, for any $s \geq 0$, the map  $\Xi : \T^{S_+} \times \R_{>0}^{S_+} \times h^s_\bot \to h_S^0 \times h^s_\bot, $ is real analytic.
Furthermore, in view of the definition  \eqref{definition reversible structure for actions angles}, the map $\Xi$ preserves the reversible structure.
Hence the claimed results for the map $\Psi_L \circ \Psi_C \circ \Xi$ follow from the corresponding ones for the map $\Psi_L \circ \Psi_C$.
In what follows we summarize the proofs of the results for $\Psi_L \circ \Psi_C$ corresponding to the ones claimed for  $\Psi_L \circ \Psi_C \circ \Xi$.

\smallskip

\noindent 
{\em Proof of Theorem \ref{modified Birkhoff map}.}  By a slight abuse of notation, the map $\Psi$ of Theorem \ref{modified Birkhoff map} is 
defined to be the composition $\Psi_L \circ \Psi_C$.
By \eqref{definition Psi_X {tau_0, tau}}, it is defined on the neighborhood $\mathcal V' = \mathcal V'_S \times \mathcal V'_\bot$
where $\mathcal V'_S$ is a bounded neighborhood of any given compact subset $\mathcal K \subset h_S^0$ and $\mathcal V'_\bot$ is a ball in $h^0_\bot$ 
of radius smaller than $1$, centered at $0$. The expansion of $\Psi$, corresponding to the one of {\bf (AE1)}, 
follows from the expansion for the map $\Psi_L$, provided by Theorem \ref{lemma asintotica Floquet solutions}
and the one for the map $\Psi_C$, provided by Theorem \ref{espansione flusso per correttore}.

\noindent
The expansion of the transpose $d\Psi(z)^t$ of the derivative $d\Psi(z)$, corresponding to the one of {\bf (AE2)}, 
follows from the fact that $\Psi: \mathcal V'  \to L^2_0$ is symplectic, meaning that 
for any $z \in \mathcal V' $, the operator $d\Psi(z)^t: H^1_0 \to  h^1_0$ satisfies $d\Psi(z)^t = J^{-1} (d\Psi(z))^{-1} \partial_x$. 
The expansion of $\Psi(z)$ in {\bf (AE1)} then leads to an expansion 
of $d\Psi(z)$ and in turn of $(d\Psi(z))^{-1}$ and hence of $d\Psi(z)^t$. In addition, the identity $d\Psi(z)^t = J^{-1} (d\Psi(z))^{-1} \partial_x$
implies that the coefficient $a_1(z; d \Psi^t)$ in the expansion of $d \Psi^t$ satisfies $a_1(z; d \Psi^t) = - a_1(z; \Psi)$.

%ALTERNATIVE  The expansion of the transpose $d\Psi(z)^t$ of the derivative $d\Psi(z)$, corresponding to the one of {\bf (AE2)}, is obtained from the formula 
%$$d\Psi(z)^t = \big(d(\Psi_L( \Psi_C(z) )\big)^t = \big(d\Psi_L(\Psi_C(z)) \circ d\Psi_C(z) \big)^t = d\Psi_C(z)^t \circ \big( d\Psi_L(\Psi_C(z))\big)^t$$ 
%by using the expansions of $d\Psi_L(z)^t$, provided by Theorem \ref{lemma asintotica Floquet solutions}
%and of $\Psi_C(z)$ and $d\Psi_C(z)^t$, provided by Theorem \ref{espansione flusso per correttore}.
% INCLUDE CORRESPONDING RESULTS IN Theorem \ref{lemma asintotica Floquet solutions}, Theorem \ref{espansione flusso per correttore}

\noindent
The expansion of the Hamiltonian $\mathcal H^{(2)}$ and of the remainder term ${\cal P}_3^{(2)}$, 
corresponding to the one in {\bf (AE3)}, are provided in Theorem \ref{stime finali grado 3 perturbazione}.
\hfill $\square$

\smallskip

\noindent
{\em Proof of Addendum to Theorem \ref{modified Birkhoff map}.} Clearly, the Fourier transform $\mathcal F$ and its inverse preserve 
the reversible structure and by Proposition \ref{proposizione 1 reversibilita}, so do the Birkhoff map $\Phi^{kdv}$  and its inverse $\Psi^{kdv}$.
Furthermore,  by the Addendum to Theorem \ref{lemma asintotica Floquet solutions}, and the Addendum to Theorem \ref{espansione flusso per correttore}
also the maps $\Psi_L$  and $\Psi_C$ and hence $\Psi_L \circ \Psi_C$ preserve the reversible structure, as do the coefficients and the remainder terms
in their expansions as well as the transpose of their derivatives. 

\noindent
Clearly, the KdV Hamiltonian $H^{kdv}$ is reversible and therefore so is $\mathcal H^{(2)} = H^{kdv} \circ \Psi$.
By \eqref{forma finalissima hamiltoniana trasformata} one then concludes that also the remainder  $ {\cal P}_3^{(2)}$ is reversible.
\hfill $\square$

\smallskip

\noindent
{\em Proof of Theorem \ref{modified Birkhoff map 2}.} The estimates of the coefficients and the remainder in the expansion of $\Psi = \Psi_L \circ \Psi_C$, corresponding to the ones of {\bf (Est1)}, 
follow from the estimates of the coefficients and the remainder in the expansion of the map $\Psi_L$, provided by Theorem \ref{lemma asintotica Floquet solutions},
and the ones of the coefficients and the remainder in the expansion of the map $\Psi_C$, provided by Theorem \ref{espansione flusso per correttore}.

\noindent
The estimates of the coefficients and the remainder in the expansion of $d\Psi(z)^t$, corresponding to the one of {\bf (Est2)},
follow from the fact that $\Psi: \mathcal V'  \to L^2_0$ is symplectic, meaning that for any $z \in \mathcal V' $, 
$d\Psi(z)^t: H^1_0 \to h^1_0$ satisfies  $d\Psi(z)^t = J^{-1} (d\Psi(z))^{-1} \partial_x$ 
and the estimates  {\bf (Est1)} of the coefficients and the remainder in the expansion of $\Psi(z)$ which lead to corresponding estimates
of the coefficients and the remainder in the expansion of $d\Psi(z)$ and in turn of $(d \Psi(z))^{-1}.$

\noindent
The estimates of the remainder term $ {\cal P}_3^{(2)}$ in the expansion of the Hamiltonian $\mathcal H^{(2)} = H^{kdv} \circ \Psi$, corresponding to {\bf (Est3)}, 
are provided by Theorem \ref{stime finali grado 3 perturbazione}.
\hfill $\square$

\appendix

\section{Birkhoff map}\label{Birkhoff map}
In this appendix we review the Birkhoff map and properties of it, relevant for our purposes. We refer to \cite{KP} and \cite{KMT}, \cite{KP1}, \cite{KST2} for more details
in these matters.

\begin{theorem}[\cite{KP}, \cite{KST2}] \label{Theorem Birkhoff coordinates}
 There exists an open neighborhood $W$ of $L^2_0$ in $L^2_{0, \C}$ and a real analytic map
$$
\Phi^{kdv} : W \to h^0_{0,\C}\,, \qquad q \mapsto z(q) = (z_n(q))_{n \ne 0}
$$ 
with $\Phi^{kdv}(0) = 0$ so that the following holds: 
\begin{description}
\item[{\rm (B0)}] For any $n \in \N$, the complex Birkhoff coordinates $z_n(q), z_{-n}(q)$ are related to the Birkhoff coordinates $x_n(q), y_n(q)$ as introduced in \cite{KP} 
by the formulas \eqref{definition z_n}\,.
\item[{\rm (B1)}] For any $s \in \Z_{\ge 0}$, the restriction of $\Phi^{kdv}$ to  $H^s_0$ gives rise to a map $\Phi^{kdv}: H^s_0 \to h^s_0$ which is a bi-analytic diffeomorphism. 
\item[{\rm (B2)}] The map $\Phi ^{kdv}$ is canonical, meaning that on $W$, $\{ z_n, z_{- n} \} = \ii 2 \pi n$ for any $n \in \N$ and the brackets between all other coordinate functions vanish.  
\item[{\rm (B3)}] The Hamiltonian $H^{kdv} \circ (\Phi^{kdv})^{- 1}$, defined on $h_0^1$, only depends on the actions $(I_n)_{ n \in \N}$ $($cf \eqref{definition actions}$)$. 
More precisely, it can be viewed as a real analytic map ${\cal H}^{kdv}$ on a complex neighborhood of the positive quadrant $\ell^{1,3}_+$ in $\ell^{1,3}(\N, \C)$ $($cf \eqref{weighted ell_1}$)$.
\item[{\rm (B4)}] The differential $d_0\Phi^{kdv}$ of $\Phi^{kdv}$ at $0$ is the
Fourier transform $\mathcal F$ $($cf \eqref{Fourier transform}$)$.
\item[{\rm (B5)}] The nonlinear part of the Birkhoff map, $A^{kdv} := \Phi^{kdv} - \mathcal F$,  and the one of its inverse, $B^{kdv} := (\Phi^{kdv})^{-1} - \mathcal F^{- 1},$  
are one smoothing. More precisely, for any $s \in \N$,
$A^{kdv} : H_0^s \to h^{s + 1}_{0}$ and  $B^{kdv} : h^s_0 \to H^{s + 1}_{0}$ are real analytic.
\end{description}
\noindent
The inverse of $\Phi^{kdv}$ is denoted by $\Psi^{kdv}$.
%\begin{equation}\label{coordinate di birkhoff zn z-n kdv}
%z_n(q) = \sqrt{n \pi } \,(x_n - \ii y_n) \,, \quad z_{- n}(q) = \sqrt{n \pi} \,(x_n + \ii y_n)\,, \quad \forall n \geq 1\,.
%\end{equation}
\end{theorem}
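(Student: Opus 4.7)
\smallskip

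\noindent
\textbf{Proof plan.} The statement is essentially an assembly of results from \cite{KP}, \cite{KMT}, \cite{KP1}, and \cite{KST2}, and the plan is to identify the right reference for each of (B0)--(B5) and to explain how the complexification is carried out. The starting point is the construction in \cite{KP} of real Birkhoff coordinates $(x_n, y_n)_{n \ge 1}$ on $L^2_0$, giving a real bi-analytic diffeomorphism onto a weighted real sequence space, together with the canonical relations $\{x_n, y_n\} = 1$ and vanishing brackets otherwise. Defining $z_{\pm n}$ by \eqref{definition z_n} gives (B0) by fiat, and the canonical relation $\{z_n, z_{-n}\} = \ii 2\pi n$ in (B2) follows by a direct computation from $\{x_n, y_n\} = 1$ together with the factor $\sqrt{n\pi}$. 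The scale of Sobolev spaces statement in (B1) for real arguments is the content of the main theorem of \cite{KP} in the form refined in \cite{KST2}.

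\smallskip

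\noindent
The second step is the complexification. The construction of $x_n, y_n$ in \cite{KP} is carried out via the spectral data of the Hill operator $-\partial_x^2 + q$, and its real analyticity comes from analyticity of the periodic/Dirichlet eigenvalues as functions of the potential. Since these eigenvalues and the associated normalized differentials extend holomorphically to a complex neighborhood $W$ of $L^2_0$ in $L^2_{0,\C}$ (provided the spectral gaps do not close), one obtains a holomorphic extension $\Phi^{kdv}: W \to h^0_{0,\C}$. The key point is that real analyticity on $L^2_0$ combined with the Cauchy--Riemann equations forces the image to lie in $h^0_{0,\C}$ and the restrictions to each $H^s_0$ to be holomorphic diffeomorphisms onto $h^s_0$; this gives the full statement of (B1).

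\smallskip

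\noindent
For (B3), (B4), and (B5), the plan is to invoke the corresponding results of \cite{KP} and \cite{KST2} directly. Property (B3) -- that $H^{kdv}$ expressed in Birkhoff coordinates is a function of the actions $I_n = z_n z_{-n}/(2\pi n)$ alone -- is Theorem 13.3 of \cite{KP}; real analyticity on a complex neighborhood of $\ell^{1,3}_+$ inside $\ell^{1,3}(\N,\C)$ follows from the integrability of $H^{kdv}$ along the whole KdV hierarchy and the estimates for the $\psi$-series expansion of the Hamiltonian. Property (B4) is Theorem 9.8 of \cite{KP}, obtained from the explicit formula for $dz_{\pm n}$ at $q=0$ using that the Floquet solutions at $q=0$ are pure exponentials $e^{\pm \pi \ii n x}$ (cf.\ the Remark following Proposition~\ref{lemma zn nabla q}). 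Property (B5) is the content of Theorem~0.2 of \cite{Kuksin-Perelman} locally near $0$ and the main result of \cite{KST2} globally on $H^s_0$; the one-smoothing property of both $A^{kdv}$ and $B^{kdv}$ requires the delicate analysis of the gap lengths and the asymptotics of the Floquet solutions carried out in \cite{KST2}.

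\smallskip

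\noindent
The main obstacle, and in fact the only one of substance, is the complexification step: one must verify that the domain of definition of the spectral data, and hence of the real Birkhoff map, extends to an open neighborhood $W \subset L^2_{0,\C}$ on which none of the spectral gaps degenerate in a way that would destroy the canonical factorization $z_n = \sqrt{n\pi}(x_n - \ii y_n)$. This is handled by restricting $W$ to potentials $q$ for which the isospectral set remains of the expected topological type, and by invoking the holomorphic extensions of the normalizing angles $\beta_n$ and of $\xi_n = \sqrt{8 I_n/\gamma_n^2}$ worked out in Appendix~\ref{appendix asymptotics}. All the remaining verifications reduce to repeated use of the Taylor expansion at $q=0$ and the uniform estimates of \cite{KP}, \cite{KST2} together with Cauchy's formula to pass from real to complex analyticity. $\square$
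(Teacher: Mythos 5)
The paper gives no proof of this theorem: it is stated as a recollection of known results and attributed directly to \cite{KP} and \cite{KST2}, which is exactly what your proposal does, and your attributions of the individual items (the real Birkhoff coordinates and canonical relations to \cite{KP}, the differential at $0$ to \cite[Theorem 9.8]{KP}, the one-smoothing property to \cite{Kuksin-Perelman} locally and \cite{KST2} globally, the analytic extension to the complex neighbourhood $W$ to the spectral-theoretic construction in \cite{KP}) agree with how the paper uses these facts. The one caveat is your appeal to the holomorphic extensions of $\beta_n$ and $\xi_n$ "worked out in Appendix~\ref{appendix asymptotics}" for the complexification step: those appendix results are formulated on the neighbourhoods $V^*_{q,S}$ of finite-gap potentials, which are themselves defined via the Addendum to this very theorem, so the complexification must be (and in \cite{KP} is) carried out at the level of the spectral data independently of this paper's appendices.
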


% \noindent 
To continue we first need to introduce some more notations and review properties of the Schr\"odinger operator $- \partial_x^2 + q.$
For any $s \in \Z_{\ge 0}$, denote by $H^s_\C[0, 1] \equiv H^s([0, 1], \C)$ the Sobolev space of functions
$f: [0, 1] \to \C$ with the property that for any $0 \le j \le s,$ the distributional derivative $\partial_x^j f$ is in $L^2_\C [0, 1] \equiv L^2([0, 1], \C)$.
Similarly, $H^s_{0, \C} \equiv H^s_0(\T, \C)$ denotes the Sobolev space of functions $q: \T \to \C$ in $H^s(\T, \C)$ with $\int_0^1 q(x) dx = 0.$
For any $q \in L^2_{0, \C} \equiv H^0_{0, \C}$ and $\lambda \in \C$, we denote by $y_j(x, \lambda) \equiv y_j(x, \lambda, q)$, $j = 1,2$, the fundamental solutions 
of $-y'' + qy = \lambda y$. These are the solutions satisfying the initial conditions $y_1(0, \lambda) = 1$, $y_1'(0, \lambda) = 0$ and 
$y_2(0, \lambda) = 0$, $y_2'(0, \lambda) = 1$. It is well known that for any $s \in \Z_{\ge 0}$ and $1 \le j \le 2 $, the map
$$
\C \times H^s_{0, \C} \to H^{s+2}_\C[0,1]\,, \, (\lambda, q) \mapsto y_j (\cdot, \lambda, q)
$$
is analytic (cf \cite{PT}).
 For $q$ in $L^2_{0,\mathbb{C}}$, the Schr\"odinger operator $- \partial_x^2+q$, considered on the interval $[0,2]$ with periodic boundary conditions, has a discrete spectrum.
It consists of a sequence of complex numbers 
%$\lambda_j \equiv \lambda_j(q)$, 
bounded from below. We list them lexicographically and with algebraic multiplicities, i.e.,
$\lambda_0^+\preceq \lambda_1^-\preceq \lambda_1^+ \preceq\lambda_2^-\preceq \dots\; $ where $\lambda_n^\pm \equiv \lambda_n^\pm (q)$  (cf \cite{KP}).
%where two  complex numbers $a,b,$ are ordered lexicographically, $a\preceq b$,  if $[\operatorname{Re}a< \operatorname{Re}b]$ or 
%$[\operatorname{Re}a=\operatorname{Re}b\;\text{and}\,\operatorname{Im}a\leq\operatorname{Im}b]$. 
They  are referred to as periodic eigenvalues of $q$ and satisfy the asymptotic estimates $\lambda_n^+,\lambda_n^-= n^2\pi^2 +\ell^2_n,$
valid uniformly on bounded subsets of $L^2_{0,\mathbb{C}}$ (cf \cite{KP}). 
%In particular, this means that for any $R>0$ there exists $r>0$ so that for any $q \in L^2_{0,\mathbb{C}}$ with $\|q\|\leq R$, 
%\begin{align}\label{1ter} |_k-n^2\pi^2|\leq r\pi^2 \quad \forall k\in \{2n,2n-1\},\,\forall n\geq 1.\end{align}  
For real valued $q$, the periodic eigenvalues are real and come in isolated pairs, meaning that
$\lambda_0^+< \lambda_1^-\leq \lambda_1^+ <\lambda_2^-\leq \lambda_2^+ <\dots\; .$
We remark that for any given finite subset $S_+ \subseteq \N$, the manifold $M_S$ of $S$-gap potentials defined in the introduction, coincides with the set 
$\big\{ q \in L^2_0 : \lambda_n^-(q)  =  \lambda_n^+(q)  \,\,  \forall \,  n \in S_+^\bot \big\}$.

\noindent
By shrinking the neigbourhood $ W$ of 
%$L^2_0 \subset L^2_{0,\mathbb{C}}$ of 
Theorem \ref{Theorem Birkhoff coordinates}, if needed, one can ensure that for any $q \in W$, the closed intervals 
$$
G_n= \{(1-t)\lambda_n^-+ t \lambda_n^+|\;0\leq t\leq 1\},\quad n\geq 1\,, \qquad    G_0= \{ t+ \lambda_{0}^+|\;-\infty < t\leq 0\}
$$
are disjoint from each other. By a slight abuse of terminology, we refer to the closed interval $G_n$, $n\geq 1$, as the $n$'th gap and to $\gamma_n \equiv \gamma_n(q)$
as the $n$'th gap length, $\gamma_n:= \lambda_{n}^+ -\lambda_{n}^-$ and denote by $\tau_n \equiv \tau_n(q)$ the middle point of $G_n$, $\tau_n=(\lambda_{n}^+ +\lambda_{n}^-)/2$. 
Due to the asymptotic behaviour of the periodic eigenvalues, $(G_n)_{n \ge 1}$ admit mutually disjoint neighbourhoods 
$U_n\subseteq \mathbb{C}$, $n \ge 1$, with $G_n \subseteq U_n$, referred to as \textit{isolating neighbourhoods}.
%Moreover, inside each $U_n$, we choose a circuit $\Gamma_n$ around $G_n$ with counterclockwise orientation. Both $U_n$ and $\Gamma_n$ 
They can be chosen locally independently of $q$ (cf \cite{KP}).
%After shrinking $\mathcal W$, if needed, the $U_n$'s with $n\geq n_0, \, n_0=n_0(q)$ sufficiently large, can be chosen
%to be discs,  $U_n=\{\lambda \in \mathbb{C}|\;|\lambda-n^2\pi^2|<r\pi^2\}$, where $0< r\leq n_0$ and $n_0, r$ are chosen so large that \eqref{1ter} holds. 
%Such neighbourhoods will be called isolating neighbourhoods with parameters $n_0\geq 1$ and $ r>0$. In the course of this paper, $W$  will be shrunk several times, but we continue to denote it by $W$. 
Denote by $F(\lambda) \equiv F(\lambda, q)$ the Floquet matrix 
\begin{equation}\label{Floquet matrix}
F(\lambda) = \begin{pmatrix}
m_1(\lambda) & m_2(\lambda) \\
m_1'(\lambda) & m_2'(\lambda)
\end{pmatrix},
\qquad m_j(\lambda) = y_j(1, \lambda)\,, \quad m_j'(\lambda) = y_j'(1, \lambda)\,, \quad j = 1, 2\,,
\end{equation}
and introduce the discriminant $\Delta(\lambda) \equiv \Delta(\lambda, q) := {\rm Tr}(F(\lambda, q))$, its derivative $\dot \Delta(\lambda) \equiv \dot \Delta(\lambda, q) := \partial_\lambda \Delta(\lambda, q)$,
 and the anti-discriminant $\delta(\lambda)\equiv \delta(\lambda, q) = m_1(\lambda, q) - m_2'(\lambda, q)$. 
The functions $m_j(\lambda) \equiv m_j(\lambda, q)$ and 
$m_j'(\lambda) \equiv m_j'(\lambda, q)$ 
are analytic on $\C \times L^2_{0, \C}$. The entire functions $\Delta^2(\lambda)-4$ and $\dot \Delta(\lambda)$ have product representations 
(see \cite[Proposition B.10, Proposition B.13]{KP}) 
\begin{equation}\label{product represenations}
\Delta^2(\lambda)-4 = 4 (\lambda_0^+ - \lambda) \prod_{n \geq 1} \frac{(\lambda_{n}^+-\lambda) (\lambda_{n}^- -\lambda)}{\pi_n^4}\,, 
\qquad \dot \Delta (\lambda) = - \prod_{n\geq 1} \frac{\dot \lambda_{n} -\lambda}{\pi_n^2}
\end{equation} 
where $\pi_n=n\pi$ for any $n \geq 1$ and where the zeroes $\dot \lambda_n \equiv \dot \lambda_n(q)$ satisfy the asymptotic estimate $\dot \lambda_n = n^2\pi^2 +\ell^2_n.$
We also need to consider the operator $ - \partial_x^2 +q$ on $[0,1]$ with Dirichlet and Neumann boundary conditions. For any $q$ in $L^2_{0,\mathbb{C}}$,
the corresponding spectra are again discrete, consisting of sequences of complex numbers, bounded from below. 
They are referred to as Dirichlet and respectively, Neumann eigenvalues of $q$. We list them lexicographically and with their algebraic multiplicities 
$ \mu_1\preceq \mu_2\preceq \mu_3 \preceq \dots$ and $\nu_0\preceq \nu_1 \preceq \nu_2 \preceq \dots$\,.
The $\mu_n \equiv \mu_n(q)$ and $\nu_n \equiv \nu_n(q)$ satisfy the asymptotics $\mu_n,\nu_n=n^2\pi^2+\ell^2_n,$ valid uniformly on bounded subsets of $L^2_{0,\mathbb{C}}$. 
For real valued $q$, the Dirichlet and the Neumann eigenvalues are real and satisfy
\[
\lambda_1^- \leq\mu_1\leq \lambda_1^+ <\lambda_2^- \leq \mu_2 \leq \lambda_2^+  < \dots\,, \qquad \quad \nu_0 \leq \lambda_0^+ < \lambda_1^- \leq \nu_1  \leq \lambda_1^+ < \dots \;  .
\]
By shrinking the neighbourhood $ W$ of Theorem \ref{Theorem Birkhoff coordinates}, if needed, one can assure that for any $q \in  W$ there exist isolating neighbourhoods
 $(U_n)_{n \ge 1}$ so that for any $n\geq 1$, $\mu_n,\nu_n, \tau_n, \dot \lambda_n \in U_n$, whereas $\lambda_0^+$ and $\nu_0$ are not contained in any of the $U_n$'s (cf \cite{KP}). Isolating neighbourhoods with this additional property 
can also be chosen locally independently of $q$. Note that for $q \in W$, the Dirichlet und Neumann eigenvalues are all simple and are analytic functions on $W$. 
Similarly, $\tau_n$ and $\dot \lambda_n$, $n \in \N$, are analytic on $W$. In addition,
$m_2(\lambda)$ and $m_1'(\lambda)$ admit the product representations (cf \cite[Proposition B.6]{KP})
$$
m_2(\lambda) =  \prod_{n\geq 1} \frac{\mu_{n} -\lambda}{\pi_n^2}\,, \quad m_1'(\lambda) =  - (\nu_0 - \lambda) \prod_{n\geq 1} \frac{\nu_{n} -\lambda}{\pi_n^2}\,.
$$

Let $S_+ \subset \N$ be finite and set $S = S_+ \cup (-S_+)$. For any $s \in \Z_{\ge 0}$, we identify $h^s_0$ with $h^0_S \times h^s_\bot$ and $h^s_{0, \C}$ with $\C^S \times h^s_{\bot, \C}$.
The manifold $M_S$ of $S-$gap potentials is given by $\Psi^{kdv}(h^0_S \times \{0\})$. By item $(B1)$ of  Theorem \ref{Theorem Birkhoff coordinates} it then follows that $M_S \subset \large \cap_{s \ge 0} H^s_0$.
Actually, potentials in $M_S$ are real analytic functions. 
%To facilitate our analysis on the Sobolev spaces $H^s_{0, \C}$ with $s \in  \Z_{\ge 0}$ arbitrarily large, 
For our purposes, it is useful to consider the Hilbert spaces  $H^w_{0, \C} := \{ q \in L^2_{0, \C} \, : \, \| q \|_{w} \equiv \| (q_n)_{n \ne 0}\|_w < \infty \}$ and 
$h^w_{0, \C} := \{ (z_n)_{n \ne 0} \in \ell^2_{0, \C} \, : \, \|(z_n)_{n \ne 0}\|_w < \infty \}$
where
$$
\|(z_n)_{n \ne 0}\|_w := (\sum_{n \ne 0} w_n^2 |z_n|^2)^{1/2}\,, \qquad
w_n:= \langle n \rangle ^r e^{a |n|^\sigma}\, , \,\, n \in \Z\,, \qquad r \ge 0\,, \, a > 0\,, \, 0 < \sigma < 1
$$
The weight $w= (w_n)_{n \in \Z}$ is referred to as Gevrey weight and the Hilbert space $H^w_{0, \C}$ as weighted Sobolev space. Functions in $H^w_{0, \C}$ are Gevrey smooth.
Correspondingly, we define the real Hilbert spaces $H^w_{0} := \{ q \in H^w_{0, \C}  \, : \, q \mbox{ real valued}\}$ and $h^w_{0} := \{ (z_n)_{n \ne 0} \in h^w_{0, \C} \, : \, z_{-n} = {\overline{z_n}}  \,\, \forall n \ge 1 \}$.
To fix ideas we will only consider the Gevrey weight with parameters $r=0,$ $a =1$, and $\sigma = 1/2$ und denote it by $w_*$, but any other choice of a Gevrey weight would also be possible.
Note that $M_S \subset H^{w_*}_{0}$ and that $H^{w_*}_{0, \C}$ naturally embeds into $H^s_{0, \C}$ for any $s \in \Z_{\ge 0}$.
According to \cite[Addendum 1 to Theorem 5]{KP1}, we have the following

\medskip

\noindent
{\bf Addendum to Theorem \ref{Theorem Birkhoff coordinates}} {\em The restriction of $\Phi^{kdv}$ to  $H^{w_*}_0$ gives rise to a map $\Phi^{kdv}: H^{w_*}_0 \to h^{w_*}_0$ 
which is a bi-analytic diffeomorphism.  In particular, for any $q \in M_S$, there exists a neighborhood $V^*_{q}$ of $q$ in $H^{w_*}_{0, \C} \cap W$ and a neighborhood 
$\mathcal V^*_{z(q)}$ of $z(q) =\Phi^{kdv}(q)$ in $h^{w_*}_{0, \C}$ so that the restriction of $\Phi^{kdv}$ to $V^*_{q}$ gives rise to a real analytic diffeomorphism
$\Phi^{kdv}: V^*_q \to \mathcal V^*_{z(q)}$. The neighborhood $\mathcal V^*_{z(q)}$ can be chosen to be of the form $\mathcal V^*_{z_S(q)} \times \mathcal V^*_{0, \bot}$
where $\mathcal V^*_{z_S(q)}$ is a neighborhood of $z_S(q) = (z_n(q))_{n \in S}$ in $\C^S$ and $\mathcal V^*_{0, \bot}$ is a ball in $h^w_{\bot, \C}$, centered at $0$
with radius depending on $q$. 
 We denote the set $\Psi^{kdv} (\mathcal V^*_{z_S(q)} \times \{0 \})$ by  $V^*_{q, S}$. It consists of complex valued $S-$gap potentials near $q$. }

%\bigskip

%By Theorem \ref{Theorem Birkhoff coordinates}, there exists an open complex neighborhood $\mathcal V_{\C}$  of $h_S^0 \times \{0\}$ in $h^0_{0, \C}$,
%contained in the domain of definition $\Phi^{kdv}(W)$ of the inverse $\Psi^{kdv}$ of the Birkhoff map $\Phi^{kdv}$. Hence $\Psi^{kdv}: \mathcal V_\C \to V_\C := \Psi^{kdv}(\mathcal V_\C) \subset L^2_{0, \C}$ is an
%analytic diffeomorphism. We assume that  $\mathcal V_{\C}$ is of the form $\mathcal V_{S, \C} \times \mathcal V_{\bot, \C}$ where $\mathcal V_{S, \C}$ is a neighborhood of $h_S^0$ in $\C^S$
%and $ V_{\bot, \C}$ is a ball in $h^0_{\bot, \C}$, centered at $0$. According to the discussion above, any $q \in V_\C$ admits isolating neighborhoods $U_n, n \ge 1$, as defined above.
%Note that $M_S \subset V_{S, \C} := \Psi^{kdv}(\mathcal V_{S, \C} \times \{0\})$ since by definition  $M_S = \Psi^{kdv}(h_S^0 \times \{0\})$. Hence any $q \in V_\C$ is a (possibly complex valued) $S-$gap potentials, i.e.,
%a potential with the property that $\lambda_n^+(q) = \lambda_n^-(q)$ for any $n \in S^\bot_+$. By item $(B1)$ of  Theorem \ref{Theorem Birkhoff coordinates} it then follows that for any $s \in \Z_{\ge 0}$,
%$V_{S, \C} \cap W^s \subset V_{ \C} \cap W^s \subset H^s_{0, \C}$. In particular, $M_S \subset \large \cap_{s \ge 0} H^s_0$.\\

%%%%%%%%%%%%%%%%%%%%%%%%%%%%%%%%%%%%%%%%%%%%%%%%%%%%%%%%%%%%%%%%%%%%%%%%%%%%%%%%%%%%%%%%%%%%%%%%%%%%%%%%%%%%%%%%%%
%%%%%%%%%%%%%%%%%%%%%%%%%%%%%%%%%%%%%%%%%%%%%%%%%%%%%%%%%%%%%%%%%%%%%%%%%%%%%%%%%%%%%%%%%%%%%%%%%%%%%%%%%%%%%%%%%%

\section{Floquet solutions}\label{sezione floquet solution}
In this appendix we obtain formulas for Floquet solutions $f_{\pm n}(x, q)$, $n \in S^\bot_+$, for potentials $q$ in $M_S$
which will be used in Proposition \ref {lemma zn nabla q} of Section \ref{sezione mappa Psi L} to relate these solutions to 
the differentials of the complex Birkhoff coordinates $z_{\pm n}(q)$.
The resulting formulas are a key ingredient for proving the asymptotic expansion of the map $\Psi_L$ (cf Section~\ref{sezione mappa Psi L}). 
Without further reference, we will use the notations estabished in Appendix \ref{Birkhoff map}.
%For more details in these matters, we refer the reader to \cite{KP}, Appendix B. 

\noindent
We begin by recalling the notion of Floquet solutions of $- y'' + qy = \lambda y$ for any given $q \in L^2_0$ and $\lambda \in \R$.
The eigenvalues $\kappa_\pm(\lambda) \equiv \kappa_\pm(\lambda, q)$ of the Floquet matrix $F(\lambda)$ (cf \eqref{Floquet matrix}) are given by the roots of ${\rm det}\big( F(\lambda) - \kappa {\rm Id}_{2 \times 2}\big) = \kappa^2 - \Delta(\lambda) \kappa + 1$.
For $\lambda$ in $(\lambda_0^+, \infty) \setminus \big( \bigcup_{n \geq 1} [\lambda_n^-, \lambda_n^+] \big)$ one has
$\kappa_\pm(\lambda) = \frac{\Delta(\lambda)}{2} \mp \frac12 \sqrt[c]{\Delta^2(\lambda) - 4} \in \C$
where $\sqrt[c]{\Delta^2(\lambda) - 4}$ denotes the canonical root determined by 
${\rm sign}\big( \sqrt[c]{\Delta^2(\lambda) - 4} \big) = - \ii$ for $\lambda_0^+ < \lambda < \lambda_1^-$ (cf \cite[definition (6.10)]{KP}). 
For $\lambda \in (\lambda_0^+, \infty) \setminus \big( \bigcup_{n \geq 1} [\lambda_n^-, \lambda_n^+] \big)$, 
$m_2(\lambda) \neq 0$ ($\lambda$ is not a Dirichlet eigenvalue), $m_1'(\lambda) \neq 0$ ($\lambda$ is not a Neumann eigenvalue) 
and $(1, a_+(\lambda)) \in \C^2$ is an eigenvector of $F(\lambda)$ corresponding to the eigenvalue $\kappa_+(\lambda)$
$$
F(\lambda) \begin{pmatrix}
1 \\
a_+(\lambda)
\end{pmatrix} = \begin{pmatrix}
m_1(\lambda) & m_2(\lambda) \\
m_1'(\lambda) & m_2'(\lambda)
\end{pmatrix} 
\begin{pmatrix}
1 \\
a_+(\lambda)
\end{pmatrix} = \kappa_+(\lambda) \begin{pmatrix}
1 \\
a_+(\lambda)
\end{pmatrix}
$$
where $a_+(\lambda) \equiv a_+(\lambda, q)$ is given by
\begin{equation}\label{armadillo 1}
a_+(\lambda) = \frac{\kappa_+(\lambda) - m_1(\lambda)}{m_2(\lambda)} \quad \mbox{ or, equivalently, } \quad
 a_+(\lambda) = \frac{m_1'(\lambda)}{\kappa_+(\lambda) - m_2'(\lambda)}\,.
\end{equation}
Similarly, $(1, a_-(\lambda)) \in \C^2$, $a_-(\lambda) \equiv a_-(\lambda, q)$,
is an eigenvector of $F(\lambda)$ corresponding to the eigenvalue $\kappa_-(\lambda)$,
\begin{equation}\label{armadillo 1-}
F(\lambda) \begin{pmatrix}
1 \\
a_-(\lambda)
\end{pmatrix} =  \kappa_-(\lambda) \begin{pmatrix}
1 \\
a_-(\lambda)
\end{pmatrix}\, , \qquad
a_-(\lambda) = \frac{\kappa_-(\lambda) - m_1(\lambda)}{m_2(\lambda)} \quad \mbox{or} \quad  a_-(\lambda) = \frac{m_1'(\lambda)}{\kappa_-(\lambda) - m_2'(\lambda)}\,.
\end{equation}
If $\lambda_n^+$ is a double periodic eigenvalue, one has $\lambda_n^- = \lambda_n^+ = \tau_n$ and $F(\tau_n) = (- 1)^n {\rm Id}_{2 \times 2}$. 
By de l'Hospital's rule, the formulas in \eqref{armadillo 1} - \eqref{armadillo 1-} admit limits at such eigenvalues. 
Recall that $S_+ \subseteq \N$ is finite, $S_+^\bot = \N \setminus S_+$ and $S = S_+ \cup (- S_+)$. 
We denote by $\, \dot{} \,$ the derivative with respect to $\lambda$. 
\begin{lemma}\label{lemma armadillo 1}
For any $q \in M_S$ and $n \in S_+^\bot$, the following holds: 

\noindent
$(i)$ $(- 1)^n \dot m_2(\tau_n) > 0$, $(- 1)^{n+1} \ddot{\Delta}(\tau_n) > 0$. 

\noindent
$(ii)$ The limit $a_{\pm n} \equiv a_{\pm n}(q) := \lim_{\lambda \to \tau_n} a_{\pm}(\lambda, q)$ exists and 
\begin{equation}\label{costa rica 2}
a_{\pm n} = - \frac{\dot m_1(\tau_n)}{\dot m_2(\tau_n)} \pm \ii \frac{\sqrt[+]{(- 1)^{n + 1} \ddot{\Delta}(\tau_n)/2}}{(- 1)^n \dot m_2(\tau_n)}\,.
\end{equation}

\noindent
$(iii)$ One has $\dot{m_1}'(\tau_n) \neq 0$ and 
\begin{equation}\label{alternative formula a_n}
a_{\pm n} = \frac{\dot{m_1}'(\tau_n)}{- \dot{m_2}'(\tau_n) \pm \ii (- 1)^{n} \sqrt[+]{(- 1)^{n + 1} \ddot{\Delta}(\tau_n)/2}}\,.
\end{equation}
\end{lemma}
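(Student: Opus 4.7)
The plan is to exploit that for $q \in M_S$ and $n \in S_+^\bot$ the $n$th gap collapses, $\lambda_n^- = \lambda_n^+ = \tau_n = \mu_n$, so that the Floquet matrix satisfies $F(\tau_n) = (-1)^n \mathrm{Id}_{2\times 2}$. In particular $m_2(\tau_n) = 0$, $m_1'(\tau_n) = 0$, $m_1(\tau_n) = m_2'(\tau_n) = (-1)^n$, and $\Delta(\tau_n) = 2(-1)^n$, so both representations \eqref{armadillo 1}--\eqref{armadillo 1-} of $a_\pm(\lambda)$ present a $0/0$ indeterminate form at $\tau_n$. One order of Taylor expansion will resolve it, provided the first-order data $\dot m_j(\tau_n)$, $\dot m_j'(\tau_n)$ are controlled via the identities coming from $\det F(\lambda) \equiv 1$.

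For part $(i)$ I would substitute $\lambda = \tau_n$ into the Hadamard products recalled in Appendix~\ref{Birkhoff map}. From $m_2(\lambda) = \prod_{k \ge 1}(\mu_k - \lambda)/\pi_k^2$, differentiation and evaluation at $\tau_n = \mu_n$ yield $\dot m_2(\tau_n) = -\pi_n^{-2}\prod_{k \ne n}(\mu_k - \tau_n)/\pi_k^2$; the factors with $k < n$ are negative and those with $k > n$ are positive, so exactly $n-1$ negative factors remain and the sign of $\dot m_2(\tau_n)$ is $(-1)^n$. For $\ddot\Delta(\tau_n)$ I would differentiate \eqref{product represenations} twice at $\tau_n$, use $\Delta(\tau_n)^2 - 4 = 0$ and $\dot\Delta(\tau_n)=0$ (see next paragraph), and count signs: the factor $\lambda_0^+ - \tau_n$ is negative, while for $k \ne n$ both $\lambda_k^\pm - \tau_n$ have the same sign so their product is positive. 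This pins down the sign of $\ddot\Delta(\tau_n)$, giving $(-1)^{n+1}\ddot\Delta(\tau_n) > 0$.

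For part $(ii)$, note first that differentiating $\det F(\lambda) \equiv 1$ once and evaluating at $\tau_n$ gives $\dot m_1(\tau_n) + \dot m_2'(\tau_n) = 0$, i.e.\ $\dot\Delta(\tau_n) = 0$. Writing $\epsilon = \lambda - \tau_n$, Taylor expansion then yields $\Delta(\lambda) - 2(-1)^n = \tfrac12 \ddot\Delta(\tau_n)\,\epsilon^2 + O(\epsilon^3)$ and hence $\Delta^2(\lambda) - 4 = 2(-1)^n \ddot\Delta(\tau_n)\,\epsilon^2 + O(\epsilon^3)$. By $(i)$ the leading coefficient is strictly negative, and the canonical root, which is analytic across the collapsed gap $G_n$, admits an expansion of the form $\sqrt[c]{\Delta^2(\lambda)-4} = \ii\,\epsilon\,\sqrt[+]{2(-1)^{n+1}\ddot\Delta(\tau_n)} + O(\epsilon^2)$ with a definite sign dictated by the branch convention. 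Substituting into $\kappa_\pm - m_1 = -\delta/2 \mp \tfrac12 \sqrt[c]{\Delta^2 - 4}$, expanding $\delta(\lambda) = 2\dot m_1(\tau_n)\epsilon + O(\epsilon^2)$ and $m_2(\lambda) = \dot m_2(\tau_n)\epsilon + O(\epsilon^2)$, and forming the ratio, the factor $\epsilon$ cancels and the limit formula \eqref{costa rica 2} drops out.

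For part $(iii)$, differentiating $\det F(\lambda)\equiv 1$ twice at $\tau_n$ and using $\dot m_2'(\tau_n) = -\dot m_1(\tau_n)$ leads to the identity $\dot m_2(\tau_n)\,\dot m_1'(\tau_n) = \tfrac{(-1)^n}{2}\ddot\Delta(\tau_n) - \dot m_1(\tau_n)^2$. By $(i)$ the right-hand side is strictly negative, so $\dot m_1'(\tau_n) \ne 0$. The formula \eqref{alternative formula a_n} then follows either by repeating the Taylor-expansion argument of $(ii)$ for the second representation of $a_\pm(\lambda)$ in \eqref{armadillo 1}--\eqref{armadillo 1-}, or by rationalizing the numerator in \eqref{costa rica 2} and substituting the above identity together with $\dot m_2'(\tau_n) = -\dot m_1(\tau_n)$. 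The main delicate point throughout is the consistent book-keeping of the branch of $\sqrt[c]{\Delta^2(\lambda)-4}$ at $\tau_n$: it has to be matched with the specification that the canonical root takes negative imaginary values on $(\lambda_0^+, \lambda_1^-)$, and this matching is what pins down the signs appearing in \eqref{costa rica 2} and \eqref{alternative formula a_n}.
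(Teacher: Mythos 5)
Your proposal is correct and, for the heart of the lemma (parts (ii) and (iii)), follows the same route as the paper: resolve the $0/0$ form in \eqref{armadillo 1}--\eqref{armadillo 1-} by a first-order Taylor expansion of $m_1$, $m_2$, $\delta$ and the canonical root at the collapsed gap, using $m_2(\tau_n)=0$, $m_1'(\tau_n)=0$, $m_1(\tau_n)=m_2'(\tau_n)=(-1)^n$ and $\dot\Delta(\tau_n)=0$. Where you differ is in how the auxiliary facts are established, and your variants are all valid: for the sign of $\dot m_2(\tau_n)$ the paper invokes the integral formula $\dot m_2(\mu_n)=(-1)^n\int_0^1 y_2(x,\mu_n)^2\,dx$, whereas you count signs in the Hadamard product of $m_2$; the paper gets $\dot\Delta(\tau_n)=0$ from $\tau_n$ being a nondegenerate critical point of $\Delta$, whereas you differentiate the Wronskian identity $\det F\equiv 1$; and for $\dot m_1'(\tau_n)\neq 0$ the paper uses simplicity of the Neumann eigenvalues, whereas you differentiate $\det F\equiv 1$ twice to get $\dot m_2(\tau_n)\dot m_1'(\tau_n)=\tfrac{(-1)^n}{2}\ddot\Delta(\tau_n)-\dot m_1(\tau_n)^2<0$ -- an identity the paper itself derives later (in the proof of Proposition \ref{Re/ImFloquet solutions}) by a different route, so your argument is self-contained and arguably cleaner. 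One caution: your displayed expansion of the canonical root is missing the prefactor $(-1)^{n+1}$; the paper's computation gives $\sqrt[c]{\Delta^2(\lambda)-4}=\ii(-1)^{n+1}\sqrt[+]{(-1)^{n+1}2\ddot\Delta(\tau_n)}\,(\lambda-\tau_n)+O((\lambda-\tau_n)^2)$, and without that factor the $\pm$ in \eqref{costa rica 2} would come out reversed for even $n$. You correctly flag that this sign must be pinned down by matching against the convention $\mathrm{sign}\big(\sqrt[c]{\Delta^2(\lambda)-4}\big)=-\ii$ on $(\lambda_0^+,\lambda_1^-)$, but a complete write-up has to actually carry out that matching (tracking how the branch propagates across the bands up to $U_n$) rather than leave it as a remark.
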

\noindent
\begin{remark}\label{analytic extension a_n} Recall that for any given $q \in M_S$, $V^*_{q, S}$ is the set of $S-$gap potentials, introduced at the end of Appendix \ref{Birkhoff map}.
By shrinking $W$ of Theorem \ref{Theorem Birkhoff coordinates}, if needed, the expressions in the formulas for $a_{\pm n}$, $n \in S^\bot_+$, of item (ii) and (iii) of Lemma \ref{lemma armadillo 1} 
are well defined, real analytic functions on $V^*_{q, S}$ and the formulas for $a_{\pm n}$ continue to hold for any potential in $V^*_{q, S}$ (cf Appendix \ref{Birkhoff map}).
\end{remark}
\begin{proof}
$(i)$ For any $q \in M_S$ and $n \in S_+^\bot$, $\tau_n = \mu_n$, $m_1(\tau_n) = (- 1)^n$, $m_2'(\tau_n) = (- 1)^n$, and 
$\dot{m}_2(\mu_n) = (- 1)^n\int_0^1 y_2(x, \mu_n)^2\, d x$
(cf \cite[Proposition B.4]{KP}). Since $\tau_n$ is a nondegenerate critical point of $\Delta$, one has $\dot \Delta(\tau_n) = 0$ and $ \ddot{\Delta}(\tau_n) \ne 0$.
Furthermore, one has $\Delta(\tau_n) = (- 1)^n 2$ and $(- 1)^{n + 1} \ddot{\Delta}(\tau_n) > 0$. 
$(ii)$ It is well known that $F(\lambda)$ is real analytic in $\lambda$ (cf Appendix \ref{Birkhoff map}). Expanding $m_1(\lambda)$ and $m_2(\lambda)$ at $\tau_n$, $n \in S^\bot_+$ , one has 
$$
m_1(\lambda) = (- 1)^n + \dot m_1(\tau_n)(\lambda - \tau_n) + O((\lambda - \tau_n)^2)\,, \quad 
m_2(\lambda) = \dot m_2(\tau_n) (\lambda - \tau_n) + O((\lambda - \tau_n)^2)\,,
$$
and 
$\Delta(\lambda) = (- 1)^n 2 + \frac{\ddot{\Delta}(\tau_n)}{2} (\lambda - \tau_n)^2 + O \big( (\lambda - \tau_n)^3 \big)\,.$
It follows that for $\lambda_{n - 1}^+ < \lambda < \lambda_{n +1}^-$,
$$
\sqrt[c]{\Delta^2(\lambda) - 4 } = \ii (- 1)^{n + 1} \sqrt[+]{(- 1)^{n + 1} 2 \ddot{\Delta}(\tau_n)} \, (\lambda - \tau_n) + O((\lambda - \tau_n)^2)
$$
(where we set $\sqrt[c]{\Delta^2(\tau_n) - 4 } = 0$). 
Combining these asymptotic estimates, the claimed formula \eqref{costa rica 2} then follows from \eqref{armadillo 1} - \eqref{armadillo 1-}.
$(iii)$ Since $\tau_n$ is a Neumann eigenvalue and Neumann eigenvalues are simple, it follows that $\dot{m}_1'(\tau_n) \neq 0$. 
Expanding $m_1'(\lambda)$, $m_2'(\lambda)$ at $\tau_n$ one gets 
$$
\begin{aligned}
& m_1'(\lambda) = \dot{m}_1'(\tau_n ) (\lambda - \tau_n) + O \big( (\lambda - \tau_n)^2 \big)\,,  \qquad
& m_2'(\lambda) = (- 1)^n + \dot{m}_2'(\tau_n)(\lambda - \tau_n) + O\big( (\lambda - \tau_n)^2 \big)\,.
\end{aligned}
$$
Furthermore, one has
$$
\kappa_\pm(\lambda) = (- 1)^n  \pm  \ii \frac{(- 1)^n}{2}  \sqrt[+]{(- 1)^{n +1} 2 \ddot{\Delta}(\tau_n)/2} \, (\lambda - \tau_n) + 
O\big( (\lambda - \tau_n)^2\big)\,.
$$
Formula \eqref{alternative formula a_n}
 then follows from \eqref{armadillo 1} - \eqref{armadillo 1-}.
\end{proof}

\medskip

%\noindent
%and Remark \ref{analytic extension a_n}, we define for any $q \in V_{S, \C}$ 
For $q \in M_S$ and $n \in S_+^\bot$, we define the Floquet solutions $f_{\pm n}(x) \equiv f_{\pm n}(x, q)$  at $\tau_n$ by
$$
 f_{\pm n} (x, q) := y_1(x, \tau_n, q) + a_{\pm n}(q) y_2(x, \tau_n, q)\,.
$$
By Lemma \ref{lemma armadillo 1} and Remark \ref{analytic extension a_n} they are welldefined for any potential in $V^*_{q, S}$.
Furthermore, consider the normalized solutions $H_n(x) \equiv H_n(x, q)$ and $G_n(x) \equiv G_n(x, q)$ of $- y'' + q y = \tau_n y,$
\begin{equation}\label{armadillo 2}
H_n(x) := \big( - \frac{2 \dot m_2(\tau_n)}{\ddot{\Delta}(\tau_n)}\big)^{\frac12} \,
%\quad Re f_n(x) := 
\Big( y_1(x, \tau_n) - \frac{\dot m_1(\tau_n)}{\dot m_2(\tau_n)} y_2(x, \tau_n) \Big)
\end{equation}
\begin{equation}\label{aramdillo 2}
G_n(x) := \big( - \frac{2 \dot m_2(\tau_n)}{\ddot{\Delta}(\tau_n)}\big)^{\frac12} 
%\quad Im f_n(x) = 
\frac{\sqrt[+]{(- 1)^{n + 1} \ddot{\Delta}(\tau_n)/2}}{(- 1)^{n} \dot m_2(\tau_n)} y_2(x, \tau_n)\,.
\end{equation}
One then has $G_n(0) = 0$ and 
\begin{equation}\label{aramdillo 3}
H_n(x) + \ii G_n(x) = \big( - \frac{2 \dot m_2(\tau_n)}{\ddot{\Delta}(\tau_n)}\big)^{\frac12} f_n(x)\,, \quad
H_n(x) - \ii G_n(x) = \big( - \frac{2 \dot m_2(\tau_n)}{\ddot{\Delta}(\tau_n)}\big)^{\frac12} f_{-n}(x)\,.
\end{equation}
Note that for any $q \in M_S$, $ f_{- n}(x, q) = \overline{f_n(x, q)},$ and 
$ H_n(x, q)$ and $ G_n(x, q)$ are the normalized real and respectively imaginary parts of $f_n(x, q).$
In addition, they satisfy $H_n(0, q) > 0$ and $G'_n(0, q) > 0$ by the formulas above. 
Hence given $q \in M_S$, by shrinking $V^*_{q, S}$, if needed, we can assume that $Re \, H_n(0) > 0$ and $Re \, G'_n(0) > 0$ on $V^*_{q, S}$
for any $n \in S^\bot_+$. 
\begin{proposition}\label{Re/ImFloquet solutions} For any $q \in M_S$ and $n \in S^\bot_+$ the following holds:
(i) For any $s \in \Z_{\ge 0}$, $H_n(\cdot, p)$, $G_n(\cdot, p)$, and $f_{\pm n}(\cdot, p)$ are
analytic maps in $p \in V^*_{q, S}$ with values in $H^s_{\C}[0, 1]$. \\
(ii) For any $p \in V^*_{q, S}$, the Floquet solutions 
$$
 \big( - \frac{2 \dot m_2(\tau_n)}{\ddot{\Delta}(\tau_n)}\big)^{\frac12} f_{\pm n}(x) = H_n(x) \pm \ii G_n(x)
 $$
have the property that $H_n,$ $G_n$ are the unique solutions of $- y'' + p y = \tau_n y$,
satisfying the following normalization conditions:  \,\, $(ii1)$ $\int_0^1 H_n(x) G_n(x)\, d x = 0$ \,\, and
$$
(ii2) \,\, \int_0^1 G_n^2(x)\, d x = 1\,, \quad G_n(0) = 0\,,\quad Re \, G_n'(0) > 0\, ; \qquad
(ii3) \,\, \int_0^1 H^2_n(x) \, d x = 1\,, \quad  Re \, H_n(0) > 0\,. \qquad 
$$
\end{proposition}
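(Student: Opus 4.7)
The plan is to use the standard joint analyticity of the fundamental solutions $y_1(\cdot, \lambda, p)$, $y_2(\cdot, \lambda, p)$ in $(\lambda, p) \in \C \times L^2_{0, \C}$ with values in $H^s_\C[0,1]$, together with the analyticity of $\tau_n(p) = \mu_n(p)$ on $V^*_{q, S}$ and the fact that $\dot m_2(\tau_n, p)$, $\ddot\Delta(\tau_n, p)$, and $\dot m_1'(\tau_n, p)$ are nowhere vanishing on a suitably shrunken $V^*_{q, S}$ (cf Lemma \ref{lemma armadillo 1}). The composition $p \mapsto y_j(\cdot, \tau_n(p), p)$ is then analytic into $H^s_\C[0,1]$, and multiplying by the scalar factors in the definitions \eqref{armadillo 2}--\eqref{aramdillo 2} of $H_n$ and $G_n$ (interpreted via the principal branch of the square root, which is single-valued and analytic once $V^*_{q, S}$ is shrunk so that $- 2 \dot m_2/\ddot\Delta$ stays in a sector avoiding the branch cut) yields part (i). The relation \eqref{aramdillo 3} then gives the analogous statement for $f_{\pm n}$.

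For part (ii), first observe that $G_n(0) = 0$ is immediate from $y_2(0, \tau_n) = 0$, and that $\mathrm{Re}\, H_n(0) > 0$, $\mathrm{Re}\, G_n'(0) > 0$ hold after a further shrinking of $V^*_{q, S}$ by continuity in $p$, since on $M_S$ the scalar prefactors are positive real by the choice of the principal branch. The $L^2$-normalizations and the orthogonality relation I would verify via three standard integral identities, all obtained by the same trick: differentiating the eigenvalue equation $-y'' + (p - \lambda) y = 0$ in $\lambda$ produces $-\dot y'' + (p-\lambda) \dot y = y$; multiplying by $y_j$ ($j = 1, 2$) and integrating by parts on $[0,1]$, using the initial data at $x=0$ and the values of $y_j, y_j'$ at $x = 1$, yields
$$
\int_0^1 y_2^2\,dx = \dot m_2 m_2' - \dot m_2' m_2\,, \quad \int_0^1 y_1^2\,dx = \dot m_1' m_1 - \dot m_1 m_1'\,, \quad \int_0^1 y_1 y_2\,dx = m_2' \dot m_1 - m_2 \dot m_1'\,.
$$
Evaluating at $\lambda = \tau_n$ and using $m_1(\tau_n) = m_2'(\tau_n) = (-1)^n$, $m_2(\tau_n) = m_1'(\tau_n) = 0$ (valid for $n \in S_+^\bot$) yields $\int y_2^2\,dx = (-1)^n \dot m_2$, $\int y_1^2\,dx = -(-1)^n \dot m_1'$, and $\int y_1 y_2\,dx = (-1)^n \dot m_1$ at $\tau_n$. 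Substituting into \eqref{armadillo 2}--\eqref{aramdillo 2} immediately verifies $\int_0^1 G_n^2\,dx = 1$ and $\int_0^1 H_n G_n\,dx = 0$.

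The remaining identity $\int_0^1 H_n^2\,dx = 1$ is the crux of the proof and hinges on the quadratic relation
$$
(-1)^n \ddot\Delta(\tau_n)/2 = \dot m_1(\tau_n)^2 + \dot m_1'(\tau_n)\, \dot m_2(\tau_n)\,,
$$
which I would derive by differentiating the identity $\Delta^2 - 4 = (m_1 - m_2')^2 + 4 m_1' m_2$ twice and evaluating at $\tau_n$, using $\Delta(\tau_n) = (-1)^n 2$, $\dot\Delta(\tau_n) = 0$, $(m_1 - m_2')(\tau_n) = 0$, together with the Wronskian derivative identity $\dot m_1(\tau_n) + \dot m_2'(\tau_n) = 0$ obtained from differentiating $m_1 m_2' - m_1' m_2 = 1$ at $\tau_n$. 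A direct calculation then gives $\int_0^1 (y_1 - (\dot m_1/\dot m_2) y_2)^2\,dx \big|_{\tau_n} = - \ddot\Delta(\tau_n)/(2 \dot m_2(\tau_n))$, and multiplying by the squared prefactor $-2 \dot m_2/\ddot\Delta$ from the definition of $H_n$ produces $\int_0^1 H_n^2\,dx = 1$.

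For uniqueness, any solution of $-y'' + p y = \tau_n y$ is a linear combination of $y_1(\cdot, \tau_n)$ and $y_2(\cdot, \tau_n)$; $G_n(0) = 0$ forces $G_n = \beta\, y_2(\cdot, \tau_n)$, then $\int G_n^2\,dx = 1$ pins down $\beta^2 = (-1)^n/\dot m_2(\tau_n)$, and $\mathrm{Re}\, G_n'(0) = \mathrm{Re}\, \beta > 0$ selects the sign; the orthogonality $\int H_n G_n\,dx = 0$ then forces the ratio of the $y_1$ and $y_2$ coefficients in $H_n$, $\int H_n^2\,dx = 1$ fixes the overall scalar up to sign, and $\mathrm{Re}\, H_n(0) > 0$ determines the sign. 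The main obstacle is the bookkeeping for the quadratic identity linking $\ddot\Delta(\tau_n)$ to the derivatives of the entries of the Floquet matrix, and the careful tracking of the square-root branches under analytic continuation from $M_S$ to the complex neighborhood $V^*_{q, S}$.
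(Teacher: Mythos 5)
Your proposal is correct and follows essentially the same route as the paper: part (i) via analyticity of the building blocks, and part (ii) by reducing the three normalization integrals to the values of $\dot m_1, \dot m_1', \dot m_2, \dot m_2'$ at $\tau_n$ (the paper cites \cite{PT} for these integral identities, which you rederive) and then invoking the quadratic relation $(-1)^n\ddot\Delta(\tau_n)/2 = \dot m_1(\tau_n)^2 + \dot m_1'(\tau_n)\dot m_2(\tau_n)$, which you obtain by differentiating $\Delta^2-4=(m_1-m_2')^2+4m_1'm_2$ twice while the paper equivalently combines the two formulas for $a_n$ in Lemma \ref{lemma armadillo 1}. One harmless slip: your displayed general formula for $\int_0^1 y_1^2\,dx$ should read $\dot m_1 m_1' - \dot m_1' m_1$ (the evaluation $(-1)^{n+1}\dot m_1'(\tau_n)$ you then use is nevertheless the correct one).
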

\begin{proof} 
(i) Since for any $s \in \Z_{\ge 0}$, $H^{w_*}_{0, \C}$ embeds into the Sobolev space $H^s_{0, \C}$ the claimed analyticity statements follow from the results
recorded in Appendix \ref{Birkhoff map}.
(ii) Clearly, the statement on uniqueness follows from the uniqueness of the initial value problem for $- y'' + q y = \tau_n y$.
Hence it remains to prove that $G_n$ and $H_n$ satisfy items (ii1) - (ii3).  In view of item (i), it suffices to verify these normalisation conditions on $M_S$.
%Since $G_n$ and $H_n$ are real analytic maps $V_{S, \C} \to H^2_\C$, it suffices to consider the case where $q$ is real valued, i.e., $q \in M_S.$ 
By \cite{PT}, Theorem 6, page 21, one has
\begin{equation}\label{armadillo 5}
\dot{m}_1'(\tau_n) = (- 1)^{n + 1} \int_0^1 y_1(x, \tau_n)^2 \,d x\,, \qquad 
\dot{m}_2(\tau_n) = (- 1)^n \int_0^1 y_2(x, \tau_n)^2 \, d x\,,
\end{equation}
\begin{equation}\label{armadillo 4}
\dot{m}_2'(\tau_n) = (- 1)^{n + 1} \int_0^1 y_1(x, \tau_n) y_2(x, \tau_n)\, d x = - \dot m_1(\tau_n)\,.
\end{equation}
To prove $(ii1)$ it is  to show that $J := \int_0^1 (Re f_n(x) )\,  y_2(x, \tau_n)\, d x= 0$. By \eqref{armadillo 2}-\eqref{armadillo 4}, 
\begin{align*}
J & = \int_0^1 \big( y_1(x, \tau_n) - \frac{\dot{m}_1(\tau_n)}{\dot{m}_2(\tau_n)} y_2(x, \tau_n) \big) y_2(x, \tau_n)\, d x 
 = (- 1)^{n + 1} \dot{m}_2'(\tau_n) - \frac{\dot m_1(\tau_n)}{\dot m_2(\tau_n)} (- 1)^n \dot m_2(\tau_n)\,.
\end{align*}
Since $\dot \Delta(\tau_n) = 0$ and at the same time $\dot \Delta(\tau_n) = \dot m_1(\tau_n) + \dot{m}_2'(\tau_n)$ 
%it follows that $\dot{m}_2'(\tau_n) = - \dot{m}_1(\tau_n)$, hence 
one concludes that $J = 0$. 
%\noindent
$(ii2)$ By the definition of $G_n$, one has $G_n(0) = 0$ and $G_n'(0) > 0.$
To see that $\int_0^1 G_n(x)^2 \, d x = 1$, note that 
$$
\int_0^1 \big( Im f(x)\big)^2\, d x = \frac{(- 1)^{n + 1} \ddot{\Delta}(\tau_n)/2}{(\dot m_2(\tau_n))^2} (- 1)^n \dot m_2(\tau_n) = - \frac{\ddot{\Delta}(\tau_n)}{2 \dot{m}_2(\tau_n)}
$$
implying that $\int_0^1 G_n(x)^2 \, d x = 1$. 
%\noindent
$(ii3)$ Since $Re f_n(0) = y_1(0, \tau_n) = 1$, one has $H_n(0) > 0$ whereas 
$$
J  : = \int_0^1 Re f_n(x)^2\, d x = \int_0^1 y_1(x, \tau_n)^2 - 2 \frac{\dot m_1(\tau_n)}{\dot m_2(\tau_n)} \int_0^1 y_1(x, \tau_n) y_2(x, \tau_n)\, d x+ \Big( \frac{\dot m_1(\tau_n)}{\dot m_2(\tau_n)}\Big)^2 \int_0^1 y_2(x, \tau_n)^2\, d x 
$$
is given by (cf  \eqref{armadillo 5}-\eqref{armadillo 4})

$$
J = (- 1)^{n + 1} \dot{m}_1'(\tau_n) - 2 \frac{\dot m_1(\tau_n)}{\dot m_2(\tau_n)} (- 1)^n \dot m_1(\tau_n) + \Big( \frac{\dot m_1(\tau_n)}{\dot m_2(\tau_n)} \Big)^2 (- 1)^n \dot m_2(\tau_n) 
%& = (- 1)^{n + 1} \dot{m}_2'(\tau_n) - 2 \frac{(\dot{m}_1(\tau_n))^2}{(- 1)^n\dot m_2(\tau_n)} + \frac{(\dot m_1(\tau_n))^2}{(- 1)^n \dot m_2(\tau_n)}
= (- 1)^{n + 1} \dot{m}_1'(\tau_n) + (- 1)^{n + 1} \frac{\dot m_1(\tau_n)^2}{\dot m_2(\tau_n)}\,.
$$
By the definition \eqref{armadillo 2}, it is therefore to show that 
$$
(- 1)^n \frac{\ddot{\Delta}(\tau_n)}{2 \dot m_2(\tau_n)} = \dot{m}_1'(\tau_n) + \frac{\dot m_1(\tau_n)^2}{\dot m_2(\tau_n)}
\qquad \mbox{or} \qquad 
\dot{m}_1'(\tau_n) \dot m_2(\tau_n) = (- 1)^n \ddot{\Delta}(\tau_n)/2 - \dot m_1(\tau_n)^2\,.
$$
This latter identity follows by combining the two formulas for $a_n$, given in Lemma \ref{lemma armadillo 1}. 
\end{proof}

%%%%%%%%%%%%%%%%%%%%%%%%%%%%%%%%%%%%%%%%%%%%%%%%%%%%%%%%%%%%%%%%%%%%%%%%%%%%
%%%%%%%%%%%%%%%%%%%%%%%%%%%%%%%%%%%%%%%%%%%%%%%%%%%%%%%%%%%%%%%%%%%%%%%%%%%%

\section{Asymptotic expansions}\label{appendix asymptotics}
The main purpose of this appendix is to provide for any  $S-$gap potential $q$ an asymptotic expansion of the Floquet solutions $f_{\pm n}(x) \equiv f_{\pm n}(x, q)$ 
%and  $f_{- n}(x) = \overline{f_n(x)}$ 
as $n \to \infty$. These expansions are a key ingredient for proving the asymptotic expansion of the map $\Psi_L$,  stated in  
Theorem \ref{lemma asintotica Floquet solutions} in Section~\ref{sezione mappa Psi L}.
At the end of this appendix we provide an asymptotic expansion for the KdV frequencies for $S-$gap potentials, needed for the expansion of the KdV Hamiltonian
in the new coordinates.

\noindent 
Throughout this appendix, if not mentioned otherwise, 
we assume that $q \in M_S$ where $S = S_+ \cup (- S_+)$ and $S_+ \subset \N$ is a finite subset.  
Furthermore, $V^*_{q, S}$ is the neighborhood of $q$, introduced at the end of Appendix \ref{Birkhoff map}.
If not mentioned otherwise, 
 $\sqrt{\mu}$ denotes the principal branch $\sqrt[+]{\mu}$ of the square root, defined  for $\mu \in \C \setminus (- \infty, 0]$.
 Recall that for any $p \in V^*_{q, S}$, one has $\int_0^1 p(x)\, d x= 0$ and 
 \begin{equation}\label{costa rica 1}
f_{\pm n}(x) = y_1(x, \tau_n) + a_{\pm n} y_2(x, \tau_n)\,, \quad \forall \, n \in S_+^\bot\,,
\end{equation}
where $y_j(x, \lambda)$, $j = 1,2$, denote the fundamental solutions of 
$- y'' + p y = \lambda y$, $a_{\pm n}$ are the complex numbers given by \eqref{costa rica 2}, and
$\tau_n = (\lambda_n^+ + \lambda_n^-)/2$. Note that if $p$ is real valued, then 
$$
\tau_n(p) \in \R\,, \quad a_{-n}(p) = \overline{a_n(p)}\,, \quad \mbox{and} \quad 
f_{-n}(x, p) = \overline{ f_n(x, p)}\,, \qquad \forall n \in  S^\bot_+\,.
$$
%$m_j(\lambda) = y_j(1, \lambda)$, and $\Delta(\lambda) = m_1(\lambda) + m_2'(\lambda)$

\begin{theorem}\label{teorema espansione fn appendice}
Let  $q \in M_S$ and $N \in \Z_{ \geq 0}$. Then for any $p \in V^*_{q, S}$, the Floquet solutions $f_n$, $n \in S^\bot$, 
have an expansion as $|n| \to \infty$ of the form 
\begin{equation}\label{costa rica 3}
f_n(x, p) = e^{\ii \pi n x} \big(1 + \sum_{k = 1}^N \frac{f_k^{ae}(x, p)}{(2 \pi \ii n)^k} + \frac{{\cal R}^{f_n}_{ N}(x, p)}{(2 \pi \ii n )^{N + 1}} \big)
\end{equation}
where for any $s \geq 0$, the coefficients $V^*_{q, S} \to H^s_\C$, $p \mapsto f^{ae}_k(\cdot, p)$, $ k \ge 1$,  are real analytic and  
the remainder $V^*_{q, S} \to H^s_\C$, $p \mapsto {\cal R}_N^{f_n}(\cdot, p)$, is analytic. 
In addition, for any given $j \ge 0,$
\begin{equation}\label{costa rica 4}
\sup_{\begin{subarray}{c}
0 \leq x \leq 1 \\
%0 \leq j \leq M \\
n \in S^\bot
\end{subarray}} |\partial_x^j {\cal R}_{ N}^{f_n}(x, p)| \leq C_{N, j}
\end{equation}
where the constant $C_{N, j} \geq 1$ can be chosen locally uniformly for $p$ in $V^*_{q, S}$. 
\end{theorem}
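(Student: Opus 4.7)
The key idea is to reduce the problem to a regular perturbation calculation by gauge-transforming away the leading oscillation. I would introduce
\[ u_n(x, p) := e^{- \ii \pi n x} f_n(x, p), \qquad \mu_n(p) := \tau_n(p) - \pi^2 n^2. \]
A direct computation from $- f_n'' + p f_n = \tau_n f_n$ shows that $u_n$ is the unique solution of the linear initial value problem
\[ u_n'' + 2 \pi \ii n \, u_n' + (\mu_n - p) u_n = 0, \qquad u_n(0) = 1, \qquad u_n'(0) = a_n - \ii \pi n. \]
At the free potential $p = 0$ one has $u_n \equiv 1$, so for $p$ in a neighborhood $V^*_{q, S}$ of a general $S$-gap potential $q$ the task is to track the perturbation of $u_n$ from $1$ uniformly in $n \in S^\bot$.

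Before iterating, I would record analytic asymptotic expansions in powers of $1/n$ of the two input quantities $\mu_n(p)$ and $a_n(p) - \ii \pi n$, with coefficients real analytic in $p \in V^*_{q, S}$. For $\mu_n$, the Gevrey regularity of the $H^{w_*}_0$-neighborhood of $S$-gap potentials (Addendum to Theorem~\ref{Theorem Birkhoff coordinates}) together with the product representations \eqref{product represenations} yields decay faster than any polynomial in $1/n$, so $\mu_n$ will contribute only to the remainder. For $a_n - \ii \pi n$, I would start from the two formulas in Lemma~\ref{lemma armadillo 1}(ii)-(iii), insert the classical large-$\lambda$ asymptotics of $y_j(1, \lambda), y_j'(1, \lambda)$, $j = 1, 2$ (obtained by iterating the Volterra integral equations for the fundamental solutions with the free kernel $\sin(\sqrt\lambda(x - s))/\sqrt\lambda$), and Taylor expand at $\lambda = \tau_n$. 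This produces an expansion $a_n - \ii \pi n = \sum_{k = 0}^{N} b_k(p) / (2 \pi \ii n)^k + O(1/n^{N + 1})$ with each $b_k : V^*_{q, S} \to \C$ real analytic.

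The coefficients $f_k^{ae}$ are then defined iteratively. Plugging the formal ansatz $u_n \sim \sum_{k \geq 0} \phi_k(x) / (2 \pi \ii n)^k$, with $\phi_0 \equiv 1$, into the ODE and matching coefficients of $(2 \pi \ii n)^{-k}$ (the super-polynomially small $\mu_n$ contributions falling below every algebraic order) yields the triangular recursion
\[ \phi_k'(x) = - \phi_{k - 1}''(x) + p(x) \phi_{k - 1}(x), \qquad k \geq 1, \]
uniquely solved with the initial data $\phi_k(0) = 0$. Each $\phi_k(\cdot, p)$ is then an explicit iterated integral in $p$ and its derivatives up to order $k - 1$, hence depends real analytically on $p \in V^*_{q, S}$ with values in $H^s_\C$ for every $s \in \Z_{\geq 0}$. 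I would set $f_k^{ae} := \phi_k$. Note that matching the initial derivative $u_n'(0)$ to the expansion of $a_n - \ii \pi n$ up to order $N$ will be automatic, since both $\phi_k'(0)$ and $b_k(p)$ arise from the same uniquely determined true solution with the same initial data.

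Finally, to control the remainder, one additional coefficient is needed: set
\[ r_n(x) := u_n(x) - \sum_{k = 0}^{N + 1} \phi_k(x) / (2 \pi \ii n)^k. \]
A short computation shows that $r_n$ satisfies a linear ODE of the same form as $u_n$ but with source term $g_n^{(N)}(x) = O(1/(2 \pi \ii n)^{N + 1})$ uniformly on $[0, 1] \times V^*_{q, S}$ (absorbing both the truncation error and the $\mu_n$-contribution), together with $r_n(0) = 0$ and $r_n'(0) = O(1/(2 \pi \ii n)^{N + 1})$. Integrating the associated first-order ODE for $r_n'$ with the integrating factor $e^{2 \pi \ii n x}$ and then once more in $x$ recasts the problem as a Volterra-type fixed point equation for $r_n$ whose integral operator has $C^0([0, 1])$-norm bounded by $C/|n|$, uniformly for $p$ in a compact subset of $V^*_{q, S}$. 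A Neumann series for $|n|$ large then yields $\|r_n\|_{L^\infty} \lesssim 1/|n|^{N + 2}$, and setting ${\cal R}_N^{f_n}(x, p) := \phi_{N + 1}(x, p) + (2 \pi \ii n)^{N + 1} r_n(x, p)$ gives the claimed form \eqref{costa rica 3} together with the uniform bound \eqref{costa rica 4} for $j = 0$; higher $x$-derivative estimates follow by differentiating the ODE and bootstrapping on the $L^\infty$ bound, and analyticity of $p \mapsto {\cal R}_N^{f_n}(\cdot, p)$ is inherited from the $\phi_k$ and from the uniformly convergent Neumann series. The main technical obstacle will be making the $1/|n|$ gain of the Volterra operator genuinely uniform in $n \in S^\bot$ and in $p$ on compact subsets of $V^*_{q, S}$: this requires exploiting the cancellation provided by the oscillatory factor $e^{2 \pi \ii n x}$ via integration by parts at each step, rather than merely estimating in absolute value, and bookkeeping the constants so that they depend only on $N$ and on the $C^j$-norms of $p$ for finitely many $j$.
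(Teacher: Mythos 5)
Your overall strategy is close in spirit to the paper's: the recursion $\phi_k' = -\phi_{k-1}'' + p\,\phi_{k-1}$, $\phi_k(0)=0$, is exactly \eqref{costa rica 10}, and the control of the remainder by variation of constants / a Volterra argument with the oscillatory integrating factor is how the paper estimates $\widetilde y_{N+M}$ in \eqref{costa rica 15}. However, there is a genuine error in the treatment of $\mu_n(p) = \tau_n(p) - \pi^2 n^2$. You claim that Gevrey regularity forces $\mu_n$ to decay faster than any power of $1/n$, so that it ``falls below every algebraic order''. This is false: Gevrey (or even analytic) regularity gives super-polynomial decay of the gap lengths and of the Fourier coefficients, but $\tau_n - n^2\pi^2$ admits a genuine asymptotic expansion in powers of $n^{-2}$ whose coefficients are the KdV conserved densities; the leading one is a nonzero multiple of $\int_0^1 p^2\,dx$, hence nonvanishing for every nonzero finite gap potential. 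This is precisely the content of Lemma \ref{asymptotics tau} (quoting \cite{KST1}), whose coefficients $\tau_{2k}^{ae}(p)$ do not vanish in general.

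The consequences are concrete. First, your recursion omits the terms coming from $\mu_n u_n$; the correct matching at order $(2\pi\ii n)^{-(k-1)}$ reads $\phi_k' = -\phi_{k-1}'' + p\,\phi_{k-1} - \sum_{2j+l = k-1,\, j\ge 1}\tau_{2j}^{ae}(p)\,\phi_l$, so your coefficients are wrong from $k=3$ on. Indeed your $\phi_3$ contains the term $-\int_0^x p^2\,dt$, which is not $1$-periodic, whereas $e^{-\ii\pi n x}f_n(x)$ is $1$-periodic and hence every true coefficient $f_k^{ae}$ must be; the non-periodic part is exactly cancelled by the missing $\tau_2^{ae}$-contribution. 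Second, the residual of your truncated ansatz in the ODE contains $\mu_n\sum_k \phi_k/(2\pi\ii n)^k = O(n^{-2})$, so the source term $g_n^{(N)}$ is \emph{not} $O(n^{-(N+1)})$ for $N\ge 2$ and the claimed remainder bound fails. The fix is either to feed the expansion of $\mu_n$ from Lemma \ref{asymptotics tau} into the recursion (and to prove that lemma first, rather than dismiss it), or to do what the paper does: construct solutions $y_{N,M}(x,\pm\nu)$ with expansions in powers of $1/(2\ii\nu)$ for the exact spectral parameter $\nu = \sqrt{\tau_n}$ (Lemma \ref{lemma A2 appendice}), where the clean recursion \eqref{costa rica 10} is correct, and only afterwards substitute the expansion \eqref{costa rica 22} of $\sqrt{\tau_n}$ in powers of $1/n$. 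Aside from this, your route through the initial value problem $f_n(0)=1$, $f_n'(0)=a_n$ is a legitimate alternative to the paper's representation of $y_1,y_2$ as combinations of $y_{N,M}(x,\pm\sqrt{\tau_n})$, and your sketch of the expansion of $a_n - \ii\pi n$ is workable provided you use, as the paper does in Lemma \ref{lemma A3 appendice}, the product representations to reduce the ratio $-\ddot\Delta(\tau_n)/\dot m_2(\tau_n)$ to a finite product before extracting the square root.
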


\noindent
To prove Theorem \ref{teorema espansione fn appendice}, we first need to establish some auxiliary results. 

\begin{lemma}\label{lemma A2 appendice}
For any $q \in M_S$ and any integers $N, M \ge 0$, the following holds: for any $p \in V^*_{q, S}$ and $\nu \in {\C \setminus\{0\}}$, there exist solutions $y_{N, M}(x, \nu) \equiv y_{N, M}(x, \nu, p)$
of $- y'' + p y = \nu^2 y$ of the form 
\begin{equation}\label{costa rica 7}
y_{N, M}(x, \nu, p) = e^{\ii \nu x} \Big(1 + \sum_{k = 1}^N \frac{y_k^{ae}(x, p)}{(2 \ii \nu)^k} + \frac{\tilde y_{N, M}(x, \nu, p)}{(2 \ii \nu)^{N + 1}} \Big)\,.
\end{equation}
The functions $y_k^{ae}(x) \equiv  y_k^{ae}(x, p)$, $k \geq 1$, are defined inductively by
\begin{equation}\label{costa rica 10}
y_k^{ae}(x) = \int_0^x (- \partial_t^2 + p) y^{ae}_{k - 1}(t)\, d t\,, \quad y_0^{ae}(x) \equiv 1
\end{equation}
and for any $s \in  \Z_{\ge 0}$, the maps $V^*_{q, S} \to H^s_\C[0, 1]$, $p \mapsto y^{ae}_k(\cdot, p)$, are real analytic. 
The remainder $\tilde y_{N, M}(x, \nu, p)$ satisfies $\tilde y_{N, M}(0, \nu, p) = 0$, $\partial_x \tilde y_{N, M}(0, \nu, p) = \sum_{k = 1}^M \frac{\partial_x y_{N + k}^{ae}(0, p)}{(2 \ii \nu)^{k}}$
and  has the property that for any $s \in \Z_{\geq 0}$, $(\C \setminus\{0\})\times V^*_{q, S} \to H^s_\C[0, 1]$, $( \nu, p) \mapsto \tilde y_{N, M}( \cdot, \nu, p)$ 
is analytic. Furthermore, if $\nu \in \R\setminus \{0\}$ and $p$ is real valued then $y_{N, M}(x, - \nu, p) = \overline{y_{N, M}(x, \nu, p)}$.
In addition, for any given $c > 0$, the remainder $ \tilde y_{N, M}( \cdot, \nu, p)$ satisfies
\begin{equation}\label{costa rica 8}
\sup_{\begin{subarray}{c}
|\rm{Im} \, \nu| \le c, |\nu| \ge 1\\
0 \leq x \leq 1,  0 \leq j \leq M
\end{subarray}} |\partial_x^j \tilde y_{N, M}(x, \nu, p)| \leq C_{N, M}
\end{equation}
where the constant $C_{N, M}$ can be chosen locally uniformly in $p \in V^*_{q, S}$. 
\begin{remark} By \eqref{costa rica 10}, for any $k \ge 1$ and $s \ge k-1$,
the map $y_k^{ae} :  H^s_{0, \C}   \to H^{s-k+2}_\C[0, 1] ,   q \mapsto y^{ae}_k(\cdot, q) $ is real analytic.
Writing $Q(x) := \int_0^x q(t) dt$, the formulas for $y_1^{ae},$ $y_2^{ae},$ and $y_3^{ae}$ read as follows:
$$
 y_1^{ae}(x, q)=  Q(x)\,,  \qquad y_2^{ae}(x, q) = -(q(x) - q(0)) + \frac{1}{2} Q(x)^2\,,
$$
$$
y_3^{ae}(x, q) =  (q'(x) - q'(0)) - \int_0^x q(t)^2 dt + q(0)Q(x)  - q(x)Q(x) + \frac{1}{6} Q(x)^3.
$$
\end{remark}
\end{lemma}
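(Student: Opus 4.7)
The plan is to follow the classical WKB approach. Setting $y = e^{\ii \nu x} u$ transforms $-y'' + py = \nu^2 y$ into the singularly perturbed identity $u' = \frac{1}{2 \ii \nu}(-u'' + pu)$, and substituting the formal series $u \sim \sum_{k\ge 0} y_k^{ae}(x,p)/(2 \ii \nu)^k$ with $y_0^{ae} \equiv 1$ and matching powers of $(2 \ii \nu)^{-1}$ forces $(y_k^{ae})' = (-\partial_x^2 + p)\, y_{k-1}^{ae}$. Integrating from $0$ and imposing $y_k^{ae}(0) = 0$ for $k \ge 1$ recovers exactly the recursion \eqref{costa rica 10}. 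Real analyticity of $p \mapsto y_k^{ae}(\cdot, p) \in H^s_\C[0,1]$ follows by induction: potentials in $V^*_{q,S}$ are Gevrey-smooth (cf.\ Addendum to Theorem \ref{Theorem Birkhoff coordinates}), so $\partial_x^2$ acts analytically in $p$ between any two Sobolev spaces on $[0,1]$, multiplication by $p$ preserves $H^s_\C[0,1]$ analytically, and primitivation is bounded $H^{s-1}_\C[0,1] \to H^s_\C[0,1]$.

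For the partial sum $S_N := 1 + \sum_{k=1}^N y_k^{ae}/(2\ii\nu)^k$, the recursion gives the telescoping identity $S_N'' + 2\ii\nu\, S_N' - p\, S_N = -(2\ii\nu)^{-N}\, \partial_x y_{N+1}^{ae}$, so the ansatz \eqref{costa rica 7}, written as $u = S_N + \tilde y_{N,M}/(2\ii\nu)^{N+1}$, yields a solution of the Schr\"odinger equation if and only if $\tilde y_{N,M}$ satisfies $\tilde y'' + 2\ii\nu\, \tilde y' - p\, \tilde y = (2\ii\nu)\, \partial_x y_{N+1}^{ae}$ with the prescribed $\tilde y(0) = 0$ and $\partial_x \tilde y(0) = \sum_{k=1}^M \partial_x y_{N+k}^{ae}(0,p)/(2\ii\nu)^k$. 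I would define $\tilde y_{N,M}$ as the unique solution of this Cauchy problem and recover $y_{N,M}$ from \eqref{costa rica 7}. The fundamental system $\{1, e^{-2\ii\nu x}\}$ of $\partial_x^2 + 2\ii\nu\,\partial_x$ recasts the problem, via variation of parameters, as the Volterra equation
\[
\tilde y_{N,M}(x,\nu,p) = h(x,\nu,p) + \int_0^x \frac{1 - e^{-2\ii\nu(x-t)}}{2\ii\nu}\, p(t)\, \tilde y_{N,M}(t,\nu,p)\, dt,
\]
where the source $h$ collects the initial-data contribution together with $\int_0^x (1-e^{-2\ii\nu(x-t)})\partial_t y_{N+1}^{ae}(t,p)\, dt$. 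For $|\nu|\ge 1$ and $|\mathrm{Im}\,\nu|\le c$, the Volterra kernel is bounded by $C(c)/|\nu|$; Picard iteration therefore converges uniformly and delivers at once existence, uniqueness, joint real analyticity in $(\nu,p) \in (\C\setminus\{0\})\times V^*_{q,S}$, and a uniform $C^0$-bound on $\tilde y_{N,M}$ via Gronwall. The reality relation $y_{N,M}(x,-\nu,p) = \overline{y_{N,M}(x,\nu,p)}$ for real $p$, $\nu$ then follows from the uniqueness of the Cauchy problem for $-y''+py = \nu^2 y$, since complex conjugation exchanges $\nu$ and $-\nu$ both in the ODE and in the prescribed initial data.

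The main obstacle is the uniform-in-$\nu$ bound $|\partial_x^j \tilde y_{N,M}| \le C_{N,M}$ for $0 \le j \le M$. Differentiating the ODE for $\tilde y_{N,M}$ yields $\partial_x^{j+2}\tilde y = -2\ii\nu\,\partial_x^{j+1}\tilde y + 2\ii\nu\,\partial_x^{j+1} y_{N+1}^{ae} + \partial_x^j(p\,\tilde y)$, so every differentiation introduces a factor $2\ii\nu$ that must be absorbed by a cancellation between consecutive derivatives. The precise choice of the $M$ correction terms in $\partial_x\tilde y(0)$ is designed for exactly this purpose: it forces the Volterra iteration to produce an asymptotic expansion of $\tilde y_{N,M}$ whose derivatives match those of $y_{N+1}^{ae}$ up to order $|\nu|^{-1}$ through a boundary layer at $x=0$ that the iteration controls. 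Bootstrapping this cancellation by differentiating the Volterra equation $M$ times and invoking at each step the $C^0$ bound already obtained then delivers the claimed estimates.
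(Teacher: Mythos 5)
Your derivation of the recursion, the reformulation of the remainder as a Volterra problem for the operator $\partial_x^2+2\ii\nu\partial_x$, the Picard/Gronwall argument for existence, analyticity and the $C^0$ bound, and the reality relation via uniqueness of the Cauchy problem are all sound and match the paper's construction in substance (the paper runs variation of parameters with the kernel $K(x,t,\nu^2)$ built from the fundamental solutions of the full equation, which is cosmetically different). The gap is in the derivative estimates \eqref{costa rica 8}, which are the only delicate part of the lemma and the sole reason for the parameter $M$ and the tuned initial slope. You correctly identify the mechanism --- each differentiation of the equation produces a factor $2\ii\nu$ that must be cancelled, and the correction terms in $\partial_x\tilde y_{N,M}(0,\nu,p)$ are what make the cancellation possible --- but ``differentiating the Volterra equation $M$ times and invoking at each step the $C^0$ bound already obtained'' is an assertion, not a proof. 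Concretely: differentiating twice, the source contributes a term of size $|\nu|\,|\partial_x y_{N+1}^{ae}(0,p)|\,|e^{-2\ii\nu x}|$ which is $O(|\nu|)$ and must be cancelled against the second derivative of the homogeneous part carrying the initial slope; at order $j$ one needs the coefficient of the oscillatory mode $e^{-2\ii\nu x}$ inside $\tilde y_{N,M}$ to be $O(|\nu|^{-M})$, equivalently one must show that $\tilde y_{N,M}$ itself admits an expansion with leading terms $y_{N+1}^{ae}+y_{N+2}^{ae}/(2\ii\nu)+\cdots$. None of this is exhibited, and the ``boundary layer at $x=0$'' picture is misleading: for $|{\rm Im}\,\nu|\le c$ the mode $e^{-2\ii\nu x}$ is oscillatory and not localized, so nothing is ``controlled by the iteration'' near $x=0$; its amplitude is small only because the initial data are tuned.

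The paper avoids proving any cancellation by a regrouping device that is the missing ingredient here: it first constructs the expansion \eqref{costa rica 9} to the higher order $N_1=N+M$ with a remainder $\widetilde y_{N+M}$ having \emph{zero} Cauchy data, for which the crude bounds $\sup_x|\partial_x^j\widetilde y_{N+M}(x,\nu)|\lesssim|\nu|^{j}$, $0\le j\le M$, follow directly from the integral representation and the equation, with no cancellation needed. It then \emph{defines} $\tilde y_{N,M}:=\sum_{k=1}^{M}y_{N+k}^{ae}/(2\ii\nu)^{k-1}+\widetilde y_{N+M}/(2\ii\nu)^{M}$: the finite sum is bounded together with all its $x$-derivatives, and the explicit prefactor $(2\ii\nu)^{-M}$ pays for up to $M$ differentiations of $\widetilde y_{N+M}$. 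This regrouping is also what produces the specific value of $\partial_x\tilde y_{N,M}(0,\nu,p)$ in the statement; note that the construction actually yields the exponent $(2\ii\nu)^{-(k-1)}$ rather than the $(2\ii\nu)^{-k}$ you copied from the statement, which appears to be a typo in the lemma and does not affect the substance. To complete your proof you should either adopt this regrouping, or else prove by induction on $j\le M$ that $\partial_x^{j}\bigl(\tilde y_{N,M}-\sum_{k=1}^{j}y_{N+k}^{ae}/(2\ii\nu)^{k-1}\bigr)=O(1)$ uniformly in $\nu$, which amounts to re-deriving the same decomposition.
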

\begin{proof}
For any $p \in V^*_{q, S}$ and $\nu \ne 0$, the solutions $y_{N, M}(x, \nu) \equiv y_{N, M}(x, \nu, p)$ of $- y'' + p y = \nu^2 y$ are obtained from solutions of the form 
\begin{equation}\label{costa rica 9}
y_{N_1}(x, \nu) = e^{\ii \nu x} \Big(1 + \sum_{k = 1}^{N_1} \frac{y_k^{ae}(x)}{(2 \ii \nu)^k} + \frac{\widetilde y_{N_1}(x, \nu)}{(2 \ii \nu)^{N_1 + 1}} \Big)
\end{equation}
with an appropriate choice of $N_1$. Solutions of the form \eqref{costa rica 9}  were studied in \cite[Chapter 1, Section 4]{Ma}. Here for any $k \geq 1$,
$y_k^{ae}(x) \equiv y_k^{ae}(x, p)$ is defined  by \eqref{costa rica 10}
%$$y_k^{ae}(x) = \int_0^x (- \partial_t^2 + q(t)) y^{ae}_{k - 1}(t)\, d t, \quad y_0^{ae}(x) \equiv 1$$
and $\widetilde y_{N_1} (x, \nu) \equiv \widetilde y_{N_1} (x, \nu, p)$ is the unique solution of the inhomogeneous ODE 
\begin{equation}\label{costa rica 11}
\big( - \partial_x^2 + p - \nu^2 \big) [e^{\ii \nu x} \widetilde y_{N_1}(x, \nu)] = - 2 \ii \nu e^{\ii \nu x} \partial_x  y_{N_1+1}^{ae}(x) 
\end{equation}
satisfying the initial conditions 
\begin{equation}\label{costa rica 12}
\widetilde y_{N_1}(0, \nu) = 0\,, \qquad \partial_x \widetilde y_{N_1}(0, \nu) = 0\,.
\end{equation}
Clearly, for any $s \geq 0$ and $ k \ge 1$, the maps $V^*_{q, S} \to H^s_\C[0, 1]$, $p \mapsto y^{ae}_k(\cdot, p)$, are real analytic.
Arguing as in \cite[Chapter 1]{KP}, one sees that for any $s \geq 0$,  the map $(\C \setminus\{0\})\times V^*_{q, S} \to H^s_\C[0, 1]$, $( \nu, p) \mapsto \tilde y_{N_1}( \cdot, \nu, p)$ 
is  analytic. Furthermore, note that $ y_{N_1}(0, \nu) = 0$ and $\partial_x  y_{N_1}(0, \nu) = \ii \nu + \sum_{k = 1}^{N_1} \frac{\partial_x y_{ k}^{ae}(0)}{(2 \ii \nu)^{k }}$.
%(with the convention that $\sum_{k = 1}^0  \cdot = 0$). 
It then follows that for any $\nu \ne 0,$ $y_{N_1}(x, \nu)$ is a solution of $- y'' + p y = \nu^2 y$. Indeed 
\begin{align*}
\big( - \partial_x^2 + p - \nu^2 \big) y_{N_1} & = 
e^{\ii \nu x} \sum_{k = 1}^{N_1} \frac{- \partial_x y_k^{ae} - \partial_x^2 y_{k - 1}^{ae} + p y_{k - 1}^{ae}}{(2 \ii \nu)^{k - 1}} \\
& \qquad + e^{\ii \nu x} \frac{- \partial_x^2 y_{N_1}^{ae} + p y_{N_1}^{ae} }{(2 \ii \nu)^{N_1}} + 
(- \partial_x^2 + p - \nu^2) \Big[ e^{\ii \nu x} \frac{\widetilde y_{{N_1}}(x, \nu)}{(2 \ii \nu)^{N_1 + 1}} \Big]\,.
\end{align*}
Hence by \eqref{costa rica 10} and \eqref{costa rica 11}, $\big( - \partial_x^2 + p - \nu^2 \big) y_{N_1} = 0$.
The solution $y_{N, M}(x, \nu)$ is then defined as follows
\begin{align*}
y_{N, M}(x, \nu) & = 
e^{\ii \nu x}\Big( 1 + \sum_{k = 1}^N \frac{y_k^{ae}(x)}{(2 \ii \nu)^k} + \frac{\widetilde y_{N, M}(x, \nu)}{(2 \ii \nu)^{N + 1}}\Big) \,, \qquad
\widetilde y_{N, M}(x, \nu) =  
\sum_{k = 1}^M \frac{y_{N + k}^{ae}(x)}{(2 \ii \nu)^{k - 1}} + \frac{\widetilde y_{N + M}(x, \nu)}{(2 \ii \nu)^M}\,.
\end{align*}
Since by the definition \eqref{costa rica 10}, $y_k^{ae}(0) = 0$ for any $k \geq 1$ and by \eqref{costa rica 12}, $\widetilde y_{N + M}(0, \nu) = 0$ 
one concludes that $\widetilde y_{N, M}(0, \nu) = 0$. Furthermore, since by \eqref{costa rica 12} $\partial_x \widetilde y_{N + M}(0, \nu) = 0$, one has 
$$
\partial_x \widetilde y_{N, M}(0, \nu) = \sum_{k = 1}^M \frac{\partial_x y_{N + k}^{ae}(0)}{(2 \ii \nu)^{k - 1}}
$$
and by the arguments above,  for any $s \geq 0$,  the map $(\C \setminus\{0\})\times V^*_{q, S}  \to H^s_\C[0, 1]$, $( \nu, p) \mapsto \tilde y_{N, M}( \cdot, \nu, p)$ is  analytic. 
Furthermore, if $\nu \in \R\setminus \{0\}$ and $p$ is real valued then 
$$
y'_{N, M}(0, - \nu) = - \ii \nu + \sum_{k=1}^{N+M}  \frac{\partial_x y_{k}^{ae}(0)}{(-2 \ii \nu)^{k}} = \overline{y'_{N, M}(0, - \nu)}\,.
$$
Since $y_{N, M}(0, \pm \nu) = 1$ and $y_{N, M}(0, \nu)$ and $y_{N, M}(0, - \nu)$ both solve $-y'' + py = \nu^2 y$
it follows by the uniqueness of the initial value problem that $y_{N, M}(\cdot, - \nu) = \overline{y_{N, M}(\cdot, \nu})$.
It remains to show the estimate \eqref{costa rica 8}. Note that for any $p \in V^*_{q, S}$, the terms $\frac{y_{N + k}^{ae}(x)}{(2 \ii \nu)^{k - 1}}$, $1 \leq k \leq M$, 
satisfy an estimate of the type  \eqref{costa rica 8}. Hence it suffices to show that 
\begin{equation}\label{costa rica 13}
\sup_{\begin{subarray}{c}
|\rm{Im} \, \nu | \le c,  |\nu| \geq 1\\
0 \leq x \leq 1, 0 \leq j \leq M
\end{subarray}}  \frac{|\partial_x^j \widetilde y_{N + M}(x, \nu)|}{ |\nu|^j} \leq \widetilde C_{N, M}
\end{equation}
for some constant $\widetilde C_{N, M} > 0$.
% which can be chosen locally uniformly in $q \in M_S$. 
By \eqref{costa rica 11}, \eqref{costa rica 12}, $\widetilde y_{N + M}$ solves the initial value problem 
\begin{equation}\label{costa rica 14}
\begin{aligned}
& \big( - \partial_x^2 + p - \nu^2 \big) \big[ e^{\ii \nu x} \widetilde y_{N + M}(x, \nu) \big] = - 2 \ii \nu e^{\ii \nu x} \partial_x y_{N + M +1}^{ae}(x)\,, \\
& \widetilde y_{N + M}(0, \nu) = 0\,, \quad \partial_x \widetilde y_{N + M}(0, \nu) = 0\,.
\end{aligned}
\end{equation}
By the method of the variation of the constants it is given by
\begin{equation}\label{costa rica 15}
\widetilde y_{N + M}(x, \nu) = - 2 \ii \nu \int_0^x K(x, t, \nu^2) e^{\ii \nu (t - x)} \partial_t y^{ae}_{N + M + 1}(t)\, d t
\end{equation}
where 
$K(x, t, \nu^2)  = y_1(x, \nu^2) y_2(t, \nu^2) - y_1(t, \nu^2) y_2(x, \nu^2)$, satisfying the estimates (cf \cite[Chapter 1]{KP}) 
$$
\sup_{\begin{subarray}{c}
0 \leq x \leq 1 \\
|{\rm Im} \, \nu | \le c, |\nu| \geq 1
\end{subarray}} |y_1(x, \nu^2)| \leq C\,, \quad \sup_{\begin{subarray}{c}
0 \leq x \leq 1 \\
|{\rm Im} \, \nu | \le c, |\nu| \geq 1
\end{subarray}} |\nu| |y_2(x, \nu^2)| \leq C\,,
$$
$$
\sup_{\begin{subarray}{c}
0 \leq x \leq 1 \\
|{\rm Im } \, \nu | \le c, |\nu| \geq 1
\end{subarray}} \frac{|\partial_x y_1(x, \nu^2)|}{|\nu|} \leq C\,, \quad \sup_{\begin{subarray}{c}
0 \leq x \leq 1 \\
|{\rm  Im} \, \nu | \le c, |\nu| \geq 1
\end{subarray}} | \partial_x y_2(x, \nu^2)| \leq C
$$
for some constant $C > 0$. It then follows from \eqref{costa rica 15} and \eqref{costa rica 10} that 
$
\sup_{\begin{subarray}{c}
0 \leq x \leq 1 \\
|{\rm Im} \, \nu | \le c, |\nu| \geq 1
\end{subarray}} |\widetilde y_{N + M}(x, \nu)| \leq C\,.
$
Since $K(x, x, \nu^2) = 0$ one has that 
$$
\partial_x \widetilde y_{N + M}(x, \nu) = 
- \ii \nu \widetilde y_{N + M}(x, \nu) - 2 \ii \nu \int_0^x \partial_x K(x, t, \nu^2) e^{\ii \nu(t - x)} \partial_t y^{ae}_{N + M + 1}(t)\, d t
$$
implying that 
$
\sup_{\begin{subarray}{c}
0 \leq x \leq 1 \\
|{\rm Im} \, \nu | \le c, |\nu| \geq 1
\end{subarray}} \frac{|\partial_x \widetilde y_{N + M }(x, \nu)|}{|\nu|} \leq C\,.
$
Using equation \eqref{costa rica 14} one gets 
\begin{equation}\label{costa rica 16}
- \partial_x^2 \widetilde y_{N + M}(x, \nu) = 
2 \ii \nu \partial_x \widetilde y_{N + M}(x, \nu) - p \widetilde y_{N + M}(x, \nu) - 2 \ii \nu \partial_x y^{ae}_{N + M + 1}(x)
\end{equation}
yielding 
$
\sup_{\begin{subarray}{c}
0 \leq x \leq 1 \\
|{\rm Im} \, \nu | \le c, |\nu| \geq 1
\end{subarray}} \frac{|\partial_x^2 \widetilde y_{N + M}(x, \nu)|}{|\nu|^2} \leq C\,.
$
By taking derivatives of \eqref{costa rica 16}, one then concludes that there exists a constant $C_{N,M} > 0$ so that for any $0 \leq j \leq M$,
$
\sup_{\begin{subarray}{c}
0 \leq x \leq 1 \\
|{\rm Im} \, \nu | \le c, |\nu| \geq 1
\end{subarray}} \frac{|\partial_x^j \widetilde y_{N + M}(x, \nu)|}{|\nu|^j} \leq \widetilde C_{N,M}.$
Going through the arguments of the proof one concludes that the constant $C_{N, M}$ can be chosen locally uniformly for $p \in V^*_{q, S}$. 
\end{proof}

\smallskip

Let $q \in M_S$ and $N, M \geq 0$. Then for any $p \in V^*_{q, S}$ and $\nu \ne 0$ with $|\nu|$ sufficiently large, the solutions $y_{N, M}(x, \nu)$ and $y_{N, M}(x, - \nu)$ of 
$- y'' + p y = \nu^2 y$, considered in Lemma \ref{lemma A2 appendice}, 
are linearly independent. Indeed by Lemma \ref{lemma A2 appendice}, for any $\nu \ne 0$,
$$
y_{N, M}(0, \nu) = 1\,, \quad \partial_x y_{N, M}(0, \nu) =  \ii \nu + \sum_{k = 1}^{N + M} \frac{\partial_x y_k^{ae}(0)}{( 2 \ii \nu)^k}
$$
implying that the Wronskian of $y_{N, M}(\cdot, - \nu)$ and $y_{N, M}(\cdot,  \nu)$ equals
$$
{\rm det}\begin{pmatrix}
y_{N, M}(0, - \nu) & y_{N, M}(0, \nu) \\
\partial_x y_{N, M}(0, - \nu) & \partial_x y_{N, M}(0, \nu)
\end{pmatrix} = \partial_x y_{N, M}(0, \nu) - \partial_x y_{N, M}(0, - \nu)    = 2 \ii \nu + \sum_{k = 1}^{N + M} \frac{\partial_x y_k^{ae}(0)}{(2 \ii \nu)^k} \big(1 - (- 1)^k \big)\,.
$$
Hence there exists $\nu_{b} \geq 1$ so that 
\begin{equation}\label{costa rica 16 bis}
\Big| 2 \ii \nu + \sum_{k = 1}^{N + M} \frac{\partial_x y_k^{ae}(0)}{(2 \ii \nu)^k} \big(1 - (- 1)^k \big)\Big| \geq 1, \quad \forall \nu  \mbox{ with } |\nu| \geq \nu_b\,.
\end{equation}
The bound $\nu_b$ can be chosen locally uniformly in $p \in V^*_{q, S}$. It then follows that for $|\nu| \geq \nu_b$, $y_1(x, \nu^2)$, $y_2(x, \nu^2)$ are linear combinations 
of $y_{N, M}(x, \nu)$ and $y_{N, M}(x, - \nu)$,
\begin{equation}\label{costa rica 17}
y_1(x, \nu^2) = \alpha_{N, M}(- \nu) y_{N, M}(x, \nu) +  \alpha_{N, M}(\nu) y_{N, M}(x, - \nu)\,,
\end{equation}
\begin{equation}\label{costa rica 18}
\alpha_{N, M}(\nu) =  \frac{\partial_x y_{N, M}(0,  \nu)}{\partial_x y_{N, M}(0, \nu) - \partial_x y_{N, M}(0,  -\nu)}
\end{equation}
and 
\begin{equation}\label{costa rica 19}
y_2(x, \nu^2) = \beta_{N, M}(\nu) y_{N, M}(x, \nu) + \beta_{N, M}(-\nu) y_{N, M}(x, - \nu)\,, 
\end{equation}
\begin{equation}\label{costa rica 20}
\beta_{N, M}(\nu) = \frac{1}{\partial_x y_{N, M}(0, \nu) - \partial_x y_{N, M}(0, - \nu)}\,.
\end{equation}
We note that $\beta_{N, M}(- \nu) = - \beta_{N, M}(\nu)$ and for $|\nu |$ sufficiently large, $\alpha_{N, M}(\nu) \simeq \frac 12$ and $\beta_{N, M}(\nu) \simeq \frac{1}{2 \ii \nu}$.
Furthermore, in case $p$ is real valued and $\nu \in \R$ with $|\nu | \ge \nu_b$, one has
$$
\alpha_{N, M}(- \nu) = \overline{\alpha_{N, M}(\nu)}\, \quad \mbox{and} \quad \beta_{N, M}(- \nu) = \overline{\beta_{N, M}(\nu)} = - \beta_{N, M}( \nu)\,,
$$
implying that $\beta_{N, M}(\nu)$ is purely imaginary.
%\medskip
%\noindent
Finally, to prove Theorem~\ref{teorema espansione fn appendice}, the following two additional results are needed.
It is well know that $\tau_n = (\lambda_n^+ +  \lambda_n^-)/2$ admits an asymptotic expansion as $n \to \infty$ (cf e.g. \cite[Theorem 1.3]{KST1}).
More precisely, we have the following

\begin{lemma}\label{asymptotics tau} Let $q \in M_S$ and $N \in \Z_{ \ge 0}.$ Then for any $p \in V^*_{q, S}$, $\tau_n(p),$ $n \in S^\bot_+$, has an expansion of the form
\begin{equation}\label{asintotica tau n KST}
\tau_n(p) = n^2 \pi^2 + \sum_{k = 1}^N \frac{\tau^{ae}_{2k}(p)}{(2 \pi \ii n)^{2 k}} + \frac{{\cal R}_{2N}^{\tau_n}(p)}{(2 \pi \ii n)^{2 N + 2}}
\end{equation}
where $V^*_{q, S} \to \C$, $p \mapsto \tau^{ae}_{2k}(p)$, $k \ge 1$, and $ V^*_{q, S} \to \C$, $p \mapsto {\cal R}_{2N}^{\tau_n}(p)$
are real analytic. 
As a consequence, choosing $n_0 \ge m_S :=  1+ \mbox{max}\{ n \in S\}$  so that $Re(\tau_n(p)) > 0$ for any $n \ge n_0,$ it follows that 
 $2 \ii \sqrt{\tau_n(p)}$, $n \ge n_0$, admits an expansion of the form 
\begin{equation}\label{costa rica 22}
2\ii\sqrt{\tau_n(p)}  = 2 \pi \ii n \Big(1 + \sum_{k = 2}^N \frac{\sqrt{\tau} _{2k}^{ae}(p)}{(2 \pi \ii n)^{2 k}} +  \frac{{\cal R}_{2N}^{\sqrt{\tau_n}}(p)}{(2\pi \ii n)^{2 N + 2}} \Big)
\end{equation}
where  $V^*_{q, S} \to \C$, $p \mapsto \sqrt{\tau} ^{ae}_{2k} (p)$, $k \ge 2$, and $ V^*_{q, S }\to \C$, $p \mapsto {\cal R}_{2N}^{\sqrt{\tau_n}}(p)$
are real analytic. 
In addition, the remainders ${\cal R}_{2N}^{\tau_n}(p)$ and ${\cal R}_{2N}^{\sqrt{\tau_n}}(p)$ satisfy
$$
\sup_{n \in S_+^\bot} |{\cal R}_{2N}^{\tau_n}(p) | \leq C_{N}\,, \qquad  \sup_{n \ge n_0} |{\cal R}_{2N}^{\sqrt{\tau_n}}(p) |  \leq C_{N}
$$
where the constants $C_{N} > 0$ and $n_0 > m_S$ can be chosen locally uniformly for $p \in V^*_{q, S}$.
\end{lemma}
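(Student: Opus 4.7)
The plan is to derive \eqref{asintotica tau n KST} from an asymptotic expansion of the discriminant $\Delta(\lambda,p)$ obtained through Lemma~\ref{lemma A2 appendice}, and then to deduce \eqref{costa rica 22} by Taylor-expanding the principal branch of the square root.

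For $p\in V^*_{q,S}$, the Addendum to Theorem~\ref{Theorem Birkhoff coordinates} ensures that $p$ is a (complex) $S$-gap potential, so for $n\in S_+^\bot$ the point $\tau_n(p)$ is characterised as the unique simple zero of $\dot\Delta(\cdot,p)$ in the isolating neighbourhood $U_n$ of $n^2\pi^2$. Setting $\lambda=\nu^2$ with $\nu$ near $\pi n$ and substituting the expansion \eqref{costa rica 7} into \eqref{costa rica 17}--\eqref{costa rica 20} to assemble $y_1(1,\nu^2,p)$ and $y_2'(1,\nu^2,p)$, I would obtain
\[
\Delta(\nu^2,p)\;=\;2\cos\nu\;+\;\sum_{k=1}^{2N+1}\frac{A_k(p)\cos\nu+B_k(p)\sin\nu}{(2\ii\nu)^k}\;+\;\frac{\mathcal R^{\Delta}_{2N+1}(\nu,p)}{(2\ii\nu)^{2N+2}}\,,
\]
with $A_k,B_k$ real analytic in $p$ and with remainder bounded uniformly on any strip $|{\rm Im}\,\nu|\le c$ by virtue of \eqref{costa rica 8}. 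The symmetry $\Delta(\nu^2,p)=\Delta((-\nu)^2,p)$ forces $B_k\equiv 0$ for even $k$ and $A_k\equiv 0$ for odd $k$; a parallel computation yields an analogous expansion for $\dot\Delta(\nu^2,p)$. Writing $\nu_n(p)=\pi n+\epsilon_n(p)$ and imposing $\dot\Delta(\nu_n^2,p)=0$, I would invoke the implicit function theorem at each order to solve for $\epsilon_n(p)$ as a series in $1/(\pi n)$; combined with the identities $\cos(\pi n+\epsilon)=(-1)^n\cos\epsilon$ and $\sin(\pi n+\epsilon)=(-1)^n\sin\epsilon$, the parity pattern of $A_k,B_k$ then ensures that $\nu_n^2=\tau_n$ admits an expansion in \emph{even} powers of $1/n$, namely \eqref{asintotica tau n KST}. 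Real analyticity of $\tau_{2k}^{ae}(p)$ and $\mathcal R_{2N}^{\tau_n}(p)$ is inherited from that of $A_k(p),B_k(p)$, and the locally uniform bound on the remainder follows from \eqref{costa rica 8} together with the uniform control of the perturbative inversion.

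For the second part, factor $\tau_n(p)=(\pi n)^2(1+w_n(p))$ where, by \eqref{asintotica tau n KST}, $w_n(p)$ is a polynomial in $1/(2\pi\ii n)^2$ of degree $N$ plus a remainder of order $(\pi n)^{-2N-4}$, with real analytic coefficients. Choose $n_0\ge m_S$, locally uniformly in $p$, so that $\mathrm{Re}\,\tau_n(p)>0$ and $|w_n(p)|\le 1/2$ for all $n\ge n_0$; then the principal branch of $\sqrt{1+w_n(p)}$ is given by the convergent Taylor series $1+\tfrac12 w_n-\tfrac18 w_n^2+\cdots$. Substituting, multiplying by $2\ii\pi n$, and collecting powers of $1/n$ yields \eqref{costa rica 22}: the coefficients $\sqrt\tau_{2k}^{ae}(p)$ are universal polynomials in $\tau_{2j}^{ae}(p)$ with $j\le k$ (hence real analytic), while the uniform bound on $\mathcal R^{\sqrt{\tau_n}}_{2N}(p)$ follows from Taylor's remainder estimate for $\sqrt{1+w}$ combined with the bound on $\mathcal R_{2N}^{\tau_n}(p)$.

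The hard part will be verifying rigorously that only even powers of $1/n$ appear in the expansion of $\tau_n(p)$. While the symmetry $\Delta(\nu^2)=\Delta((-\nu)^2)$ makes this plausible in principle, turning it into a proof requires carefully tracking the parity of the coefficients $y_k^{ae}(1,p)$ from Lemma~\ref{lemma A2 appendice} through the combinations \eqref{costa rica 17}--\eqref{costa rica 20}, and then through the iterative inversion for $\epsilon_n(p)$. Once this parity is secured, the passage to \eqref{costa rica 22} is essentially formal.
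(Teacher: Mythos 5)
Your route is genuinely different from the paper's. The paper disposes of the first half of the lemma by citation: the expansion \eqref{asintotica tau n KST}, with coefficients $\tau^{ae}_{2k}$ given by polynomial expressions in integrals of densities involving $p$ and its derivatives, together with the uniform remainder bounds, is taken directly from \cite{KST1}; real analyticity of ${\cal R}_{2N}^{\tau_n}$ on $V^*_{q,S}$ then follows simply because $\tau_n$ and the $\tau^{ae}_{2k}$ are real analytic there and the remainder is defined by the difference. What you propose instead is to re-derive this expansion from scratch out of Lemma \ref{lemma A2 appendice}, by assembling $\Delta(\nu^2,p)$ and $\dot\Delta(\nu^2,p)$ from \eqref{costa rica 17}--\eqref{costa rica 20}, locating $\tau_n$ as the zero of $\dot\Delta$ in $U_n$ (which is legitimate here precisely because $p$ is an $S$-gap potential, so $\tau_n=\dot\lambda_n=\mu_n$ for $n\in S_+^\bot$ -- you invoke this correctly), and solving perturbatively for $\nu_n=\pi n+\epsilon_n$. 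This is essentially the method of \cite{KST1} and \cite{Ma}, so it buys self-containedness at the cost of reproducing a known computation. Your second half (Taylor expansion of $\sqrt{1+w_n}$, coefficients as universal polynomials in the $\tau^{ae}_{2j}$, remainder bound from Taylor's theorem) coincides with the paper's recursive determination of $\sqrt{\tau}^{ae}_{2k}$ and is fine as written; note only that since $\int_0^1 p\,dx=0$ the first correction in \eqref{asintotica tau n KST} already sits at order $n^{-2}$, so $w_n=O(n^{-4})$ and the sum in \eqref{costa rica 22} indeed starts at $k=2$.

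The one substantive reservation is that the step you yourself flag as ``the hard part'' -- that only even powers of $1/n$ survive -- is the entire nontrivial content of \eqref{asintotica tau n KST}, and your proposal leaves it as a plan rather than a proof. The parity mechanism you describe does work: evenness of $\Delta(\nu^2)$ in $\nu$ forces $A_k=0$ for $k$ odd and $B_k=0$ for $k$ even (this requires a short separate argument that the $\cos\nu$- and $\sin\nu$-parts of such an asymptotic expansion are individually determined), whence $\partial_\nu\Delta(\nu^2)=\sin\nu\cdot E(1/\nu)+\cos\nu\cdot O(1/\nu)$ with $E$ even and $O$ odd; the fixed-point equation $\tan\epsilon_n=-O/E$ evaluated at $\nu=\pi n+\epsilon_n$ then propagates oddness of $\epsilon_n$ as a series in $1/n$, and $\tau_n=(\pi n+\epsilon_n)^2$ is even. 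But to turn this into the lemma one must also verify $B_1=0$ (which uses $\int_0^1 p\,dx=0$, otherwise $\epsilon_n$ would only be $O(1/n)$ and the leading correction to $\tau_n$ would be $O(1)$), and one must carry the quantitative bounds \eqref{costa rica 8} through the inversion to get the locally uniform estimate on ${\cal R}_{2N}^{\tau_n}$. None of this fails, but all of it is exactly what the citation to \cite{KST1} supplies; as submitted, your argument is an outline of that reference rather than a complete alternative proof.
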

\begin{proof}
The functions $\tau _{2k}^{ae}(q)$, $k \ge 2,$ are given by polynomial expressions of integrals of densities, involving $q$ and its derivatives up to order $2k$ (cf \cite{KST1})
and are real analytic maps, $H^k_{0, \C} \to \C, q \mapsto \tau _{2k}^{ae}(q)$. Since $\tau_n$ is real analytic on $V^*_{q, S}$ so is
$$
{\cal R}_{2N}^{\tau_n}(p) = (2 \pi \ii  n)^{2N+2} \Big(  \tau_n(p) - n^2 \pi^2 - \sum_{k = 1}^N \frac{\tau^{ae}_{2k}(p)}{(2 \pi \ii n)^{2 k}} \Big)\,.
$$ 
The claimed bounds for ${\cal R}_{2N}^{\tau_n}(p) $ were established in \cite{KST1}. 
In view of the asymptotics of $\tau_n(p)$, one finds $n_0 \ge m_S$ with the claimed properties and then obtains the coefficients $\sqrt{\tau} _{2k}^{ae}(p)$ 
from the expansion of $\tau_n(p)$ in a recursive way and concludes that they are real analytic. Since $ \sqrt{\tau_n(p)}$ is real analytic
one again concludes that the remainder term ${\cal R}_{2N}^{\sqrt{\tau_n}}(p)$ is real analytic as well and deduces the claimed bounds.
\end{proof}
The second result concerns the asymptotic expansion of the coefficients $a_n$, defined in \eqref{costa rica 2}.
\begin{lemma}\label{lemma A3 appendice}
Let $q \in M_S$ and $N \in \Z_{ \ge 0}.$ Then for any $p \in V^*_{q, S}$, $a_{n} (p)$, $n \in S^\bot$,  has an expansion of the form 
\begin{equation}\label{expansion an}
a_n(p) = \ii n \pi + \sum_{k = 0}^N \frac{a_k^{ae}(p)}{(2 \pi \ii n)^k} +  \frac{{\cal R}_{N}^{a_n}(p)}{( 2 \pi \ii n)^{N + 1}}
\end{equation}
where $V^*_{q, S} \to \C$, $p \mapsto a^{ae}_{k} (p)$, $k \ge 0$, are real analytic and $V^*_{q, S} \to \C$, $p \mapsto {\cal R}_{N}^{a_n}(p)$ is analytic. 
In addition, the remainders ${\cal R}_{N}^{a_n}(p)$ satisfy
$$
\sup_{n \in S^\bot} |{\cal R}_{N}^{a_n}(p) |  \leq C_{N}
$$
where the constant $C_{N} > 0$ can be chosen locally uniformly for $p \in V^*_{q, S}$.

\end{lemma}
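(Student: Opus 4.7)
The plan is to derive the expansion \eqref{expansion an} from the closed formula of Lemma~\ref{lemma armadillo 1}(ii),
\[
a_n(p) = -\frac{\dot m_1(\tau_n)}{\dot m_2(\tau_n)} + \ii\,\frac{\sqrt{(-1)^{n+1}\ddot\Delta(\tau_n)/2}}{(-1)^n\,\dot m_2(\tau_n)},
\]
by producing asymptotic expansions, in powers of $1/(2\pi\ii n)$ with real-analytic coefficients on $V^*_{q, S}$, for $\dot m_1(\tau_n)$, $\dot m_2(\tau_n)$, and the argument of the square root. For the latter, I would first establish the algebraic identity
\[
(-1)^{n+1}\ddot\Delta(\tau_n)/2 = -\dot m_1(\tau_n)^2 - \dot m_1'(\tau_n)\dot m_2(\tau_n),
\]
obtained by differentiating $\Delta^2 - 4 = (m_1 - m_2')^2 + 4\, m_1' m_2$ twice at $\tau_n$ and using that $m_1'(\tau_n) = m_2(\tau_n) = 0$, $\Delta(\tau_n) = 2(-1)^n$, $\dot\Delta(\tau_n) = 0$, and $\dot m_1(\tau_n) = -\dot m_2'(\tau_n)$ from \eqref{armadillo 4}. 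In view of \eqref{armadillo 5}--\eqref{armadillo 4}, the problem is thereby reduced to obtaining asymptotic expansions of the three integrals $\int_0^1 y_1^2\,dx$, $\int_0^1 y_2^2\,dx$, $\int_0^1 y_1 y_2\,dx$, all evaluated at $\lambda = \tau_n$.

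Setting $\nu_n := \sqrt{\tau_n(p)}$, I would substitute the decompositions \eqref{costa rica 17} and \eqref{costa rica 19} expressing $y_1(\cdot, \tau_n), y_2(\cdot, \tau_n)$ as linear combinations of $y_{N,M}(\cdot, \pm\nu_n)$ from Lemma~\ref{lemma A2 appendice}, with $N, M$ chosen sufficiently large relative to the target order. Each of the three integrals then reduces to a finite sum of two basic types: the non-oscillatory integral $\int_0^1 y_{N,M}(x, \nu_n) y_{N,M}(x, -\nu_n)\,dx$, which expands directly in powers of $1/\nu_n$ by multiplying the expansions of Lemma~\ref{lemma A2 appendice}, and oscillatory integrals $\int_0^1 e^{\pm 2\ii\nu_n x} F(x, \nu_n)\,dx$ with $F$ smooth, expanded via iterated integration by parts into boundary contributions at $x = 0, 1$ involving powers of $1/(2\ii\nu_n)$ and the scalars $e^{\pm 2\ii\nu_n}$. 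The prefactors $\alpha_{N,M}(\pm\nu_n), \beta_{N,M}(\pm\nu_n)$ of \eqref{costa rica 18} and \eqref{costa rica 20} admit analogous $1/\nu_n$-expansions, obtained from the explicit expansion of $\partial_x y_{N,M}(0, \pm\nu_n)$ combined with a geometric series inversion of the denominator, which is bounded away from zero by \eqref{costa rica 16 bis}.

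Lemma~\ref{asymptotics tau} then converts every $1/\nu_n$-expansion into a $1/(2\pi\ii n)$-expansion with real-analytic coefficients on $V^*_{q, S}$; since $\nu_n = n\pi + O(1/n^3)$, the scalars $e^{\pm 2\ii\nu_n} = 1 + O(1/n^3)$ also admit such an expansion. Assembling these yields the required expansions of $\dot m_1(\tau_n), \dot m_2(\tau_n), \dot m_1'(\tau_n)$. At $p = 0$ one computes directly $\dot m_1 = 0$, $\dot m_2 = (-1)^n/(2n^2\pi^2)$, $\dot m_1' = (-1)^{n+1}/2$, so the argument of the square root equals $1/(4n^2\pi^2)$; by continuity this remains of order $1/n^2$ with positive leading coefficient on a neighborhood of $q$ uniformly in $n$, so I can factor $(-1)^{n+1}\ddot\Delta(\tau_n)/2 = (1/(2n\pi))^2(1 + \varepsilon_n(p))$ with $|\varepsilon_n(p)| \le 1/2$ for large $n$ and apply the binomial series $\sqrt{1 + \varepsilon_n} = 1 + \varepsilon_n/2 + \cdots$ to obtain a real-analytic $1/n$-expansion of the square root. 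Inserting everything into the formula for $a_n$ gives \eqref{expansion an}, with leading term $\ii\,(1/(2n\pi))/(1/(2n^2\pi^2)) = \ii n\pi$ confirmed at $p = 0$, and the uniform bound on the remainder ${\cal R}_N^{a_n}(p)$ follows from \eqref{costa rica 8} of Lemma~\ref{lemma A2 appendice} together with the uniformity statement of Lemma~\ref{asymptotics tau}. The main obstacle will be the bookkeeping required to extract clean $1/n$-expansions from the quotient $\dot m_1/\dot m_2$ and from the square root: since $\dot m_2(\tau_n)$ is itself of order $1/n^2$, several orders of cancellation have to be tracked carefully when dividing and when taking the square root of the composite expression.
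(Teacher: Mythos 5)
Your proposal is correct and shares most of its machinery with the paper's proof: both start from the closed formula of Lemma \ref{lemma armadillo 1}(ii), both reduce the terms $\dot m_1(\tau_n)$, $\dot m_2(\tau_n)$ to the integrals $\int_0^1 y_1y_2\,dt$, $\int_0^1 y_2^2\,dt$ via \eqref{armadillo 5}--\eqref{armadillo 4}, and both expand these by substituting the decompositions \eqref{costa rica 17}, \eqref{costa rica 19} in terms of $y_{N,M}(\cdot,\pm\sqrt{\tau_n})$ and invoking Lemma \ref{asymptotics tau}. The one substantive divergence is the treatment of $\ddot\Delta(\tau_n)$. The paper never expands $\ddot\Delta(\tau_n)$ directly: it uses the product representations \eqref{product represenations} of $\dot\Delta$ and of $m_2$ and the coincidences $\dot\lambda_n=\mu_n=\tau_n$ for $n\in S_+^\bot$ to collapse the ratio $-\ddot\Delta(\tau_n)/\dot m_2(\tau_n)$ into the \emph{finite} product $\prod_{k\in S_+}(1-\dot\lambda_k/\tau_n)/(1-\mu_k/\tau_n)$, whose square root manifestly expands in $1/\tau_n$ with coefficients that are polynomials in the analytic quantities $\dot\lambda_k,\mu_k$, $k\in S_+$; the only remaining square root is that of $2\int_0^1 y_2^2\,dt$. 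You instead prove the Wronskian identity $(-1)^{n+1}\ddot\Delta(\tau_n)/2=-\dot m_1(\tau_n)^2-\dot m_1'(\tau_n)\dot m_2(\tau_n)$ (which is correct — the paper records the equivalent identity at the end of the proof of Proposition \ref{Re/ImFloquet solutions}, though it derives it there by comparing the two formulas for $a_n$) and then expand all three integrals, including $\int_0^1 y_1^2\,dt$, before extracting the square root by a binomial series. Both routes work; the paper's buys a cleaner square root at the cost of invoking the product representations of $\dot\Delta$ and $m_2$, while yours stays entirely within the fundamental-solution calculus but, as you note, requires tracking the cancellations in a composite expression of order $1/n^2$. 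Two small points you leave implicit and should add: the expansion must be checked consistent for $n\in S^\bot$ negative (the first term of $a_{\pm n}$ is even and the second odd under $n\mapsto -n$, so even and odd powers of $1/(2\pi\ii n)$ must sort accordingly), and the real-valuedness of the coefficients $a_k^{ae}$ on real potentials follows from $a_{-n}(p)=\overline{a_n(p)}$ by an inductive comparison of the expansions of $a_n+a_{-n}$ and $a_n-a_{-n}$, as in the paper.
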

\begin{proof}
To start with, we compute the leading term in the expansion of $a_n$. Recall that by \eqref{costa rica 2}, 
$$
a_{\pm n} = - \frac{\dot m_1(\tau_n)}{\dot m_2(\tau_n)}  \pm  \ii \frac{\sqrt{(- 1)^{n + 1}\ddot \Delta(\tau_n)/2}}{(- 1)^n \dot m_2(\tau_n)}, \quad n \in S_+^\bot\,.
$$
By \eqref{armadillo 5} - \eqref{armadillo 4}, for any $n \in S_+^\bot$, 
%$\partial_x y_2(1, \tau_n) = (- 1)^{n}$. 
%Since $\tau_n$ is a Dirichlet eigenvalue of $- \partial_x^2 + q$ it follows from Theorem 2 in \cite{PT} that 
$ 
\dot m_1(\tau_n) = (- 1)^n \int_0^1 y_1(t, \tau_n) y_2(t, \tau_n)\, d t
$ and
%\, \quad \mbox{ and } \quad
$\dot m_2(\tau_n) = (- 1)^n \int_0^1 y_2(x, \tau_n)^2\, d x$,
%Furthermore, according to Theorem 6 in \cite{PT}, 
%\begin{align*}
%\dot m_1(\lambda) & 
%= - \int_0^1 \partial_q y_1(1, \lambda)\, d t  
 %= - \int_0^1 y_2(1, \lambda) y_1^2(t, \lambda)\, d t + \int_0^1 y_1(t, \lambda) y_2(y, \lambda) y_1(1, \lambda)\, d t\,.
%\end{align*}
%For $\lambda = \tau_n$, $y_2(1, \tau_n) = 0$ and $y_1(1, \tau_n) = (- 1)^n$ implying that 
yielding 
\begin{align}
a_{\pm n} & = - \frac{\int_0^1 y_1(t, \tau_n) y_2(t, \tau_n)\, d t}{\int_0^1 y_2(t, \tau_n)^2\, d t} \pm 
 \ii \frac{1}{\big(2 \int_0^1 y_2(t, \tau_n)^2\, d t \big)^{\frac12}} \sqrt{- \frac{ \ddot \Delta(\tau_n)}{ \dot m_2(\tau_n)}}\,.  \label{costa rica 25}
\end{align}
Since  $\sqrt{\tau_n} = n \pi \big( 1 + O(\frac{1}{n^4}) \big)$ (Lemma~\ref{asymptotics tau}) and hence $y_1(t, \tau_n) = \cos(n \pi t) + O(\frac{1}{n})$ and 
$y_2(t, \tau_n) = \frac{ \sin(n \pi t)}{n\pi} + O(\frac{1}{n^2})$ (cf \cite[Chapter 1]{PT}) one has
\begin{equation}\label{costa rica 27}
\int_0^1 y_1(t, \tau_n) y_2(t, \tau_n)\, d t = O\Big( \frac{1}{n^2}\Big)\,, \qquad 2 \int_0^1 y_2(t, \tau_n)^2\, d t = \frac{1}{n^2 \pi^2} \big( 1 + O (\frac{1}{n})\big)\,.
\end{equation}
To analyze the quotient $- \frac{ \ddot \Delta(\tau_n)}{ \dot m_2(\tau_n)}$ we use the product representation 
of $\dot \Delta(\lambda)$ and $m_2(\lambda)$ (cf Appendix \ref{Birkhoff map}),
$$
\dot \Delta(\lambda) = - \prod_{k \geq 1} \frac{\dot \lambda_k - \lambda}{\pi^2 k^2}\,, \qquad m_2(\lambda) = 
\prod_{k \geq 1} \frac{\mu_k - \lambda}{\pi^2 k^2}\,.
$$
Since for $n \in S_+^\bot$, $\lambda_n^+ = \dot \lambda_n= \mu_n = \tau_n$, 
$$
\ddot \Delta(\tau_n) = \frac{1}{n^2 \pi^2} \prod_{k \neq n} \frac{\dot \lambda_k - \tau_n}{\pi^2 k^2}\,, 
\qquad \dot m_2(\tau_n)  = - \frac{1}{\pi^2 n^2} \prod_{k \neq n} \frac{\mu_k - \tau_n}{\pi^2 k^2}
$$
%By \cite{KST1}, Corollary 7.1, 
%$$\ddot \Delta(\tau_n) = \frac{(- 1)^{n + 1}}{2 \pi^2 n^2} \big( 1 + O\big( \frac1n\big) \big)\,, \quad \dot m_2(\tau_n) 
%= \frac{(- 1)^n}{2 \pi^2 n^2} \big( 1 + O \big( \frac1n \big) \big) $$
%implying that 
%$$ - \frac{\ddot \Delta(\tau_n)}{\dot m_2(\tau_n)} = 1 + O \big( \frac1n\big) $$
and one concludes that
$$
- \frac{\ddot \Delta(\tau_n)}{\dot m_2(\tau_n)} = \prod_{k \in S_+} \frac{\dot \lambda_k - \tau_n}{\mu_k - \tau_n} = 
\prod_{k \in S_+} \frac{1 - \frac{\dot \lambda_k}{\tau_n}}{1 - \frac{\mu_k}{\tau_n}}\,.
$$
Altogether we thus have proved that for any $n \in S^{\bot}_+$,
$$
a_{\pm n} = \pm  \ii n \pi + O(1) \quad  \mbox{and } \quad
a_{\pm n} = -  \frac{\int_0^1 y_1(t, \tau_n) y_2(t, \tau_n)\, d t}{\int_0^1 y_2(t, \tau_n)^2\, d t}  \pm
\ii \frac{1}{\big(2 \int_0^1 y_2(t, \tau_n)^2\, d t \big)^{\frac12}} \Big( \prod_{k \in S_+} \frac{1 - \frac{\dot \lambda_k}{\tau_n}}{1 - \frac{\mu_k}{\tau_n}} \Big)^{\frac12}\,.
$$
Expressing $y_1(t, \tau_n)$ and $y_2(t, \tau_n)$ in terms of $y_{N,M}(x, \pm \sqrt{\tau_n})$  (cf \eqref{costa rica 17}, \eqref{costa rica 19}),
one obtains an expansion of the form \eqref{expansion an} where the coefficients  $a^{ae}_{k}$ can be explicitly computed by using the expansion of $y_{N, M}(x, \nu)$, 
obtained in  Lemma \ref{lemma A2 appendice} and  the one of $\sqrt{\tau_n}$ of  Lemma~\ref{asymptotics tau}. 
It follows that for any $k \ge 0$, the map $V^*_{q, S} \to \C$, $p \mapsto a^{ae}_{k} (p)$ is analytic. In case $p$ is real valued one has 
$a_{-n} = \overline{a_n}$ (cf definition \eqref{costa rica 2} of $a_{\pm n}$). By an inductive argument it then follows from the expansion
of $a_{ n}  + a_{-n}$ that for any $k \ge 0,$ the coefficient $a^{ae}_{k}$ is real valued.
With regard to the remainder term, since for any $n \in S^\bot,$ $a_{ n}$ is analytic on 
$V^*_{q, S}$ (cf Remark \ref{analytic extension a_n}) one sees that ${\cal R}_{N}^{a_n}$ is analytic on $V^*_{q, S}$.
The claimed estimates are obtained from the corresponding estimates of Lemma \ref{lemma A2 appendice} and Lemma~\ref{asymptotics tau}.
%Going through the arguments of the proof one concludes that the remainder term of the asymptotic expansion of $a_n$ can be chosen locally uniformly for $q \in M_S$.
\end{proof}

\medskip

\noindent
{\bf Proof of Theorem \ref{teorema espansione fn appendice}.} Let $q \in M_S$ and  $N \ge 0$. To prove that for $p \in V^*_{q, S}$,
$f_n(x, p)$ has an expansion of the form \eqref{costa rica 3} we first note that since $y_1(x, \tau_n) =  \cos(n\pi x) + O(\frac{1}{n})$, $y_2(x, \tau_n) =  \frac{1}{n\pi} \sin(n\pi x) + O(\frac{1}{n^2})$,
and $a_{\pm n} = \ii n\pi + O(1)$, one has $f_{\pm n}(x) = e^{\pm \ii \pi n x} + O(\frac{1}{n})$. To obtain the expansion as claimed,
we want to apply Lemma \ref{lemma A2 appendice}. 
Choose $M \ge 0 $ (arbitrarily large) and  $n_0 \geq m_S$ (cf Lemma \ref{asymptotics tau}) so that $Re \, \tau_n > 0$ and in addition
$|\sqrt{\tau_n}| \geq \nu_b$ for any $n \geq n_0$ where $\nu_b \ge 1$ is given by \eqref{costa rica 16 bis}. 
We then substitute the formulas \eqref{costa rica 17} and \eqref{costa rica 19} with $\nu^2 = \tau_n$ 
into the expression \eqref{costa rica 1} for $f_{\pm n}(x) \equiv f_{\pm n}(x, p)$ to get for $n \ge n_0$
\begin{equation}\label{costa rica 21}
\begin{aligned}
f_{\pm n}(x) & = \alpha_{N, M} (- \sqrt{\tau_n}) \, y_{N, M}(x, \sqrt{\tau_n}) + \alpha_{N, M}( \sqrt{\tau_n} \big) \, y_{N, M}(x, - \sqrt{\tau_n}) \\
& \qquad + a_{\pm n} \, \Big( \beta_{N, M}(\sqrt{\tau_n}) \, y_{N, M}(x, \sqrt{\tau_n}) + \beta_{N, M}(- \sqrt{\tau_n}) \, y_{N, M}(x, - \sqrt{\tau_n}) \Big)\,.
\end{aligned}
\end{equation}
Using the expansions of  $\sqrt{\tau_n}$ (Lemma~\ref{asymptotics tau}), $a_{\pm n}$ (Lemma~\ref{lemma A3 appendice}),  
and $y_{N, M}(x, \nu)$ (Lemma~\ref{lemma A2 appendice}) one gets an expansion of $f_{ n}$, $|n| \ge n_0$, of the form \eqref{costa rica 3}
%\begin{equation}\label{costa rica 22}
%$$ \sqrt{\tau_n} = n \pi \Big(1 + \sum_{k = 2}^N \frac{\sqrt{\tau}\,_k^{ae}}{(2 \pi n)^{2k}} +  \frac{{\cal R}_{2N}^{\sqrt{\tau_n}}(q)}{n^{2 N + 2}} \b\Big)\,, \qquad
%\end{equation}
%Furthermore by   admits an asymptotic expansion of the form 
%\begin{equation}\label{costa rica 23}
%a_n = \ii n \pi + \sum_{k = 0}^N \frac{a_k^{ae}}{(2 \pi \ii n)^k} +  \frac{{\cal R}_{N}^{a_n}(q)}{n^{N + 1}}\,.
%\end{equation}$$
where the coefficients $f^{ae}_k$, $k \ge 1$,
%of the expansion of $f_{n}(x) $ in \eqref{costa rica 3}
and the remainder ${\cal R}^{f_n}_{ N}$ can be explicitly computed.  One verifies that for any $s \ge 0$ and $k \ge 1$,
$f^{ae}_k:V^*_{q, S} \to H^s_\C[0, 1]$ is analytic. 
Furthermore, by choosing $M$ sufficiently large and using
the estimates of the lemmas referred to above, one obtains the claimed estimate \eqref{costa rica 4} of ${\cal R}^{f_n}_{ N}$ for any $|n| \ge n_0$.
Note that at this point, we only know that $f^{ae}_k(\cdot, p)$ is an element in $H^s_\C[0, 1]$ for any $s \ge 0.$
But since $e^{-\ii \pi n x}  f_n(x)$ is one periodic in $x$, it follows by induction that for any $k \ge 1$, $f^{ae}_k(x)$ is one periodic in $x$ as well. 
Since in case $p$ is real valued, $e^{-\ii \pi n x}  f_n(x) = \overline{ e^{\ii \pi n x}  f_{-n}(x) }$
one reads off from the expansions of  $e^{-\ii \pi n x}  f_n(x)$ and $\overline{e^{\ii \pi n x}  f_{-n}(x)}$ that $f^{ae}_k$, $k \ge 1$, are real valued.
Altogether this shows that for any $s \ge 0$ and $k \ge 1$, $f^{ae}_k:V^*_{q, S} \to H^s_\C$ is real analytic.
For $n \in S^\bot$ with $|n| < n_0,$ we define ${\cal R}^{f_n}_{ N}(x)$ by
$$
{\cal R}^{f_n}_{ N}(x) =  (2 \pi \ii n )^{N + 1} \Big(e^{-\ii \pi n x}  f_n(x) - 1 - \sum_{k = 1}^N \frac{f_k^{ae}(x)}{(2 \pi \ii n)^k} \Big)\,.
$$
We then conclude that for any $n \in S^\bot$, ${\cal R}^{f_n}_{ N}(x)$ is one periodic in $x$. Furthermore,
since for any $n \in S^\bot$ and $s \ge 0$, $e^{-\ii \pi n x}  f_n: V^*_{q, S} \to H^s_\C$ is analytic (cf \eqref{costa rica 1}) it follows that
 ${\cal R}^{f_n}_{ N} : V^*_{q, S} \to H^s_\C$ is analytic as well.
Going through the arguments of the proof one sees that the estimate \eqref{costa rica 4} holds for any $n \in S^\bot$ and that the constant $C_{N,j}$ in
\eqref{costa rica 4} can be chosen  locally uniformly for  $p \in V^*_{q, S}$. \hfill $\square$

\medskip
The next result states how the map $S_{rev}$, defined in  Section \ref{introduzione paper},
acts on the functions $f_n(x, q)$ and how on the coefficients and the remainder of its expansion. 

\noindent
{\bf Addendum to Theorem \ref{teorema espansione fn appendice}.}
{\em For any $q \in M_S$ and $n \in S^\bot$
$$
f_n (x, S_{rev}q) = f_{-n}( - x, q) \,\,\,\big( = (S_{rev}f_{-n})(x, q) \, \big)\,, \quad \forall x \in \R
$$ and as a consequence,
\begin{equation}\label{formula symmetries}
f^{ae}_k (x, S_{rev}q) = (-1)^k f^{ae}_k ( - x, q)\,, \, k \ge 1, \qquad {\cal R}^{f_n}_{ N}(x, S_{rev}q) = (-1)^{N+1} {\cal R}^{f_{-n}}_{ N}( - x, q)\,.
\end{equation}
}
\noindent
{\bf Proof of Addendum to Theorem \ref{teorema espansione fn appendice}.} Let $q \in M_S$ and $n \in S^\bot_+$. By Lemma \ref{lemma 2 reversibilita}, one knows that
$\tau_n(S_{rev}(q)) = \tau_n(q)$ and
\begin{equation}\label{symmetries Floquet solution}
f_{\pm n} (x, S_{rev}q) =  \big( y_1(x) + a_{\pm n} y_2(x)\big) |_{\tau_n, S_{rev}q} = y_1( -x, \tau_n, q) - a_{\pm n}(S_{rev}q) y_2( - x, \tau_n, q)\, \qquad \forall x \in \R
\end{equation}
where   $\tau_n \equiv \tau_n(q)$.
Recall that  $a_{\pm n}$ is given by $a_{\pm n} = - \frac{\dot m_1(\tau_n)}{\dot m_2(\tau_n)}  \pm  \ii \frac{\sqrt{(- 1)^{n + 1}\ddot \Delta(\tau_n)/2}}{(- 1)^n \dot m_2(\tau_n)}$.
Note that again by Lemma \ref{lemma 2 reversibilita}, one has 
$$
m_2(\lambda, S_{rev}q ) = m_2(\lambda, q )\,, \quad  \Delta(\lambda, S_{rev}q) = \Delta(\lambda, q)\,, \quad m_1(\lambda, S_{rev}q ) = m_2'(\lambda, q)\,, \quad \forall \lambda \in \R\,.
$$
Since $\dot \Delta(\tau_n, q) = 0$ and hence  $\dot m_2'(\tau_n, q) = - \dot m_1(\tau_n, q)$ it then follows that $\dot m_1(\tau_n, S_{rev}q) =  - \dot m_1(\tau_n, q)$.
Combining all these identities one concludes that $a_{\pm n}(S_{rev}q) = - a_{\mp n}(q)$ and hence by \eqref{symmetries Floquet solution}
$$
f_{\pm n} (x, S_{rev}q) = y_1( -x, \tau_n, q) + a_{\mp n}(S_{rev}q) y_2( - x, \tau_n, q) = f_{\mp n} ( - x, q)
$$
as claimed. Considering the expansions of the latter identities one obtains \eqref{formula symmetries}. \hfill  $\square$

\medskip

%\noindent
To obtain the asymptotic expansion for $\Psi_L,$ presented in Section~\ref{sezione mappa Psi L},
we need to establish such an expansion for each of the factors appearing in the definition  \eqref{definition Wn} of $W_{\pm n}(q)$ for a finite gap potential $q$. 
First we consider the factor $\xi_n$ which compares the square root of the n'th action
with the n'th gap length.  For any $q \in W$ (cf Theorem \ref{Theorem Birkhoff coordinates}) with $\gamma_n(q) \ne 0,$ it is given by $\sqrt{8 I_n(q) / \gamma_n^2(q)}$.
In case $\gamma_n(q) = 0,$ it can be computed by a limiting argument. By a slight abuse of terminology,
we denote this limit also by $\sqrt{8 I_n(q) / \gamma_n^2(q)}$.
%If not mentioned otherwise, we write $\sqrt{\mu}$ for $\sqrt[+]{\mu}$ for any $\mu \in \C \setminus (-\infty, 0]$.

\begin{lemma}\label{lemma appendice espansione xi n}
Let $q \in M_S$ and $N \in \Z_{ \ge 0}.$ Then for any $p \in V^*_{q, S}$,
 $\xi_n(q) := \sqrt{8 I_n(q) / \gamma_n^2(q)}$, $n \in S^\bot_+$, has an expansion of the form
$$
\sqrt{n \pi} \xi_n (p) = 1 + \sum_{k = 1}^N \frac{\xi_{2k}^{ae} (p) }{(2 \pi \ii n)^{2k} } +  \frac{{\cal R}_{2N}^{\xi_n}(p)}{(2 \pi \ii n)^{2 N+ 2}} 
$$
where $V^*_{q, S} \to \C$, $p \mapsto \xi^{ae}_{2k} (p)$, $k \ge 1$, and $ V^*_{q, S} \to \C$, $p \mapsto {\cal R}_{2N}^{\xi_n}(p)$,
are real analytic. 
In addition, the remainders ${\cal R}_{2N}^{\xi_n}(p)$ satisfy
\begin{equation}\label{estimate remainder xi}
\sup_{n \in S_+^\bot} |{\cal R}_{2N}^{\xi_n}(p) |  \leq C_{N}
\end{equation}
where the constant $C_{N} > 0$ can be chosen locally uniformly for $p \in V^*_{q, S}$.
\end{lemma}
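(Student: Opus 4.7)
The strategy is to represent $\xi_n^2(p) = 8 I_n(p)/\gamma_n^2(p)$ in a form that extends holomorphically across the closed gap $\gamma_n=0$ (which holds identically for $n\in S_+^\bot$ and $p\in V^*_{q,S}$), and then to expand this representation using exactly the tools already deployed in the proof of Lemma~\ref{lemma A3 appendice}.

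First I would invoke the closed-form expression for $\xi_n^2$ on $\{\gamma_n\neq 0\}$ provided by \cite[Theorem 7.3 and Chapter 7]{KP}, where $\xi_n^2$ is written as a quotient of spectral invariants. The product representations \eqref{product represenations}, combined with the identity
$-\ddot\Delta(\tau_n)/\dot m_2(\tau_n) = \prod_{k\in S_+}(\dot\lambda_k-\tau_n)/(\mu_k-\tau_n)$
already established in the proof of Lemma~\ref{lemma A3 appendice}, show that this expression extends holomorphically to all of $V^*_{q,S}$: the potential singularities of $\dot m_2$ and $\ddot\Delta$ at $\lambda=\tau_n$ cancel in the relevant ratio, and the remaining finite product over $S_+$ is nonvanishing and analytic because the isolating neighbourhoods keep $\dot\lambda_k,\mu_k$ and $\tau_n$ separated for $k\in S_+$, $n\in S_+^\bot$.

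Next, using the representation \eqref{costa rica 17}–\eqref{costa rica 20} to express $y_j(1,\tau_n)$ and their $\lambda$-derivatives (and hence $\dot m_2(\tau_n)$ and $\ddot\Delta(\tau_n)$) in terms of the basis $y_{N,M}(x,\pm\sqrt{\tau_n})$ from Lemma~\ref{lemma A2 appendice}, and inserting the expansion of $\sqrt{\tau_n}$ from Lemma~\ref{asymptotics tau}, one obtains asymptotic expansions in powers of $1/(2\pi\ii n)$ of each of the spectral factors, with coefficients real analytic on $V^*_{q,S}$. Multiplying these expansions together and by the prefactor $n\pi$ yields an expansion of $(n\pi)\xi_n^2(p)$ with leading term $1$ and with remainder bounds inherited, locally uniformly in $p$, from \eqref{costa rica 8} and from the corresponding bound in Lemma~\ref{asymptotics tau}.

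The fact that only even powers of $1/(2\pi\ii n)$ actually appear follows from parity: $\tau_n$ expands in even powers of $1/n$, and each spectral quantity enters $\xi_n^2$ through the symmetric combinations of $y_{N,M}(x,\sqrt{\tau_n})$ and $y_{N,M}(x,-\sqrt{\tau_n})$ dictated by \eqref{costa rica 17} and \eqref{costa rica 19}, which kill the odd-power contributions once the overall $n$-scaling has been absorbed into the prefactor $n\pi$. Taking the positive square root (legitimate because $(n\pi)\xi_n^2|_{p=0}=1$ and hence the quantity stays close to $1$ for $|n|$ large) produces the claimed expansion of $\sqrt{n\pi}\,\xi_n$. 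Real analyticity of the coefficients $\xi^{ae}_{2k}$ and analyticity of the remainder ${\cal R}^{\xi_n}_{2N}$ on $V^*_{q,S}$ follow from the corresponding properties at every stage, and the uniform bound \eqref{estimate remainder xi}, locally uniform in $p$, is obtained by choosing the truncation parameter $M$ in Lemma~\ref{lemma A2 appendice} sufficiently large relative to $N$. The main obstacle is the first step — producing a representation of $\xi_n^2$ that is manifestly holomorphic on $V^*_{q,S}$, in which the limit $\gamma_n\to 0$ is transparent — while a subsidiary technical point is the parity check, which requires careful bookkeeping of the $n$-dependence through the several occurrences of $\sqrt{\tau_n}$ in the expansions.
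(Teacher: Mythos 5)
Your overall skeleton --- obtain a closed form for $8I_n/\gamma_n^2$ that survives the limit $\gamma_n\to 0$, expand it using the expansion of $\tau_n$, then take the square root --- is indeed the paper's strategy. But the middle of your argument substitutes the wrong spectral quantity and consequently routes the proof through machinery that is neither needed nor sufficient here. The identity you quote from the proof of Lemma~\ref{lemma A3 appendice}, namely $-\ddot\Delta(\tau_n)/\dot m_2(\tau_n)=\prod_{k\in S_+}(\dot\lambda_k-\tau_n)/(\mu_k-\tau_n)$, is the normalization constant entering $a_n$ and $W_n$ (cf.\ Lemma~\ref{m2 tau n delta ddot lemma appendice}); it involves the Dirichlet eigenvalues $\mu_k$ and is not a formula for $\xi_n^2$. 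The correct closed form --- which is the actual content of the limiting argument behind \cite[Theorem 7.3]{KP}, and which you flag as ``the main obstacle'' without producing it --- is
$$
\frac{8I_n(p)}{\gamma_n^2(p)}=\chi(\tau_n(p),p),\qquad
\chi(\lambda,p)=\frac{1}{\sqrt{\lambda-\lambda_0}}\,\prod_{k\in S_+}\frac{\lambda-\dot\lambda_k}{\sqrt{(\lambda_k^+-\lambda)(\lambda_k^--\lambda)}}\,,
$$
obtained by evaluating the contour integral $I_n=\frac1\pi\int_{\Gamma_n}\lambda\dot\Delta(\lambda)\big(\Delta^2(\lambda)-4\big)^{-1/2}d\lambda$ along a sequence of potentials with $\gamma_n>0$ converging to $p$: the factors indexed by $k\in S_+^\bot\setminus\{n\}$ cancel because there $\dot\lambda_k=\lambda_k^\pm$, and the factor at $k=n$ is absorbed into the limit $\gamma_n\to0$. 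Without this explicit formula your ``holomorphic extension'' step is only asserted, and it is asserted for the wrong ratio.

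Once the formula is in hand, your second paragraph is superfluous: $\chi(\cdot,p)$ involves only the finitely many $n$-independent spectral data $\lambda_0,\lambda_k^\pm,\dot\lambda_k$, $k\in S_+$, all real analytic on $V^*_{q,S}$, so the entire $n$-dependence sits in $\tau_n$ and the lemma follows from Lemma~\ref{asymptotics tau} alone --- no recourse to the fundamental-solution asymptotics of Lemma~\ref{lemma A2 appendice}, nor to expansions of $\dot m_2(\tau_n)$ and $\ddot\Delta(\tau_n)$, is needed. This also makes the parity automatic: writing $n\pi\,\chi(\tau_n)=\frac{n\pi}{\sqrt{\tau_n}}\cdot F(1/\tau_n)$ with $F$ analytic near $0$, both factors expand in even powers of $1/(2\pi\ii n)$ because $\tau_n$ does, so none of the delicate $\pm\sqrt{\tau_n}$ symmetrization bookkeeping you describe is required. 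Your final steps (principal square root of a quantity near $1$, analyticity of coefficients and remainder, local uniformity of the bounds) are fine as stated.
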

\begin{proof}
Let $q \in M_S$ and $N \in \Z_{ \ge 0}$.
Following the proof of \cite[Theorem 7.3]{KP}, for any $p \in V^*_{q, S}$ and $n \in S_+^\bot$, $8 I_n(p) /\gamma_n^2(p)$ can be computed 
by considering a sequence of $S_n$-gap potentials $(p_j)_{j \geq 1}$ in $W$ (cf Theorem~\ref{Theorem Birkhoff coordinates}) with $\gamma_n(p_j) > 0$ so that $p_j \to p$ as $j \to \infty$
where $S_n : = S \cup\{-n , n\}$. 
One then obtains in the limit the formula $8 I_n(p) / \gamma_n^2(p) = \chi(\tau_n(p), p)$ where $\chi(\lambda) \equiv \chi(\lambda, p)$ is given by
$ \frac{1}{\sqrt{\lambda - \lambda_0}} \prod_{k \in S_+} \frac{\lambda -  \dot \lambda_k }{\sqrt{(\lambda_k^+ - \lambda)( \lambda_k^- -\lambda)}}$, implying that
for $n \ge n_0$ with $n_0 \ge m_S$ chosen so that $Re( \tau_n(p) ) > 0$ for any $n \ge n_0$ (cf Lemma \ref{asymptotics tau})
\begin{equation}\label{chi (lambda) sec lemma}
 \chi(\tau_n) = \frac{1}{\sqrt{\tau_n}} \frac{1}{\sqrt{1 - \frac{\lambda_0}{\tau_n}} }
\prod_{k \in S_+} \frac{1 -  \frac{\dot \lambda_k}{\tau_n}} {\sqrt{ (1 - \frac{\lambda_k^+}{\tau_n})( 1 - \frac{\lambda_k^-}{\tau_n})  }}\,.
\end{equation}
%where $\tau_n := \frac{\lambda_j^- + \lambda_j^+}{2}$. 
%Note that $\chi(\lambda, q)$ has an asymptotic expansion as $\lambda \to \infty$ of the form
%\begin{equation}\label{chi (lambda) sec lemma A}
%\chi(\lambda, q) = \frac{1}{\sqrt{\lambda}} \Big( 1 + \sum_{ k= 1}^\infty \frac{\chi^{ae}_k(q)}{\lambda^k}
%\frac{{\cal R}^\chi_N(\lambda, q)}{\lambda^{N+1}} 
%\Big)\,. 
%\end{equation}
%One has 
%$$ \frac{1}{\sqrt{\lambda - \lambda_0}} = \frac{1}{\sqrt{\lambda}} \frac{1}{\sqrt{1 - \frac{\lambda_0}{\lambda}}} = 
%\frac{1}{\sqrt{\lambda}} \big(1 + \sum_{k = 1}^\infty b_k \frac{1}{\lambda^k}\big)\,,  $$
%$$ \lambda - \dot \lambda_j = \lambda \big(1 - \frac{\dot \lambda_j}{\lambda} \big)\,, $$
%$$\sqrt{(\lambda - \lambda_j^+)(\lambda - \lambda_j^-)} = \lambda \sqrt{1 - \frac{\lambda_j^+}{\lambda}}\sqrt{1 - \frac{\lambda_j^-}{\lambda}}$$
%hence 
%$$ \frac{\lambda - \dot \lambda_j}{\sqrt{(\lambda - \lambda_j^+)(\lambda - \lambda_j^-)}} = 
%\frac{1 - \frac{\dot \lambda_j}{\lambda}}{\sqrt{1 - \frac{\lambda_j^+}{\lambda}}\sqrt{1 - \frac{\lambda_j^-}{\lambda}}} = 
%1 + \sum_{k = 1}^\infty b_{j k} \frac{1}{\lambda^k}\,. $$
Combining  \eqref{chi (lambda) sec lemma}
%-\eqref{chi (lambda) sec lemma A} 
with the expansion of $\tau_n$ (cf Lemma~\ref{asymptotics tau}) then
yields the expansion
$$
\sqrt{n \pi} \xi_n(p) = \sqrt{n \pi} \sqrt{\chi(\tau_n(p), p)} = 1 + \sum_{k =1}^{N}\frac{\xi_{2k}^{ae}(p)}{(2 \pi \ii n)^{2k}} +  \frac{{\cal R}^{\xi_n}_{2N}(p)}{(2 \pi \ii  n)^{2N+2}}\,. 
$$
where $V^*_{q, S} \to \C$, $p \mapsto \xi^{ae}_{2k} (p)$, $k \ge 1$, are real analytic and $\sup_{n \in S^\bot_+}|{\cal R}^{\xi_n}_{2N}(p)|$ is bounded.
For $n \in S^\bot_+$ with $n < n_0$, ${\cal R}^{\xi_n}_{2N}(p)$ is defined by $(2 \pi \ii  n)^{2N+2}\big( \sqrt{n \pi} \xi_n(p)  - 1 - \sum_{k =1}^{N}\frac{\xi_{2k}^{ae}(p)}{(2 \pi \ii n)^{2k}}\big)$.
Since for any $n \in S^\bot_+$, $\sqrt{n \pi} \xi_n$ is real analytic on $V^*_{q, S}$ , so is $ {\cal R}_{N}^{\xi_n}(p)$. 
Going through the arguments of the proof one sees that the constant $C_N$ in \eqref{estimate remainder xi} can be chosen locally uniformly for $p \in V^*_{q, S}$.
\end{proof}

%\noindent 
Next we prove an expansion for the factor $ \dot m_2(\tau_n(q), q) / \ddot \Delta(\tau_n(q), q)$ in  \eqref{definition Wn} for $ q \in M_S$.
More precisely, we show the following
\begin{lemma}\label{m2 tau n delta ddot lemma appendice}
Let $q \in M_S$ and $N \in \Z_{ \ge 0}.$ Then for any $p \in V^*_{q, S}$,
 $d_n(p):=  - \dot m_2(\tau_n(p), p) / \ddot \Delta(\tau_n(p), p)$, $n \in S^\bot_+$,
has an expansion of the form 
$$
d_n(p)= 
1 +  \sum_{k = 1}^N \frac{d_{2k}^{ae}(p)}{(2 \pi \ii n)^{2k}} + \frac{{\cal R}_{2N}^{d_n}(p)}{(2 \pi \ii n)^{2 N + 2}}\,,
$$
where $V^*_{q, S} \to \C$, $p \mapsto d^{ae}_{2k} (p)$, $k \ge 1$, and $ V^*_{q, S} \to \C$, $p \mapsto {\cal R}_{2N}^{d_n}(p)$,
are real analytic. 
In addition, the remainders ${\cal R}_{2N}^{d_n}(p)$ satisfy
$$
\sup_{n \in S_+^\bot} |{\cal R}_{2N}^{d_n}(p) |  \leq C_{N}
$$
where the constant $C_{N} > 0$ can be chosen locally uniformly for $p \in V^*_{q, S}$.
\end{lemma}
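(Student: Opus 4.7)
The plan is to exploit the product representations of $\dot\Delta$ and $m_2$ recalled in Appendix \ref{Birkhoff map} together with the $S$-gap structure to reduce $d_n(p)$ to a finite product, and then to combine this with the expansion of $\tau_n$ from Lemma \ref{asymptotics tau}.

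First I would differentiate the product representation $\dot\Delta(\lambda) = -\prod_{k\ge 1}(\dot\lambda_k-\lambda)/(\pi^2 k^2)$ once more at $\lambda=\tau_n$. Since for $p\in V^*_{q,S}$ and $n\in S_+^\bot$ we have $\dot\lambda_n(p)=\tau_n(p)$, only the factor indexed by $j=n$ survives when differentiating the product rule, and one obtains
\[
\ddot\Delta(\tau_n) \;=\; \frac{1}{\pi^2 n^2}\prod_{k\ne n}\frac{\dot\lambda_k-\tau_n}{\pi^2 k^2}.
\]
Similarly, using $m_2(\lambda)=\prod_{k\ge 1}(\mu_k-\lambda)/(\pi^2 k^2)$ and $\mu_n(p)=\tau_n(p)$ for $n\in S_+^\bot$, one has
\[
\dot m_2(\tau_n) \;=\; -\frac{1}{\pi^2 n^2}\prod_{k\ne n}\frac{\mu_k-\tau_n}{\pi^2 k^2}.
\]
Taking the quotient the common $1/(\pi^2 n^2)$ factor cancels and, using once more that $\mu_k(p)=\dot\lambda_k(p)$ for every $k\in S_+^\bot\setminus\{n\}$, almost all factors in the remaining infinite product cancel as well, leaving the \emph{finite} product
\[
d_n(p)\;=\;-\,\frac{\dot m_2(\tau_n,p)}{\ddot\Delta(\tau_n,p)}\;=\;\prod_{k\in S_+}\frac{\mu_k(p)-\tau_n(p)}{\dot\lambda_k(p)-\tau_n(p)}\;=\;\prod_{k\in S_+}\frac{1-\mu_k(p)/\tau_n(p)}{1-\dot\lambda_k(p)/\tau_n(p)}.
\]

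Next I would expand this product in powers of $1/(2\pi\ii n)$. By Lemma \ref{asymptotics tau}, for $n$ sufficiently large (say $n\ge n_0$ chosen locally uniformly in $p$, with $\mathrm{Re}\,\tau_n(p)>0$), $\tau_n(p)^{-1}$ admits a real-analytic expansion in even powers of $1/(2\pi\ii n)$ of the form
\[
\frac{1}{\tau_n(p)} \;=\; \sum_{k=1}^{N+1}\frac{\tau^{(-1)}_{2k}(p)}{(2\pi\ii n)^{2k}} \;+\; \frac{R_{2N+2}(p)}{(2\pi\ii n)^{2N+4}},
\]
with analytic coefficients $\tau^{(-1)}_{2k}$ and a remainder bounded uniformly in $n$ (locally uniformly in $p$). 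Since $\mu_k(p)$ and $\dot\lambda_k(p)$ for $k\in S_+$ are analytic on $V^*_{q,S}$ (see Appendix \ref{Birkhoff map}), multiplying these expansions out and inverting the denominator factors via a geometric-series expansion truncated at order $N$ yields, for $n\ge n_0$, an expansion
\[
d_n(p) \;=\; 1 + \sum_{k=1}^N \frac{d^{ae}_{2k}(p)}{(2\pi\ii n)^{2k}} + \frac{{\cal R}^{d_n}_{2N}(p)}{(2\pi\ii n)^{2N+2}},
\]
where the coefficients $d^{ae}_{2k}(p)$ are polynomial expressions in $\mu_j(p),\dot\lambda_j(p)$ ($j\in S_+$) and in the $\tau^{(-1)}_{2\ell}(p)$, hence real analytic on $V^*_{q,S}$. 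The leading constant is $1$ because as $n\to\infty$ each factor of the product tends to $1$. For the finitely many remaining indices $n\in S_+^\bot$ with $n<n_0$, one simply \emph{defines} the remainder by
\[
{\cal R}^{d_n}_{2N}(p) \;=\; (2\pi\ii n)^{2N+2}\Bigl(d_n(p)-1-\sum_{k=1}^N \frac{d^{ae}_{2k}(p)}{(2\pi\ii n)^{2k}}\Bigr),
\]
which is real analytic on $V^*_{q,S}$ since so is $d_n$ (note that the finite product form above shows $d_n$ is analytic on all of $V^*_{q,S}$, with no pole issues because for $p\in V^*_{q,S}$ sufficiently close to $q\in M_S$ one has $\dot\lambda_k(p)\ne \tau_n(p)$ for $k\in S_+$, $n\in S_+^\bot$, by the isolation of gaps recalled in Appendix \ref{Birkhoff map}).

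Finally, the uniform bound $\sup_{n\in S_+^\bot}|{\cal R}^{d_n}_{2N}(p)|\le C_N$, locally uniformly in $p$, follows for $n\ge n_0$ from the explicit form of the remainder obtained in the geometric-series expansion together with the uniform bounds on $R_{2N+2}(p)$ of Lemma \ref{asymptotics tau}, and for the finitely many $n<n_0$ it is automatic. The main subtlety, which is however mild, is verifying that the denominators $1-\dot\lambda_k(p)/\tau_n(p)$ stay bounded away from zero uniformly in $n\in S_+^\bot$ and locally uniformly in $p\in V^*_{q,S}$; this in turn follows from the isolating-neighborhood property and the asymptotics $\tau_n(p)=n^2\pi^2+O(1)$, and is the only place where the geometry of the gaps enters quantitatively.
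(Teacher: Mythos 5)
Your proposal is correct and follows essentially the same route as the paper's proof: the product representations of $m_2$ and $\dot\Delta$, the collapse to the finite product $\prod_{k\in S_+}\bigl(1-\mu_k/\tau_n\bigr)/\bigl(1-\dot\lambda_k/\tau_n\bigr)$ using $\mu_k=\dot\lambda_k=\tau_k$ off $S_+$, and then the expansion of $\tau_n$ from Lemma~\ref{asymptotics tau}. Your additional remarks on handling the finitely many small $n$ and on the denominators staying bounded away from zero are details the paper leaves implicit, but the argument is the same.
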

\begin{proof}
%It is well known that $\dot m_2$ and $\dot \Delta$ admit product expansions: 
Let $q \in M_S$ and $N \in \Z_{\ge 0}$ be given. 
%By \cite[Proposition B.6]{KP} , 
For any $p \in V^*_{q, S}$, $m_2(\lambda) \equiv m_2(\lambda, p)$ admits the product representation (cf Appendix \ref{Birkhoff map})
$m_2(\lambda) = \prod_{k \geq 1} \frac{\mu_k - \lambda}{\pi^2 k^2}$
where $(\mu_k)_{k \ge 1}$ denote the Dirichlet eigenvalues of the operator $- \partial_x^2 + p$, listed in lexicograpic order. By \eqref{product represenations}, $\dot \Delta(\lambda)$ also admits such a representation,
$\dot \Delta(\lambda) = - \prod_{k \geq 1} \frac{\dot \lambda_k - \lambda}{\pi^2 k^2}$ with $(\dot \lambda_k)_{k \ge 1}$ being listed in lexicographic order.
Since $(\mu_k)_{k \geq 1}, (\dot \lambda_k)_{k \geq 1}$ are simple 
$$
\dot m_2(\mu_n) = - \frac{1}{\pi^2 n^2} \prod_{k \neq n} \frac{\mu_k - \mu_n}{\pi^2 k^2} \neq 0 \quad {\mbox{ and }} \quad
\ddot \Delta(\dot \lambda_n) = \frac{1}{\pi^2 n^2} \prod_{k \neq n} \frac{\dot \lambda_k - \dot \lambda_n}{\pi^2 k^2} \neq 0\,.
$$ 
For any $n \in S_+^\bot$, one has $\mu_n = \dot \lambda_n = \tau_n$ and hence one concludes that for $n$ sufficiently large so that $Re \, \tau_n > 0,$
\begin{equation}\label{m2 Delta tau n}
- \frac{\dot m_2(\tau_n)}{\ddot \Delta(\tau_n)} = \prod_{k \in S_+} \frac{\mu_k - \tau_n}{\dot \lambda_k - \tau_n} = 
\prod_{k \in S_+} \frac{1 - \frac{\mu_k}{\tau_n}}{1 - \frac{\dot \lambda_k}{\tau_n}}\,.
\end{equation}
Combining this with the results of $\tau_n$ (Lemma~\ref{asymptotics tau}) then yields the expansion of the stated form.
Going through the arguments of the proof one concludes that $d_{2k}^{ae}$, $k \ge 1$, and ${\cal R}^{d_n}_{2N}$ have the claimed properties.
\end{proof}

It remains to prove that the factors $e^{\pm \ii \beta_n(q)}$, appearing in the definition  \eqref{definition Wn} of $W_{\pm n}(x, q)$,
admit an expansion as well. Clearly, it suffices to prove such an expansion for $\beta_n(q)$. 
%We recall that $\beta_n(q)$ is an important ingredient  of the definition of the angle variable $\theta_n(q)$ (cf \cite[Theorem 8.5]{KP}). 
Recall that for any $q \in M_S$ and $n \in S_+^\bot$, $\beta_n(q)$ is given by
$\beta_n(q) = \sum_{\ell \in S_+} \beta_\ell^n(q)$ (\cite[Theorem 8.5]{KP}) and by \cite[page 70]{KP},
$$
\beta_\ell^n(q) =  \int_{\lambda_\ell^-(q)}^{\mu_\ell^\ast(q)} \frac{\psi_n(\lambda, q)}{\sqrt{\Delta^2(\lambda, q) - 4}} d \lambda, \qquad
\mu_\ell^\ast (q) = (\mu_n(q), \delta(\mu_\ell)) \in \R^2
$$
where $\delta(\l ) \equiv \delta(\l, q)$ denotes the anti-discriminant. Here we used that for any $\ell \in S_+^\bot \setminus{\{n\}}$, one has $\mu_\ell = \lambda_\ell^-$
and hence $\beta_\ell^n(q) = 0$. Furthermore recall that by  \cite[Theorem 8.1]{KP} $\psi_n(\l, q)$ is an entire function of $\l$.
\begin{lemma}\label{asymptotics beta_n}
Let $q \in M_S$ and $N \in \Z_{ \ge 0}.$ After shrinking $V^*_{q, S}$, if needed, it follows that for any $p \in V^*_{q, S}$,
 $\b_n(p)$, $n \in S_+^\bot$, admits an expansion of the form
$$
\b_n(p)=  \frac{1}{n\pi} \sum_{k = 0}^N \frac{\b_{2k}^{ae}(p)}{(2 \pi \ii n)^{2k}} + \frac{{\cal R}_{2N}^{\b_n}(p)}{(2 \pi \ii n)^{2 N + 2}}\,,
$$
where $V^*_{q, S} \to \C$, $p \mapsto \b^{ae}_{2k} (p)$, $k \ge 1$, and $ V^*_{q, S} \to \C$, $p \mapsto {\cal R}_{2N}^{\b_n}(p)$,
are real analytic. 
In addition, the remainders ${\cal R}_{2N}^{\b_n}(p)$ satisfy
$$
\sup_{n \in S_+^\bot} |{\cal R}_{2N}^{\b_n}(p) |  \leq C_{N}
$$
where the constant $C_{N} > 0$ can be chosen locally uniformly for $p \in V^*_{q, S}$.
\end{lemma}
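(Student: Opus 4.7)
The plan is to analyze $\beta_n(p) = \sum_{\ell \in S_+} \beta_\ell^n(p)$ term by term, for each fixed $\ell \in S_+$ and $n \in S_+^\bot$ with $|n|$ large. By shrinking $V^*_{q,S}$ if needed, one may assume that the isolating neighborhoods $U_\ell$, $\ell \in S_+$, are a fixed finite collection of compact sets, bounded and separated from the asymptotic locations $\tau_n(p) \approx n^2\pi^2$ for all $n \in S_+^\bot$ sufficiently large (cf. Lemma \ref{asymptotics tau}). In particular, the integration path from $\lambda_\ell^-(p)$ to $\mu_\ell^*(p)$ on the spectral curve stays inside a compact neighborhood $K_\ell \subset \mathbb{C}$ which is bounded away from every $U_n$ with $n \in S_+^\bot$, $|n|$ large.

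The first main step is to establish an asymptotic expansion, uniform for $\lambda$ in $K_\ell$ and locally uniform in $p \in V^*_{q,S}$, of the normalized abelian differential $\psi_n(\lambda, p)$. Since $p$ is an $S$-gap potential, $\psi_n$ is entire in $\lambda$ and, up to normalization, given by a polynomial of degree at most $|S_+|$ determined by the condition $\int_{b_k} \psi_n/\sqrt{\Delta^2-4}\, d\lambda = 2\pi \delta_{nk}$ for $k \in S_+ \cup \{n\}$. Using the product representations \eqref{product represenations} of $\Delta^2 - 4$ and $\dot\Delta$, one factors $\sqrt{\Delta^2(\lambda) - 4} = \sqrt{\lambda_0^+ - \lambda}\prod_{k \in S_+}\sqrt{(\lambda_k^+-\lambda)(\lambda_k^- -\lambda)}\cdot R_n(\lambda, p)$, where
$$R_n(\lambda, p) := \prod_{k \in S_+^\bot}\frac{\sqrt{(\lambda_k^+-\lambda)(\lambda_k^- -\lambda)}}{\pi_k^2} = \prod_{k \in S_+^\bot}\frac{\tau_k - \lambda}{\pi_k^2}$$
for $p \in M_S$ (and analytically continued to $V^*_{q,S}$). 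Combining the asymptotic expansion of $\tau_k(p)$ from Lemma \ref{asymptotics tau} with a residue computation at $\lambda = \tau_n$ (which pins down the leading coefficient of $\psi_n$), and normalizing through the finitely many b-cycle conditions indexed by $S_+$, I obtain an expansion of the form
\begin{equation*}
\psi_n(\lambda, p) = \frac{1}{n\pi}\Big(\psi_0^{ae}(\lambda, p) + \sum_{k=1}^N \frac{\psi_{2k}^{ae}(\lambda, p)}{(2\pi \ii n)^{2k}}\Big) + \frac{\mathcal{R}^{\psi_n}_{2N}(\lambda, p)}{(2\pi \ii n)^{2N+3}},
\end{equation*}
with each $\psi_{2k}^{ae}(\cdot, p)$ a polynomial in $\lambda$ depending real analytically on $p \in V^*_{q,S}$, and $\sup_{\lambda \in K_\ell, n \in S_+^\bot}|\mathcal{R}^{\psi_n}_{2N}(\lambda, p)|$ locally uniformly bounded. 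The appearance of only even negative powers of $n$ is a consequence of the product representation, since $R_n(\lambda, p)^{-1}$ and the residue $\dot\Delta(\tau_n)^{-1}$ both expand in inverse even powers of $2\pi \ii n$ via Lemma \ref{asymptotics tau}.

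The second step substitutes this expansion into the integral $\beta_\ell^n(p) = \int_{\lambda_\ell^-(p)}^{\mu_\ell^*(p)} \psi_n(\lambda, p)/\sqrt{\Delta^2(\lambda,p) - 4}\, d\lambda$. On $K_\ell$, the factor $\big(\prod_{k \in S_+^\bot}(\tau_k(p) - \lambda)/\pi_k^2\big)^{-1}$ inherits the same type of even-power expansion from Lemma \ref{asymptotics tau}, the remaining factor $1/\sqrt{\lambda_0^+ - \lambda}\prod_{k \in S_+}\sqrt{(\lambda_k^+-\lambda)(\lambda_k^--\lambda)}$ being independent of $n$ and extending analytically to the prescribed sheet along a fixed contour in $K_\ell$ with $\mathcal{R}$ endpoints $\lambda_\ell^-(p)$ and $\mu_\ell^*(p)$, which depend real analytically on $p$. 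Multiplying the two expansions, integrating termwise against the $n$-independent factor and over a $p$-dependent contour that can be deformed to a fixed one, and reassembling yields
\begin{equation*}
\beta_\ell^n(p) = \frac{1}{n\pi}\sum_{k=0}^N \frac{\beta_{\ell, 2k}^{ae}(p)}{(2\pi \ii n)^{2k}} + \frac{\mathcal{R}_{2N}^{\beta_\ell^n}(p)}{(2\pi \ii n)^{2N+2}},
\end{equation*}
with each $\beta_{\ell, 2k}^{ae}: V^*_{q,S} \to \mathbb{C}$ real analytic and $\sup_{n \in S_+^\bot}|\mathcal{R}_{2N}^{\beta_\ell^n}(p)|$ locally uniformly bounded on $V^*_{q,S}$. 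Summing over the finite set $S_+$ and setting $\beta_{2k}^{ae}(p) := \sum_{\ell \in S_+}\beta_{\ell,2k}^{ae}(p)$, $\mathcal{R}_{2N}^{\beta_n}(p) := \sum_{\ell \in S_+}\mathcal{R}_{2N}^{\beta_\ell^n}(p)$, yields the claimed expansion.

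The main technical obstacle will be step one: producing the expansion of $\psi_n$ with the correct parity (only even inverse powers of $n$ appearing inside the bracket) and with the explicit factor $1/(n\pi)$ in front. This parity reflects a symmetry between the expansion of $\dot\Delta(\tau_n)^{-1}$ and that of the residue of the normalized differential at the nth double point, and must be extracted from the product representations by combining the expansion $\tau_n = n^2\pi^2 + O(n^{-2})$ with the finite $S_+$-normalization system. Once $\psi_n$ is understood, the integral estimates reduce to routine applications of the tools already developed in Lemma \ref{asymptotics tau} and elementary analyticity arguments.
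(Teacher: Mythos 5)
Your proposal follows essentially the same route as the paper: exploit the collapse of all closed gaps to reduce $\psi_n/\sqrt{\Delta^2-4}$ to a degree-$|S_+|$ polynomial over $\sqrt{\mathcal R}$ times the factor $n\pi/(\tau_n-\lambda)$, expand that factor in even inverse powers of $n$ via Lemma \ref{asymptotics tau}, and determine the remaining polynomial coefficients from the finitely many normalization conditions indexed by $S_+$. The one step you flag as the main obstacle is resolved in the paper exactly as you anticipate: the coefficients $s^n_j$ of that polynomial solve an $|S_+|\times|S_+|$ linear system whose entries are the gap integrals $\int_{\lambda_\ell^-}^{\lambda_\ell^+}\frac{\lambda^j}{\sqrt{\mathcal R}}\frac{n^2\pi^2}{\tau_n-\lambda}\,d\lambda$, uniquely solvable after shrinking $V^*_{q,S}$ (which is where the shrinking is actually needed), so that Cramer's rule transfers the even-power expansion of $\tau_n$ to the $s^n_j$ and hence to $\beta_\ell^n$.
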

\begin{proof}
Let $p \in V^*_{q, S}$ and $n \in S_+^\bot$. Since $p$ is an $S-$gap potential, it follows from \cite[Theorem 8.5]{KP} that the quotient of $\psi_n(\lambda) \equiv \psi_n(\lambda, p)$
with $\sqrt{\Delta^2(\l) - 4} \equiv \sqrt{\Delta^2(\l, p) - 4}$ is of the form
$$
\frac{\psi_n(\lambda)}{\sqrt{\Delta^2(\l) - 4}} =  \frac{\l^M + s^n_{M-1}\l^{M-1} + \cdots + s^n_{0}}{\sqrt{\mathcal R}} \frac{n\pi}{\tau_n - \l}
$$
where up to a sign, the complex numbers $s^n_{j} \equiv s^n_{j}(p)$, $0 \le j \le M-1$, are the symmetric functions of the roots $\sigma^n_\ell$, $\ell \in S_+,$
of $\psi_n(\l)$, $M = |S|$, and  $\mathcal R(\l) \equiv \mathcal R(\l, p)$ is given by
$$
\mathcal R(\l) = (\l^+_0 - \l) \prod_{j \in S_+}( \l_j^+ - \l)( \l_j^- - \l).
$$
Here we used that for any $k \ne n$ with $\l_k^+ = \l_k^-$, the eigenvalue $\l_k^+$ is also a root of $\psi_n(\l)$
and we listed the roots $\sigma^n_\ell$, $\ell \in S_+,$ in lexicographic order. Without loss of generality
we thus may assume in the sequel that $\l_\ell^+ \ne \l^-_\ell$ for any $\ell \in S_+$. It then follows that for any $\ell \in S_+$,
$$
\b_\ell^n =   \int_{\lambda_\ell^-}^{\mu_\ell^\ast} \frac{\psi_n(\lambda)}{\sqrt{\Delta^2(\lambda) - 4}} d \lambda
= \frac{1}{n \pi }  \int_{\lambda_\ell^-}^{\mu_\ell^\ast}  \frac{\l^M}{\sqrt{\mathcal R}} \frac{n^2\pi^2}{\tau_n - \l} d \l 
+ \frac{1}{n \pi }  \sum_{j = 0}^{M-1} s^n_j    \int_{\lambda_\ell^-}^{\mu_\ell^\ast}  \frac{\l^j}{\sqrt{\mathcal R}} \frac{n^2\pi^2}{\tau_n - \l}    d \l.
$$
Using Lemma \ref{asymptotics tau}, one concludes that  $\frac{n^2\pi^2}{\tau_n - \l} = 1 + O(\frac{1}{n^2})$ and shows in a straightforward way that $\frac{n^2\pi^2}{\tau_n - \l}$
and hence the integrals  $\int_{\lambda_\ell^-}^{\mu_\ell^\ast}  \frac{\l^j}{\sqrt{\mathcal R}} \frac{n^2\pi^2}{\tau_n - \l}    d \l$ admit an expansion
in $\frac{1}{(2\pi \ii n)^{2k}}$, $k \ge 0$, with coefficients and remainder having properties as stated.
It thus remains to show that for any $0 \le j \le M-1$, $s^n_j$ also admits such an expansion.
By the uniqueness statement of \cite[Proposition D.7]{KP} (and after shrinking $V^*_{q, S}$, if needed,)   it follows that $(s^n_j)_{0 \le j \le M-1}$
is the unique solution of the following inhomogeneous, linear $M \times M$ system 
$$
\sum_{j = 0}^{M-1} s^n_j    \int_{\lambda_\ell^-}^{\l_\ell^+}  \frac{\l^j}{\sqrt{\mathcal R}} \frac{n^2\pi^2}{\tau_n - \l}    d \l 
= - \int_{\lambda_\ell^-}^{\l_\ell^+}  \frac{\l^M}{\sqrt{\mathcal R}} \frac{n^2\pi^2}{\tau_n - \l} d \l \,, \quad \forall \ell \in S_+.
$$
It then follows that $\det(E^n) \ne 0$ where $E^n \equiv E^n(p)$ denotes the $M\times M$ matrix with coefficients 
$$
E^n_{\ell j} = \int_{\lambda_\ell^-}^{\l_\ell^+}  \frac{\l^j}{\sqrt{\mathcal R}} \frac{n^2\pi^2}{\tau_n - \l}    d \l  \,, \qquad \ell \in S_+, \,\, 0 \le j \le M-1.
$$
Therefore, 
$$
(s^n_j)_{0 \le j \le M-1} = - (E^n)^{-1} (b^n_\ell)_{\ell \in S_+}, \qquad \quad  b^n_\ell = \int_{\lambda_\ell^-}^{\l_\ell^+}  \frac{\l^M}{\sqrt{\mathcal R}} \frac{n^2\pi^2}{\tau_n - \l} d \l, \quad \ell \in S_+.
$$
Using once again the expansion of $\tau_n$ of Lemma \ref{asymptotics tau} one shows that $s^n_j$, $0 \le j \le M-1$, admit an expansion
in $\frac{1}{(2\pi \ii n)^{2k}}$, $k \ge 0$, with coefficients and remainder having properties as stated. As an aside, we remark that by
 \cite[Proposition D.7]{KP}, $\lim_{n \to \infty} \sigma^n_\ell = \dot \lambda_\ell$, $\ell \in S_+$, and hence $\lim_{n \to \infty} s^n_j =  s_j$
for any $0 \le j \le M-1$ where up to signs, $(s_j)_{0 \le j \le M-1}$ are the symmetric polynomials of $\dot \l_\ell,$ $\ell \in S_+$. 
One then concludes that $\sigma^n_\ell = \dot \lambda_\ell + O(\frac{1}{n^2})$, $\ell \in S_+$, and in turn $s^n_j = s_j + O(\frac{1}{n^2})$, $0 \le j \le M-1$.
%$$ \lim_{n \to \infty} E^n_{\ell j} =  E_{\ell j} := \int_{\lambda_\ell^-}^{\l_\ell^+}  \frac{\l^j}{\sqrt{\mathcal R}}    d \l $$
\end{proof}
%\noindent
We finish this appendix by proving an expansion of the KdV frequencies $\omega_n \equiv \omega_n^{kdv}$ (cf Section \ref{introduzione paper}) at finite gap potentials. 
Using the Birkhoff map, we view them as functions of the potential, which by a slight abuse of notation, we denote also by $\omega_n$.
%More precisely we show the following
\begin{lemma}\label{Lemma appendice espansione frequenze}
Let $q \in M_S$ and $N \in \Z_{ \ge 0}.$ Then for any $p \in V^*_{q, S}$, the KdV frequencies $\omega_n(p)$, $n \in S_+^\bot$,
have an expansion of the form 
\begin{equation}\label{expansion frequencies}
\omega_n(p)= (2 \pi n)^3 + \sum_{k = 1}^N \frac{\omega_{2k-1}^{ae}(p)}{(2 \pi n)^{2k-1}} + \frac{{\cal R}_{2N}^{\omega_n}(p)}{(2 \pi n)^{2 N + 1}}
\end{equation}
where  $V^*_{q, S} \to \C$, $p \mapsto \omega^{ae}_{2k-1} (p)$, $k \ge 1$, and $ V^*_{q, S} \to \C$, $p \mapsto {\cal R}_{2N}^{\omega_n}(p)$,
are real analytic. 
In addition, the remainders ${\cal R}_{2N}^{\omega_n}(p)$ satisfy
$$
\sup_{n \in S_+^\bot} |{\cal R}_{2N}^{\omega_n}(p) |  \leq C_{N}\,,
$$
where the constant $C_{N} > 0$ can be chosen locally uniformly for $p \in V^*_{q, S}$.
\end{lemma}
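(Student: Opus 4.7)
The plan is to extract a closed spectral representation of $\omega_n(p)$ on $V^*_{q,S}$ that is \emph{regular} at the collapsed $n$-th gap ($n\in S_+^\bot$), and then substitute into it the expansions of the spectral data which are already available. The starting point is the formula of Kappeler--P\"oschel expressing the KdV frequencies as a period integral over a cycle encircling the $n$-th gap, namely (on potentials $p$ with $\gamma_n(p)>0$)
\[
\omega_n(p) \;=\; -\frac{2}{\pi}\,\oint_{\Gamma_n} \frac{\psi^{kdv}(\lambda,p)}{\sqrt[c]{\Delta^2(\lambda,p)-4}}\,d\lambda,
\]
where $\psi^{kdv}(\lambda,p)$ is an entire function of $\lambda$ whose leading term in $\lambda$ is $\lambda^{3/2}$-type and whose coefficients depend analytically on $p\in W$; the contour $\Gamma_n$ encircles $G_n$ and is disjoint from the other gaps. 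For $p\in V^*_{q,S}$ with $n\in S_+^\bot$, one has $\gamma_n(p)=0$, so the two branch points of $\sqrt[c]{\Delta^2-4}$ at the endpoints of $G_n$ collapse into the double zero at $\lambda=\tau_n(p)$; by the residue theorem, the period integral becomes a residue at $\tau_n(p)$ and the formula collapses to an evaluation of the form
\[
\omega_n(p) \;=\; 8\pi n\,\tau_n(p)\,\chi_n(\tau_n(p),p) \;+\; n\pi\,\rho_n(\tau_n(p),p),
\]
where, using $\Delta(\tau_n)=(-1)^n 2$, $\dot\Delta(\tau_n)=0$, $m_1(\tau_n)=m_2'(\tau_n)=(-1)^n$, the factors $\chi_n$ and $\rho_n$ are built from finite products over $k\in S_+$ of factors of the form $1-\mu_k/\lambda$, $1-\lambda_k^\pm/\lambda$, $1-\dot\lambda_k/\lambda$, and from evaluations of $\dot m_2(\tau_n)$, $\ddot\Delta(\tau_n)$, $\dot m_1(\tau_n)$, $\dot m_1'(\tau_n)$. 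In particular, $\chi_n(\lambda,\cdot)\to 1$ and $\rho_n(\lambda,\cdot)$ is $O(\lambda)$ as $\lambda\to\infty$, so the leading term is $8\pi n\cdot n^2\pi^2 = (2\pi n)^3$, which matches the value at $p=0$.

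Once this representation is in hand, the asymptotic expansion is essentially a bookkeeping exercise. First substitute the expansion of $\tau_n(p)$ from Lemma~\ref{asymptotics tau} into $\tau_n\,\chi_n(\tau_n,p)$ and $\rho_n(\tau_n,p)$. The factors appearing in $\chi_n$ and $\rho_n$ are rational in $\tau_n$ with poles away from $\tau_n$ for $|n|$ large, so each of them admits an expansion in even powers of $(2\pi\ii n)^{-1}$ with real-analytic coefficients on $V^*_{q,S}$, proved exactly as in Lemma~\ref{lemma appendice espansione xi n} and Lemma~\ref{m2 tau n delta ddot lemma appendice}. Multiplying the even expansion by the prefactor $n$ in front of both $\tau_n\,\chi_n$ and $\rho_n$ produces an expansion in \emph{odd} powers of $n$, which is the form asserted in \eqref{expansion frequencies}. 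The coefficients $\omega^{ae}_{2k-1}(p)$ are real-analytic on $V^*_{q,S}$ as finite algebraic combinations of real-analytic spectral functions, and the remainder $\mathcal R^{\omega_n}_{2N}(p)$ inherits the locally uniform bound $\sup_{n\in S_+^\bot}|\mathcal R^{\omega_n}_{2N}(p)|\le C_N$ from the corresponding bounds on $\mathcal R^{\tau_n}_{2N}$ and on the remainders of the spectral factors.

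The hard part will be establishing the collapsed representation at $\tau_n$, that is, justifying the passage $\gamma_n\to 0$ inside the period integral and identifying the resulting rational factors. The delicate point is that $\sqrt[c]{\Delta^2(\lambda)-4}$ has a double zero at $\lambda=\tau_n$ when $\gamma_n=0$, so the residue computation requires the precise local behavior $\Delta^2(\lambda)-4 = \tfrac{1}{2}\ddot\Delta(\tau_n)(\lambda-\tau_n)^2 + O((\lambda-\tau_n)^3)$ together with the product representations of $\dot\Delta$ and $m_2$ recalled in Appendix~\ref{Birkhoff map}; these are exactly the ingredients used in the proof of Lemma~\ref{m2 tau n delta ddot lemma appendice}. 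Once the closed form is obtained, the remainder of the argument is a mechanical application of the expansions already proved in this appendix.
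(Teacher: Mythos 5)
Your plan diverges from the paper's proof at the very first step, and that step is where the real content lies. The paper does not start from a contour integral around $\Gamma_n$ with $\sqrt{\Delta^2-4}$ in the denominator; it starts from the $b$-period formula $\omega_n = 12\ii\int_{b_n}\Omega_4$ of \cite[formula (2.19)]{KT}, where $\Omega_4$ is the second-kind Abelian differential \eqref{bubble 0} on the \emph{finite-genus} surface $\Sigma_p$ whose branch points are only the simple periodic eigenvalues $\lambda_j^\pm$, $j\in J(p)\subseteq S_+$. For $n\in S_+^\bot$ the point $\tau_n$ is not a branch point of $\Sigma_p$ at all, so no residue computation and no $\gamma_n\to 0$ limit are needed: the $b_n$-period degenerates directly to $\omega_{m_S}+24\ii\int_{\tau_{m_S}}^{\tau_n}\Omega_4$, and the expansion comes from the expansion of the Abelian integral at $\infty$ in powers $\lambda^{3/2},\lambda^{1/2},\lambda^{-1/2},\dots$ together with Lemma~\ref{asymptotics tau}. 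Your proposed starting identity is unsubstantiated and looks wrong in kind: a closed contour around the $n$-th gap computes an $a$-period (this is how the \emph{actions} $I_n$ and the angle data are represented in \cite{KP}), whereas the frequencies are $b$-periods, and the normalized differential has vanishing $a$-periods. Moreover an entire function of $\lambda$ cannot have ``$\lambda^{3/2}$-type'' leading behaviour, so the object $\psi^{kdv}$ you posit is not even well specified. You flag the collapsed representation as ``the hard part'', but it is not a technical passage to the limit in a known formula --- the formula itself is missing.

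There is a second, independent gap in the bookkeeping. Even granting a representation of the shape $\omega_n = 8\pi n\,\tau_n\chi_n(\tau_n)+n\pi\rho_n(\tau_n)$, your own asymptotics are inconsistent: if $\rho_n(\lambda)=O(\lambda)$ then $n\pi\rho_n(\tau_n)=O(n^3)$ and contributes to the leading term, contradicting the claim that the leading term is $8\pi n\cdot n^2\pi^2$. More importantly, ``an expansion in odd powers of $n$'' would a priori contain a term of order $n^{1}$, which is absent from \eqref{expansion frequencies}. The paper closes this by invoking the a priori asymptotics $\omega_n=(2\pi n)^3+O(1/n)$ (\cite[Proposition 8.1]{KST2}, equation \eqref{asintotica rozza omega n}), which forces the coefficients of $n^2$, $n^1$ and $n^0$ in the formal expansion to vanish. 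Your plan never addresses why these intermediate terms disappear, and without that input the asserted form \eqref{expansion frequencies} does not follow from the mechanical substitution you describe.
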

\begin{proof}
Let $q \in M_S$ and $N \in \Z_{ \ge 0}$ be given. 
The basic ingredient into our proof of \eqref{expansion frequencies} are formulas of the frequencies
in terms of periods of an Abelian differential of the second kind on the hyperelliptic Riemann surface $\Sigma_p$, 
associated to the periodic spectrum of $L_p = - \partial^2_{x} + p$ (see \cite{Dubrovin}, \cite{DKN}, \cite{IM}, \cite{KT}, \cite{MM}). 
We follow \cite[Section 2]{KT} and note that the arguments made there extend to complex valued potentials: for any $p \in V^*_{q, S}$, denote by $\Sigma_p$ the compact Riemann surface 
associated to the simple periodic eigenvalues  of $p$, 
$$
\Sigma_p := \big\{ (\lambda, \mu) \in \C^2 : \mu^2 = 
(\lambda - \lambda_0) \prod_{j \in J}(\lambda - \lambda_j^+)(\lambda - \lambda_j^-) \big\} \cup \{ \infty \}
$$
where $J \equiv J(p) := \{ j \in S_+ \, : \, \lambda^+_j(p) \ne \lambda^-_j(p) \}$. The variable $z \in \C$ around the point $z = 0$ gives a complex chart in a neighborhood of the 
branch point $\infty \in \Sigma_p$ via the substitution $\lambda = - \frac{1}{z^2}$. By construction, this chart is defined uniquely up to a change of sign 
of the variable $z$, $z \mapsto - z$, and is referred to as standard chart. Then $\Sigma_p$ admits an Abelian differential $\Omega_4$ 
of the second kind, uniquely determined by the following properties: (i) $\Omega_4$ is holomorphic on $\Sigma_p \setminus \{ \infty \}$;
(ii) near $\infty$, $\Omega_4$ is of the form 
$$
\Omega_4 = \frac{1}{z^4}\, d z + h(z)\, d z, \qquad h\,\,\text{holomorphic near}\, z = 0
$$
in the appropriate standard chart; (iii) $\int_{a_j} \Omega_4 = 0$ for any $ j \in J$
where $a_j$ are the smooth cycles around the gap $[\lambda_j^- , \lambda_j^+]$ defined in \cite[Section 2]{KT}. 
The differential $\Omega_4$ is of the form 
\begin{equation}\label{bubble 0}
\frac{\ii}{2} \frac{\lambda^{M + 1} + c_1 \lambda^M + \cdots + c_{M + 1}}{\sqrt{(\lambda - \lambda_0^+)
\prod_{j \in J} (\lambda - \lambda_j^+)(\lambda - \lambda_j^-)}}\, d \lambda 
\end{equation}
%The $N+ 1$ coefficients $c_1, \ldots, c_{N + 1}$ are determined by the normalization conditions 
%$\int_{a_j} \Omega_4 = 0$, $j \in S_+$, and the normalization condition at $\infty$ saying 
%that the coefficient of $\frac{1}{z^2}\, d z$ vanishes. 
where $M:= |J(p)|$ and the coefficients $c_1, \ldots , c_{M+1}$ are real analytic functions on $V^*_{q, S}$. Then by \cite[formula (2.19)]{KT}, 
$$
\omega_n = 12 \ii \int_{b_n} \Omega_4 \,, \quad \forall n \geq 1\,,
$$
where $b_n$, $n \geq 1$, are the cycles as defined in \cite[Section 2]{KT}. Let 
$m_S := 1 + {\rm max}\{ k \in S\}$. Then for any $ n \ge m_S$, $\lambda_{n}^- = \lambda_{n}^+ = \tau_{n}$.
%and $$\tau_n := \frac{\lambda_n^+ + \lambda_n^-}{2}, \quad \forall n \geq 1\,. $$
%Since by item (iii) above, $\int_{a_j} \Omega_4 = 0$ for any $ j \in J$,
It then follows from the definition of the cycles $b_n$ that for any $n \ge m_S$
\begin{equation}\label{bubble 1}
\omega_n =  \omega_{m_S}  +  12 \ii  \big(2 \int_{\tau_{m_S}}^{\tau_n} \Omega_4 \big)
\end{equation}
with the appropriate choice of the root in the denominator of $\Omega_4$. 
The abelian integral $\int_{\tau_{m_S}}^\lambda \Omega_4$ has an expansion as $\lambda \to \infty$ of the form 
$$
\begin{aligned}
\int_{\tau_{m_S}}^\lambda \Omega_4 & = b_* \lambda^{\frac32} + b_0 \lambda^{\frac12} + b_1 \frac{1}{\lambda^{\frac12}} + \cdots + b_{**} 
= \lambda^{\frac12} \big( b_* \lambda + b_0 + b_1 \frac{1}{\lambda}+ \cdots  \big) + b_{**}
\end{aligned}
$$
and hence as $n \to \infty$ 
\begin{equation}\label{int tau mS tau n}
\int_{\tau_{m_S}}^{\tau_n} \Omega_4 = b_{**} + \sqrt{\tau_n} \big(b_* \tau_n + b_0 + b_1 \frac{1}{\tau_n} + \cdots \big)\,.
\end{equation}
In view of the formula \cite[(2.20)]{KT} of $\Omega_4$, the coefficients $b_* ,b_{**}, b_0, b_1, \ldots$ are real analytic functions on $V^*_{q, S}$. 
Furthermore, it is well known 
(cf e.g. \cite[Proposition 8.1]{KST2}) that since $\int_0^1 p(x)\, d x = 0$
\begin{equation}\label{asintotica rozza omega n}
\omega_n = (2 \pi n)^3 + O(\frac{1}{n})\,. 
\end{equation}
Combining \eqref{bubble 1} - \eqref{asintotica rozza omega n} 
with the results on $\tau_n$ and  $\sqrt{\tau_n}$  of Lemma~\ref{asymptotics tau}
one obtains an expansion of $\omega_n$, $n \ge m_S$, of the form \eqref{expansion frequencies}
%$$\omega_n = (2 \pi n)^3 + \sum_{k = 1}^N \frac{\omega^{ae}_{2k-1}}{(2 \pi n)^{2 k - 1}} + \frac{{\cal R}^{\omega_n}_{2N}}{n^{2N + 1}}\,.$$
where $\omega_{2k-1}^{ae}: V^*_{q, S} \to \C$, $k \ge 1$, and ${\cal R}^{\omega_n}_{2N}: V^*_{q, S} \to \C$, $n \ge m_S$, are real analytic
and ${\cal R}^{\omega_n}_{2N}: V^*_{q, S} \to \C$, $n \ge m_S$, have the claimed bounds. 
For $n \in S^\bot_+$ with $n < m_S$, one defines ${\cal R}^{\omega_n}_{2N}$ by \eqref{expansion frequencies} and
since $\omega_n$ are real analytic on $V^*_{q, S}$, one then concludes  that ${\cal R}^{\omega_n}_{2N}$, $n \in S^\bot_+$,
are real analytic on $V^*_{q, S}$ and satisfy the claimed bounds.
\end{proof}

%%%%%%%%%%%%%%%%%%%%%%%%%%%%%%%%%%%%%%%%%%%%%%%%%%%%%%%%%%%%%%%%%%%%%%%
%%%%%%%%%%%%%%%%%%%%%%%%%%%%%%%%%%%%%%%%%%%%%%%%%%%%%%%%%%%%%%%%%%%%%%%

\section{Reversibility structure}\label{AppendixReversability}

In this appendix we prove that the Birkhoff map $\Phi^{kdv}$ and hence also its inverse $\Psi^{kdv}$ preserve the reversible structure,
defined by the maps 
$$
 S_{rev}: L^2_0 \to L^2_0\,, \,\,\, ( S_{rev} q)(x) := q(-x) \quad  {\mbox{ and }} \quad
\mathcal S_{rev}: h^0_0 \to h^0_0\,, \,\,\, (\mathcal S_{rev} w)_n := w_{-n}\,, \,\, \forall n \ne 0.
$$
\begin{proposition}\label{proposizione 1 reversibilita}
One has 
$$
\Phi^{kdv} \circ { S}_{rev} = \mathcal S_{rev} \circ \Phi^{kdv}\,. 
$$
As a consequence, ${ S}_{rev} \circ \Psi^{kdv} = \Psi^{kdv} \circ \mathcal S_{rev}$ and by the chain rule, for any $q \in L^2_0(\T)$ and $w \in h^0_0$
$$
(d_{{ S}_{rev}(q)} \Phi^{kdv}) \circ { S}_{rev} = \mathcal S_{rev} \circ d_q \Phi^{kdv}\,, \qquad  (d_{{\mathcal S}_{rev}(w)} \Psi^{kdv}) \circ { \mathcal S}_{rev} =  S_{rev} \circ d_w \Psi^{kdv}\,.
$$ 
\end{proposition}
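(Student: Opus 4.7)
The plan is to reduce the claimed identity $\Phi^{kdv} \circ S_{rev} = \mathcal{S}_{rev} \circ \Phi^{kdv}$ to the analogous statement for the real Birkhoff coordinates $(x_n, y_n)_{n \ge 1}$ of \cite{KP}, namely $x_n(S_{rev}q) = x_n(q)$ and $y_n(S_{rev}q) = -y_n(q)$ for every $n \ge 1$. By property (B0) of Theorem \ref{Theorem Birkhoff coordinates} and \eqref{definition z_n}, such a flip $y_n \mapsto -y_n$ swaps $z_n$ with $z_{-n}$, which is precisely the action of $\mathcal{S}_{rev}$. Once the first equation of the proposition is established, the second ($S_{rev}\circ\Psi^{kdv} = \Psi^{kdv}\circ\mathcal{S}_{rev}$) follows by composing with $\Psi^{kdv}$ on both sides and using $S_{rev}^2 = \mathrm{Id}$, $\mathcal{S}_{rev}^2 = \mathrm{Id}$; the two chain-rule identities follow by differentiating $\Phi^{kdv}\circ S_{rev} = \mathcal{S}_{rev}\circ\Phi^{kdv}$ (respectively $S_{rev}\circ\Psi^{kdv} = \Psi^{kdv}\circ\mathcal{S}_{rev}$), since $S_{rev}$ and $\mathcal{S}_{rev}$ are linear involutions and therefore coincide with their own differentials.

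The next step is to record the action of $S_{rev}$ on the underlying spectral data. Uniqueness of the initial value problem for $-y'' + qy = \lambda y$ gives $y_1(x, \lambda, S_{rev}q) = y_1(-x, \lambda, q)$ and $y_2(x, \lambda, S_{rev}q) = -y_2(-x, \lambda, q)$, and evaluating at $x = 1$ via $y_j^q(-1) = (M(\lambda, q)^{-1})_{\bullet j}$ (by $1$-periodicity of $q$) yields the transformation laws of Lemma \ref{lemma 2 reversibilita}: $\Delta(\lambda, \cdot)$, $m_2(\lambda, \cdot)$ and $m_1'(\lambda, \cdot)$ are $S_{rev}$-invariant, while $m_1$ and $m_2'$ are interchanged, so $\delta = m_1 - m_2'$ satisfies $\delta(\lambda, S_{rev}q) = -\delta(\lambda, q)$. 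Consequently, the periodic eigenvalues $\lambda_n^{\pm}$, the Dirichlet eigenvalues $\mu_n$, the midpoints $\tau_n$, the gap lengths $\gamma_n$ and every quantity depending only on $\Delta$ and $m_2$ are fixed by $S_{rev}$; in particular the actions $I_n(q)$ are $S_{rev}$-invariant, so $|z_n|^2 = 2\pi n I_n$ is preserved and only the argument of $z_n$ can change under $S_{rev}$.

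To extract the angle flip $\theta_n(S_{rev}q) = -\theta_n(q)$ (equivalently $y_n \to -y_n$), I would use the representation of $x_n - i y_n$ in \cite{KP} as the product of an $S_{rev}$-invariant $\Delta$-dependent factor with $e^{-i\theta_n}$, where $\theta_n$ is an Abelian integral on the hyperelliptic curve $\Sigma_q$ whose sheet is selected through $\mathrm{sgn}(\delta(\mu_k))$. Because $S_{rev}$ fixes the Dirichlet divisor $(\mu_k)$ while reversing the sign of $\delta$, each sheet choice flips and the integral changes sign; the partial-phase identity $\beta_n(S_{rev}q) = -\beta_n(q)$ of Corollary \ref{corollario 3 reversibilita} makes this explicit at the level of the additive building blocks of $\theta_n$. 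The main obstacle is precisely this sign bookkeeping: one must track carefully, via the Abel map of $\Sigma_q$ and the branch conventions of $\sqrt{\Delta^2 - 4}$, that the cumulative effect of the involution $x \mapsto -x$ on the angle formula is exactly one net sign change and nothing more. Everything else in the proof is then either invariance inherited from $\Delta$ or the formal consequences recorded in the first paragraph.
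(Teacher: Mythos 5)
Your proposal is correct and follows essentially the same route as the paper: the uniqueness-of-IVP identities $y_1(x,\lambda,S_{rev}q)=y_1(-x,\lambda,q)$, $y_2(x,\lambda,S_{rev}q)=-y_2(-x,\lambda,q)$ give the invariance of $\Delta$ and $m_2$ (hence of $\lambda_n^\pm$, $\mu_n$, $\gamma_n$, $I_n$, $\xi_n$) together with the sign flip of $\delta$, and the paper extracts the angle reversal exactly as you describe — the endpoint $\mu_k^*=(\mu_k,\delta(\mu_k))$ and the branch normalization $\sqrt[*]{\Delta(\mu_k)-4}=\delta(\mu_k)$ flip sheet, so $\eta_n$ and $\beta_n$, hence $\theta_n=\eta_n+\beta_n$, change sign mod $2\pi$, giving $z_{\pm n}(S_{rev}q)=z_{\mp n}(q)$. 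The formal consequences for $\Psi^{kdv}$ and the differentials are obtained exactly as in your first paragraph.
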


%\noindent
First we establish some preliminary results. Recall that $y_j(x, q) \equiv y_j(x, \lambda, q)$, $j = 1, 2$, denote the fundamental solutions of $- y'' + q y = \lambda y$,
$\Delta(\lambda) \equiv \Delta(\lambda, q)$ the discriminant, and $\delta(\lambda) \equiv \delta(\lambda, q)$ the anti-discriminant, 
$$
\Delta(\lambda) = y_1(1, \lambda) + y_2'(1, \lambda)\,, \qquad \delta(\lambda) = y_1(1, \lambda) - y_2'(1, \lambda)\,.
$$
In a straightforward way, one verifies the following 
\begin{lemma}\label{lemma 2 reversibilita}
For any $q \in L^2_0$, $\lambda \in \C$, $x \in \R$, 
$$
y_1(x, \lambda, { S}_{rev} q) = y_1(- x, \lambda, q)\,, \qquad     y_2(x, \lambda, { S}_{rev} q) = - y_2(- x, \lambda, q)
$$ 
or alternatively,
$$
y_1(x, \lambda, { S}_{rev} q) = \big( y_2'(1) y_1(1 - x) - y_1'(1) y_2(1 - x)\big)|_{\lambda, q}\,, \qquad
y_2(x, \lambda, { S}_{rev}q) = \big( y_2(1) y_1(1 - x) - y_1(1) y_2(1 - x) \big)|_{\lambda, q}\,. 
$$
The latter identities imply that
\begin{equation}\label{formula 1 reversibilita}
\Delta(\lambda, { S}_{rev} q) = \Delta(\lambda, q), \qquad \delta(\lambda, {S}_{rev} q) = - \delta(\lambda, q), \qquad
%\end{equation}
%and 
%\begin{equation}\label{formula 2 reversibilita}
y_2(1, \lambda, { S}_{rev} q) = y_2(1, \lambda, q)\,. 
\end{equation}
\end{lemma}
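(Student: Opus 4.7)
The plan is to establish the two basic identities for $y_1$ and $y_2$ by a direct uniqueness argument for the Schr\"odinger initial value problem, then derive the alternative representations from the $1$-periodicity of $q$, and finally read off the statements about $\Delta$, $\delta$, and $y_2(1)$ by setting $x=1$.

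First I would introduce $\tilde y(x):=y_1(-x,\lambda,q)$ and $\tilde z(x):=-y_2(-x,\lambda,q)$ and verify that they both solve $-w''+(S_{rev}q)w=\lambda w$: indeed $\tilde y''(x)=y_1''(-x,\lambda,q)$, so
$$-\tilde y''(x)+(S_{rev}q)(x)\tilde y(x)=-y_1''(-x)+q(-x)y_1(-x)=\lambda y_1(-x)=\lambda\tilde y(x),$$
and the analogous computation with the extra sign works for $\tilde z$. Since $\tilde y(0)=1$, $\tilde y'(0)=-y_1'(0,\lambda,q)=0$ and $\tilde z(0)=-y_2(0,\lambda,q)=0$, $\tilde z'(0)=y_2'(0,\lambda,q)=1$, uniqueness of the initial value problem forces $\tilde y=y_1(\cdot,\lambda,S_{rev}q)$ and $\tilde z=y_2(\cdot,\lambda,S_{rev}q)$, which is the first pair of identities.

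Next I would prove the alternative representations. The key observation is that $q\in L^2_0(\T)$ has period $1$, so $q(1-x)=q(-x)=(S_{rev}q)(x)$. Setting $z_j(x):=y_j(1-x,\lambda,q)$ for $j=1,2$, a chain-rule computation analogous to the one above shows that each $z_j$ satisfies $-z_j''+(S_{rev}q)z_j=\lambda z_j$ with initial data
$$z_1(0)=y_1(1),\quad z_1'(0)=-y_1'(1),\quad z_2(0)=y_2(1),\quad z_2'(0)=-y_2'(1),$$
where the right-hand sides are evaluated at $(\lambda,q)$. Expanding $z_j$ in the basis $\{y_1(\cdot,\lambda,S_{rev}q),y_2(\cdot,\lambda,S_{rev}q)\}$ gives a $2\times 2$ linear system with coefficient matrix
$$\begin{pmatrix} y_1(1) & -y_1'(1) \\ y_2(1) & -y_2'(1) \end{pmatrix}\bigg|_{\lambda,q},$$
whose determinant equals $-(y_1(1)y_2'(1)-y_1'(1)y_2(1))=-1$ by the Wronskian identity. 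Inverting the system and solving for $y_1(x,\lambda,S_{rev}q)$ and $y_2(x,\lambda,S_{rev}q)$ yields exactly the two alternative formulas claimed.

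Finally, evaluating these alternative representations at $x=1$, together with $y_1(0,q)=y_2'(0,q)=1$ and $y_1'(0,q)=y_2(0,q)=0$, gives $y_1(1,\lambda,S_{rev}q)=y_2'(1,\lambda,q)$ and, after differentiating the formula for $y_2(x,S_{rev}q)$ in $x$ and setting $x=1$, also $y_2'(1,\lambda,S_{rev}q)=y_1(1,\lambda,q)$, while $y_2(1,\lambda,S_{rev}q)=y_2(1,\lambda,q)$. Adding and subtracting the first two identities produces the stated formulas for $\Delta$ and $\delta$. No real obstacle is anticipated: the whole argument reduces to uniqueness for the Schr\"odinger IVP and the Wronskian identity, the only minor point requiring attention being the correct use of $1$-periodicity to identify $q(1-\cdot)$ with $S_{rev}q$.
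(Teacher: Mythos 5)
Your proof is correct, and since the paper only asserts that the lemma is verified ``in a straightforward way'' without giving details, your argument (uniqueness for the initial value problem, $1$-periodicity to identify $q(1-\cdot)$ with $S_{rev}q$, and the Wronskian identity $y_1y_2'-y_1'y_2=1$ to invert the $2\times 2$ system) is exactly the intended one. All the computations check out, including the evaluation at $x=1$ and the differentiation step giving $y_2'(1,\lambda,S_{rev}q)=y_1(1,\lambda,q)$, from which \eqref{formula 1 reversibilita} follows.
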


\smallskip

An immediate consequence of  the first identity in \eqref{formula 1 reversibilita} is that 
\begin{equation}\label{formula 3 reversibilita}
\lambda^{+}_0({S}_{rev} q) = \lambda^{+}_0(q)\,, \quad \lambda^{\pm}_n({S}_{rev} q) = \lambda^{\pm}_n(q), \,\, \forall n \geq 1, \qquad \gamma_n({ S}_{rev} q) = \gamma_n(q), \,\, \forall n \geq 1\,. 
\end{equation}
Moreover by \eqref{formula 1 reversibilita},
%\eqref{formula 2 reversibilita} one has that 
\begin{equation}\label{formula 4 reversibilita}
\mu_n({ S}_{rev} q) = \mu_n(q)\,, \quad   \delta(\mu_n, { S}_{rev} q) = - \delta(\mu_n, q)\,, \qquad \forall n \geq 1\,. 
\end{equation}
For any $q \in L^2_0$, the action variables $I_n \equiv I_n(q)$, $n \geq 1$, are defined by contour integrals (cf. \cite[p 64]{KP}),
$$
I_n = \frac{1}{\pi} \int_{\Gamma_n} \frac{\lambda \dot \Delta(\lambda)}{\sqrt{\Delta(\lambda)^2 - 4}}\, d \lambda\,.
$$
Furthermore the normalizing factor $\xi_n \equiv \xi_n(q)$, defined for $q \in L^2_0$ with $\gamma_n(q) > 0$ by $\xi_n = \sqrt{8 I_n / \gamma_n^2}$, 
extends analytically to $L^2_0$ (cf \cite[Theorem 7.3]{KP}). By \cite[Theorem 8.5]{KP}, $\beta_n = \sum_{k \neq n} \beta_k^n$ is well defined on $L^2_0$ 
where $\beta_k^n \equiv \beta_k^n(q)$ is given by (cf \cite[p 70]{KP}) 
$$
\beta_k^n = \int_{\lambda_{k}^-}^{\mu_k^*} \frac{\psi_n(\lambda)}{\sqrt{\Delta^2(\lambda) - 4}}\, d \lambda\,, \qquad \mu_k^* = (\mu_k, \delta(\mu_k))
$$
with the sign of $\sqrt{\Delta^2(\lambda) - 4}$ determined by $\sqrt[*]{\Delta(\mu_k) - 4} = \delta(\mu_k)$. On the other hand, $\eta_n \equiv \eta_n(q)$ and $\theta_n \equiv \theta_n(q)$ 
are well defined  modul $2\pi$ on $L^2_0 \setminus Z_n$ by 
$$
\eta_n = \int_{\lambda_n^-}^{\mu_n^*} \frac{\psi_n(\lambda)}{\sqrt{\Delta^2(\lambda) - 4}}\,d \lambda \,, \qquad \theta_n = \eta_n + \beta_n\,,
$$
where $Z_n = \big\{q \in L^2_0 : \gamma_n(q) = 0 \big\}.$
One then concludes from \eqref{formula 1 reversibilita}, \eqref{formula 3 reversibilita}, \eqref{formula 4 reversibilita} that the following holds. 
\begin{corollary}\label{corollario 3 reversibilita}
For any $q \in L^2_0$ and $n \geq 1$, 
\begin{equation}\label{formula 5 reversibilita}
I_n({ S}_{rev}q) = I_n(q)\,, \qquad \xi_n({ S}_{rev} q) = \xi_n(q)\,, \qquad
%\end{equation}
%\begin{equation}\label{formula 6 reversibilita}
\beta_n({ S}_{rev} q) = - \beta_n(q)\,.
\end{equation}
Furthermore on $L^2_0 \setminus Z_n$, $\theta_n({ S}_{rev} q) = -  \theta_n(q)$ modulo $2\pi$.
\end{corollary}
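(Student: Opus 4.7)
The plan is to dispatch the four identities in turn, exploiting the fact that under $S_{rev}$ all spectral data determined solely by the discriminant $\Delta$ are invariant, while the sheet-labelling provided by the anti-discriminant $\delta$ gets flipped.

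First I would verify $I_n(S_{rev}q)=I_n(q)$. The contour $\Gamma_n$ encircles the interval $[\lambda_n^-,\lambda_n^+]$, and by \eqref{formula 3 reversibilita} these endpoints and their isolating neighbourhoods are invariant under $S_{rev}$, so $\Gamma_n$ may be taken to be the same cycle. Since $\Delta(\lambda,S_{rev}q)=\Delta(\lambda,q)$ by \eqref{formula 1 reversibilita}, the integrand $\lambda\dot\Delta(\lambda)/\sqrt{\Delta^2(\lambda)-4}$ is invariant (the canonical sign of the root is determined by its value on $(\lambda_0^+,\lambda_1^-)$, which only depends on $\Delta$), and the invariance of the contour integral follows.

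Next, $\xi_n(S_{rev}q)=\xi_n(q)$ is immediate on the open dense set $L^2_0\setminus Z_n$ from the identity $\xi_n=\sqrt{8I_n/\gamma_n^2}$ together with $I_n(S_{rev}q)=I_n(q)$ and $\gamma_n(S_{rev}q)=\gamma_n(q)$ (cf.\ \eqref{formula 3 reversibilita}); since by \cite[Theorem 7.3]{KP} $\xi_n$ extends real analytically to all of $L^2_0$ and $S_{rev}:L^2_0\to L^2_0$ is continuous, the identity extends to $L^2_0$ by continuity.

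The main work lies in the identity $\beta_n(S_{rev}q)=-\beta_n(q)$. For each summand $\beta_k^n$, the integration endpoints $\lambda_k^-$ and $\mu_k$ are preserved by $S_{rev}$ (cf.\ \eqref{formula 3 reversibilita} and \eqref{formula 4 reversibilita}), and the function $\psi_n(\lambda,q)$ is determined uniquely from $\Delta$ (its roots $\sigma_k^n$ and the leading coefficient are characterized spectrally via $\Delta$ alone, cf.\ \cite[Theorem 8.1, Proposition D.7]{KP}), so $\psi_n(\lambda,S_{rev}q)=\psi_n(\lambda,q)$. The key observation is that the branch of $\sqrt{\Delta^2(\lambda)-4}$ along the path of integration is determined by the sign convention $\sqrt[*]{\Delta^2(\mu_k)-4}=\delta(\mu_k)$, which by \eqref{formula 1 reversibilita} flips sign under $S_{rev}$. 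Hence the path lifts to the opposite sheet of the Riemann surface, and the integrand acquires an overall factor of $-1$, yielding $\beta_k^n(S_{rev}q)=-\beta_k^n(q)$. Summing over $k\neq n$ gives the claim for $\beta_n$. The subtle point — and the main obstacle — is justifying that the homotopy class of the lifted path is indeed just the reflection through the sheet interchange (i.e.\ that no additional period of $\int \psi_n/\sqrt{\Delta^2-4}$ gets picked up), which requires checking that the path from $\lambda_k^-$ to $\mu_k^\ast$ before and after applying $S_{rev}$ can be chosen within a common isolating neighbourhood $U_k$, where the two sheets are unambiguously exchanged.

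Finally, applying exactly the same branch-flipping argument to the integral defining $\eta_n$ (whose endpoints $\lambda_n^-$ and $\mu_n$ are again $S_{rev}$-invariant while the sheet label flips) yields $\eta_n(S_{rev}q)=-\eta_n(q)\pmod{2\pi}$ on $L^2_0\setminus Z_n$. Combined with $\beta_n(S_{rev}q)=-\beta_n(q)$, the relation $\theta_n=\eta_n+\beta_n$ delivers $\theta_n(S_{rev}q)=-\theta_n(q)\pmod{2\pi}$, completing the corollary.
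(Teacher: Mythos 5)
Your proposal is correct and follows the same route the paper intends: the paper's own proof consists of the single sentence that the corollary ``follows from'' the identities \eqref{formula 1 reversibilita}, \eqref{formula 3 reversibilita}, \eqref{formula 4 reversibilita}, and the details you supply (invariance of $\Delta$, the contour, the canonical root and $\psi_n$, combined with the sign flip of $\delta(\mu_k)$ forcing the branch of $\sqrt{\Delta^2-4}$ at $\mu_k^\ast$ onto the other sheet, plus analytic continuation for $\xi_n$) are exactly the intended justification. Your remark about checking that the path stays within the isolating neighbourhood $U_k$ is a legitimate point of care, and your resolution of it is the standard one.
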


With these preparations made we now prove Proposition \ref{proposizione 1 reversibilita}. 
\smallskip

\noindent
{\it Proof of  Proposition \ref{proposizione 1 reversibilita}.} For any $n \geq 1$ and $q \in L^2_0 \setminus Z_n$, the complex Birkhoff coordinates $z_n(q), z_{- n}(q)$ are given by 
$z_n(q) = \sqrt{n \pi} \sqrt{I_n(q)} e^{- \ii \theta_n(q)}$, $z_{- n}(q) = \sqrt{n \pi} \sqrt{I_n(q)} e^{\ii \theta_n(q)}$, whereas for $q \in Z_n$, $z_n(q) = 0$ and $z_{- n}(q) = 0$. 
Hence it follows from Corollary \ref{corollario 3 reversibilita} that $z_n({ S}_{rev} q) = z_{- n}(q)$ and $z_{- n}({ S}_{rev} q) = z_n(q)$ for any $n \geq 1$. This proves that $\Phi^{kdv} \circ { S}_{rev} = \mathcal S_{rev} \circ \Phi^{kdv}$. 
\hfill $\square$

%%%%%%%%%%%%%%%%%%%%%%%%%%%%%%%%%%%%%%%%%%%%%%%%%%%%%%%%%%%%%%
%%%%%%%%%%%%%%%%%%%%%%%%%%%%%%%%%%%%%%%%%%%%%%%%%%%%%%%%%%%%%%

\section{Properties of pseudodifferential and paradifferential calculus}\label{appendice B}
In this appendix we collect some well known facts about pseudodifferential and paradifferential calculus on the torus. We refer to \cite{Met} for further details.
Let $\chi \in C^\infty(\R^2, \R)$ be an admissible cut-off function. It means that $\chi$ is an even function
and that there exist
$0 < \e_1 < \e_2 < 1$ so that for any $(\vartheta, \eta) \in \R^2$ and $\alpha, \beta \in \Z_{\ge 0}$,
%choose a cut-off function  which is homogeneous of degree $0$
\begin{equation}\label{cut-off paraprodotto}
\chi(\vartheta, \eta) = 1, \quad \forall |\vartheta| \leq \e_1 +  \e_1|\eta|\,, \quad \chi(\vartheta, \eta) = 0, \quad \forall |\vartheta| \geq \e_2 +  \e_2 |\eta|\,,
\end{equation}
\begin{equation}\label{order 0 cut-off}
|\partial^\alpha_\vartheta \partial^\beta_\eta \chi(\vartheta, \eta)) | \le C_{\alpha, \beta} (1 +  |\eta|)^{-\alpha -\beta}\,.
\end{equation}
For any $a \in H^1$, the paraproduct $T_a u$ of the function $a$  with $u \in L^2$ (with respect to the cut-off function $\chi$) is defined as
\begin{equation}\label{definition prarproduct}
(T_a u) (x) := \sum_{k, n \in \Z} \chi(k, n) a_k u_n e^{\ii 2 \pi (k + n) x}
\end{equation}
where we recall that $u_n$, $n \in \Z$, denote the Fourier coefficients of $u$, $u_n = \int_0^1 u(x) e^{- 2 \pi \ii n x}\, d x$. 
Note that since $a$, $u$, and $\chi$ are real valued and $\chi$ is even, $T_a u$ is real valued as well.
Given any $s, s' \in \Z$, we denote by ${\cal B}(H^s, H^{s'})$ the Banach space of all bounded linear operators
$H^s \to H^{s'}$, endowed with the operator norm $\| \cdot \|_{{\cal B}(H^s, H^{s'})}$. 
In case $s=s',$ we also write ${\cal B}(H^s)$ instead of ${\cal B}(H^s, H^{s})$. Given any linear operator $A \in {\cal B}(H^s, H^{s'})$,
we denote by $A^t$ the transpose of $A$ with respect to the $L^2-$inner product. It is an element in ${\cal B}((H^{s'})^*, (H^{s})^*)$
where $(H^{s})^*$ denotes the dual of $H^{s}$.

\begin{lemma}\label{primo lemma paraprodotti}
$(i)$ For any $s \in \Z_{\ge 0}$ and $a \in H^1,$ the linear operator $T_a: u \mapsto  T_a u$ is in ${\cal B}(H^s)$. Furthermore 
the linear map $H^1 \to  {\cal B}(H^s),$ $a \mapsto T_a$, is bounded,  $\| T_a\|_{{\cal B}(H^s)} \lesssim_s \| a \|_{1}$. 

\noindent
$(ii)$ Let $a \in H^{s_1}, b \in H^{s_2}$ and $s_1, s_2 \in \Z_{ \geq 1}$. Then 
$$
a b = T_a b + T_b a + {\cal R}^{(B)}(a, b)
$$
where the bilinear map ${\cal R}^{(B)}: H^{s_1} \times H^{s_2} \to H^{s_1 + s_2 - 1}$, $(a, b) \mapsto {\cal R}^{(B)}(a, b)$, is continuous 
and satisfies the estimate 
$$
\| {\cal R}^{(B)}(a, b)\|_{s_1 + s_2 -  1} \lesssim_{s_1, s_2} \| a \|_{s_1} \| b \|_{s_2}\,. 
$$

\noindent
$(iii)$ Let $a \in H^\rho$ with $\rho \in \Z_{ \ge 2}$. Then for any $s \geq 0$, $T_a^t - T_{ a} \in {\cal B}(H^s, H^{s + \rho - 1})$ and
%$T_a^t - T_a \in {\cal B}(H^s, H^{s + \rho - 1})$ and 
$$
\| T_a^t - T_{ a}\|_{{\cal B}(H^s, H^{s + \rho - 1})}  \lesssim_{s, \rho}  \| a \|_{\rho}\,.
%\| T_a^t - T_a \|_{{\cal B}(H^s, H^{s + \rho - 1})} \lesssim_{s, \rho}  \| a \|_{\rho}
$$

\noindent
$(iv)$ Let $a, b \in H^\rho$ with $\rho \in \Z_{ \ge 1}$. Then for any $s \geq 0$, $T_a \circ T_b - T_{ab} \in {\cal B}\big( H^s , H^{s + \rho - 1} \big)$ and 
$$
\| T_a \circ T_b - T_{ab} \|_{{\cal B}(H^s, H^{s + \rho - 1})} \lesssim_{s, \rho} \| a \|_\rho \| b \|_\rho\,. 
$$
\end{lemma}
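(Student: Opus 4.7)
\noindent
The plan is to reduce all four statements to Fourier-side estimates that exploit the frequency localization of the cut-off $\chi$ and the derivative bounds \eqref{cut-off paraprodotto}--\eqref{order 0 cut-off}. For (i) I would compute the Fourier coefficients
$$
(T_a u)_m = \sum_{k \in \Z} \chi(k,\, m-k)\, a_k\, u_{m-k},
$$
and observe that the support property of $\chi$ forces $|k| \leq \e_2(1 + |m-k|)$, so that $\langle m \rangle \sim \langle m-k \rangle$ throughout the sum. Expanding $\|T_a u\|_s^2 = \sum_m \langle m \rangle^{2s} |(T_a u)_m|^2$, distributing $\langle m\rangle^{s}$ onto $\langle m-k\rangle^s$, applying the Cauchy--Schwarz inequality in the convolution over $k$, and using the one-dimensional Sobolev embedding $H^1(\T) \hookrightarrow \ell^1_k$ for the Fourier coefficients of $a$ then yields the estimate $\|T_a u\|_s \lesssim_s \|a\|_1 \|u\|_s$.

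\noindent
For (ii), the starting point is the algebraic splitting
$$
(ab)_m = \sum_{k+n=m} \chi(k,n)\, a_k b_n + \sum_{k+n=m} \chi(n,k)\, a_k b_n + \sum_{k+n=m} r(k,n)\, a_k b_n,
$$
with $r(k,n) := 1 - \chi(k,n) - \chi(n,k)$. The first two sums are precisely $(T_a b)_m$ and $(T_b a)_m$, while on the support of $r$ both conditions $|k| \geq \e_1(1+|n|)$ and $|n| \geq \e_1(1+|k|)$ hold, forcing $\langle k \rangle \sim \langle n \rangle \gtrsim \langle m\rangle$. Distributing $\langle m \rangle^{s_1 + s_2 - 1} \lesssim \langle k \rangle^{s_1 - 1} \langle n \rangle^{s_2}$ across the remainder sum and applying Cauchy--Schwarz then yields the claimed $H^{s_1 + s_2 - 1}$ bound for the Bony remainder ${\cal R}^{(B)}(a,b)$, with the gain of one derivative reflecting the comparability of $\langle k\rangle$ and $\langle n\rangle$.

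\noindent
For (iii) and (iv) the strategy is parallel: compute both operators on the Fourier side and represent the differences $T_a^t - T_a$ and $T_a \circ T_b - T_{ab}$ as operators whose symbols involve increments of $\chi$ of the form $\chi(k, -m) - \chi(k, m-k)$ (for (iii), where the transpose is computed against the bilinear pairing $\langle f,g\rangle = \sum_m f_m g_{-m}$) and $\chi(k, n-k) \chi(j, n-j) - \chi(k+j, n-k-j)$ (for (iv)). Using finite Taylor-type expansions of $\chi$ in its lattice arguments, combined with the derivative bounds $|\partial^\alpha_\vartheta \partial^\beta_\eta \chi(\vartheta, \eta)| \lesssim (1+|\eta|)^{-\alpha - \beta}$, each order in the expansion converts a factor of $\langle k\rangle$ (or $\langle j\rangle$) on the Fourier side into a compensating factor $\langle m\rangle^{-1}$. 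After $\rho - 1$ such iterations one obtains smoothing of order $\rho - 1$, while the extracted factors of $\langle k \rangle^\alpha$ are absorbed by the $H^\rho$-regularity of $a$ (respectively of $a,b$) via $\sum_k \langle k\rangle^{2\alpha}|a_k|^2 \le \|a\|_\rho^2$ for $\alpha \le \rho$.

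\noindent
The principal obstacle lies in (iii) and (iv): the continuous symbolic calculus must here be implemented on the Fourier lattice $\Z^2$, with Taylor expansions of $\chi$ replaced by finite differences and remainders written as integrals $\int_0^1 (\partial^{\rho-1}_\vartheta \chi)(\cdot, \eta)\, dt$ evaluated along discrete paths. The delicate point is to verify that in the resulting double convolution sums the factors of $\langle k\rangle^\alpha$ produced by each expansion are genuinely compensated by the regularity of $a$, and that the tail of the expansion still defines a bounded operator $H^s \to H^{s+\rho-1}$ with a constant depending only on $s, \rho$, and $\|a\|_\rho$ (and $\|b\|_\rho$ for (iv)). Once this discrete symbol-calculus lemma is in place, the remaining estimates in (iii) and (iv) reduce to $\ell^2$-Cauchy--Schwarz manipulations entirely analogous to those used in (i) and (ii).
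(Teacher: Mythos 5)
The paper does not actually prove Lemma \ref{primo lemma paraprodotti}: Appendix \ref{appendice B} states it as a collection of well-known facts and refers to M\'etivier \cite{Met} for the details, so there is no in-paper argument to measure yours against. Your Fourier-side framework is the standard one, and parts (i) and (ii) are correct as sketched: boundedness of $T_a$ follows from the support condition $|k|\le\e_2(1+|m-k|)$ together with Young's inequality and $\|(a_k)_k\|_{\ell^1}\lesssim\|a\|_1$, and the Bony remainder is controlled exactly by the frequency balance $\langle k\rangle\sim\langle n\rangle$ on the support of $1-\chi(k,n)-\chi(n,k)$, as you say.

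The gap is in the mechanism you propose for (iii) and (iv). First, a local issue in (iii): the increment $\chi(k,-m)-\chi(k,m-k)$ you write down has second arguments differing by $2m-k$, which is of size $\langle m\rangle$, not $\langle k\rangle$, so a finite-difference expansion of $\chi$ applied to it produces no gain. You must first invoke the evenness of $\chi$ (which is part of the definition of an admissible cut-off precisely for this purpose) to rewrite $\chi(k,-m)=\chi(-k,m)$; only then is the increment $O(\langle k\rangle)$ in both slots. Second, and more seriously, even after that correction the scheme ``each order of the Taylor expansion converts $\langle k\rangle$ into $\langle m\rangle^{-1}$, so $\rho-1$ iterations give $\rho-1$ orders of smoothing'' does not close: the intermediate Taylor terms of orders $1,\dots,\rho-2$ do not vanish (the symbol has no $\xi$-dependence to kill them, and the increments contracted against $\partial^\gamma\chi$ are genuinely nonzero), and each such term is only $O\bigl((\langle k\rangle/\langle m\rangle)^{|\gamma|}\bigr)$, hence only $|\gamma|$-smoothing; they cannot be absorbed into a $(\rho-1)$-smoothing remainder. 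The correct route for both (iii) and (iv) is not an expansion at all but the same support argument as in (ii): the differences $\chi(-k,m)-\chi(k,m-k)$ and $\chi(k,m-k)\chi(j,m-k-j)-\chi(k+j,m-k-j)$ vanish identically unless the frequency of $a$ (respectively, one of the frequencies of $a$ or $b$) is comparable to the output frequency $m$, since outside a fixed annulus $c_1\langle m\rangle\le\langle k\rangle\le c_2\langle m\rangle$ the cut-offs involved are simultaneously $1$ or simultaneously $0$. On that region one trades $\rho-1$ powers of $\langle m\rangle$ for powers of $\langle k\rangle$ absorbed by $\|a\|_\rho$ (keeping one derivative for $\ell^1$-summability), exactly as in your estimate of ${\cal R}^{(B)}$. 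Rewriting (iii) and (iv) along these lines closes the argument and, as a by-product, shows that the derivative bounds \eqref{order 0 cut-off} on $\chi$ are not needed for this particular lemma.
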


%\begin{itemize}
%\item{\bf Notation.} Given $\rho \in \N$, we denote by $W^{\rho, \infty}(\T)$ the space of $L^\infty$ functions with derivatives up to order $\rho$ in $L^\infty$. If $\rho \in (0, + \infty) \setminus \N$, we denote by $W^{\rho, \infty}(\T)$ the space of functions in $W^{[\rho], \infty}(\T)$ whose derivative of order $[\rho]$ is H\"older continuous with exponent $\rho - [\rho]$.
%\end{itemize}
%\begin{definition}\label{def Gamma rho m}
%Given $m \in \R$ and $\rho \geq 0$, we define the class $\Gamma_\rho^m$ in the following way: we say that $a \in \Gamma_\rho^m$ if $a : \T \times \R \to \C$, $(x, \xi) \to a(x, \xi)$ is $C^\infty$ w.r. to the variable $\xi$ and for any $\xi \in \R$, the map $\T \to \C$, $x \mapsto a(x, \xi)$ is in $W^{\rho, \infty}(\T)$ and satisfies for any $\alpha \in \N$ there exists $C{\alpha, m} > 0$ such that
%$$
%\| \partial_\xi^\alpha a(\cdot, \xi) \|_{W^{\rho, \infty}(\T)} \leq C_{\alpha, m} \langle \xi \rangle^{m - \alpha}\,, \quad \forall \xi \in \R\,. 
%$$
%\end{definition}
%\begin{definition}\label{definizione norma paradiff}

%\end{definition}
 
\begin{lemma}\label{lemma composizione pseudo}
$(i)$ Let $k, j \in \Z_{\geq 0}$ and $a \in C^\infty(\T)$. Then for any $s \in \Z_{\ge 0}$ and $N \in \N$ with $N \geq  k+ j$, the composition
$\partial_x^{- k} \circ a \partial_x^{- j}$ is a bounded linear operator  $H^s \to H^{s+k+j}$ which admits an expansion of the form
$$
\partial_x^{- k} \circ a  \partial_x^{- j} = a\partial_x^{-k -j} + \sum_{i = 1}^{N - k - j} C_i(k, j) \,  (\partial_x^i a ) \, \partial_x^{- k -j - i} + {\cal R}^{\psi do}_{N, k, j}(a)
$$
where $C_i(k, j)$, $1 \le i \le N-k-j$, are constants depending on $k, j$ and the remainder ${\cal R}^{\psi do}_{N, k, j}(a)$ is a bounded linear operator 
$H^s \to  H^{s + N + 1}$,
 satisfying the estimate 
\begin{equation}\label{stima resti lemma astratto OPS}
\| {\cal R}^{\psi do}_{N, k, j}(a) \|_{{\cal B}(H^s, H^{s + N + 1})} \lesssim_{s, N}  \| a \|_{s + 2N }\,.
\end{equation} 
%\noindent Note that $ Id =  \partial_x \partial_x^{-1} + \Pi_0$ where $\Pi_0 : H^s \to \cap_{k \ge 0}H^k$ is the projection onto the zero mode, 
% $f \mapsto \int_0^1 f(x) dx$. Thus $ \Pi_0$ is a smoothing operator.
%Then $\partial_x^{-1} \circ a = \partial_x^{-1} \circ a ( \partial_x \partial_x^{-1})  +  \partial_x^{-1} \circ a \Pi_0$. The operator $\partial_x^{-1} \circ a \Pi_0$
%is also a smoothing operator. Furthermore, 
%$$\partial_x^{-1} \circ a  \partial_x \partial_x^{-1} = \partial_x^{-1} \partial_x  \circ a  \partial_x^{-1} - \partial_x^{-1} \circ a'  \partial_x^{-1}
%=  a \partial_x^{-1} - \partial_x^{-1} \circ a'  \partial_x^{-1}$$
%Altogether one obtains that 
%$$\partial_x^{-1} \circ a = a \partial_x^{-1}  + {\cal R}_{N, n=1, k=0}, \quad     
%{\cal R}_{N =1, n=1, k=0} =  - \partial_x^{-1} \circ a'  \partial_x^{-1} +  \partial_x^{-1} \circ a \Pi_0.$$
%and one verifies that ${\cal R}_{N, n=1, k=0}$ has the claimed properties. The general case is proved in the same way.

\noindent
$(ii)$ Let $k, j \in \Z_{\geq 0}$ and $N \geq k + j$. There exists a constant $\sigma_N > N - k - j +1$ such that for any $a \in H^{\sigma_N}$
and any $s \in \Z_{\ge 0}$, the composition $\partial_x^{- k} \circ T_a  \circ \partial_x^{- j}$ is a bounded linear operator 
$H^s \to H^{s+k+j}$ which admits an expansion of the form
$$
\partial_x^{- k} \circ T_a  \circ \partial_x^{- j} =  T_{ a} \partial_x^{- k -j} +
\sum_{i= 1}^{N - k - j} C_i(k, j) T_{\partial_x^i a} \partial_x^{- k -j - i} + {\cal R}_{N, k, j}^{(B)}(a)
$$
where $C_i(k, j)$, $1 \le i \le N-k-j$, are constants depending on $k, j$ and for any $s \geq 0$, the remainder ${\cal R}_{N, k, j}^{(B)}(a)$
is a bounded linear operator $H^s \to  H^{s + N+1}$, satisfying the estimate 
\begin{equation}\label{stima resti lemma astratto OPS}
\| {\cal R}_{N, n, k}^{(B)}(a) \|_{{\cal B}(H^s, H^{s + N + 1})} \lesssim_{s, N}  \| a \|_{ \sigma_N }\,. 
\end{equation}
\end{lemma}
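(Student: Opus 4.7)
\textbf{Proof plan for Lemma \ref{lemma composizione pseudo}.}

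For part (i), the plan is to proceed by a direct computation at the level of Fourier series. Writing $a(x) = \sum_m a_m e^{2\pi \ii mx}$ and applying $\partial_x^{-k}\circ a \partial_x^{-j}$ to a basis element $e^{2\pi \ii nx}$, $n \ne 0$, one obtains
$$
\partial_x^{-k}\bigl(a\cdot\partial_x^{-j} e^{2\pi \ii nx}\bigr) \;=\; \frac{1}{(2\pi \ii n)^{j}}\sum_{\substack{m\in\Z\\ m+n\ne 0}} \frac{a_m}{(2\pi\ii (m+n))^k}\,e^{2\pi\ii(m+n)x}.
$$
The expansion then comes from Taylor expanding $(m+n)^{-k}$ in powers of $m/n$: for $|m|<|n|$,
$$
\frac{1}{(m+n)^{k}} \;=\; \sum_{i=0}^{N-k-j}\binom{-k}{i}\frac{m^{i}}{n^{k+i}} + \mathcal E_{N,k,j}(m,n),
$$
with $\mathcal E_{N,k,j}$ the standard integral form of the Taylor remainder. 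Multiplying by $a_m$ and using the identity $\widehat{\partial_x^i a}(m) = (2\pi\ii m)^i a_m$, the principal part reproduces the claimed series with $C_i(k,j)$ identified as $\binom{-k}{i}$ up to explicit factors of $2\pi\ii$. The contribution from the finitely many pairs with $m+n=0$ is a finite-rank (hence arbitrarily smoothing) correction that can be absorbed into the remainder without affecting the estimate.

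Next I would estimate $\mathcal R^{\psi do}_{N,k,j}(a)$. After the Taylor expansion, the remainder acts on $u = \sum_n u_n e^{2\pi\ii nx}$ by a kernel that in Fourier picks up the factor $\mathcal E_{N,k,j}(m,n)$, bounded essentially by $|m|^{N-k-j+1}/|m+n|^{N+1}$ on the region $|m|\le |n|/2$ (where $|m+n|\gtrsim |n|$), and by brute multiplier bounds on the complementary region $|m|\ge |n|/2$. Splitting the sum accordingly, applying Cauchy--Schwarz in the frequency variable, and writing the weights in terms of $\|a\|_{s+2N}$ and $\|u\|_s$ yields the estimate $\|\mathcal R^{\psi do}_{N,k,j}(a)\|_{\mathcal B(H^s,H^{s+N+1})}\lesssim_{s,N}\|a\|_{s+2N}$. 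The slack factor $2N$ (rather than $N+1$) arises precisely from the ``large $m$'' regime, where one must trade powers of $|m|$ for derivatives of $a$ to close the estimate.

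For part (ii), the plan is to mimic the Fourier analysis of (i) but with the multiplication $M_a$ replaced by the paraproduct $T_a$. The key structural advantage is the admissible cut-off $\chi$ in the definition \eqref{definition prarproduct}: only pairs $(m,n)$ with $|m|\le\varepsilon_2+\varepsilon_2|n|$ contribute, so the ratio $|m/n|$ (and $|m/(m+n)|$) is uniformly bounded by a constant strictly less than $1$. As a consequence, the Taylor expansion of $(m+n)^{-k}$ converges unconditionally on the support of $\chi$ and the ``large $m$'' regime of step two simply does not occur. Combining this with the boundedness $H^{\sigma}\ni a\mapsto T_a\in\mathcal B(H^{s})$ from Lemma~\ref{primo lemma paraprodotti}(i), together with Lemma~\ref{primo lemma paraprodotti}(iv) to replace products $T_a\circ T_b$ by $T_{ab}$ modulo smoothing remainders, produces the claimed expansion with each coefficient identified as the paraproduct with $\partial_x^i a$. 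The size of $\sigma_N$ is then fixed by the number of derivatives of $a$ needed both to control the Taylor integral remainder at order $N-k-j+1$ and to apply Lemma~\ref{primo lemma paraprodotti}(i) (uniformly in $s$).

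\textbf{Main obstacle.} The hard part will be the asymmetry between the two estimates: in (i) the loss of regularity on $a$ depends on $s$ (namely $\|a\|_{s+2N}$), while in (ii) it is a fixed $\|a\|_{\sigma_N}$ independent of $s$. This reflects the built-in spectral localization of paraproducts and is essential downstream (since $\sigma_N$ controls the loss of regularity in \textbf{(AE3)}), but it forces a careful separation of the two proofs: in (i) one must concede an $s$-dependent loss because nothing prevents $a$ from concentrating at high frequencies, whereas in (ii) one exploits that $T_a$ only resonates with frequencies of $u$ much larger than those of $a$. Getting the constant $\sigma_N$ sharp (or merely explicit in terms of $N$) requires tracking both the order of the Taylor remainder and the Sobolev embedding needed for paraproduct boundedness; I would settle for $\sigma_N = 2N$ or so, which suffices for all applications in the paper.
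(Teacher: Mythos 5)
The paper does not actually prove this lemma: Appendix \ref{appendice B} collects it as a known fact of pseudodifferential/paradifferential calculus and refers to \cite{Met}, so there is no in-paper argument to compare against. Your plan supplies a genuine elementary proof, and it is sound: the Fourier-side computation, the binomial expansion of $(m+n)^{-k}$ in the region $|m|\le|n|/2$, the brute-force treatment of the region $|m|\ge|n|/2$ (which is indeed where the generous loss $\|a\|_{s+2N}$ comes from and why the bound is $s$-dependent), and the observation that the cut-off $\chi$ eliminates that second region entirely in part (ii), yielding an $s$-independent constant $\sigma_N$, are all exactly the right mechanisms. Your identification $C_i(k,j)=\binom{-k}{i}$ is correct, and in fact the powers of $2\pi\ii$ cancel identically, so no extra explicit factors appear. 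An alternative, slightly more self-contained route for part (i) is to iterate the exact commutator identity $\partial_x^{-1}\circ a = a\,\partial_x^{-1}-\partial_x^{-1}\circ(\partial_x a)\circ\partial_x^{-1}$ (modulo the projection onto the zero mode), which produces the remainder as an explicit composition of operators of the form $\partial_x^{-k'}\circ(\partial_x^{M+1}a)\circ\partial_x^{-(N+1-k')}$ and lets one conclude directly from the tame product estimate of Lemma \ref{lemma interpolation}; this avoids the two-region kernel analysis.

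Three small inaccuracies to fix when writing this up. First, the pairs with $m+n=0$ are not finitely many (there is one for each $n$); what is true is that their total contribution to the difference between $\partial_x^{-k}\circ a\partial_x^{-j}$ and the principal part is a rank-one operator into the constants, bounded by $\|a\|_0\|u\|_0$, hence smoothing to every order and absorbable into the remainder \textemdash{} the conclusion stands, the counting does not. Second, in part (ii) the ratio $|m|/|n|$ on the support of $\chi$ is only bounded by $\varepsilon_2+\varepsilon_2/|n|$, which may exceed $1$ for the finitely many $n$ with $|n|\le\varepsilon_2/(1-\varepsilon_2)$; these low frequencies must be split off as a finite-rank (hence harmless) piece before invoking convergence of the binomial series. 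Third, Lemma \ref{primo lemma paraprodotti}$(iv)$ plays no role here, since no composition $T_a\circ T_b$ occurs; the expansion in (ii) follows from the Fourier computation alone together with Lemma \ref{primo lemma paraprodotti}$(i)$ for the boundedness of the coefficient operators $T_{\partial_x^i a}$.
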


Finally, we record the following well known tame estimates of products of functions in Sobolev spaces. 
\begin{lemma}\label{lemma interpolation}
For any $s \in \Z_{\ge 1}$,
$$
\| u v \|_s \lesssim_s \| u \|_s \| v \|_1 + \| u \|_1 \| v \|_s\,, \quad \forall u, v \in H^s\,.
$$
\end{lemma}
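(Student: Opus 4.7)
The plan is to deduce the lemma directly from Bony's paraproduct decomposition and the boundedness properties of paraproducts that have already been established in Lemma~\ref{primo lemma paraprodotti}. Write
$$
uv = T_u v + T_v u + \mathcal{R}^{(B)}(u,v)
$$
for $u,v \in H^s$ with $s \ge 1$ (this is the decomposition provided by Lemma~\ref{primo lemma paraprodotti}(ii), whose hypotheses $s_1,s_2 \in \Z_{\ge 1}$ are met here since we may take $s_1=1$, $s_2=s$, or vice versa). The remainder of the argument amounts to estimating the three summands on the right separately in $H^s$.

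For the paraproduct pieces I would apply Lemma~\ref{primo lemma paraprodotti}(i): since the map $a \mapsto T_a$ is bounded $H^1 \to \mathcal{B}(H^s)$ with $\|T_a\|_{\mathcal{B}(H^s)} \lesssim_s \|a\|_1$, one immediately obtains
$$
\|T_u v\|_s \lesssim_s \|u\|_1 \|v\|_s, \qquad \|T_v u\|_s \lesssim_s \|v\|_1 \|u\|_s.
$$
Both of these contributions fit into the right-hand side of the claimed inequality. For the Bony remainder I would use Lemma~\ref{primo lemma paraprodotti}(ii) twice: applying it with $(s_1,s_2)=(1,s)$ yields $\mathcal{R}^{(B)}(u,v) \in H^{1+s-1}=H^s$ with $\|\mathcal{R}^{(B)}(u,v)\|_s \lesssim_s \|u\|_1\|v\|_s$, and applying it with $(s_1,s_2)=(s,1)$ gives the symmetric estimate $\|\mathcal{R}^{(B)}(u,v)\|_s \lesssim_s \|u\|_s\|v\|_1$. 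Either bound suffices to absorb this term into the sum $\|u\|_s\|v\|_1 + \|u\|_1\|v\|_s$.

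Combining the three bounds yields the claimed tame estimate, and the dependence on $s$ in the implicit constant comes only from the $s$-dependence in parts (i) and (ii) of Lemma~\ref{primo lemma paraprodotti}. There is really no main obstacle here: the work has already been done in the paraproduct lemma, and the only subtle point is to choose the two different splittings $(1,s)$ and $(s,1)$ in the application of part (ii) so that the symmetric form of the right-hand side appears naturally. An alternative, more classical route would bypass paraproducts entirely by expanding $\partial_x^k(uv)$ via the Leibniz rule and estimating each term with Gagliardo--Nirenberg interpolation together with the embedding $H^1(\T) \hookrightarrow L^\infty(\T)$, but given that the paraproduct machinery is already set up in Appendix~\ref{appendice B}, the Bony-decomposition proof is both shorter and more in the spirit of the rest of the paper.
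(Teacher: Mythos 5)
Your argument is correct. Note, however, that the paper offers no proof of Lemma~\ref{lemma interpolation} at all: it is simply recorded as a well-known fact, so there is no ``paper's approach'' to compare against. Your derivation from Lemma~\ref{primo lemma paraprodotti} is sound: the decomposition $uv = T_u v + T_v u + \mathcal{R}^{(B)}(u,v)$ applies since $u,v\in H^s$ with $s\ge 1$, part~(i) gives $\|T_u v\|_s \lesssim_s \|u\|_1\|v\|_s$ and $\|T_v u\|_s \lesssim_s \|v\|_1\|u\|_s$, and part~(ii) applied with $(s_1,s_2)=(1,s)$ bounds the (uniquely determined) remainder by $\|u\|_1\|v\|_s$; summing gives the claim. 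Two small remarks. First, there is no circularity risk: Lemma~\ref{primo lemma paraprodotti} is itself imported from the paradifferential calculus literature (\cite{Met}) and its standard proofs do not rely on the tame product estimate. Second, the classical route you mention in passing --- Leibniz rule plus the embedding $H^1(\T)\hookrightarrow L^\infty(\T)$ and interpolation, or directly the Cauchy--Schwarz/convolution estimate on Fourier coefficients using $\langle n\rangle^s \lesssim_s \langle n-k\rangle^s + \langle k\rangle^s$ --- is entirely self-contained and is presumably what the authors have in mind when they call the estimate well known; it buys independence from the paraproduct machinery, whereas your route buys brevity given that Appendix~\ref{appendice B} is already in place. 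Either is acceptable.
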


\vspace{1.0cm}

\noindent
T. Kappeler, 
Institut f\"ur Mathematik, 
Universit\"at Z\"urich, Winterthurerstrasse 190, CH-8057 Z\"urich;\\
${}\qquad$  email: thomas.kappeler@math.uzh.ch \\

\noindent
R. Montalto, 
Department of Mathematics,  
University of Milan, Via Saldini 50, 20133, Milano, Italy;\\
${}\qquad$ email: riccardo.montalto@unimi.it

\noindent
and Institut f\"ur Mathematik, 
Universit\"at Z\"urich, Winterthurerstrasse 190, CH-8057 Z\"urich;\\
${}\qquad$  email: riccardo.montalto@math.uzh.ch \\

\end{document}